\let\OLDthebibliography\thebibliography
\renewcommand\thebibliography[1]{
	\OLDthebibliography{#1}
	\setlength{\parskip}{0pt}
	\setlength{\itemsep}{2pt} 
}
\theoremstyle{definition}
\newtheorem{df}{Definition}[section]
\newtheorem{rem}[df]{Remark}
\newtheorem{cv}[df]{Convention}
\theoremstyle{plain}
\newtheorem{thm}[df]{Theorem}
\newtheorem{pp}[df]{Proposition}
\newtheorem{co}[df]{Corollary}
\newtheorem{lm}[df]{Lemma}
\newcommand{\tr}{\mathrm{t}} 
\newcommand{\End}{\mathrm{End}} 
\newcommand{\id}{\mathrm{id}}
\newcommand{\Hom}{\mathrm{Hom}}
\newcommand{\Span}{\mathrm{Span}}
\newcommand{\wtd}{\widetilde}
\newcommand{\Ker}{\mathrm{Ker}}
\newcommand{\Conj}{\mathrm{Conj}}
\newcommand{\Ad}{\mathrm{Ad}}
\newcommand{\gh}{{\widehat{\mathfrak g}}}
\newcommand{\gk}{\mathfrak g}
\newcommand{\ph}{{\widehat{\mathfrak p}}}
\newcommand{\pk}{\mathfrak p}
\newcommand{\hk}{\mathfrak h}
\newcommand{\hh}{\widehat{\mathfrak h}}
\newcommand{\Lo}{{\Lambda^\circ}}
\newcommand{\ih}{i\hk_{\mathbb R}}
\newcommand{\tk}{\mathfrak t}
\newcommand{\ek}{\mathfrak e}
\newcommand{\fk}{\mathfrak f}
\newcommand{\kk}{\mathfrak k}
\newcommand{\bk}[1]{\langle {#1}\rangle}
\newcommand{\wt}{\widetilde}
\newcommand{\so}{\mathfrak {so}}
\newcommand{\gl}{\mathfrak {gl}}
\newcommand{\mc}{\mathcal}
\newcommand{\smp}{\mathfrak {sp}}
\numberwithin{equation}{section}
\title{Energy bounds condition for intertwining operators of type $B$, $C$, and $G_2$ unitary affine vertex operator algebras}
\author{{\sc Bin Gui}\\
	{\small Department of Mathematics, Rutgers University}\\
	{\small bin.gui@rutgers.edu}
}
\date{}
\begin{document}\sloppy 
	\pagenumbering{arabic}
	
	\setcounter{section}{0}
	\maketitle

\begin{abstract}
The energy bounds condition for intertwining operators of unitary rational vertex operator algebras (VOAs) was studied, first by A.Wassermann for type $A$ affine VOAs, and later by T.Loke for $c<1$ Virasoro VOAs, and by V.Toledano-Laredo for type $D$ affine VOAs. In this paper, we extend their results to affine VOAs of type $B$, $C$, and $G_2$. As a consequence, the modular tensor categories of these unitary vertex operator algebras are unitary.
\end{abstract}

	\tableofcontents
\section*{Introduction}

In algebraic quantum field theory, energy bounds condition is a very natural functional analytic property satisfied by many examples. This property  guarantees not only the preclosedness of smeared   field operators, but also the strong commutativity of causally disjoint ones when the energy bounds are small. In 2d conformal field theory (CFT), the energy bounds condition has been extensively studied for chiral fields (vertex operators), from which one can construct conformal nets, i.e., nets of von Neumann algebras on the unit circle $S^1$ satisfying important properties such as Haag duality. See for instance \cite{GW84,BS90,CKLW18}.

It is  quite  natural to think about extending the analysis of energy bounds condition to the \emph{intertwining operators} of a unitary rational vertex operator algebra (VOA), since intertwining operators are the building blocks (the chiral halves) of full CFT field operators \cite{MS88,HK07}, the energy bounds of which are clearly expected. Such analysis was initiated in the seminal work of A.Wassermann \cite{Was98},  followed by the works of T.Loke \cite{Loke94} and V.Toledano-Laredo \cite{TL04}. In these works, certain energy bounds were established  for a generating set of intertwining operators of unitary affine type $A$ and type $D$ VOAs, as well as $c<1$ unitary Virasoro VOAs. These results are crucial for constructing finite index subfactors \cite{Jon83,Lon89}, unitary fusion categories, unitary modular tensor categories \cite{Xu00,KLM01,KL04}, and hence unitary 3d topological quantum field theories  \cite{Tur94} under the settings of algebraic quantum field theories and operator algebras. Moreover, these functional analytic properties also have important applications in VOAs, as can be seen in our recent works  \cite{Gui19,Gui17} which used these properties to prove the unitarity of the representation tensor categories of many unitary rational VOAs.

Our goal in this article is to prove the energy bounds condition for the intertwining operators of unitary affine VOAs of type $B$, $C$, and the exceptional type $G_2$. As a consequence, we show the unitarity of the modular tensor categories for these VOAs (theorem \ref{lb49}). Further applications to conformal nets and operator algebras will be given in future work \cite{Gui18}. Roughly speaking, if $W_i,W_j,W_k$ are unitary representations of a unitary VOA $V$,  $\mathcal Y$ is a type ${W_k\choose W_i~W_j}={k\choose i~j}$ intertwining operator $V$, and $w^{(i)}$ is a homogeneous vector of $W_i$, we say that $\mathcal Y$ satisfies $r$-th order energy bounds ($r\geq0$) if the smeared intertwining opeartor $$\mathcal Y(w^{(i)},f)=\oint_{S^1}\mathcal Y(w^{(i)},z)f(z)\frac{dz}{2i\pi}$$ is bounded by $L_0^r$, and is also continuous with respect to the smooth function $f$ under some Sobolev norm on $C^\infty(S^1)$. $1$-st order energy bounds are especially important, as they imply the strong commutativity of two unbounded operators commuting adjointly on a common invariant core. So, when possible, we also determine those intertwining operators satisfying $1$-st order energy bounds. Although the concrete analysis of each type is different from another, the main idea, which is called compression principle in our paper, is the same as those in \cite{Was98} and also in \cite{Loke94,TL04}, which we now describe. 

\subsubsection*{Compression principle}

In general, energy bounds are easier to establish for vertex operators than for intertwining operators. For example, for free fermion vertex superalgebras, affine VOAs, and Virasoro VOAs, one can use the  Lie (super)algebra relations to deduce the $0$-th or $1$-st order energy bounds for primary fields (cf. \cite{BS90} section 2), and use normal ordering (or more generally, the Jacobi identity in the form \eqref{eq24}) to prove the energy bounds condition for any vertex operators. The energy bounds condition is also preserved when passing to vertex subalgebras. So one can indeed establish energy-bounds  for many examples of vertex operators. Unfortunately, these methods  seem to not work for intertwining operators, as intertwining operators don't in general form an  algebra.\footnote{Perhaps the only exceptions are the intertwining operators of a Heisenberg VOA, which form a so called generalized vertex algebra \cite{DL93}.} 

However, since we already know that many vertex operators are energy-bounded, we can try to embed  our VOA $V$ in a larger unitary vertex (super)algebra $\widetilde V$ whose vertex operators are energy bounded, choose  irreducible (unitary) $V$-submodules $W_i,W_j,W_k$ of the action $V\curvearrowright \widetilde V$, and restrict the vertex operator $\widetilde Y$ of $\widetilde V$ onto $W_i,W_j,W_k$. Then, up to a multiplication by a monomial of the formal variable $x$, this restricted vertex operator, which is clearly energy-bounded, is an intertwining operator $V$. If one can show that any irreducible intertwining operator $\mathcal Y$ of $V$ can be realized as such a compression of an energy-bounded vertex operator, or more generally, as a compression of an intertwining operator $\mathfrak Y$ of a larger $\widetilde V$, the energy-bounds condition of which has already been established, then we can prove the energy bounds condition for $\mathcal Y$. The precise statement of this compression principle, at least for affine VOAs, can be found in theorem \ref{lb10}.

\subsubsection*{Examples}

So far, compression principle is the only essential way to obtain energy bounds condition for intertwining operators. Let us now explain how this principle can be applied to all known examples. First, for the intertwining operators of  unitary $c<1$ Virasoro VOAs, one can deduce the energy bounds condition from that of the intertwining operators of affine $\mathfrak {sl}_2$-VOAs, since the coset construction of Goddard-Kent-Olive \cite{GKO86} realizes any $c<1$ Virasoro VOA as a unitary vertex subalgebras of a tensor product of two affine $\mathfrak {sl}_2$ VOAs \cite{Loke94}.

Unitary Affine VOAs are the examples that we are especially interested in in this paper. Let $\gk$ be a unitary complex simple Lie algebra, and $l=1,2,\dots$. Then it is well-known that the level $l$ affine $\gk$-VOA $V^l_\gk$ can be embedded as a unitary vertex subalgebra of $(V^1_\gk)^{\otimes l}$ (proposition \ref{lb33}). Moreover, when $\gk$ is a classical Lie algebra or of type $G_2$, it can be shown that any irreducible intertwining operator of $V^l_\gk$ is a compression of one of $(V^1_\gk)^{\otimes l}$. So for such $\gk$, it suffices to prove the energy bounds condition for the intertwining operators of the level $1$ affine VOA $V^1_\gk$.\footnote{In fact, we expect that this statement holds for all simple Lie algebras except type $E_8$. See our discussion in chapter \ref{lb38}.} And when the level is $1$, all classical Lie types can be reduced to type $D$ due to the diagonal embeddings $\mathfrak{sl}_n\subset\mathfrak{so}_{2n}$ and $\mathfrak{sp}_{2n}\subset\mathfrak{s0}_{4n}$, and the obvious embedding $\mathfrak{so}_{2n+1}\subset\mathfrak{so}_{2n+2}$, whose levels (i.e, Dynkin indexes, see section \ref{lb39}) are all $1$. For the type $G_2$ simple Lie algebra $\gk_2$, one should consider the embedding $\gk_2\subset\ek_8$\footnote{The author would like to thank Marcel Bischoff for suggesting thinking about the embedding $\gk_2\subset\ek_8$. All the embeddings mentioned in this paragraph were also suggested in \cite{Was90}.} whose Dynkin index is also $1$, and realize the intertwining operators of $V^1_{\gk_2}$ as the compressions of the vertex operator (but not more generally intertwining operators) of $V^1_{\ek_8}$, the energy bounds condition of which is clearly known as mentioned previously.

It seems that the only possible way of solving the type $D$ level $1$ case is through Heisenberg and lattice VOAs, as was carried out in \cite{TL04} chapters V, VI. As every classical Lie type can (or should) be reduced to this case, we feel it necessary to explain the key ideas of the proof. This method indeed works for any simply laced Lie algebra. So let us assume that $\gk$ is of type $A$, $D$, or $E$. Then by Frenkel-Kac construction \cite{FK80} (see also \cite{Kac98} section 5.6), the corresponding level $1$ affine VOA is unitarily equivalent to the even lattice VOA whose lattice is the root lattice of $\gk$. So the problem is turned to show the energy bounds condition for (the vertex operators and) the intertwining operators of even lattice VOAs. 

It is impossible to discuss lattice VOAs without a full understanding of their ``Cartan subalgebras", the Heisenberg VOAs. Let $\hk$ be a unitary (finite-dimensional) abelian Lie algebra. One can define, for any $l>0$, a level $l$ Heisenberg VOA $V^l_\hk$ in a similar way as affine VOAs are defined. The most striking differences between Heisenberg and affine VOAs are: (1) $V^l_\hk$ is irrelevant to the level $l$. So we can always choose $l=1$. (2) The level adds no constraints on the irreducible representations of $V^1_\hk$. So the equivalence classes of the unitary irreducible representations of $V^1_\hk$ are in one to one correspondence with the elements of the dual space  $(\ih)^*$ of the self-adjoint (real) subspace $\ih$ of $\hk$. So the Heisenberg VOA $V^1_\hk$ is not a rational VOA. However, we can still study the ``tensor category" of $V^1_\hk$ in a broader sense, as the fusion rules are always finite, and the intertwining operators of $V^1_\hk$ actually satisfy some very simple fusion and (abelian) braid relations (cf. \cite{DL93}, see also chapter \ref{lb34}). With the help of this tensor categorical structure, one can ``rationalize" $V^1_\hk$ by extending it to a larger VOA using (irreducible) intertwining operators of $V^1_\hk$ whose charge spaces, source spaces, and target spaces all correspond to elements inside a non-degenerate even lattice $\Lambda$ in $\ih\simeq(\ih)^*$. This VOA $V_\Lambda$ is the lattice VOA for $\Lambda$. The explicit form of an intertwining operator $\mathfrak Y$ of $V_\Lambda$ can be determined, which shows that $\mathfrak Y$ must be a (globally infinite but locally finite) ``homogeneous" sum of intertwining operators of the Heisenberg VOA $V^1_\hk$, so that the analytic behaviors of $\mathfrak Y$ depends homogeneously on those of the intertwining operators of $V^1_\hk$ (theorem \ref{lb40}). Therefore, to study the energy bounds condition for the intertwining operators of lattice VOAs, it suffices to do this for any irreducible intertwining operator $\mathcal Y$ of $V^1_\hk$. We assume that $\mathcal Y$ is of type $\nu\choose\lambda~\mu$, where $\lambda,\mu,\nu\in(\ih)^*$. Then, when $(\lambda|\lambda)=1$, $\mathcal Y$ can be regarded as a compression of the vertex operator of the lattice vertex superalgebra $V_{\mathbb Z^n}$ for the integral lattice $\mathbb Z^n$. Since, by Boson-Fermion correspondence, $V_{\mathbb Z^n}$ is equivalent to the vertex superalgebra for the $n$-dimensional free fermion, one can show show that $\mathcal Y$ satisfies $0$-th order energy bounds. When $(\lambda|\lambda)$ takes general values, a tensor product argument due to \cite{TL04} can be applied to prove the energy-bounds condition for $\mathcal Y$. Thus the energy bounds conditions for Heisenberg VOAs and lattice VOAs are established. See chapter \ref{lb34} for more details.

\subsubsection*{Outline of this paper}

Chapter 1 is aimed to  fix the notations and provide the necessary backgrounds for establishing and understanding the compression principle,  and for analyzing the energy bounds condition for intertwining operators. In particular, we state the basic theories of unitary VOAs, affine Lie algebras, and finite-dimensional simple Lie algebras with a unifying language emphasizing their unitary properties. So, for example, the invariant \emph{inner products} of complex simple Lie algebras are introduced at the very beginning of our discussion of Lie algebras, and the real structures of root systems and weight spaces become very natural from this point of view. On the other hand, the notion of invariant \emph{bilinear forms} is completely avoided in our paper, as the author believes that this concept is, at some point, inconvenient for one to understand the unitary structures. In some sense our philosophy is similar to that of \cite{Was10}. 

Chapter 2 gives a detailed account of the compression principle. In section 2.1 we discuss how a given unitary affine VOA $V^l_\gk$ can be realized as a unitary vertex subalgebra of a larger unitary VOA $V$. In section 2.2 we state the compression principle, and show how this principle can be used to reduce  higher level problems to  level $1$ ones. 

After the presentation of general theories, in chapters 3, 4, 5 we apply the compression principle to establish the energy bounds condition for the intertwining operators of type $B$, $C$, and $G_2$ affine Lie algebras. Conclusions and future perspectives are given in chapter 6. 

As mentioned earlier, the results on all classical Lie type affine VOAs depend on those of type $D$, which in turn depend on those of the intertwining operators of Heisenberg and lattice VOAs. Although the latter is implicitly contained in \cite{TL04}, we feel it necessary to give a brief account of this theory in our paper, since, after all, the results in \cite{TL04} are explicitly stated only for the lattice VOAs $V_\Lambda$'s where $\Lambda$ is the root lattice of a simply laces Lie algebra. Moreover, \cite{TL04} uses the notion of ``primary fields" rather than intertwining operators, and the conditions that primary fields need to satisfy are, at first glance, quite weaker than those on intertwining operators.\footnote{They are indeed equivalent at least for affine VOAs, but the proofs are non-trivial. See \cite{Gui17} section 8.2 for an explanation of this issue, and the reference therein.} The reason that we avoid primary fields in our paper is clear: it is hard to generalize them onto VOAs beyond affine and Virasoro ones. Therefore, as a general theory of even lattice VOAs, the account in \cite{TL04} chapters V, VI is incomplete, and should be accompanied by the results in \cite{DL93}. We will discuss this in the appendix chapter A.

\subsubsection*{Acknowledgment}
This project was initiated when the author was  in Vanderbilt university. The author would like to thank Vaughan Jones for introducing him to this topic,  and for the constant support during his research.

\section{Background}

\subsection{Unitary VOAs}

\subsubsection*{Unitary VOAs and unitary representations}
We assume the reader is familiar with the basic concepts and computations in VOA, such as those in \cite{FHL93}, \cite{Kac98}, or \cite{LL04}.

Let $(V,Y,\Omega,\nu)$ (or simply $V$) be a VOA, where $\Omega$ is the vacuum vector, and  $\nu$ is the conformal vector. For any vector $v\in V$ and a formal variable $x$, we write $Y(v,x)=\sum_{n\in\mathbb Z}Y(v,n)x^{-n-1}$, where each $Y(v,n)\in\End(V)$ is a mode of the field operator $Y(v,x)$. Let $\{L_n=Y(v,n+1)\}$ be the Virasoro operators of $V$. The vector space $V$ has  grading $V=\bigoplus_{n\in\mathbb Z}V(n)$, where $L_0|_{V(n)}=n\cdot\id_{V(n)}$.   We assume that $V$ is of CFT type, i.e., $V(0)=\mathbb C\Omega$ and $V(n)=0$ when $n<0$.

Let $\Theta$ be an antilinear automorphism of $V$. This means that $\Theta:V\rightarrow V$ is antilinear, that $\Omega$ and $\nu$ are fixed by $\Theta$, and that $\Theta Y(v,x)=Y(\Theta v,x)\Theta$ for any $v\in V$. We say that  $(V,Y,\Omega,\nu,\Theta)$ (or simply $V$) is a \textbf{unitary VOA}, if there exists an inner product $\langle\cdot|\cdot\rangle$ on $V$, antilinear on the second variable, such that for any $v,v_1,v_2\in V$,
\begin{align}
\langle Y(v,x)v_1|v_2\rangle=\langle v_1|Y(e^{xL_1}(-x^{-2})^{L_0}\Theta v,x^{-1})v_2\rangle.\label{eq18}
\end{align}
The above relation can be simply written as
\begin{align*}
Y(v,x)^\dagger=Y(e^{xL_1}(-x^{-2})^{L_0}\Theta v,x^{-1}),
\end{align*}
where $Y(v,x)^\dagger$ means the formal adjoint of $Y(v,x)$. In particular, if we let $v=\nu$ and note that $L_1\nu=0$, we have
\begin{align*}
L_n^\dagger=L_{-n}\qquad(n\in\mathbb Z).
\end{align*}

Note that $\Omega\in V$ is a cyclic vector under the action of vertex operators, and that $V(0)$ is spanned by $\Omega$. Therefore, the inner product $\langle \cdot|\cdot\rangle$ is uniquely determined by the positive value $\langle \Omega|\Omega\rangle$. If we \textbf{normalize} $\langle \cdot|\cdot\rangle$ so  that $\langle \Omega|\Omega\rangle=1$, then $\langle \cdot|\cdot\rangle$ is unique, or more precisely, $\langle \cdot|\cdot\rangle$ is uniquely determined by $\Theta$ and $\nu$. We call $\Theta$ the \textbf{PCT operator} of $V$. It is not hard to show that $\Theta$ is  an anti-unitary map (cf. \cite{CKLW18} proposition 5.1). One can also check easily that any unitary CFT type VOA is simple. 

The invariant inner product on $V$ also determines the conformal vector $\nu$ and the PCT operator $\Theta$:
\begin{pp}\label{lb3}
Suppose that $(V,Y,\Omega,\nu,\Theta)$ and $(V,Y,\Omega,\widetilde\nu,\widetilde\Theta)$ are unitary VOAs of CFT type having the same normalized invariant inner product $\bk{\cdot|\cdot}$. Then $\nu=\widetilde\nu$ and $\Theta=\widetilde\Theta$.
\end{pp}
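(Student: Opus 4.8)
The plan is to first show that the inner product determines $\Theta$ together with the $L_0$- and $L_1$-operators attached to $\nu$ (so that $L_0=\widetilde L_0$, $L_1=\widetilde L_1$, $\Theta=\widetilde\Theta$), and then to conclude $\nu=\widetilde\nu$ by a short skew--symmetry argument. Since \eqref{eq18} holds for both $(\nu,\Theta)$ and $(\widetilde\nu,\widetilde\Theta)$ against the \emph{same} inner product, the fixed formal adjoint $Y(v,x)^\dagger$ equals both $Y(e^{xL_1}(-x^{-2})^{L_0}\Theta v,x^{-1})$ and $Y(e^{x\widetilde L_1}(-x^{-2})^{\widetilde L_0}\widetilde\Theta v,x^{-1})$. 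Subtracting and using non-degeneracy of $\langle\cdot|\cdot\rangle$ (together with the recovery of a state from its field by evaluation at $\Omega$ and comparison of powers of $x$), I would obtain the identity of maps $V\to V[x,x^{-1}]$
\begin{equation}
e^{xL_1}(-x^{-2})^{L_0}\Theta=e^{x\widetilde L_1}(-x^{-2})^{\widetilde L_0}\widetilde\Theta.\label{eqcompat}
\end{equation}

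Next I would read off information from \eqref{eqcompat} by inspecting lowest powers of $x$ on homogeneous vectors. For nonzero $v\in V(n)$, since $\Theta$ commutes with $L_0$ the left side equals $(-1)^nx^{-2n}\Theta v+O(x^{-2n+1})$, so its lowest power of $x$ is $x^{-2n}$ with coefficient $(-1)^n\Theta v$; writing $\widetilde\Theta v=\sum_m y_m$ with $y_m\in\widetilde V(m)$, the right side has lowest power $x^{-2M}$ with coefficient $(-1)^My_M$, where $M=\max\{m:y_m\neq0\}$. Comparison forces $M=n$ and $y_n=\Theta v$, i.e.\ $\Theta v\in\widetilde V(n)$ and $\widetilde\Theta v-\Theta v\in\bigoplus_{m<n}\widetilde V(m)$; the symmetric argument gives $\widetilde\Theta(\widetilde V(n))\subseteq V(n)$. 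Combining these with $\Theta(V(n))=V(n)$ and finite-dimensionality of the graded pieces forces $V(n)=\widetilde V(n)$ for all $n$, hence $L_0=\widetilde L_0$; applying $\widetilde\Theta$ to $\widetilde\Theta v=\Theta v+\sum_{m<n}y_m$ and matching graded components then kills the $y_m$, so $\Theta=\widetilde\Theta$. Now \eqref{eqcompat} collapses to $e^{xL_1}=e^{x\widetilde L_1}$, so $L_1=\widetilde L_1$, while $L_{-1}=\widetilde L_{-1}$ automatically, both being the intrinsic translation operator $T$ of $(V,Y,\Omega)$.

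Finally, to get $\nu=\widetilde\nu$, set $u=\nu-\widetilde\nu\in V(2)$; by the previous paragraph the operators $u_0=L_{-1}-\widetilde L_{-1}$, $u_1=L_0-\widetilde L_0$, $u_2=L_1-\widetilde L_1$ are all zero on $V$. I would then apply skew--symmetry $Y(\nu,x)u=e^{xL_{-1}}Y(u,-x)\nu$. On the right $u_n\nu\in V(3-n)$, so $u_n\nu=0$ for $n\geq4$; together with $u_0\nu=u_1\nu=u_2\nu=0$, the only singular term of $Y(u,-x)\nu$ is $(u_3\nu)x^{-4}$, and $u_3\nu\in V(0)=\mathbb C\Omega$ is annihilated by $L_{-1}$, hence untouched by $e^{xL_{-1}}$. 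Therefore $Y(\nu,x)u=(u_3\nu)x^{-4}+(\text{nonnegative powers of }x)$, so its $x^{-2}$-coefficient $\nu_1u=L_0u$ vanishes; but $u\in V(2)$ gives $L_0u=2u$, whence $u=0$ and $\nu=\widetilde\nu$.

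The most delicate part is the lowest-power analysis of \eqref{eqcompat} and the ensuing bookkeeping that promotes the triangular relation to $\Theta=\widetilde\Theta$ and $V(n)=\widetilde V(n)$; the closing skew--symmetry step is immediate once $u_0=u_1=u_2=0$ is available.
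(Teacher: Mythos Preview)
Your argument is correct, but it follows a genuinely different route from the paper's. The paper first observes $L_{-1}=\widetilde L_{-1}$ (both equal the intrinsic translation operator), then takes adjoints under the common inner product to get $L_1=\widetilde L_1$, and applies the Virasoro relation $2L_0=[L_1,L_{-1}]$ to obtain $L_0=\widetilde L_0$; from there it computes $[L_{-2},\widetilde L_0]$ via the Jacobi identity, checking the contributions of $L_{l-1}\widetilde\nu$ for $l=0,1,2,3,\ldots$ one by one, to conclude $L_{-2}=\widetilde L_{-2}$ and hence $\nu=\widetilde\nu$; finally $\Theta=\widetilde\Theta$ drops out of the adjoint formula. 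You instead extract the operator identity \eqref{eqcompat} directly from the adjoint formula, read off $L_0=\widetilde L_0$ and $\Theta=\widetilde\Theta$ by a lowest-power-of-$x$ analysis, then get $L_1=\widetilde L_1$ by cancellation, and finish $\nu=\widetilde\nu$ with a clean skew-symmetry trick showing $L_0(\nu-\widetilde\nu)=0$.

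The paper's path to $L_0=\widetilde L_0$ and $L_1=\widetilde L_1$ is shorter---one line of adjoints plus one Virasoro bracket---whereas your lowest-power bookkeeping is more laborious (and, incidentally, the appeal to finite-dimensionality is unnecessary: $\Theta(V(n))=V(n)\subseteq\widetilde V(n)$ and the symmetric inclusion already give $V(n)=\widetilde V(n)$). On the other hand, your skew-symmetry endgame is more elegant than the paper's term-by-term Jacobi computation of $[L_{-2},\widetilde L_0]$: once $u_0=u_1=u_2=0$ for $u=\nu-\widetilde\nu$, the only singular contribution to $Y(\nu,x)u=e^{xL_{-1}}Y(u,-x)\nu$ is the $x^{-4}$ term coming from $u_3\nu\in\mathbb C\Omega$, so the $x^{-2}$ coefficient $L_0u$ vanishes and $u=0$. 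A natural hybrid would be to reach $L_n=\widetilde L_n$ for $n=-1,0,1$ the paper's way and then close with your skew-symmetry step.
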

\begin{proof}
(cf. \cite{CKLW18} proposition 4.8.) Let $Y(\widetilde\nu,x)=\sum_{n\in\mathbb Z}\widetilde L_nx^{-n-2}$. If we can show  $L_{-2}=\widetilde L_{-2}$, then $\nu=Y(\nu,-1)\Omega=L_{-2}\Omega=\widetilde L_{-2}\Omega=Y(\widetilde\nu,-1)\Omega=\widetilde\nu$. So $L_n=\widetilde L_n$ for any $n\in\mathbb Z$. Now for any $v,v_1,v_2\in V$,
\begin{align*}
&\langle Y(\Theta^{-1} v,x)v_1|v_2\rangle=\langle v_1|Y(e^{xL_1}(-x^{-2})^{L_0}v,x^{-1})v_2\rangle\\
=&\langle v_1|Y(e^{x \widetilde L_1}(-x^{-2})^{\widetilde  L_0}v,x^{-1})v_2\rangle=\langle Y(\widetilde\Theta^{-1} v,x)v_1|v_2\rangle,
\end{align*}
showing that $\Theta=\widetilde\Theta$.

We now show  $L_{-2}=\widetilde L_{-2}$. By translation property, for any $v\in V$ we have
\begin{align*}
Y(L_{-1}v,x)=\frac d{dx}Y(v,x)=Y(\widetilde L_{-1}v,x).
\end{align*} 
The state-field correspondence now gives $L_1v=\widetilde L_{-1}v$. Hence $L_{-1}=\widetilde L_{-1}$. Taking the adjoint of both sides under the same inner product $\bk{\cdot|\cdot}$ corresponding to  $(V,Y,\Omega,\nu,\Theta)$ and $(V,Y,\Omega,\widetilde\nu,\widetilde\Theta)$ gives us $L_1=\widetilde L_1$. Hence, by Virasoro relation, $2L_0=[L_1,L_{-1}]=[\widetilde L_1,\widetilde L_{-1}]=2\widetilde L_0$. We conclude that $L_n=\widetilde L_n$ when $n=-1,0,1$.

Using the Jacobi identity (e.g. \eqref{eq14}), we have
\begin{align}
[L_{-2},\widetilde L_0]=[Y(\nu,-1),Y(\widetilde\nu,1)]=\sum_{l\geq0}{-1\choose l}Y\big(Y(\nu,l)\widetilde\nu,-l\big)=\sum_{n\geq0}(-1)^lY(L_{l-1}\widetilde\nu,-l).\label{eq16}
\end{align}
We now compute each summand on the right hand side. By translation property, $Y(L_{-1}\widetilde\nu,0)=0$. Also, $Y(L_0\widetilde\nu,-1)=Y(\widetilde L_0\widetilde\nu,-1)=2Y(\widetilde\nu,-1)=2\widetilde L_{-2}$. Since $L_1\widetilde\nu=\widetilde L_1\widetilde\nu=0$, we have $Y(L_1\widetilde\nu,-2)=0$. For any $l\in\mathbb Z$, $L_0L_l\widetilde \nu=[L_0,L_l]\widetilde\nu+L_lL_0\widetilde\nu=-lL_l\widetilde\nu+L_l\widetilde L_0\widetilde\nu=(2-l)L_l\widetilde\nu$. So  $L_l\widetilde\nu\in V(2-l)$. In particular, $L_2\widetilde\nu\in V(0)=\Span_{\mathbb C}(\Omega)$, and hence  $Y(L_2\widetilde\nu,3)$ is proportional to $Y(\Omega,3)$, which is $0$. When $l>2$, $V(2-l)=0$. So $L_l\widetilde\nu=0$. Hence we conclude that the right hand side of equation \eqref{eq16} equals $-2\widetilde L_{-2}$. One the other hand, the Virasoro relation implies that $[L_{-2},\widetilde L_0]=[L_{-2},L_0]=-2L_{-2}$. Therefore $L_{-2}=\widetilde L_{-2}$, which finishes our proof.
\end{proof}
\begin{co}
If $(V,Y,\Omega,\nu,\Theta)$ is a unitary VOA of CFT type, then $\Theta$ is an involution, i.e., $\Theta^2=\id_V$.
\end{co}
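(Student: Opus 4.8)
The plan is to apply the adjoint relation \eqref{eq18} a \emph{second} time and deduce that $\Theta^2$ acts as the identity on every vertex operator. Write $\mathcal A_x=e^{xL_1}(-x^{-2})^{L_0}$, so that \eqref{eq18} reads $Y(v,x)^\dagger=Y(\mathcal A_x\Theta v,x^{-1})$ for every $v\in V$. I will use two elementary facts. (i) $\Theta$ commutes with every Virasoro operator $L_n$, since $\Theta Y(\nu,x)=Y(\Theta\nu,x)\Theta=Y(\nu,x)\Theta$; consequently $\Theta$ preserves the grading, and---because the scalars occurring in $\mathcal A_x$ on each homogeneous subspace (namely $(-1)^d/k!$) are all real---$\Theta$ also commutes with $\mathcal A_x$, the formal variable $x$ being untouched by $\Theta$. (ii) $\mathcal A_{x^{-1}}\mathcal A_x=\id_V$: from $[L_0,L_1]=-L_1$ one gets $(-x^2)^{L_0}e^{xL_1}(-x^2)^{-L_0}=e^{-x^{-1}L_1}$, and a short computation then collapses $\mathcal A_{x^{-1}}\mathcal A_x$ to $\id_V$.

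Granting these, fix a homogeneous $v$ of weight $d$; since $V$ is of CFT type, $\mathcal A_x\Theta v=(-1)^d\sum_{k=0}^{d}\tfrac{x^{k-2d}}{k!}L_1^k\Theta v$ is a \emph{finite} sum of $x$-independent vectors times monomials in $x$. Applying \eqref{eq18} once more to each of these vectors (with $x$ replaced by $x^{-1}$) yields
\begin{align*}
Y(v,x)=\big(Y(v,x)^\dagger\big)^\dagger=\big(Y(\mathcal A_x\Theta v,x^{-1})\big)^\dagger=Y\big(\mathcal A_{x^{-1}}\Theta(\mathcal A_x\Theta v),x\big)=Y\big(\mathcal A_{x^{-1}}\mathcal A_x\Theta^2 v,x\big)=Y(\Theta^2 v,x),
\end{align*}
using (i) in the penultimate step and (ii) in the last. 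Applying both sides to $\Omega$ and invoking the creation property finishes the proof: $\Theta^2 v=Y(\Theta^2 v,-1)\Omega=Y(v,-1)\Omega=v$ for all $v$, hence $\Theta^2=\id_V$. One may equivalently phrase the computation as saying that $(V,Y,\Omega,\nu,\Theta^{-1})$ is again a unitary VOA of CFT type with the same normalized invariant inner product, whereupon Proposition \ref{lb3} forces $\Theta^{-1}=\Theta$.

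I expect the only subtle point to be the second invocation of \eqref{eq18}, where the argument vector $\mathcal A_x\Theta v$ depends on the formal variable: one must first expand it into $x$-independent pieces before applying \eqref{eq18}, and be careful that the antilinear $\Theta$ conjugates only genuine complex scalars---here the real numbers $(-1)^d$ and $1/k!$---and leaves powers of $x$ alone. Once this bookkeeping is set up, the remaining manipulations with exponentials of $L_1$ and the substitution $x\mapsto x^{-1}$ are routine.
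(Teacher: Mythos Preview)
Your proof is correct and is essentially the paper's argument made explicit: the paper also uses the commutation of $\Theta$ with $L_0,L_1$ and the identity $\mathcal A_{x^{-1}}\mathcal A_x=\id_V$ to show that \eqref{eq18} holds with $\Theta^{-1}$ in place of $\Theta$, then invokes Proposition~\ref{lb3} (the route you yourself mention at the end). Your direct conclusion via the creation property is a minor shortcut over the paper's appeal to Proposition~\ref{lb3}, but the underlying computation is the same.
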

\begin{proof}
Using the commuting relation for $L_0$ and $L_1$, it is not hard to show that \eqref{eq18} also holds when $\Theta$ is replaced by $\Theta^{-1}$. So $(V,Y,\Omega,\nu,\Theta^{-1})$ is also a unitary VOA of CFT type, and it has the same normalized invariant inner product as that of $(V,Y,\Omega,\nu,\Theta)$. So by our last proposition, $\Theta=\Theta^{-1}$.
\end{proof}

Let $V$ and $\widetilde V$ be VOAs with vacuum vectors $\Omega,\widetilde\Omega$ and conformal vectors $\nu,\widetilde\nu$ respectively. We also let $Y,\widetilde Y$ be their vertex operators. We say that $V$ is a \textbf{vertex subalgebra} of $\widetilde V$, if there exists an injective linear map $\varphi:V\rightarrow \widetilde V$, such that: \\
(a) $\varphi\Omega=\widetilde\Omega$. \\
(b) $\varphi Y(v,x)=\widetilde Y(\varphi v,x)\varphi$ for any $v\in V$. \\
(c) $\varphi(V)$ is a \emph{graded} subspace of $\widetilde V$.\\ In this case we may identify $V$ with $\varphi(V)$, and regard $V$ itself as a graded subspace of $\widetilde V$. Note that we do \emph{not} require $\varphi\nu=\widetilde \nu$. If $V$ and $\widetilde V$ are unitary VOAs with PCT operators $\Theta,\widetilde\Theta$, we say that $V$ is a \textbf{unitary vertex subalgebra} of $\widetilde V$, if besides conditions (a) (b) (c) we also have: \\
(d) $\varphi\Theta=\widetilde\Theta\varphi$.\\
(e) $\varphi$ is an isometry, i.e., $\langle u|v\rangle=\langle\varphi u|\varphi v\rangle$ for any $u,v\in V$.
\begin{rem}
Assume that $V$ is a unitary vertex subalgebra of $\wt V$. We regard $V$ as a subspace of $\wt V$. So $Y(v,x)=\widetilde Y(v,x)$ for any $v\in V$. If we let $\{L_n \}$ and $\{\wt L_n \}$ be the Virasoro operators of $V$ and $\wt V$ respectively, then, as an easy consequence of translation property and creation property, we must have
\begin{align}
L_{-1}v=\wt L_{-1}v\qquad(\forall v\in V).\label{eq35}
\end{align}
\end{rem}

\begin{cv}
In this article, we always assume, unless otherwise stated, that $V$ is a unitary VOA of CFT type.
\end{cv}
Representations (modules) of $V$ will be denoted by $W_i,W_j,W_k$, etc. If $W_i$ is a $V$ module, we let $Y_i$ be the corresponding vertex operator, i.e., $Y_i(v,x)=\sum_{n\in\mathbb Z}Y_i(v,n)x^{-n-1}$ describes the action of $V$ on $W_i$. Assume that $W_i$ is equipped with an inner product $\langle\cdot|\cdot\rangle$. We say that $(W_i,\langle\cdot|\cdot\rangle)$ (or simply $W_i$) is a \textbf{unitary representation} of $V$, if for any $v\in V$,
\begin{align}
Y_i(v,x)^\dagger=Y_i(e^{xL_1}(-x^{-2})^{L_0}\Theta v,x^{-1}).\label{eq19}
\end{align}
A vector $w^{(i)}\in W_i$  is called \textbf{homogeneous} if it is an eigenvector of $L_0$, and we call its eigenvalue $\Delta_{w^{(i)}}$  the \textbf{conformal weight} of $w^{(i)}$. A homogeneous vector $w^{(i)}\in W_i$ is called \textbf{quasi-primary} if $L_1w^{(i)}=0$. If $v\in V$ is quasi-primary, then equation \eqref{eq19} is simplified to
\begin{align}
Y_i(v,x)^\dagger=(-1)^{\Delta_v}x^{-2\Delta_v}Y_i(\Theta v,x^{-1}).\label{eq29}
\end{align}
Take $v=\nu$, we obtain
\begin{align}
L_n^\dagger=L_{-n}
\end{align}
when acting on $W_i$. So the action of $V$ on $W_i$ restricts to a unitary representation of Virasoro algebra. In particular, \emph{the eigenvalues of $L_0\curvearrowright W_i$ are non-negative} (cf., for example, \cite{Gui19} proposition 1.7). So $L_0$ induces grading $W_i=\bigoplus_{s\geq0}W_i(s)$.

If $W_i$ is a unitary $V$-module, we let the vector space $W_{\overline i}$ be the   complex conjugate of $W_i$, and let $C_i:W_i\rightarrow W_{\overline i}$ be the antilinear isomorphism. We equip $W_{\overline i}$ with an inner product under which $C_i$ becomes anti-unitary. The action of $V$ on $W_i$ is described by the vertex operator
\begin{align*}
Y_{\overline i}(v,x)=C_iY_i(\Theta v,x)C_i^{-1}.
\end{align*}
Clearly $W_i$ is also a unitary representation of $V$. We call $W_{\overline i}$ the \textbf{contragredient representation (module)} of $W_i$.\footnote{Our definition  of contragredient modules is equivalent to the one given in \cite{FHL93}. See \cite{Gui19} equation (1.19).}

Note that the action of $V$ on $V$ is also a unitary representation, called the vacuum representation of $V$, which is also denoted  by $W_0$.  We also identify $W_{\overline 0}$ with $V$ through the isomorphism $C_0\Theta:V=W_0\rightarrow W_{\overline 0}$. So the vacuum representation is self-dual.

\begin{rem}\label{lb6}
Suppose that $V,\widetilde V$ are unitary VOAs of CFT type, $V$ is a unitary vertex subalgebra of $\widetilde V$, and $(W_{\widetilde i},Y_{\wt i})$ is a unitary representation of $\wt V$. Then $(W_{\widetilde i},Y_{\wt i})$ is also a unitary representation of $V$, for the Jacobi identity clearly holds, and the translation property follows from relation \eqref{eq35}.
\end{rem}

\subsubsection*{Intertwining operators}
For any complex vector space $U$ we set
\begin{gather}
U((x))=\bigg\{\sum_{n\in\mathbb Z}u_nx^n:u_n\in U,u_n=0\text{ for sufficiently small }n\bigg\},\\
U\{x \}=\bigg\{\sum_{s\in\mathbb R}u_sx^s:u_s\in U\bigg\}.
\end{gather}
Now let $W_i,W_j,W_k$ be unitary $V$-modules. A type $W_k\choose W_iW_j$ (or type $k\choose i~j$) \textbf{intertwining operator} $\mathcal Y_\alpha$
is a linear map 
\begin{gather*}
W_i\rightarrow(\Hom(W_j,W_k))\{x\},\\
w^{(i)}\mapsto \mathcal Y_\alpha(w^{(i)},x)=\sum_{s\in\mathbb R}\mathcal Y_\alpha(w^{(i)},s) x^{-s-1}\\
\text{ (where $\mathcal Y_\alpha(w^{(i)},s)\in\Hom(W_j,W_k)$)},
\end{gather*}
such that:\\
(a) (Lower truncation) For any $w^{(j)}\in W_j$, $\mathcal Y_\alpha(w^{(i)},s)w^{(j)}=0$ for $s$ sufficiently large.\\
(b) (Jacobi identity) For any $u\in V,w^{(i)}\in W_i,m,n\in\mathbb Z,s\in\mathbb R$, we have
\begin{align}
&\sum_{l\in\mathbb Z_{\geq0}}{m\choose l}\mathcal Y_\alpha\big(Y_i(u,n+l)w^{(i)},m+s-l\big)\nonumber\\
=&\sum_{l\in\mathbb Z_{\geq0}}(-1)^l{n\choose l}Y_k(u,m+n-l)\mathcal Y_\alpha(w^{(i)},s+l)\nonumber\\
&-\sum_{l\in\mathbb Z_{\geq0}}(-1)^{l+n}{n\choose l}\mathcal Y_\alpha(w^{(i)},n+s-l)Y_j(u,m+l).\label{eq5}
\end{align}
(c)	(Translation property) $	\frac d{dx} \mathcal Y_\alpha(w^{(i)},x)=\mathcal Y_\alpha(L_{-1}w^{(i)},x)$.\\
$W_i,W_j,W_k$ are called the \textbf{charge space}, the \textbf{source space}, and the \textbf{target space} of $\mathcal Y_\alpha$ respectively. If all these three $V$-modules are irreducible, then $\mathcal Y_\alpha$ is called \textbf{irreducible}.

By setting $m=0$ in the Jacobi identity, we obtain
\begin{align}
\mathcal Y_\alpha\big(Y_i(u,n)w^{(i)},s\big)
=&\sum_{l\in\mathbb Z_{\geq0}}(-1)^l{n\choose l}Y_k(u,n-l)\mathcal Y_\alpha(w^{(i)},s+l)\nonumber\\
&-\sum_{l\in\mathbb Z_{\geq0}}(-1)^{l+n}{n\choose l}\mathcal Y_\alpha(w^{(i)},n+s-l)Y_j(u,l).\label{eq24}
\end{align}
If we let $n=0$ instead, then the Jacobi identity becomes
\begin{align}
Y_k(u,m)\mathcal Y_\alpha(w^{(i)},s)-\mathcal Y_\alpha(w^{(i)},s)Y_j(u,m)=\sum_{l\in\mathbb Z_{\geq0}}{m\choose l}\mathcal Y_\alpha\big(Y_i(u,l)w^{(i)},m+s-l\big),\label{eq14}
\end{align}
or equivalently,
\begin{align}
Y_k(u,m)\mathcal Y_\alpha(w^{(i)},x)-\mathcal Y_\alpha(w^{(i)},x)Y_j(u,m)=\sum_{l\in\mathbb Z_{\geq0}}{m\choose l}\mathcal Y_\alpha\big(Y_i(u,l)w^{(i)},x\big)x^{m-l}.\label{eq15}
\end{align}
Choose $u=\nu$ (the conformal vector of $V$) and $m=1$. Then we have $[L_0,\mathcal Y_\alpha(w^{(i)},x)]=\mathcal Y_\alpha(L_0w^{(i)},x)+x \mathcal Y_\alpha(L_{-1}w^{(i)},x)$. Therefore, the translation property is equivalent to
\begin{align}
[L_0,\mathcal Y_\alpha(w^{(i)},x)]=\mathcal Y_\alpha(L_0w^{(i)},x)+x\frac d{dx} \mathcal Y_\alpha(w^{(i)},x),\label{eq6}
\end{align}
or, written in terms of modes,
\begin{align}
[L_0,\mathcal Y_\alpha(w^{(i)},s)]=\mathcal Y_\alpha(L_0w^{(i)},s)-(s+1) \mathcal Y_\alpha(w^{(i)},s),\qquad(s\in\mathbb R).\label{eq4}
\end{align}
So, when $w^{(i)}\in W_i$ is homogeneous with conformal weight $\Delta_{w^{(i)}}$,  the linear operator $\mathcal Y_\alpha(w^{(i)},s):W_j\rightarrow W_k$ raises the conformal weights by $\Delta_{w^{(i)}}-s-1$.  From this we conclude that \eqref{eq4} itself implies the lower truncation property. Therefore:

\begin{pp}
A linear map $\mathcal Y_\alpha:W_i\rightarrow(\Hom(W_j,W_k))\{x\}$ is an intertwining operator of $V$ if and only if  relation \eqref{eq6} and the Jacobi identity \eqref{eq5} (both sides of which automatically converge) hold.
\end{pp}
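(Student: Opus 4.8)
The plan is to check the three defining conditions of an intertwining operator --- (a) lower truncation, (b) the Jacobi identity \eqref{eq5}, (c) the translation property --- against the pair consisting of \eqref{eq5} and \eqref{eq6}, using the reformulations already derived in the discussion above. For the forward implication, assume $\mathcal Y_\alpha$ is an intertwining operator. Then (b) is \eqref{eq5} verbatim, so it remains to produce \eqref{eq6}. Since (a) holds, the $n=0$ specialization \eqref{eq15} of the Jacobi identity is valid; specializing it further to $u=\nu$, $m=1$ (and using $Y(\nu,0)=L_{-1}$, $Y(\nu,1)=L_0$) yields
\begin{align*}
[L_0,\mathcal Y_\alpha(w^{(i)},x)]=\mathcal Y_\alpha(L_0w^{(i)},x)+x\,\mathcal Y_\alpha(L_{-1}w^{(i)},x),
\end{align*}
and substituting the translation property (c) in the form $\mathcal Y_\alpha(L_{-1}w^{(i)},x)=\frac{d}{dx}\mathcal Y_\alpha(w^{(i)},x)$ converts the right-hand side into that of \eqref{eq6}.

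For the converse, assume \eqref{eq5} and \eqref{eq6} hold. Extracting the coefficient of $x^{-s-1}$ rewrites \eqref{eq6} as its mode form \eqref{eq4}. From \eqref{eq4} I would first deduce lower truncation (a): by linearity it suffices to take $w^{(i)}\in W_i$ and $w^{(j)}\in W_j$ homogeneous, and then \eqref{eq4} forces $\mathcal Y_\alpha(w^{(i)},s)w^{(j)}$ to lie in the $L_0$-eigenspace of eigenvalue $\Delta_{w^{(i)}}+\Delta_{w^{(j)}}-s-1$, which vanishes once $s>\Delta_{w^{(i)}}+\Delta_{w^{(j)}}-1$ because the $L_0$-spectrum on the unitary module $W_k$ is non-negative; since only finitely many homogeneous components are involved, (a) follows. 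With (a) in hand \eqref{eq5} is a genuine operator identity, hence is (b), and \eqref{eq15} is again available; repeating the $u=\nu$, $m=1$ specialization of \eqref{eq15} and subtracting \eqref{eq6} cancels the $[L_0,\mathcal Y_\alpha(w^{(i)},x)]$ and $\mathcal Y_\alpha(L_0w^{(i)},x)$ terms, leaving $x\,\mathcal Y_\alpha(L_{-1}w^{(i)},x)=x\,\frac{d}{dx}\mathcal Y_\alpha(w^{(i)},x)$, i.e.\ the translation property (c).

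I do not expect a genuine mathematical obstacle here; the only point requiring care is the convergence clause in the statement, which forces a definite logical order in the converse direction. The sums in \eqref{eq5} must be eventually zero term-by-term for \eqref{eq5} to make sense: its left-hand sum and its final right-hand sum are automatically finite by the lower-truncation property of the module vertex operators $Y_i$ and $Y_j$, but the remaining sum $\sum_{l\geq0}(-1)^l{n\choose l}Y_k(u,m+n-l)\mathcal Y_\alpha(w^{(i)},s+l)$ needs lower truncation for $\mathcal Y_\alpha$ itself. Hence one must derive (a) from \eqref{eq6} (equivalently \eqref{eq4}) \emph{before} invoking \eqref{eq5} as an equality of operators. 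This is bookkeeping rather than mathematics, and everything else is just reading off the mode form of \eqref{eq6} and the $u=\nu$, $m=1$ specialization of the Jacobi identity, both recorded in the text preceding the statement.
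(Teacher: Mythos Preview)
Your proposal is correct and follows essentially the same approach as the paper: the paper's ``proof'' is really the discussion preceding the proposition, which observes that (given the Jacobi identity) \eqref{eq6} is equivalent to the translation property, and that the mode form \eqref{eq4} of \eqref{eq6} by itself forces lower truncation. You have simply unpacked this into a complete argument, and your care about deriving lower truncation from \eqref{eq6} \emph{before} treating \eqref{eq5} as a genuine identity is exactly the point behind the parenthetical ``both sides of which automatically converge.''
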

Relation \eqref{eq6} will also be called the translation property.\\

Let $\mathcal V{k\choose i~j}$ be the vector space of type $k\choose i~j$ intertwining operators. Then, clearly, the vertex operator $Y_i$ for $W_i$ should be inside $\mathcal V{i\choose 0~i}$. In particular, $Y\in\mathcal V{0\choose 0~0}$, where $Y$ is the vertex operator describing the action of $V$ on $V$. Define the \textbf{fusion rule}
\begin{align*}
N^k_{ij}=\dim\mathcal V{k\choose i~j}.
\end{align*}
Then clearly $N^i_{0i}=1$ when $W_i$ is irreducible. In particular, $N^0_{00}=1$.

In this article, we are interested in those VOAs having finite fusion rules and finitely many equivalence classes of irreducible representations. For any such VOA $V$, it will also be useful to find a finite set $\mc F$ of irreducible representations which tensor-generates the tensor category of $V$. More precisely:

\begin{df}\label{lb42}
Let  $\{W_i:i\in\mathcal F \}$ be a finite set of irreducible $V$-modules with $\mathcal F$ the set of indexes. With abuse of notation we also let $\mathcal F$ denote this set. Let $\overline{\mathcal F}=\{W_{\overline i}:i\in\mathcal F \}$ be the set of irreducible $V$-modules contragredient to those of $\mathcal F$. We say that $\mathcal F$ is \textbf{generating}, if for any irreducible $V$-module $W_k$, there exist $j_1,i_1,i_2,\dots, i_n\in\mathcal F\cup\overline{\mathcal F}$, and irreducible $V$-modules $W_{j_2},\dots,W_{j_n}$,	such that the vector spaces $\mathcal V{j_2\choose i_1~j_1},\mathcal V{j_3\choose i_2~j_2},\dots,\mathcal V{j_n\choose i_{n-1}~j_{n-1}},\mathcal V{k\choose i_n~j_n}$ are non-trivial.
\end{df}	

For any $\mathcal Y_\alpha\in\mathcal V{k\choose i~j}$, we define its \textbf{adjoint intertwining operator} $(\mathcal Y_\alpha)^\dagger\equiv\mathcal Y_{\alpha^*}\in\mathcal V{j\choose \overline i~k}$, such  that for any $w^{(i)}\in W_i$, (recall that $C_iw^{(i)}\in W_{\overline i}$)
\begin{gather}
\mathcal Y_{\alpha^*}(C_i{w^{(i)}},x)=\mathcal Y_\alpha(e^{xL_1}(e^{-i\pi}x^{-2})^{L_0}w^{(i)},x^{-1})^\dagger.\label{eq10}
\end{gather}
More precisely, for any $w^{(i)}\in W_i,w^{(j)}\in W_j,w^{(k)}\in W_k$,
\begin{align}
\langle \mathcal Y_{\alpha^*}(C_i{w^{(i)}},x)w^{(k)}|w^{(j)}  \rangle=\langle w^{(k)}|\mathcal Y_\alpha(e^{xL_1}(e^{-i\pi}x^{-2})^{L_0}w^{(i)},x^{-1})w^{(j)} \rangle.
\end{align}
The notation $(e^{-i\pi}x^{-2})^{L_0}$ is understood as follows. If $w^{(i)}$ is homogeneous with conformal weight $\Delta_{w^{(i)}}$, then we set $(e^{-i\pi}x^{-2})^{L_0}w^{(i)}=e^{-i\pi\Delta_{w^{(i)}}}x^{-2\Delta_{w^{(i)}}}w^{(i)}$. The general case is defined using linearity. To prove that \eqref{eq10} actually defines an intertwining operator of $V$, one can use the argument in \cite{FHL93} section 5.2.

Note that the unitarity of $W_i$ implies that $Y_i$ equals its adjoint intertwining operator.

One can also define the \textbf{braided intertwining operators} (\cite{FHL93} chapter 5) $\mathcal Y_{B_+\alpha},\mathcal Y_{B_-\alpha}\in\mathcal V{k\choose j~i}$ of $\mathcal Y_\alpha$ using the formula
\begin{align}
\mathcal Y_{B_\pm\alpha}(w^{(j)},x)w^{(i)}=e^{xL_{-1}}\mathcal Y_\alpha(w^{(i)},e^{\pm i\pi}x)w^{(j)}\qquad(\forall w^{(i)}\in W_i,w^{(j)}\in W_j).
\end{align}
The two braided intertwining operators of the vertex operator $Y_i\in\mathcal V{i\choose0~i}$ are equal. We denote it by $\mathcal Y_{\kappa(i)}$, and call it the \textbf{creation operator} of $W_i$. Clearly $\mathcal Y_{\kappa(i)}$ is of type $i\choose i~0$.

It is obvious that the map $\mathcal Y_{\overline\alpha}:W_{\overline i}\otimes W_{\overline j}\rightarrow W_{\overline k}\{x\}$ defined by
\begin{align*}
\mathcal Y_{\overline\alpha}(C_iw^{(i)},x)C_jw^{(j)}=C_k\mathcal Y_\alpha(w^{(i)},x)w^{(j)}\qquad(\forall w^{(i)}\in W_i,w^{(j)}\in W_j)
\end{align*}
is a type $\overline k\choose\overline i~\overline j$ intertwining operator of $V$, called the \textbf{conjugate intertwining operator} of $\mathcal Y_\alpha$.

Now we have linear maps
\begin{align*}
B_\pm:\mathcal V{k\choose i~j}\mapsto\mathcal V{k\choose j~i},\qquad\mathcal Y_\alpha\mapsto\mathcal Y_{B_\pm\alpha },
\end{align*}
and antilinear maps
\begin{gather*}
\Ad:\mathcal V{k\choose i~j}\rightarrow\mathcal V{j\choose\overline i~k},\qquad\mathcal Y_\alpha\mapsto\mathcal Y_{\alpha^*},\\
\Conj:\mathcal V{k\choose i~j}\rightarrow\mathcal V{\overline k\choose\overline i~\overline j},\qquad \mathcal Y_\alpha\mapsto\mathcal Y_{\overline\alpha}.
\end{gather*}
One can show that $B_+$ is the inverse of $B_-$, and $\alpha^{**}=\alpha,\overline{\overline\alpha}=\alpha$. (See also \cite{FZ92}). So these maps are bijective. We conclude
\begin{align}
N^k_{ij}=N^k_{ji}=N^j_{\overline ik}=N^{\overline k}_{\overline i~\overline j}.
\end{align}

\subsubsection*{Tensor products of unitary VOAs}

Let $V^1$ and $V^2$ be two unitary VOAs of CFT type. For $i=1,2$, we let $Y^i,\Omega^i,\nu^i,\Theta^i$ be the vertex operator, the vacuum vector, the conformal vector, and the PCT operator of $V^i$. Consider $V^1\otimes V^2$ as the tensor product of the two pre-Hilbert spaces $V^1$ and $V^2$. We can define a vertex operator $Y$ for $V^1\otimes V^2$ satisfying that  
\begin{align}
Y(v^1\otimes v^2,x)=Y^1(v^1,x)\otimes Y^2(v^2,x)\label{eq1}
\end{align}
for any $v^1\in V^1,v^2\in V^2$. Let
\begin{gather}
\Omega=\Omega^1\otimes\Omega^2,\\
 \nu=\nu^1\otimes \Omega^2+\Omega^1\otimes\nu^2,\label{eq2}\\ \Theta=\Theta^1\otimes\Theta^2
\end{gather}
be the vacuum vector, the conformal vector, and the PCT operator of $V^1\otimes V^2$ respectively. Then $V^1\otimes V^2$ becomes a unitary VOA of CFT type. Note that \eqref{eq1} and \eqref{eq2} imply
\begin{align*}
L_0=L_0\otimes\id_{V^2}+\id_{V^1}\otimes L_0,
\end{align*}
which determines the grading of $V^1\otimes V^2$.

Now let $W_{i_1},W_{i_2}$ be unitary representations of $V^1,V^2$ with vertex operators $Y_{i_1},Y_{i_2}$ respectively, we define a vertex operator $Y_{i_1}\otimes Y_{i_2}\equiv Y_{i_1\otimes i_2}$ of $W_i\otimes W_{i_2}$ to satisfy
\begin{align}
(Y_{i_1}\otimes Y_{i_2})(w^{(i_1)}\otimes w^{(i_2)},x)=Y_{i_1}(w^{(i_1)},x)\otimes Y_{i_2}(w^{(i_2)},x).\label{eq3}
\end{align}
Then $Y_{i_1}\otimes Y_{i_2}$ satisfies Jacobi identity and translation property, making $W_{i_1}\otimes W_{i_2}$ a representation of $V^1\otimes V^2$. (The translation property is easy to check. The  Jacobi identity can be proved by the method mentioned in \cite{FHL93} section 4.6.) It is easy to check that this representation is unitary.

Let $W_{i_1},W_{j_1},W_{j_1}$ (resp. $W_{i_2},W_{j_2},W_{k_2}$) be unitary semisimple representations of $V^1$ (resp. $V^2$). If $\mathcal Y_{\alpha_1}\in\mathcal V{k_1\choose i_1~j_1}$ is an intertwining operator of $V^1$, and $\mathcal Y_{\alpha_2}\in\mathcal V{k_2\choose i_2~j_2}$ is an intertwining operator of $V^2$, we can define a type $k_1\otimes k_2\choose i_1\otimes i_2~j_1\otimes j_2$ intertwining operator of $V^1\otimes V^2$, such that for any $w^{(i_1)}\in W_{i_1},w^{(i_2)}\in W_{i_2}$,
\begin{align}
(\mathcal Y_{\alpha_1}\otimes\mathcal Y_{\alpha_2})(w^{(i_1)}\otimes w^{(i_2)},x)=\mathcal Y_{\alpha_1}(w^{(i_1)},x)\otimes \mathcal Y_{\alpha_2}(w^{(i_2)},x).\label{eq8}
\end{align}
The right hand side of the above relation  clearly converges because this is true when  all the representations involved are irreducible.  

The above discussion can be easily generalized to tensor products of more than two VOAs.

\subsubsection*{Energy bounds condition}

Let $W_i,W_j,W_k$ be unitary $V$-modules, $\mathcal Y_\alpha\in\mathcal V{k\choose i~j}$. Given a homogeneous $w^{(i)}\in W_i,r\geq0$, we say that $\mathcal Y_\alpha(w^{(i)},x)$ satisfies \textbf{$r$-th order energy bounds}, if there exist $M,t\geq0$, such that for any $s\in\mathbb R,w^{(j)}\in W_j$, the mode $\mathcal Y_\alpha(w^{(i)},s)$ satisfies the inequality
\begin{align}
\lVert\mathcal Y_\alpha(w^{(i)},s)w^{(j)}\lVert\leq M(1+|s|)^t\lVert (1+L_0)^rw^{(j)} \lVert.
\end{align}
In the case when the exact value of $r$ is not very important, we just say that $\mathcal Y_\alpha(w^{(i)},x)$ is \textbf{energy-bounded}. 

We say that an intertwining operator $\mathcal Y_\alpha\in\mathcal V{k\choose i~j}$ is \textbf{energy-bounded} if for any $w^{(i)}\in W_i$, $\mathcal Y_\alpha(w^{(i)},x)$ is energy bounded. A unitary $V$-module $W_i$ is \textbf{energy-bounded}, if $Y_i\in\mathcal V{i\choose 0~i}$ is energy-bounded.

We now collect  some useful criteria for  the energy bounds conditions of intertwining operators. Since they are already proved in \cite{Gui19} section 3.1,  we will not prove them again here.

\begin{pp}\label{lb14}
If $\mathcal Y_\alpha(w^{(i)},x)$ satisfies $r$-th order energy bounds, so does $\mathcal Y_{\alpha^*}(C_iw^{(i)},x)$ and $\mathcal Y_{\overline\alpha}(C_iw^{(i)},x)$.
\end{pp}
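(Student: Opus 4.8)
The plan is to treat the two assertions separately, the one about the conjugate intertwining operator being essentially formal and the one about the adjoint requiring the real work. For $\mathcal Y_{\overline\alpha}$: since $\Theta$ fixes $\nu$, the antilinear isomorphisms $C_i,C_j,C_k$ intertwine the respective $L_0$-actions, and since they are anti-unitary, $\|(1+L_0)^rC_jw^{(j)}\|=\|(1+L_0)^rw^{(j)}\|$. Comparing the coefficients of $x^{-s-1}$ in the defining relation $\mathcal Y_{\overline\alpha}(C_iw^{(i)},x)C_jw^{(j)}=C_k\mathcal Y_\alpha(w^{(i)},x)w^{(j)}$ gives $\mathcal Y_{\overline\alpha}(C_iw^{(i)},s)C_jw^{(j)}=C_k\mathcal Y_\alpha(w^{(i)},s)w^{(j)}$, hence $\|\mathcal Y_{\overline\alpha}(C_iw^{(i)},s)C_jw^{(j)}\|=\|\mathcal Y_\alpha(w^{(i)},s)w^{(j)}\|\le M(1+|s|)^t\|(1+L_0)^rC_jw^{(j)}\|$ for all $s$. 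As $C_j$ is onto $W_{\overline j}$, this is precisely the $r$-th order energy bound for $\mathcal Y_{\overline\alpha}(C_iw^{(i)},x)$, with the same constants.

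For $\mathcal Y_{\alpha^*}$ I would first reduce to the level of modes. Assuming $w^{(i)}$ homogeneous of conformal weight $\Delta$ (the general case following by decomposing into homogeneous components), and expanding $(e^{-i\pi}x^{-2})^{L_0}$ and $e^{xL_1}$ in \eqref{eq10}, one uses that $L_1^nw^{(i)}=0$ for $n$ large (weights are non-negative and $L_1$ strictly lowers weight by one) to obtain the finite sum
\begin{align*}
\mathcal Y_{\alpha^*}(C_iw^{(i)},s)=e^{i\pi\Delta}\sum_{n\ge0}\frac1{n!}\,\mathcal Y_\alpha\big(L_1^nw^{(i)},\,2\Delta-n-s-2\big)^\dagger .
\end{align*}
It then suffices to show (a) that the formal adjoint of a single mode of an $r$-th order bounded intertwining operator is again $r$-th order bounded, and (b) that each $\mathcal Y_\alpha(L_1^nw^{(i)},x)$ is $r$-th order bounded; granting these, the displayed sum is $r$-th order bounded because $|2\Delta-n-s-2|\le C(1+|s|)$ and the sum is finite.

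For (a): if $v\in W_i$ is homogeneous of weight $d$, then by \eqref{eq4} the mode $\mathcal Y_\alpha(v,s)$ carries $W_j(n)$ into $W_k(n+d-s-1)$, so its formal adjoint carries $W_k(n+d-s-1)$ back into $W_j(n)$. Bounding its operator norm between these homogeneous subspaces by duality from the $r$-th order bound on $\mathcal Y_\alpha(v,s)$, and then rewriting the "input weight" $1+n$ in terms of the "output weight" — which costs a factor polynomial in $s$ — yields $\|\mathcal Y_\alpha(v,s)^\dagger\eta\|\le M'(1+|s|)^{t'}\|(1+L_0)^r\eta\|$, i.e. the same order $r$ with a larger polynomial degree; this step is routine bookkeeping once the gradings are fixed. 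For (b): the translation property gives $\mathcal Y_\alpha(L_{-1}v,s)=-s\,\mathcal Y_\alpha(v,s-1)$, which handles $L_{-1}$; for $L_1$ one uses the Jacobi identity with $u=\nu$ and $m=2$ in \eqref{eq14} to express $\mathcal Y_\alpha(L_1v,s)$ through $[L_1,\mathcal Y_\alpha(v,s)]$ and strictly lower terms, together with the standard linear energy bounds for the Virasoro operators on a unitary positive-energy module; iterating then produces $\mathcal Y_\alpha(L_1^nw^{(i)},x)$ for every $n$.

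I expect step (b) — controlling the exponential ``conformal inversion'' twist $e^{xL_1}(e^{-i\pi}x^{-2})^{L_0}$ inside \eqref{eq10} — to be the main obstacle. A naive term-by-term estimate makes each extra $L_1$ look as though it raises the order by one, because $L_1$ is only linearly, not boundedly, dominated by $1+L_0$; the point is to see, using the precise form of the Virasoro commutation relations and the weight-grading of $\mathcal Y_\alpha$, that the would-be leading contributions in $[L_1,\mathcal Y_\alpha(v,s)]$ cancel, so that the order $r$ is genuinely preserved rather than merely the weaker property of being energy-bounded. The adjoint-of-mode argument in (a) and the conjugate assertion are by comparison straightforward.
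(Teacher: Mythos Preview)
Your treatment of $\mathcal Y_{\overline\alpha}$ is correct, and step (a) for the adjoint case is also correct: the mode-wise adjoint of an $r$-th order bounded operator is again $r$-th order bounded, with the extra $(1+|s|)^r$ absorbed into the polynomial weight.

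The gap is in step (b), and it is genuine. Your proposed mechanism --- that the leading contributions in $[L_1,\mathcal Y_\alpha(w^{(i)},s)]$ cancel --- does not work. Writing out the commutator via \eqref{eq14} gives exactly
\[
[L_1,\mathcal Y_\alpha(w^{(i)},s)]=\mathcal Y_\alpha(L_1w^{(i)},s)+(2\Delta-s-2)\,\mathcal Y_\alpha(w^{(i)},s+1),
\]
so the only way to extract $\mathcal Y_\alpha(L_1w^{(i)},s)$ from data about $\mathcal Y_\alpha(w^{(i)},\cdot)$ is through $[L_1,\mathcal Y_\alpha(w^{(i)},s)]$ itself. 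For $w^{(j)}\in W_j(n)$, the two pieces $L_1\mathcal Y_\alpha(w^{(i)},s)w^{(j)}$ and $\mathcal Y_\alpha(w^{(i)},s)L_1w^{(j)}$ are both of size $\sim n^{r+1}$, but they live in $W_k$ and are produced by $L_1$ acting on two \emph{different} unitary modules $W_k$ and $W_j$; there is no algebraic identity forcing their difference to drop to order $n^r$. (Even the leading coefficients of $\lVert L_1\xi\rVert$ on a weight-$n$ vector depend on the $\mathfrak{sl}_2$-lowest-weight decomposition of $\xi$, which differs between $W_j$ and $W_k$.) So from the hypothesis on $\mathcal Y_\alpha(w^{(i)},x)$ alone you cannot conclude an $r$-th order bound for $\mathcal Y_\alpha(L_1w^{(i)},x)$; at best you get order $r+1$, and then only the weaker ``energy-bounded for some order'' statement follows.

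The paper does not give its own argument here --- it cites \cite{Gui19}, section~3.1 --- so you should look there for the actual proof. Note in particular that when $w^{(i)}$ is quasi-primary (which covers all the applications made in this paper, since the charge vectors used lie in lowest-energy subspaces), the sum collapses to the single $n=0$ term and your step~(a) alone already finishes the proof; the subtlety is entirely in the non-quasi-primary case.
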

\begin{pp}
Let $\mathcal Y_{\alpha_1}$ be a type $k_1\choose i_1~j_1$ intertwining operator of $V^1$, and let $\mathcal Y_{\alpha_2}$ be a type $k_2\choose i_2~j_2$ intertwining operator of $V^2$. If $w^{(i_1)}\in W_{i_1},w^{(i_2)}\in W_{i_2}$ are homogeneous, and $\mathcal Y_{\alpha_1}(w^{(i_1)},x)$ and $\mathcal Y_{\alpha_2}(w^{(i_2)},x)$ are energy-bounded, then $(\mathcal Y_{\alpha_1}\otimes\mathcal Y_{\alpha_2})(w^{(i_1)}\otimes w^{(i_2)},x)$ is also energy-bounded.
\end{pp}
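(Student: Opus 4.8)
The plan is to work mode by mode. Abbreviate $\mathcal Y=\mathcal Y_{\alpha_1}\otimes\mathcal Y_{\alpha_2}$ and $w=w^{(i_1)}\otimes w^{(i_2)}$; by \eqref{eq8} the $s$-th mode is the (a priori formal) sum
\[
\mathcal Y(w,s)=\sum_{s_1\in\mathbb R}\mathcal Y_{\alpha_1}(w^{(i_1)},s_1)\otimes\mathcal Y_{\alpha_2}(w^{(i_2)},s-1-s_1).
\]
Since $w$ is homogeneous of conformal weight $\Delta_{w^{(i_1)}}+\Delta_{w^{(i_2)}}$, relation \eqref{eq4} applied to $\mathcal Y$ shows that $\mathcal Y(w,s)$ shifts the $L_0$-eigenvalue on $W_{j_1}\otimes W_{j_2}$ by the fixed amount $\Delta_{w^{(i_1)}}+\Delta_{w^{(i_2)}}-s-1$, so it carries distinct $L_0$-homogeneous subspaces of $W_{j_1}\otimes W_{j_2}$ into distinct, hence orthogonal, $L_0$-homogeneous subspaces of $W_{k_1}\otimes W_{k_2}$. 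It therefore suffices to establish a bound $\lVert\mathcal Y(w,s)\xi\rVert\le M(1+|s|)^{t}\lVert(1+L_0)^{r}\xi\rVert$ for $\xi$ in a single $(W_{j_1}\otimes W_{j_2})(\lambda)$, uniformly in $\lambda$; summing the squares over $\lambda$ then handles a general $\xi$.

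Next I would decompose $(W_{j_1}\otimes W_{j_2})(\lambda)=\bigoplus_{n_1+n_2=\lambda}W_{j_1}(n_1)\otimes W_{j_2}(n_2)$, an orthogonal sum with at most $C(1+\lambda)$ nonzero summands since the gradings of $W_{j_1},W_{j_2}$ are discrete and bounded below. Fix a block $\eta\in W_{j_1}(n_1)\otimes W_{j_2}(n_2)$. Because $\mathcal Y_{\alpha_1}(w^{(i_1)},s_1)$ sends $W_{j_1}(n_1)$ into $W_{k_1}(n_1+\Delta_{w^{(i_1)}}-s_1-1)$ and the $L_0$-spectrum of $W_{k_1}$ is nonnegative (and similarly for the second factor), the terms of $\mathcal Y(w,s)\eta$ vanish unless $s_1$ runs over the intersection of a fixed coset of $\mathbb Z$ with an interval whose length, and the absolute values of whose points, are bounded by a polynomial in $|s|$ and $\lambda$; in particular only polynomially-many (in $|s|,\lambda$) terms survive. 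Moreover, for distinct $s_1$ those terms lie in distinct $L_0$-homogeneous subspaces of $W_{k_1}\otimes W_{k_2}$, hence are pairwise orthogonal, so $\lVert\mathcal Y(w,s)\eta\rVert^{2}$ is the sum of the squared norms of the individual terms.

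I would estimate each individual term by the bound $\lVert A\otimes B\rVert\le\lVert A\rVert\,\lVert B\rVert$ for operator norms together with the two hypothesised energy bounds, which restricted to the homogeneous subspaces give $\lVert\mathcal Y_{\alpha_1}(w^{(i_1)},s_1)|_{W_{j_1}(n_1)}\rVert\le M_1(1+|s_1|)^{t_1}(1+n_1)^{r_1}$ and likewise for $\mathcal Y_{\alpha_2}$. Collecting the polynomial factors — the number of surviving modes, the bounds on $|s_1|$ and $|s-1-s_1|$, and $(1+n_1)^{r_1}(1+n_2)^{r_2}\le(1+\lambda)^{r_1+r_2}$ — yields $\lVert\mathcal Y(w,s)\eta\rVert\le M'(1+|s|)^{t'}(1+\lambda)^{r'}\lVert\eta\rVert$ with $t',r'$ depending only on $t_1,t_2,r_1,r_2$, and in particular with $t'$ independent of $\lambda$. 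Finally, summing over the $\le C(1+\lambda)$ blocks via the triangle inequality and the Cauchy--Schwarz inequality — the images of different blocks share the same total $L_0$-weight and need not be orthogonal, which is why one pays one more factor of $(1+\lambda)^{1/2}$ — gives the required uniform bound on $(W_{j_1}\otimes W_{j_2})(\lambda)$, and summing squares over $\lambda$ completes the proof.

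The proof is essentially bookkeeping, and the one point to watch is exactly that bookkeeping: one must be sure that the exponent of $(1+|s|)$ in the final bound does not quietly acquire a dependence on $\lambda$. It does not, because every factor that grows with $\lambda$ — the number of contributing modes and the number of bi-homogeneous blocks — contributes its $\lambda$-growth to the $(1+L_0)^{r}$ side, whereas genuine growth in $|s|$ enters only through the two given energy bounds; the other mild subtlety is keeping straight the two orthogonality regimes (orthogonal in the mode index $s_1$ within a block, but not orthogonal across blocks of a fixed $L_0$-eigenspace).
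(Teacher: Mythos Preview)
Your argument is correct. The paper itself does not give a proof, only the remark that it ``is not hard'' together with a pointer to the argument of proposition~3.5 in \cite{Gui19}; your mode-by-mode bookkeeping is the natural way to fill this in, and you have correctly identified the two key structural points: the orthogonality of the $s_1$-summands within a fixed bi-homogeneous block $W_{j_1}(n_1)\otimes W_{j_2}(n_2)$ (so one can sum squares there), and the need for Cauchy--Schwarz when summing across blocks of a fixed total $L_0$-eigenvalue (costing the extra $(1+\lambda)^{1/2}$). Your check that the $(1+|s|)$-exponent stays independent of $\lambda$ --- because $|s_1|,|s-1-s_1|\le |s|+\lambda+C$ factors via $(1+|s|+\lambda)\le(1+|s|)(1+\lambda)$, with the $(1+\lambda)$-part absorbed into $(1+L_0)^{r}$ --- is exactly the point that needs care, and you handle it correctly. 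One cosmetic remark: the phrase ``a fixed coset of $\mathbb Z$'' is literally true only when $W_{j_1},W_{k_1}$ are irreducible; in the semisimple case the surviving $s_1$ lie in a finite union of such cosets, but the count is unaffected and the rest of your argument goes through verbatim.
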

\begin{proof}
Although this proposition is not given in \cite{Gui19}, its proof is not hard. One can use, for instance, the same argument as in the proof of \cite{Gui19} proposition 3.5.
\end{proof}

Given a set $E$ of vectors of $V$, we say that $V$ is \textbf{generated by} $E$ (or $V$ is \textbf{generating}), if the vector space $V$ is spanned by vectors of the form $Y(v_1,n_1)\cdots Y(v_k,n_k)v_{k+1}$ ($k\in\mathbb Z_{\geq0},n_1,\dots,n_k\in\mathbb Z,v_1,\dots,v_{k+1}\in E$). 
\begin{pp}\label{lb15}
If $W_i$ is a unitary $V$-module, $E$ is a generating set of homogeneous vectors in $V$, and for any $v\in E$, $Y_i(v,x)$ is energy-bounded, then $W_i$ is energy-bounded.
\end{pp}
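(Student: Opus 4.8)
The plan is to run an induction built on the generating property of $E$. Put
$S=\{w\in V:Y_i(w,x)\text{ is energy-bounded}\}$.
Since $L_0\geq0$ on $W_i$ we have $(1+L_0)^{r'}\leq(1+L_0)^{r}$ whenever $r'\leq r$, and therefore a sum of two energy-bounded fields is again energy-bounded (take the maxima of the two orders, polynomial degrees, and constants); thus $S$ is a linear subspace of $V$, and $E\subseteq S$ by hypothesis. The heart of the matter is the claim that $S$ is stable under $w\mapsto Y(u,n)w$ for every $u\in E$ and every $n\in\mathbb Z$. Granting this, $S$ contains every vector $Y(v_1,n_1)\cdots Y(v_k,n_k)v_{k+1}$ with $v_1,\dots,v_{k+1}\in E$ (begin with $v_{k+1}\in E\subseteq S$ and apply the claim $k$ times from the right); since $E$ is generating, such vectors span $V$, so $S=V$, that is, $Y_i$ is energy-bounded and $W_i$ is an energy-bounded $V$-module.

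To establish the claim I would split into cases on the sign of $n$. When $n\geq0$, equation \eqref{eq24}, applied to the module vertex operator $Y_i$ viewed as an intertwining operator of type ${i\choose0~i}$ with charge vector $u$ acting on $w\in V$, reads
\begin{align*}
Y_i\big(Y(u,n)w,s\big)&=\sum_{l=0}^{n}(-1)^l{n\choose l}Y_i(u,n-l)Y_i(w,s+l)\\
&\qquad-\sum_{l=0}^{n}(-1)^{l+n}{n\choose l}Y_i(w,n+s-l)Y_i(u,l),
\end{align*}
a \emph{finite} sum since ${n\choose l}=0$ for $l>n$. When $n\leq-1$, write $n=-p-1$ with $p\geq0$, and combine the identity $Y(u,-p-1)w=\tfrac1{p!}Y(L_{-1}^{p}u,-1)w$ (a consequence of the translation property $Y(L_{-1}a,x)=\tfrac d{dx}Y(a,x)$), the module translation property $Y_i(L_{-1}^{p}u,x)=\partial_x^{p}Y_i(u,x)$, and the normal-ordering formula $Y_i(Y(a,-1)w,x)=Y_i(a,x)_{-}Y_i(w,x)+Y_i(w,x)Y_i(a,x)_{+}$ (which is the case $n=-1$ of \eqref{eq24}, using ${-1\choose l}=(-1)^l$) to get
\begin{align*}
Y_i\big(Y(u,-p-1)w,x\big)=\frac1{p!}\Big(\big(\partial_x^{p}Y_i(u,x)\big)_{-}Y_i(w,x)+Y_i(w,x)\big(\partial_x^{p}Y_i(u,x)\big)_{+}\Big),
\end{align*}
where the subscripts $-$ and $+$ denote the parts of a field consisting of its strictly negative, respectively non-negative, modes. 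In either display the right-hand side is assembled from $Y_i(u,x)$ and $Y_i(w,x)$ --- the former energy-bounded by the hypothesis on $E$, the latter because $w\in S$ --- by finitely many applications of three operations: taking a formal $x$-derivative, passing to the $-$ or $+$ mode part, and composing two fields. Each of these operations preserves energy-boundedness (this is done in \cite{Gui19} section 3.1), so $Y(u,n)w\in S$ and the claim follows.

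The only results I am importing are those three closure properties. Differentiation multiplies the $s$-th mode of a field by a polynomial in $s$, which is absorbed into the $(1+|s|)^{t}$ factor of an energy bound; splitting off the $+$ or $-$ part is harmless because an energy bound is already an estimate mode by mode; and for a composition $A(x)B(x)$ of energy-bounded fields one commutes $(1+L_0)^{r}$ rightward past $B(x)$, which on the $s'$-th mode merely translates $L_0$ by the amount (linear in $s'$) by which that mode changes the conformal weight, costing again only a polynomial factor, after which the two bounds compose and the locally finite sum of modes appearing in each mode of $A(x)B(x)$ is controlled by one more polynomial in $|s|$. I expect this last estimate --- the bound for a product (or normal-ordered product) of two energy-bounded fields, uniform in the total mode index --- to be the one genuinely analytic obstacle; everything else is bookkeeping with the VOA axioms.
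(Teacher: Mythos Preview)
Your argument is correct and is essentially the standard one; the paper itself does not prove this proposition but defers to \cite{Gui19} section 3.1, and your sketch reproduces the approach taken there (induction on the generating set via the iterate formula \eqref{eq24}, with the $n\geq0$ case giving a finite sum and the $n<0$ case handled by normal ordering plus derivatives). Your identification of the product/normal-ordered-product estimate as the only genuine analytic content is exactly right, and that estimate is precisely what \cite{Gui19} proposition 3.4 supplies.
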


\begin{pp}\label{lb17}
If $\mathcal Y_\alpha\in\mathcal V{k\choose i~j}$, $W_i$ is irreducible, $W_j$ and $W_k$ are energy-bounded, and there exists a non-zero homogeneous vector $w^{(i)}\in W_i$, such that $\mathcal Y_\alpha(w^{(i)},x)$ is energy-bounded, then $\mathcal Y_\alpha$ is energy-bounded. 
\end{pp}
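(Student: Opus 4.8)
The plan is to show that the hypothesis "$\mathcal{Y}_\alpha(w^{(i)},x)$ is energy-bounded for one nonzero homogeneous $w^{(i)}$" propagates to all of $W_i$, using irreducibility of $W_i$ together with the Jacobi identity relating $\mathcal{Y}_\alpha$ to the vertex operators $Y_k$ and $Y_j$, whose energy bounds are available because $W_k$ and $W_j$ are energy-bounded. Concretely, since $W_i$ is irreducible, the subspace spanned by vectors $Y_i(u_1,n_1)\cdots Y_i(u_m,n_m)w^{(i)}$ (with $u_t \in V$, $n_t \in \mathbb{Z}$) is all of $W_i$; so it suffices to show that energy-boundedness of $\mathcal{Y}_\alpha(w,x)$ is preserved when $w$ is replaced by $Y_i(u,n)w$ for $u \in V$, $n \in \mathbb{Z}$, and then iterate. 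One could further reduce to a generating set $E$ of homogeneous $u \in V$, but this is not even necessary since $V$ itself is energy-bounded as a module over itself (the vacuum module), or one simply works with arbitrary $u$.

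The key identity is \eqref{eq24}, which expresses $\mathcal{Y}_\alpha(Y_i(u,n)w^{(i)},s)$ as a finite sum of terms of the form $Y_k(u,n-l)\mathcal{Y}_\alpha(w^{(i)},s+l)$ and $\mathcal{Y}_\alpha(w^{(i)},n+s-l)Y_j(u,l)$, the sums being finite because of lower truncation on each factor (for fixed $n$ only finitely many $l \geq 0$ contribute). So first I would fix a homogeneous $u \in V$ and apply \eqref{eq24}. For the first family of terms: $\mathcal{Y}_\alpha(w^{(i)},s+l)$ is bounded by $M(1+|s+l|)^t(1+L_0)^r$ by hypothesis, and then $Y_k(u,n-l)$ is bounded by some $M'(1+|n-l|)^{t'}(1+L_0)^{r'}$ because $W_k$ is energy-bounded; one needs to commute the polynomial factors in $L_0$ past the operators, using that $\mathcal{Y}_\alpha(w^{(i)},s+l)$ shifts $L_0$-weights by a fixed amount depending on $\Delta_{w^{(i)}}$ and $s$ (equation \eqref{eq4}), and $Y_k(u,n-l)$ shifts weights by $\Delta_u - (n-l) - 1$; absorbing these shifts produces at worst a polynomial-in-$s$ prefactor and a power $(1+L_0)^{r+r'}$. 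The second family, $\mathcal{Y}_\alpha(w^{(i)},n+s-l)Y_j(u,l)$, is handled symmetrically: $Y_j(u,l)$ is bounded using energy-boundedness of $W_j$, and then $\mathcal{Y}_\alpha(w^{(i)},\cdot)$ is applied. Combining the finitely many terms gives an estimate $\lVert \mathcal{Y}_\alpha(Y_i(u,n)w^{(i)},s)w^{(j)}\rVert \leq M''(1+|s|)^{t''}\lVert(1+L_0)^{\max(r,r')+ \text{(something)}}w^{(j)}\rVert$, i.e.\ $\mathcal{Y}_\alpha(Y_i(u,n)w^{(i)},x)$ is energy-bounded. By linearity this extends to arbitrary (not necessarily homogeneous) $u$, and then one inducts on the number of creation operators $Y_i(u_1,n_1)\cdots Y_i(u_m,n_m)$ applied to $w^{(i)}$; since $W_i$ is irreducible these vectors span $W_i$, and an arbitrary vector of $W_i$ is a finite linear combination, so $\mathcal{Y}_\alpha(w,x)$ is energy-bounded for every $w \in W_i$, which is the definition of $\mathcal{Y}_\alpha$ being energy-bounded.

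The main obstacle is purely bookkeeping: carefully tracking how the polynomial factors $(1+|s|)^t$ and $(1+L_0)^r$ behave under composition and under the $L_0$-weight shifts, so that at each step the order $r$ of the energy bound stays finite (it may grow, but only by the fixed amounts $r', r''$ coming from the energy bounds of $W_k$ and $W_j$) and the degree $t$ of the polynomial in $s$ stays finite. One clean way to organize this is to first prove a lemma that the composition of two energy-bounded operators (with suitable weight-shift behavior) is again energy-bounded, and that $\mathcal{Y}_\alpha(w^{(i)},s)$ composed on either side with an energy-bounded mode of a vertex operator is energy-bounded with controlled constants; then \eqref{eq24} gives the result immediately by finiteness of the sums. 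Since the proposition is quoted from \cite{Gui19} section 3.1, I would in practice just cite that; but the argument above is the natural self-contained proof.
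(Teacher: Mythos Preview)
The paper does not actually supply a proof of this proposition; immediately before the block of energy-bound criteria it states that ``they are already proved in \cite{Gui19} section 3.1'' and declines to reprove them. So there is nothing in the paper to compare against beyond the citation, and you yourself note that in practice one would simply cite \cite{Gui19}. Your sketch is the standard argument that underlies that citation: use irreducibility of $W_i$ to reduce to vectors of the form $Y_i(u_1,n_1)\cdots Y_i(u_m,n_m)w^{(i)}$, and propagate the bound one step at a time via the Jacobi identity \eqref{eq24}, controlling the outer factors by the assumed energy-boundedness of $W_j$ and $W_k$.

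One small correction: your assertion that the two sums in \eqref{eq24} are finite ``for fixed $n$'' is not quite right when $n<0$, since then $\binom{n}{l}\neq 0$ for all $l\geq 0$. The sums are only finite after applying to a fixed homogeneous $w^{(j)}$, by lower truncation, and the number of surviving terms grows with the $L_0$-weight of $w^{(j)}$. This is exactly the ``bookkeeping obstacle'' you already flag: one must check that the resulting bound is still of the form $C(1+|s|)^{t''}\lVert(1+L_0)^{r''}w^{(j)}\rVert$, i.e.\ polynomial in the weight. Since the binomial coefficients grow polynomially in $l$ and each term is already polynomially controlled by the hypotheses, this goes through; but it is worth being precise about where the finiteness actually comes from.
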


\subsection{Unitary Lie algebras}

Let $\mathfrak g$ be a finite-dimensional complex simple (non-abelian) Lie algebra. We give a $*$-structure on $\mathfrak g$. Let $\mathfrak g_{\mathbb R}$ be a compact real form of $\mathfrak g$. So $\mathfrak g_{\mathbb R}$ is a real simple Lie algebra with a negative-definite (symmetric) bilinear form $(\cdot|\cdot)$ (unique up to scalar multiplication), such that
\begin{align*}
([X,Y]|Z)=-(Y|[X,Z])\quad(X,Y,Z\in\mathfrak g_{\mathbb R}),
\end{align*}
and $\mathfrak g=\mathfrak g_{\mathbb R}\oplus_{\mathbb R}i\mathfrak g_{\mathbb R}$. Define an antilinear isomorphism $*:\mathfrak g\rightarrow\mathfrak g$, such that
\begin{align*}
(X+iY)^*=-X+iY\qquad(X,Y\in\mathfrak g_{\mathbb R}).
\end{align*}
Then $*$ is an involution, i.e., $**=\id_{\mathfrak g}$. We also have the relation
\begin{align*}
[X,Y]^*=[Y^*,X^*]\qquad(X,Y\in\mathfrak g).
\end{align*}
The $(\cdot|\cdot)$ on $\mathfrak g_{\mathbb R}$ extends to an \textbf{invariant inner product} on $\mathfrak g$, also denoted by $(\cdot|\cdot)$. The word ``invariant" means that 
\begin{align}
([X,Y]|Z)=(Y|[X^*,Z])\quad(X,Y,Z\in\mathfrak g).\label{eq13}
\end{align}
In other words, under this inner product the adjoint representation of $\mathfrak g$ is unitary. Due to the simplicity of $\mathfrak g$, the invariant inner products on $\mathfrak g$ are unique up to multiplication by scalars. If an invariant inner product is chosen, one can easily check that $*:\mathfrak g\rightarrow\mathfrak g$ is an anti-unitary map under this inner product, i.e.,
\begin{align}
(X|Y)=(Y^*|X^*)\qquad(X,Y\in\gk).
\end{align}

Let $U$ be a (complex) representation of $\mathfrak g$. If $U$ has an inner product $\langle\cdot|\cdot\rangle$, we say that the representation is \textbf{unitary}, if for any $X\in\mathfrak g,u_1,u_2\in U$, 
\begin{align*}
\langle Xu_1|u_2 \rangle=\bk{u_1|X^* u_2}.
\end{align*}

\begin{rem}
	In general, let $\pk$ be a (not necessarily simple) finite dimensional complex Lie algebra with a $*$-structure. So, by definition, $*$ satisfies $**=\id_{\pk}$ and $[X,Y]^*=[Y^*,X^*]$ for any $X,Y\in\pk$. We say that $\pk$ is \textbf{unitary}, if there exists an invariant inner product $(\cdot|\cdot)$ on $\mathfrak p$, under which $*$ is anti-unitary, i.e., $*:\pk\rightarrow\pk$ is an antilinear isomorphism, and $(X^*|Y^*)=(Y|X)$ for any $X,Y\in\pk$. The anti-unitary condition automatically holds when $\mathfrak p$ is semi-simple, i.e., when the center $\mathfrak z$ of $\pk$ is trivial.
	
	A finite dimensional complex simple Lie algebra $\gk$, with the $*$-structure defined above, is a unitary Lie algebra. An abelian finite dimensional Lie algebra $\mathfrak h_0$ with an arbitrary involution $*$ and an arbitrary inner product $(\cdot|\cdot)$ is also a unitary Lie algebra. In general, it is not hard to show that any finite dimensional unitary Lie algebra $\pk$ can be written as an orthogonal direct sum of unitary Lie subalgebras
	\begin{align}
	\pk=\mathfrak z\oplus^\perp\gk_1\oplus^\perp\cdots\oplus^\perp\gk_n,\label{eq36}
	\end{align} 
	where $\mathfrak z$, being  the center of $\pk$, is an abelian unitary Lie algebra, and $\gk_1,\dots,\gk_n$ are simple unitary Lie algebras. Such decomposition can be obtained by   considering the irreducible decomposition of the adjoint representation of $\pk\ominus^\perp\mathfrak z$ on itself. See \cite{Was10} section II.1
\end{rem}

Let $\mathfrak h$ be a maximal abelian unitary Lie subalgebra of $\mathfrak g$, i.e., a Cartan subalgebra of $\mathfrak g$. (Unitarity of the Lie subalgebra $\hk$ requires, by definition, that $H^*\in\mathfrak h$ whenever $H\in\mathfrak h$.) Let $\mathfrak h^*$ be the dual vector space of $\mathfrak h$. (The symbol $*$ in $\mathfrak h^*$ has nothing to do with the $*$-structure of $\mathfrak g$.) Then the root decomposition
\begin{align*}
\mathfrak g=\mathfrak h\oplus^\perp\bigoplus^\perp_{\alpha\in \Phi}\mathfrak g_\alpha
\end{align*}
gives the irreducible decomposition of the adjoint representation of $\mathfrak h$ on $\mathfrak g$. Here $\Phi\subset\mathfrak h^*$  is the root system of $\mathfrak g$, (In fact $\Phi$ is in the (real) dual space $(\ih)^*$ of $\ih$, where $\mathfrak{\hk}_{\mathbb R}=\{X\in\mathfrak{\hk}:X^*=-X \}$.) and $\mathfrak g_\alpha$ is the (1-dimensional) root space of the root $\alpha$, i.e. for any $H\in\mathfrak h,X\in\mathfrak g_\alpha$, we have $[H,X_\alpha]=\langle\alpha,H\rangle X$.

If we choose an invariant inner product $(\cdot|\cdot)$ on $\mathfrak g$, then the restricted inner product on $\mathfrak h$ induces naturally an anti-linear isomorphism $\mathfrak h^*\rightarrow \mathfrak h,\lambda\mapsto h_\lambda$, such that $\lambda(H)\equiv\langle H, \lambda\rangle=(H|h_\lambda)$. We can therefore define an inner product on $\mathfrak h^*$, also denoted by $(\cdot|\cdot)$, such that $(\lambda|\mu)=(h_\mu|h_\lambda)$ for any $\lambda,\mu\in\mathfrak h^*$. Now, for any $\alpha\in\Phi$, we choose a non-zero $X_\alpha\in\mathfrak g_\alpha$. Set
\begin{gather}
E_\alpha=\frac{\sqrt2}{\lVert X_\alpha\lVert\lVert\alpha\lVert}X_\alpha,\qquad 	F_\alpha=\frac{\sqrt2}{\lVert X_\alpha\lVert\lVert\alpha\lVert}X_\alpha^*,\qquad H_\alpha=\frac{2}{\lVert\alpha\lVert^2}h_\alpha. \label{eq11}
\end{gather}
Then $E_\alpha\in\mathfrak g_\alpha,F_\alpha\in\mathfrak g_{-\alpha},H_\alpha\in\mathfrak h$, and it is not hard to check the  following \textbf{unitary $\mathfrak {sl}_2$ relations}: 
\begin{gather}
[E_\alpha,F_\alpha]=H_\alpha,\qquad [H_\alpha,E_\alpha]=2E_\alpha,\qquad [H_\alpha,F_\alpha]=-2F_\alpha;\\
E_\alpha^*=F_\alpha,\qquad H_\alpha^*=H_\alpha.
\end{gather}
If there also exist non-zero $\widetilde E_\alpha\in\mathfrak g_\alpha$ and $\widetilde F_\alpha,\widetilde H_\alpha\in\mathfrak g$ satisfying the unitary $\mathfrak{sl}_2$ relation, then there exists $c\in\mathbb C$ with $|c|=1$, such that
\begin{align}
\widetilde E_\alpha=cE_\alpha,\qquad \widetilde F_\alpha=\overline cF_\alpha,\qquad \widetilde H_\alpha=H_\alpha.\label{eq65}
\end{align}
In particular, the element $H_\alpha$ can be defined independent of the invariant inner product of $\mathfrak g$.
\begin{cv}\label{lb2}
	Unless otherwise stated, in this article, we always let $(\cdot|\cdot)$ be the \textbf{normalized} invariant inner product  of $\mathfrak g$, i.e., the one under which the longest roots of $\mathfrak g$ have length $\sqrt 2$. The identification $\mathfrak h^*\rightarrow\mathfrak h,\lambda\mapsto h_\lambda$ are also defined with respect to this $(\cdot|\cdot)$.
\end{cv}

\subsubsection*{Unitary representations of $\gk$}
It is well known that irreducible finite-dimensional representations of $\mathfrak g$ are highest-weight representations, and their equivalence classes are characterized by their highest weights $\lambda\in\mathfrak h^*$. Let $\alpha_1,\dots,\alpha_n$ be the simple roots of $\mathfrak g$. Then the fundamental weights $\lambda_1,\dots,\lambda_n\in\mathfrak h^*$ are defined to satisfy that $\frac{2(\lambda_i|\alpha_j)}{\lVert\alpha_j\lVert^2}=\delta_{ij}$. Let
\begin{align*}
P_+(\mathfrak g)=\{k_1\lambda_1+\cdots+k_n\lambda_m:k_1,\dots,k_n\in\mathbb Z_{\geq0} \},
\end{align*}
the set of dominant integral weights of $\mathfrak g$. Then a weight $\lambda\in\mathfrak h^*$ is the highest weight of a finite-dimensional irreducible representation of $\mathfrak g$, if and only if $\lambda\in P_+(\mathfrak g)$.

Finite-dimensional irreducible representations of $\mathfrak g$ are unitarizable (see, for example, \cite{Was10} section II.14). It is easy to show that unitary highest-weight representations are automatically irreducible, and the highest-weights  are also inside $P_+(\mathfrak g)$ (since this is true for $\mathfrak {sl}_2$, see \cite{Was10} section II.4). Therefore, \emph{irreducible finite-dimensional $\mathfrak g$-modules and unitarizable highest-weight $\mathfrak g$-modules  are the same things}. 

For any $\lambda\in P_+(\gk)$, we choose a standard unitary highest-weight $\mathfrak g$-module $L_\gk(\lambda)$ whose highest weight is $\lambda$. Note that $L_\gk(0)=\mathbb C$, the $1$-dimensional trivial representation of $\gk$.



%

\subsubsection*{Dual representations}
We now discuss dual representations of $\gk$ from the unitary point of view. Dual representations of Lie algebras are similar to contragredient representations of VOAs. Let $U$ be a representation of $\gk$. We let $U^*$ be the dual vector space of $U$. Define an action of $\gk$ on $U^*$ using the formula
\begin{align*}
\bk{Xv',u}=-\bk{v',Xu}\qquad(\forall u\in U,v'\in U^*).
\end{align*}
Then this action makes $U^*$ a representation of $\gk$, called the \textbf{dual representation} of $U$. 

Now we assume that $U$ is a unitary representation. Then the inner product on $U$ induces  an antilinear isomorphism $C_U:U\rightarrow U^*$ satisfying
\begin{align*}
\bk{u,C_Uv}=\bk{u|v}\qquad(\forall u,v\in U).
\end{align*}
We thus define an inner product $\bk{\cdot|\cdot}$ on $U^*$:
\begin{align*}
\bk{C_Uu|C_Uv}=\bk{v|u}\qquad(\forall u,v\in U).
\end{align*}
Then $C_u$ becomes an anti-unitary map. It is easy to check that the action of $\gk$ on $U^*$ can be written as
\begin{align}
X\cdot v'=-C_UX^*C_U^{-1}\cdot v'\qquad(\forall v'\in U^*)
\end{align}
Thus $U^*$ is also a unitary $\gk$-module.

Now, for any $\lambda\in P_+(\gk)$, the dual representation of $L_\gk(\lambda)$ is also a finite-dimensional $\gk$-module, which is irreducible. Thus it is a unitary highest weight $\gk$-module.
\begin{cv}
For any $\lambda\in P_+(\gk)$, we let $\overline\lambda$ be the highest weight of the dual representation of $L_\gk(\lambda)$, and let $C_\lambda:L_\gk(\lambda)\rightarrow L_\gk(\overline\lambda)$ be the canonical anti-unitary map.
\end{cv}

\subsubsection*{Tensor products of representations}
Recall that if $U_1,U_2$ are finite-dimensional unitary $\gk$-modules, then $U_1\otimes U_2$ becomes a unitary $\gk$-module if the action of $\gk$ on which is defined by
\begin{align*}
X(u_1\otimes u_2)=Xu_1\otimes u_2+u_1\otimes Xu_2\qquad(X\in\gk,u_1\in U_1,u_2\in U_2).
\end{align*}
Now given two unitary highest weight modules $L_\gk(\lambda),L_\gk(\mu)$, we are interested in the irreducible decomposition of $L_\gk(\lambda)\otimes L_\gk(\mu)$. So we define, for any $\nu\in P_+(\gk)$,
\begin{align*}
\Hom_\gk(\lambda\otimes\mu,\nu)=\Hom_\gk(L_\gk(\lambda)\otimes L_\gk(\mu),L_\gk(\nu)).
\end{align*}
The number $\dim\Hom_\gk(\lambda\otimes\mu,\nu)$, being the multiplicity of $L_\gk(\nu)$ in $L_\gk(\lambda)\otimes L_\gk(\mu)$, is called a \textbf{tensor product rule} of $\gk$.

Tensor product rules share some similar properties with the fusion rules of VOAs. 
For example, for any $T\in \Hom_\gk(\lambda\otimes\mu,\nu)$, we define a linear map $\Ad T:L_\gk(\overline\lambda)\otimes L_\gk(\nu)\rightarrow L_\gk(\mu)$, such that for any $u^{(\lambda)}\in L_\gk(\lambda),u^{(\mu)}\in L_\gk(\mu),u^{(\nu)}\in L_\gk(\nu)$,
\begin{align}
\bk{\Ad T(C_\lambda u^{(\lambda)}\otimes u^{(\nu)})|u^{(\mu)}}=\bk{u^{(\nu)}|T(u^{(\lambda)}\otimes u^{(\mu)} ) }.
\end{align}
It is easy to check that $\Ad T$ intertwines the action of $\gk$. So we have an antilinear map
\begin{align}
\Ad:\Hom_\gk(\lambda\otimes\mu,\nu)\rightarrow \Hom_\gk(\overline\lambda\otimes\nu,\mu).
\end{align}
which implies
\begin{align}
\dim\Hom_\gk(\lambda\otimes\mu,\nu)=\dim\Hom_\gk(\overline\lambda\otimes\nu,\mu).
\end{align}
In particular, we have $$\dim\Hom_\gk(\lambda\otimes\mu,0)=\dim\Hom_\gk(\overline\lambda\otimes 0,\mu)=\dim\Hom_\gk(L_\gk(\overline\lambda),L_\gk(\mu))=\delta_{\overline\lambda,\mu}.$$
We thus conclude:
\begin{pp}\label{lb23}
$\overline\lambda=\mu$ if and only if $L_\gk(\lambda)\otimes L_\gk(\mu)$ contains an irreducible submodule equivalent to the trivial module $\mathbb C$. In that case the multiplicity of $\mathbb C$ in $L_\gk(\lambda)\otimes L_\gk(\mu)$ is $1$.
\end{pp}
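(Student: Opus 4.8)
The plan is to extract the statement directly from the antilinear bijection $\Ad\colon\Hom_\gk(\lambda\otimes\mu,\nu)\to\Hom_\gk(\overline\lambda\otimes\nu,\mu)$ constructed above, taken at $\nu=0$. First I would observe that $L_\gk(0)=\mathbb C$ is the trivial module, so the canonical map $L_\gk(\overline\lambda)\otimes L_\gk(0)\to L_\gk(\overline\lambda)$ is an isomorphism of $\gk$-modules, whence $\Hom_\gk(\overline\lambda\otimes 0,\mu)=\Hom_\gk(L_\gk(\overline\lambda),L_\gk(\mu))$. Since $L_\gk(\overline\lambda)$ and $L_\gk(\mu)$ are irreducible finite-dimensional $\gk$-modules, Schur's lemma gives $\dim\Hom_\gk(L_\gk(\overline\lambda),L_\gk(\mu))=1$ if these two modules are isomorphic, i.e.\ if $\overline\lambda=\mu$, and $0$ otherwise. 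Combining this with the fact, recalled earlier, that $\Ad$ is a bijection, I obtain $\dim\Hom_\gk(\lambda\otimes\mu,0)=\delta_{\overline\lambda,\mu}$, which is the displayed identity just above the statement.

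It then remains only to identify $\dim\Hom_\gk(\lambda\otimes\mu,0)$ with the multiplicity of $\mathbb C$ as a submodule of $L_\gk(\lambda)\otimes L_\gk(\mu)$. For this I would use that $L_\gk(\lambda)\otimes L_\gk(\mu)$ is a finite-dimensional \emph{unitary} $\gk$-module (as noted in the discussion of tensor products of representations), and therefore decomposes as an orthogonal direct sum of irreducible submodules. For a semisimple module $M$ and an irreducible module $S$, the number of summands of $M$ isomorphic to $S$ equals $\dim\Hom_\gk(M,S)$; applying this with $M=L_\gk(\lambda)\otimes L_\gk(\mu)$ and $S=\mathbb C=L_\gk(0)$ shows that $L_\gk(\lambda)\otimes L_\gk(\mu)$ contains a submodule equivalent to $\mathbb C$ if and only if $\dim\Hom_\gk(\lambda\otimes\mu,0)\geq 1$, i.e.\ if and only if $\overline\lambda=\mu$, and that in this case the multiplicity equals $\delta_{\overline\lambda,\mu}=1$.

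I do not expect any real obstacle: the only points requiring a word of care are that the map $\Ad$ of the preceding paragraph is genuinely bijective --- which was already asserted above, using the anti-unitary isomorphisms $C_\lambda$ and the nondegeneracy of the inner products, so I would simply cite it --- and the interchange of ``submodule'' with ``quotient module'' implicit in identifying the multiplicity via $\Hom_\gk(M,S)$: by semisimplicity a copy of $\mathbb C$ occurs as a submodule precisely when it occurs as a quotient, so the two descriptions of the multiplicity coincide.
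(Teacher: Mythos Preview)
Your proof is correct and follows essentially the same route as the paper: the paper derives the displayed identity $\dim\Hom_\gk(\lambda\otimes\mu,0)=\dim\Hom_\gk(\overline\lambda\otimes 0,\mu)=\dim\Hom_\gk(L_\gk(\overline\lambda),L_\gk(\mu))=\delta_{\overline\lambda,\mu}$ immediately before the proposition and then states the proposition as a direct consequence. Your write-up simply makes explicit the semisimplicity step identifying $\dim\Hom_\gk(\lambda\otimes\mu,0)$ with the multiplicity of $\mathbb C$ as a submodule, which the paper leaves implicit.
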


\subsubsection*{Computing tensor product rules}

It will be very helpful to compute some tensor product rules of a simple Lie algebra. For this purpose, we fix highest weight vectors $v_\mu\in L_\gk(\mu),v_\nu\in L_\gk(\nu)$. Let $L_\gk(\lambda)[\nu-\mu]$ be the weight subspace of $L_\gk(\lambda)$ with weight $\nu-\mu$, i.e., $L_\gk(\lambda)[\nu-\mu]$ is the subspace of vectors $u$ satisfying that $Hu=\bk{\nu-\mu,H}u$ for any $H\in\mathfrak h$.  Define a linear map
\begin{gather}
\Gamma:\Hom_\gk(\lambda\otimes\mu,\nu)\rightarrow L_\gk(\lambda)[\nu-\mu]^*,
\end{gather}
such that for any $T\in\Hom_\gk(\lambda\otimes\mu,\nu)$, $\Gamma T$ is defined by
\begin{align}
(\Gamma T)(u^{(\lambda)})=\bk{T(u^{(\lambda)}\otimes v_\mu)|v_\nu}\qquad(\forall u^{(\lambda)}\in L_\gk(\lambda)[\nu-\mu]).\label{eq40}
\end{align}
We first give an upper bound for a tensor product rule. Let $\gk_+$ and $\gk_-$ be the Lie subalgebras of raising operators and lowering operators respectively.
\begin{pp}\label{lb47}
The map $\Gamma$ is injective. Consequently,
\begin{align}
\dim\Hom_\gk(\lambda\otimes\mu,\nu)\leq\dim L_\gk(\lambda)[\nu-\mu].
\end{align}
\end{pp}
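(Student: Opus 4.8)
The plan is to prove that $\Gamma$ is injective by showing that $\Gamma T=0$ forces $T=0$; the stated inequality then follows at once from $\dim L_\gk(\lambda)[\nu-\mu]^*=\dim L_\gk(\lambda)[\nu-\mu]$.

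First I would fix $T\in\Hom_\gk(\lambda\otimes\mu,\nu)$ with $\Gamma T=0$ and introduce the linear functional $\psi$ on $L_\gk(\lambda)\otimes L_\gk(\mu)$ given by $\psi(z)=\bk{T(z)|v_\nu}$. By definition of $\Gamma$, the hypothesis $\Gamma T=0$ says exactly that $\psi$ vanishes on $L_\gk(\lambda)[\nu-\mu]\otimes v_\mu$. Two further vanishing properties come for free. First, since $T$ is $\gk$-equivariant, $L_\gk(\nu)$ is a unitary $\gk$-module, the $*$-structure sends $\gk_-$ into $\gk_+$, and $\gk_+v_\nu=0$, we get $\psi(Fz)=\bk{T(z)|F^*v_\nu}=0$ for every $F\in\gk_-$ and every $z$; hence $\psi$ annihilates $\gk_-\cdot(L_\gk(\lambda)\otimes L_\gk(\mu))$. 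Second, $T$ preserves $\hk$-weights and, in the unitary module $L_\gk(\nu)$, weight spaces of distinct weights are orthogonal; hence $\psi$ vanishes on every weight space of $L_\gk(\lambda)\otimes L_\gk(\mu)$ of weight different from $\nu$.

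The crux is the inclusion
\[
(L_\gk(\lambda)\otimes L_\gk(\mu))[\nu]\subseteq\big(L_\gk(\lambda)[\nu-\mu]\otimes v_\mu\big)+\gk_-\cdot\big(L_\gk(\lambda)\otimes L_\gk(\mu)\big).
\]
I would prove it by downward induction on $\mathrm{ht}(\mu-\eta)$, where $\eta$ runs over the weights of $L_\gk(\mu)$ and one treats the summands $L_\gk(\lambda)[\nu-\eta]\otimes L_\gk(\mu)[\eta]$ of the weight-$\nu$ space one at a time. The base case $\eta=\mu$ gives precisely $L_\gk(\lambda)[\nu-\mu]\otimes v_\mu$. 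For $\eta\neq\mu$, since $v_\mu$ generates $L_\gk(\mu)$ over $\mathcal U(\gk_-)$ the weight space $L_\gk(\mu)[\eta]$ is spanned by vectors $F_\alpha y'$ with $\alpha>0$, $F_\alpha\in\gk_{-\alpha}$ and $y'\in L_\gk(\mu)[\eta+\alpha]$; then for $x\in L_\gk(\lambda)[\nu-\eta]$ the Leibniz rule gives
\[
x\otimes(F_\alpha y')=F_\alpha\cdot(x\otimes y')-(F_\alpha x)\otimes y',
\]
where the first term lies in $\gk_-\cdot(L_\gk(\lambda)\otimes L_\gk(\mu))$ and the second lies in $L_\gk(\lambda)[\nu-(\eta+\alpha)]\otimes L_\gk(\mu)[\eta+\alpha]$, to which the induction hypothesis applies because $\mathrm{ht}(\mu-(\eta+\alpha))<\mathrm{ht}(\mu-\eta)$.

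Combining this inclusion with the three vanishing properties of $\psi$ and with the weight-space decomposition of $L_\gk(\lambda)\otimes L_\gk(\mu)$, I conclude $\psi\equiv0$, i.e.\ $\bk{w|v_\nu}=0$ for every $w\in\mathrm{Im}(T)$. Since $\mathrm{Im}(T)$ is a $\gk$-submodule of the irreducible module $L_\gk(\nu)$, it is either $0$ or all of $L_\gk(\nu)$, and the latter would contradict $\bk{v_\nu|v_\nu}>0$; hence $T=0$ and $\Gamma$ is injective. I expect the combinatorial inclusion above to be the only real obstacle — the rest is formal bookkeeping — though even that becomes routine once the Leibniz identity is used.
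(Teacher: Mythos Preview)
Your argument is correct and follows essentially the same route as the paper's proof: both use that $v_\mu$ is $\gk_-$-cyclic and $v_\nu$ is $\gk_+$-annihilated to pass from $L_\gk(\lambda)[\nu-\mu]\otimes v_\mu$ to all of $L_\gk(\lambda)\otimes L_\gk(\mu)$, and then conclude $T=0$ from $\mathrm{Im}(T)\perp v_\nu$. You have spelled out the inductive step (the inclusion) that the paper leaves as ``one can easily show,'' and you close with irreducibility of $L_\gk(\nu)$ where the paper invokes $\gk_-$-cyclicity of $v_\nu$; these are equivalent, and the minor slip of calling the induction ``downward'' does not affect the logic.
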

\begin{proof}
Assume that \eqref{eq40} equals $0$ for any $u^{(\lambda)}\in L_\gk(\lambda)[\nu-\mu]$. Then \eqref{eq40} obviously equals $0$ for any $u^{(\lambda)}\in L_\gk(\lambda)$. Since $v_\mu$ is a cyclic vector of $L_\gk(\mu)$  under the action of $\gk_-$, and since $v_\nu$ is annihilated by $\gk_+$, one can easily show that $\bk{T(u^{(\lambda)}\otimes u^{(\mu)})|v_\nu}=0$ for any $u^{(\lambda)}\in L_\gk(\lambda),u^{(\mu)}\in L_\gk(\mu)$. Using the fact that $v_\nu\in L_\gk(\nu)$ is also $\gk_-$-cyclic, one proves that $T=0$.
\end{proof}
We now describe the range of $\Gamma$. Recall that for any root $\alpha$, its dual root is defined by $\widecheck\alpha=2\alpha/\lVert\alpha \lVert^2$. For any $\rho\in\mathfrak h^*$, let $n_{\rho,\alpha}=(\rho|\widecheck\alpha)$.

\begin{pp}\label{lb11}
Let $K_\gk^\mu(\lambda)$ be the subspace of $L_\gk(\lambda)$ spanned by vectors of the form $F_\alpha^{n_{\mu,\alpha}+1}u^{(\lambda)}$, where $u^{(\lambda)}\in L_\gk(\lambda)$, $\alpha$ is a simple root of $\gk$, and $F_\alpha\in\gk_{-\alpha}$. Let $K_\gk^\mu(\lambda)[\nu-\mu]$ be the weight subspace of $K_\gk^\mu(\lambda)$ with weight $\nu-\mu$. Then an element $\varphi\in L_\gk(\lambda)[\nu-\mu]^*$ is in the range of $\Gamma$ if and only if $\varphi\perp K^\mu_\gk(\lambda)[\nu-\mu]$. As a consequence, 
\begin{align}
\dim\Hom_\gk(\lambda\otimes\mu,\nu)=\dim L_\gk(\lambda)[\nu-\mu]-\dim K^\mu_\gk(\lambda)[\nu-\mu].
\end{align}
\end{pp}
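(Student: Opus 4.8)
The plan is to reduce the statement to a question about $\gk_+$- and $\gk_-$-equivariant linear maps between $L_\gk(\lambda)$ and $L_\gk(\nu)$, and then to pin down the range of $\Gamma$ using only $\mathfrak{sl}_2$-theory for the simple-root subalgebras together with the presentation of $L_\gk(\nu)$ as a quotient of its Verma module. First I would rewrite $\Gamma$ through restriction to the two highest-weight lines. Given $T\in\Hom_\gk(\lambda\otimes\mu,\nu)$, put $S_T\colon L_\gk(\lambda)\to L_\gk(\nu)$, $S_T(u^{(\lambda)})=T(u^{(\lambda)}\otimes v_\mu)$. Because $\gk_+$ annihilates $v_\mu$ and $T$ is $\gk$-equivariant and weight-preserving, $S_T$ is $\gk_+$-equivariant and carries $L_\gk(\lambda)[\xi]$ into $L_\gk(\nu)[\xi+\mu]$; taking the Hilbert-space adjoint and using that $F_\alpha$ acts as the adjoint of $E_\alpha$ on these unitary modules, the map $S_T^\dagger\colon L_\gk(\nu)\to L_\gk(\lambda)$ is $\gk_-$-equivariant and carries $L_\gk(\nu)[\eta]$ into $L_\gk(\lambda)[\eta-\mu]$; in particular $w_T:=S_T^\dagger v_\nu\in L_\gk(\lambda)[\nu-\mu]$. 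For $u^{(\lambda)}\in L_\gk(\lambda)[\nu-\mu]$ one has $S_T(u^{(\lambda)})\in L_\gk(\nu)[\nu]=\mathbb C v_\nu$, so $(\Gamma T)(u^{(\lambda)})=\bk{S_T(u^{(\lambda)})|v_\nu}=\bk{u^{(\lambda)}|w_T}$. Hence the range of $\Gamma$ equals $\{\bk{\cdot|w}:w\in\mathcal R\}$ with $\mathcal R:=\{S_T^\dagger v_\nu:T\in\Hom_\gk(\lambda\otimes\mu,\nu)\}\subseteq L_\gk(\lambda)[\nu-\mu]$, and the statement reduces to $\mathcal R=(K^\mu_\gk(\lambda)[\nu-\mu])^\perp$.

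Next I would establish two elementary facts. First, using that $F_\alpha$ acts as the adjoint of $E_\alpha$ and that $E_\alpha^{n_{\mu,\alpha}+1}w$ lies in the single weight space $L_\gk(\lambda)[\nu-\mu+(n_{\mu,\alpha}+1)\alpha]$, one checks that for $w\in L_\gk(\lambda)[\nu-\mu]$,
\begin{align*}
w\in\big(K^\mu_\gk(\lambda)[\nu-\mu]\big)^\perp\iff E_\alpha^{n_{\mu,\alpha}+1}w=0\ \text{ for every simple root }\alpha.
\end{align*}
Second, decomposing $L_\gk(\lambda)$ into irreducibles under the $\mathfrak{sl}_2$-triple $(E_\alpha,F_\alpha,H_\alpha)$ and noting that a vector of $L_\gk(\lambda)[\nu-\mu]$ has $H_\alpha$-eigenvalue $n_{\nu,\alpha}-n_{\mu,\alpha}$, a routine $\mathfrak{sl}_2$-computation shows that for $w\in L_\gk(\lambda)[\nu-\mu]$ each of $E_\alpha^{n_{\mu,\alpha}+1}w=0$ and $F_\alpha^{n_{\nu,\alpha}+1}w=0$ is equivalent to the condition that every irreducible $\mathfrak{sl}_2$-summand occurring in $w$ has dimension $\le n_{\mu,\alpha}+n_{\nu,\alpha}+2$; hence $E_\alpha^{n_{\mu,\alpha}+1}w=0\iff F_\alpha^{n_{\nu,\alpha}+1}w=0$. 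Combining the two, it is enough to show $\mathcal R=\{w\in L_\gk(\lambda)[\nu-\mu]:F_\alpha^{n_{\nu,\alpha}+1}w=0\text{ for every simple }\alpha\}$. The inclusion $\mathcal R\subseteq$ is immediate: if $w=S_T^\dagger v_\nu$ then $F_\alpha^{n_{\nu,\alpha}+1}w=S_T^\dagger\big(F_\alpha^{n_{\nu,\alpha}+1}v_\nu\big)=0$, since $v_\nu$ is a highest-weight vector with $H_\alpha v_\nu=n_{\nu,\alpha}v_\nu$ and therefore $F_\alpha^{n_{\nu,\alpha}+1}v_\nu=0$ in $L_\gk(\nu)$ by $\mathfrak{sl}_2$-theory.

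For the reverse inclusion, take $w\in L_\gk(\lambda)[\nu-\mu]$ with $F_\alpha^{n_{\nu,\alpha}+1}w=0$ for all simple $\alpha$, and write the given $\varphi$ as $\bk{\cdot|w}$ (the case $w=0$ being trivial). Here I would invoke the classical presentation of $L_\gk(\nu)$ as the quotient of its Verma module $M_\gk(\nu)$ by the submodule generated by the singular vectors $F_\alpha^{n_{\nu,\alpha}+1}v_\nu$, $\alpha$ simple; equivalently, the kernel of $U(\gk_-)\to L_\gk(\nu)$, $p\mapsto p v_\nu$, is the left ideal $\sum_{\alpha\text{ simple}}U(\gk_-)F_\alpha^{n_{\nu,\alpha}+1}$. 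Since $w$ is killed by every $F_\alpha^{n_{\nu,\alpha}+1}$, the rule $p v_\nu\mapsto p w$ ($p\in U(\gk_-)$) defines a linear map $S^\dagger\colon L_\gk(\nu)\to L_\gk(\lambda)$, visibly $\gk_-$-equivariant and of weight $-\mu$; its adjoint $S:=(S^\dagger)^\dagger\colon L_\gk(\lambda)\to L_\gk(\nu)$ is $\gk_+$-equivariant of weight $\mu$. Then $S$ is a highest-weight vector of weight $\mu$ in the finite-dimensional $\gk$-module $\Hom(L_\gk(\lambda),L_\gk(\nu))$, hence generates a submodule isomorphic to $L_\gk(\mu)$, giving $\Phi\colon L_\gk(\mu)\to\Hom(L_\gk(\lambda),L_\gk(\nu))$ with $\Phi(v_\mu)=S$; the corresponding $T\in\Hom_\gk(\lambda\otimes\mu,\nu)$ under the tensor--hom adjunction, $T(u^{(\lambda)}\otimes u^{(\mu)})=\Phi(u^{(\mu)})(u^{(\lambda)})$, satisfies $S_T=S$, so $w_T=S^\dagger v_\nu=w$ and $\Gamma T=\varphi$. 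This yields $\mathcal R=(K^\mu_\gk(\lambda)[\nu-\mu])^\perp$, hence the range statement; the dimension formula then follows since $\Gamma$ is injective by Proposition \ref{lb47}, so $\dim\Hom_\gk(\lambda\otimes\mu,\nu)=\dim\big(K^\mu_\gk(\lambda)[\nu-\mu]\big)^\perp=\dim L_\gk(\lambda)[\nu-\mu]-\dim K^\mu_\gk(\lambda)[\nu-\mu]$.

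The hard part will be exactly the ingredient invoked for the reverse inclusion: the fact that $L_\gk(\nu)$ is the quotient of the Verma module $M_\gk(\nu)$ by the submodule generated by the obvious singular vectors $F_\alpha^{n_{\nu,\alpha}+1}v_\nu$ over the simple roots. This is a standard theorem in finite-dimensional representation theory, but it is the only genuinely non-formal step; everything else is bookkeeping with weights and adjoints together with $\mathfrak{sl}_2$-theory for the simple-root triples. It is precisely this presentation that upgrades the obviously necessary condition $F_\alpha^{n_{\nu,\alpha}+1}w=0$ to a sufficient one for $w$ to arise as some $S_T^\dagger v_\nu$.
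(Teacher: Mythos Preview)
Your argument is correct and complete; it takes a genuinely different route from the paper's. The paper proves the ``if'' direction by extending the functional $\varphi$ step by step to a $\gk$-invariant trilinear form on $L_\gk(\lambda)\otimes M_\mu\otimes\overline{M_\nu}$: first extend over $U\gk_-\cdot v_\mu$ by the rule $\varphi(u\otimes X_1\cdots X_n v_\mu\otimes\overline{v_\nu})=(-1)^n\varphi(X_n\cdots X_1 u\otimes v_\mu\otimes\overline{v_\nu})$, then extend over $\overline{M_\nu}$ by $\gk$-invariance, use the presentation $L_\gk(\mu)=M_\mu/N_\mu$ (with $N_\mu$ generated by $F_\alpha^{n_{\mu,\alpha}+1}v_\mu$) to descend over the $\mu$-slot, and finally use a finite-dimensionality argument to descend over the $\nu$-slot. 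You instead dualize via unitarity: you identify $\Gamma T$ with $\bk{\cdot|S_T^\dagger v_\nu}$, reduce the statement to $\mathcal R=(K^\mu_\gk(\lambda)[\nu-\mu])^\perp$, and then use a neat $\mathfrak{sl}_2$-symmetry to trade the condition $E_\alpha^{n_{\mu,\alpha}+1}w=0$ for $F_\alpha^{n_{\nu,\alpha}+1}w=0$, so that the Verma presentation you invoke is that of $L_\gk(\nu)$ rather than of $L_\gk(\mu)$; the $\mu$-side is then handled by recognizing $S$ as a highest weight vector of weight $\mu$ in $\Hom(L_\gk(\lambda),L_\gk(\nu))\cong L_\gk(\overline\lambda)\otimes L_\gk(\nu)$. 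Both arguments rest on the same non-formal ingredient---the BGG presentation of the finite-dimensional irreducible as the quotient of its Verma module by the obvious singular vectors---but applied to different modules. Your approach exploits unitarity more heavily and gives a clean structural picture of the range as an orthogonal complement; the paper's approach is more elementary in that it never takes adjoints or invokes the Hom-module, at the cost of a slightly more explicit bookkeeping with trilinear forms. One minor slip: your bound on the $\mathfrak{sl}_2$-summand dimensions should read $\le n_{\mu,\alpha}+n_{\nu,\alpha}+1$ rather than $+2$, but this does not affect the argument.
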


\begin{proof}
	
The "only if" part is easy to prove. Indeed, Assume that $\varphi=\Gamma T$. Then, since $F_\alpha^{n_{\mu,\alpha}+1}v_\mu=0$ and $F_\alpha^*v_\nu=0$, for any $u^{(\lambda)}\in L_\gk(\lambda)$ such that $F_\alpha^{n_{\mu,\alpha}+1}u^{(\lambda)}\in L_\gk(\lambda)[\nu-\mu]$, we have
	\begin{align*}
	\varphi(F_\alpha^{n_{\mu,\alpha}+1}u^{(\lambda)})=\bk{T(F_\alpha^{n_{\mu,\alpha}+1}u^{(\lambda)}\otimes v_\mu)|v_\nu }=-\bk{T(u^{(\lambda)}\otimes F_\alpha^{n_{\mu,\alpha}+1}v_\mu)|v_\nu }=0.
	\end{align*}
	Thus $\varphi\perp K^\mu_\gk(\lambda)[\nu-\mu]$. 

The proof of "if" part is postponed to section \ref{lb12}.
\end{proof}

\begin{co}\label{lb20}
Let $\lambda,\mu,\nu\in P_+(\gk)$. Assume $\dim L_\gk(\lambda)[\nu-\mu]=1$. Then $\dim\Hom_\gk(\lambda\otimes\mu,\nu)=1$ if and only if for any simple root $\alpha$,
\begin{align}
\dim L_\gk(\lambda)[\nu-\mu+(n_{\mu,\alpha}+1)\alpha]=0.
\end{align}
Otherwise, $\dim\Hom_\gk(\lambda\otimes\mu,\nu)=0$.
\end{co}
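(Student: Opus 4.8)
The plan is to read off the answer from Proposition~\ref{lb11}. Combining it with the hypothesis $\dim L_\gk(\lambda)[\nu-\mu]=1$ gives
\begin{align*}
\dim\Hom_\gk(\lambda\otimes\mu,\nu)=\dim L_\gk(\lambda)[\nu-\mu]-\dim K^\mu_\gk(\lambda)[\nu-\mu]=1-\dim K^\mu_\gk(\lambda)[\nu-\mu].
\end{align*}
As $K^\mu_\gk(\lambda)[\nu-\mu]$ is a subspace of the one-dimensional space $L_\gk(\lambda)[\nu-\mu]$, it is either $\{0\}$, so that $\dim\Hom_\gk(\lambda\otimes\mu,\nu)=1$, or all of $L_\gk(\lambda)[\nu-\mu]$, so that $\dim\Hom_\gk(\lambda\otimes\mu,\nu)=0$. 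Thus the whole statement reduces to proving that $K^\mu_\gk(\lambda)[\nu-\mu]=\{0\}$ if and only if $L_\gk(\lambda)[\nu-\mu+(n_{\mu,\alpha}+1)\alpha]=0$ for every simple root $\alpha$.

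First I would note that $K^\mu_\gk(\lambda)$ is a weight-graded subspace of $L_\gk(\lambda)$: since each $F_\alpha$ lies in $\gk_{-\alpha}$, every weight component of a generator $F_\alpha^{n_{\mu,\alpha}+1}u$ is again of the generating form, hence lies in $K^\mu_\gk(\lambda)$. Therefore
\begin{align*}
K^\mu_\gk(\lambda)[\nu-\mu]=\sum_{\alpha\text{ simple}}F_\alpha^{n_{\mu,\alpha}+1}\,L_\gk(\lambda)\big[\nu-\mu+(n_{\mu,\alpha}+1)\alpha\big].
\end{align*}
The ``if'' direction is then immediate. For the converse it suffices to establish the following claim, which is the only real content here: for each simple root $\alpha$, the map $F_\alpha^{n_{\mu,\alpha}+1}$ is \emph{injective} on the weight space $L_\gk(\lambda)[\nu-\mu+(n_{\mu,\alpha}+1)\alpha]$, so that if that space is nonzero it contributes a nonzero summand to $K^\mu_\gk(\lambda)[\nu-\mu]$.

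To prove the claim I would decompose $L_\gk(\lambda)$ into a direct sum of irreducibles under the $\mathfrak{sl}_2$-triple $(E_\alpha,F_\alpha,H_\alpha)$ of \eqref{eq11}; since $F_\alpha$ preserves each summand, it is enough to verify injectivity on a single irreducible summand $M$, say with top $H_\alpha$-weight $m$. Set $k=n_{\mu,\alpha}+1\geq1$ and $d=\bk{\nu-\mu+k\alpha,H_\alpha}=n_{\nu,\alpha}+n_{\mu,\alpha}+2$, the $H_\alpha$-eigenvalue of the weight space in question, so that $F_\alpha^{k}$ sends $M[d]$ into $M[d-2k]$ with $d-2k=n_{\nu,\alpha}-n_{\mu,\alpha}$. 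If $M[d]\neq\{0\}$ then $d$ is a weight of $M$, so $m\geq d$ (using $d\geq2>0$); and $\nu\in P_+(\gk)$ gives $n_{\nu,\alpha}\geq0$, hence $d-k=n_{\nu,\alpha}+1\geq1$, i.e.\ $d>k$, so $d-2k>-d\geq-m$. Thus $d-2k$ is again a weight of $M$ and $F_\alpha^{k}$ is an isomorphism $M[d]\xrightarrow{\ \sim\ }M[d-2k]$, in particular injective; summing over the summands proves the claim, and together with the displayed formula for $\dim\Hom_\gk(\lambda\otimes\mu,\nu)$ this finishes the proof. The only delicate point is precisely this $\mathfrak{sl}_2$ bookkeeping: the dominance of $\nu$ is used exactly to prevent the target weight $d-2k$ from falling below $-m$, where $F_\alpha^{k}$ would annihilate the source.
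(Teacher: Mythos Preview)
Your proof is correct and follows essentially the same route as the paper's: both reduce to Proposition~\ref{lb11} and then use $\mathfrak{sl}_2$-theory for the triple $(E_\alpha,F_\alpha,H_\alpha)$ to decide whether $K^\mu_\gk(\lambda)[\nu-\mu]$ is nonzero. Your treatment of the $\mathfrak{sl}_2$ step is a bit sharper than the paper's: where the paper argues that since both $L_\gk(\lambda)[\nu-\mu]$ and $L_\gk(\lambda)[\nu-\mu+(n_{\mu,\alpha}+1)\alpha]$ are nonzero with $H_\alpha$-eigenvalues of the same parity one can find a vector in the latter not killed by $F_\alpha^{n_{\mu,\alpha}+1}$, you instead prove outright that $F_\alpha^{n_{\mu,\alpha}+1}$ is injective on the source weight space, using only that $n_{\nu,\alpha}\geq0$ to get $d-2k>-d\geq-m$ on each irreducible summand. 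This is a cleaner bookkeeping of the same idea, not a different method.
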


\begin{proof}
Assume firstly that $\dim\Hom_\gk(\lambda\otimes\mu,\nu)=0$. Then by proposition \ref{lb11}, $\dim K^\mu_\gk(\lambda)[\nu-\mu]=1$.  Choose a simple root $\alpha$ and a weight vector $u^{(\lambda)}$ of $L_\gk(\lambda)$ such that $F^{n_{\mu,\alpha}+1}_\alpha u^{(\lambda)}$ is a non-zero weight $\nu-\mu$ vector of $L_\gk(\lambda)$. Then $u^{(\lambda)}$ is a non-zero vector of $L_\gk(\lambda)$ of weight $\nu-\mu+(n_{\mu,\alpha}+1)\alpha$. Therefore $\dim L_\gk(\lambda)[\nu-\mu+(n_{\mu,\alpha}+1)\alpha]>0$.

Conversely, assume that there exists a simple root $\alpha$ such that $\dim L_\gk(\lambda)[\nu-\mu+(n_{\mu,\alpha}+1)\alpha]>0$. Consider the unitary representation of $\mathfrak{sl}_2$ on
\begin{align*}
U=\bigoplus_{n\in\mathbb Z}L_\gk(\lambda)[\nu-\mu+n\alpha],
\end{align*}
where the $\mathfrak{sl}_2$ is generated by the elements $E_\alpha,F_\alpha,H_\alpha$ defined by \eqref{eq11}. Note that both $L_\gk(\lambda)[\nu-\mu]$ and $L_\gk(\lambda)[\nu-\mu+(n_{\mu,\alpha}+1)\alpha]$ are non-trivial eigensubspaces of $H_\alpha$, and  their eigenvalues  differ by an even number. Therefore, from representation theory of $\mathfrak{sl}_2$, one can easily show the existence of $u^{(\lambda)}\in L_\gk(\lambda)[\nu-\mu+(n_{\mu,\alpha}+1)\alpha]$ such that $F^{n_{\mu,\alpha}+1}_\alpha u^{(\lambda)}$ is non-zero. Hence $\dim K^\mu_\gk(\lambda)[\nu-\mu]>0$. By proposition \ref{lb11}, $\dim\Hom_\gk(\lambda\otimes\mu,\nu)=0$.
\end{proof}

\subsection{Unitary affine VOAs}

\subsubsection*{The affine Lie algebra $\gh$ and its positive energy representations}

Given a finite-dimensional unitary complex simple Lie algebra $\mathfrak g$, we let $\gh=\mathfrak g\otimes\mathbb C[t]\oplus \mathbb CK$, where $t$ is a formal variable, and $K$ is a formal vector. Set $X(n)=X\otimes t^n$ for any $X\in\gk$ and $n\in\mathbb Z$. The Lie algebra structure on $\gh$ is given by
\begin{gather}
[X(m),Y(n)]=[X,Y](m+n)+m(X|Y^*)\delta_{m,-n}K\qquad(X,Y\in\mathfrak g,m,n\in\mathbb Z),\label{eq26} \\
[K,X(n)]=0\qquad(X\in\mathfrak g,n\in\mathbb Z).
\end{gather}
(Recall that by the anti-unitarity of $*$, $(X|Y^*)=(Y|X^*)$.) $\widehat{\mathfrak g}$ is called the \textbf{affine Lie algebra} of $\mathfrak g$. We define an involution $*:\widehat{\mathfrak g}\rightarrow\widehat{\mathfrak g}$ satisfying
\begin{gather}
X(n)^*=X^*(-n)\qquad(X\in\mathfrak g,n\in\mathbb Z),\label{eq27}\\
K^*=K.
\end{gather}
Again we have
\begin{align*}
[\mathfrak X,\mathfrak Y]^*=[\mathfrak Y^*,\mathfrak X^*]\qquad(\mathfrak X,\mathfrak Y\in\gh).
\end{align*}

Unitary representations (unitary modules) of $\widehat{\mathfrak g}$ are defined in the same way as those of $\mathfrak g$.  A unitary representation $W$ of $\widehat{\mathfrak g}$ is called a  \textbf{positive energy representation} (PER), if the vector space $W$ has an orthogonal grading $W=\bigoplus_{n\in\mathbb N}^\perp W(n)$, and for any $n\in\mathbb N$,
\begin{gather*}
\dim W(n)<\infty,\\
X(m)W(n)\subset W(n-m)\qquad(X\in\mathfrak g,m\in\mathbb Z),\\
KW(n)\subset W(n).
\end{gather*}

We now discuss briefly irreducible PERs of $\widehat{\mathfrak g}$. Details can be found in \cite{Was10}. Let $W$ be an irreducible  PER of $\widehat{\mathfrak g}$. Choose $n_0\in\mathbb N$ such that $\dim W(n_0)>0$, and $\dim W(n)=0$ for any $n<n_0$. $W(n_0)$ is called the \textbf{lowest energy subspace} of $W$. Identify $\mathfrak g$ with $\widehat{\mathfrak g}(0)$ by setting
\begin{gather*}
X=X(0)\qquad(X\in\mathfrak g).
\end{gather*}
Then $\mathfrak g$ is a unitary Lie subalgebra of $\widehat{\mathfrak g}$, and $W(n_0)$ is a finite-dimensional unitary representation of $\mathfrak g$, called the \textbf{lowest energy $\mathfrak g$-module}. It is easy to see that \emph{the lowest energy $\mathfrak g$-module of an irreducible PER of $\gh$ is irreducible.}

The action of $K$ on $W(n_0)$ commutes with the irreducible action  $\mathfrak g\curvearrowright W(n_0)$. Therefore, by Schur's lemma, 
\begin{align*}
K|_{W(n_0)}=l\cdot\id_{W(n_0)}
\end{align*}
for some $l\in\mathbb R$. That $l$ is real follows from the self-adjointness of $K$. We say that $l$ is the \textbf{level} of the irreducible PER $W$. (In general,  a general PER  of $\gh$ is said to have  \textbf{level} $l$, if the action of $K$ on the pre-Hilbert space is the identity operator multiplied by $l$.) Since $\gh\cdot W(n_0)=W$, $K$ must be the constant $l$ on the entire vector space $W$.

It is not hard to show that \emph{$\mathfrak g\curvearrowright W(n_0)$ and $l$ completely characterizes the PER $W$}, i.e., if two irreducible PERs reduce to equivalent lowest energy $\mathfrak g$-modules, and if their levels are equal, then they are (unitary) equivalent. We also know that finite-dimensional irreducible representations of $\mathfrak g$ must be highest-weight representations, and are determined, up to equivalence, by their highest-weights. So the equivalence class of $W$ is determined by the pair $(\lambda,l)$, where $\lambda\in\mathfrak h^*$ is the highest-weight of $\mathfrak g\curvearrowright W(n_0)$.

There is also a notion of  highest-weightness for PERs of $\gh$. Let $\mathfrak g_+$ (resp. $\mathfrak g_-$) be the Lie subalgebra of raising operators (resp. lowering operators) in $\mathfrak g$. Let $\widehat{\mathfrak g}_{>0}$ (resp. $\widehat{\mathfrak g}_{<0}$) be the Lie subalgebra of $\widehat{\mathfrak g}$ spanned by $X(n)$, where $X\in\mathfrak g$ and $n\in\mathbb Z_{>0}$ (resp. $n\in\mathbb Z_{<0}$). Let $W$ be a PER of $\gh$. A non-zero vector $v\in W$ is called a  \textbf{highest-weight vector} of $W$, if the following conditions are satisfied:\\
(a) There exists a grading of $W$, such that $v\in W(n)$ for some $n\in\mathbb N$.\\
(b) There exists $\lambda\in\mathfrak h^*$ such that $Hv=\bk{\lambda,H}v$ for any $H\in\mathfrak h$.\\
(c) There exists $l\in\mathbb R$ such that $Kv=lv$.\\
(d) $\mathfrak g_+v=0,\gh_{>0}v=0$.\\
In this case, $\lambda$ is called the \textbf{highest weight} of $v$, and $l$ is called the \textbf{level} of $v$. If $v$ is a cyclic vector, i.e. $W=U(\gh)\cdot v$ where $U(\gh)$ is the universal enveloping algebra of $\gh$, we say that $W$ is a \textbf{highest-weight PER} of $\gh$.

If $W$ is an irreducible PER, then the highest-weight vector $v$ of its lowest energy $\mathfrak g$-module is a highest-weight vector of $\gh\curvearrowright W$. So $W$ is a highest-weight PER of $\gh$. Conversely, if $W$ is a highest-weight PER, and  $v$ is a highest-weight vector with highest weight $\lambda$ and level $l$, then it is easy to see that $W$ is irreducible. So \emph{irreducible PERs and highest-weight PERs are the same for $\gh$}. Moreover, $v$ is a highest-weight vector of the lowest energy $\mathfrak g$-module of $W$. So $W$ has a unique up to scalar multiplication highest-weight vector, a unique highest weight, and a unique level. 

The remaining question is to know the possible $\lambda$ and $l$ for irreducible PERs. Clearly $\lambda\in P_+(\mathfrak g)$. But extra conditions are required. Let $\theta\in\Phi$ be the highest root of $\mathfrak g$, i.e., the highest weight of the adjoint representation $\mathfrak g\curvearrowright\mathfrak g$. Since $\theta$ is a longest root, we have $\lVert\theta\lVert^2=2$. Choose $E_{-\theta}\in\mathfrak g_{-\theta},F_{-\theta}\in\mathfrak g_{\theta},H_{-\theta}\in\mathfrak h$ as in \eqref{eq11}. Then we have $\lVert E_{-\theta} \lVert^2=\lVert F_{-\theta} \lVert^2=1$, and $H_{-\theta}=h_{-\theta}$. Using these facts, one can easily check that $E_{-\theta}(1),F_{-\theta}(-1),H_{-\theta}(0)+K$ satisfy the unitary $\mathfrak{sl}_2$ relation. Since the highest-weight vector $v$ of $W$ is also a highest-weight vector of this $\mathfrak{sl}_2$ with highest weight $-(\lambda|\theta)+l$ (which is also easy to check), we must have
\begin{gather*}
-(\lambda|\theta)+l\in\mathbb Z_{\geq0}.
\end{gather*}
Since $\lambda,\theta\in P_+(\mathfrak g)$, we must have $(\lambda|\theta)\in\mathbb Z_{\geq0}$. Therefore, $l$ is a non-negative integer no less than $(\lambda|\theta)$. We conclude that if $\lambda,l$ are the highest weight and the level of an irreducible PER of $\gh$, then condition \eqref{eq12} holds. The converse is also true: if condition \eqref{eq12} is satisfied, then there exists an irreducible PER with highest weight $\lambda$ and level $l$.  See \cite{Kac94}, or \cite{Was10} chapter III.  We conclude the following:

\begin{thm}\label{lb4}
Let $W$ be a PER of $\gh$. Then $W$ is irreducible if and only if it is a highest-weight PER. In this case, $W$ has a unique up to scalar multiplication highest-weight vector $v_\lambda$ with highest weight $\lambda$ and level $l$.   $\lambda$ and $l$ satisfy
\begin{gather}
\lambda\in P_+(\mathfrak g),\qquad l\in\mathbb Z_{\geq0},\qquad (\lambda|\theta)\leq l,\label{eq12}
\end{gather}
and they completely determine the PER $W$ up to unitary equivalence. The lowest energy $\mathfrak g$-module $W(n_0)$ of $W$ is irreducible. We have $K=l\cdot\id_W$, $v_\lambda\in W(n_0)$, and $v_\lambda$ is also a highest-weight vector of $\mathfrak g\curvearrowright W(n_0)$ with highest weight $\lambda$.

Conversely, if $\lambda$ and $l$ satisfy \eqref{eq12}, then there exists an irreducible PER of $\gh$ with highest weight $\lambda$ and level $l$.
\end{thm}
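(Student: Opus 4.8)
The theorem packages together the structural facts developed in the discussion above, so the plan is to assemble them into a single argument; I expect the one genuinely hard ingredient to be the converse existence statement, which in the end I would attribute to the literature.

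\emph{Irreducible $\Rightarrow$ highest-weight.} Let $W$ be an irreducible PER and $n_0$ the minimal index with $W(n_0)\neq0$. Since $X(m)W(n)\subseteq W(n-m)$, the subalgebra $\gh_{>0}$ annihilates $W(n_0)$ while $\gk$ preserves it and $\gh_{<0}$ strictly raises the energy; hence, by PBW, $U(\gh)M\cap W(n_0)=M$ for every $\gk$-submodule $M\subseteq W(n_0)$, so a proper such $M$ would generate a proper $\gh$-submodule. Irreducibility therefore forces $W(n_0)$ to be an irreducible finite-dimensional unitary $\gk$-module, whence $\lambda\in P_+(\gk)$, and by Schur's lemma $K$ acts on $W(n_0)$, hence on $W=U(\gh)W(n_0)$, as a scalar $l$. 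A $\gk$-highest-weight vector $v_\lambda\in W(n_0)$ is killed by $\gk_+$ and by $\gh_{>0}$, so it is a $\gh$-highest-weight vector, and by irreducibility cyclic; thus $W$ is a highest-weight PER. The same bookkeeping shows that any $\gh$-highest-weight vector of $W$ must lie in $W(n_0)$ --- one at energy $n>n_0$ would, by cyclicity, make $W$ itself vanish below energy $n$ --- and so is a $\gk$-highest-weight vector of the irreducible module $W(n_0)$, hence unique up to a scalar; this yields uniqueness of $v_\lambda$, $\lambda$, and $l$.

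\emph{Highest-weight $\Rightarrow$ irreducible, uniqueness of $W$.} If $W$ is a highest-weight PER with cyclic highest-weight vector $v$ of weight $\lambda$ and level $l$, the same PBW bookkeeping gives $v\in W(n_0)$ and identifies $W(n_0)=U(\gk)v$ with an irreducible finite-dimensional $\gk$-module $\cong L_\gk(\lambda)$; hence $W$ is a quotient of the generalized Verma module $\widehat{M}=U(\gh)\otimes_{U(\gh_{\geq0})}L_\gk(\lambda)$, where $\gh_{\geq0}=\gk\oplus\gh_{>0}\oplus\mathbb CK$ acts on $L_\gk(\lambda)$ through $\gk$, by zero, and by the scalar $l$. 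The unitarity relations $\langle X(n)a|b\rangle=\langle a|X^*(-n)b\rangle$ together with cyclicity of $v$ force the inner product on $W$ to coincide, up to normalization, with the contravariant Hermitian form pushed down from $\widehat{M}$; this form being positive-definite, its radical --- a submodule --- vanishes, so $W$ is the unique irreducible quotient of $\widehat{M}$, in particular irreducible and determined up to unitary equivalence by $(\lambda,l)$.

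\emph{Necessity of \eqref{eq12}.} We already have $\lambda\in P_+(\gk)$. For the level, apply the triple $E_{-\theta}(1),F_{-\theta}(-1),H_{-\theta}(0)+K$, which forms a genuine unitary $\mathfrak{sl}_2$-triple precisely because $\lVert\theta\rVert^2=2$ makes the central contribution to $[E_{-\theta}(1),F_{-\theta}(-1)]$ equal to $K$. The vector $v_\lambda$ is a highest-weight vector for this $\mathfrak{sl}_2$ --- annihilated by $E_{-\theta}(1)\in\gh_{>0}$ --- with weight $l-(\lambda|\theta)$, so the representation theory of unitary $\mathfrak{sl}_2$-modules forces $l-(\lambda|\theta)\in\mathbb Z_{\geq0}$; since $(\lambda|\theta)\in\mathbb Z_{\geq0}$ (both $\lambda$ and $\theta$ being dominant integral), all three conditions in \eqref{eq12} follow. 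Conversely, assuming \eqref{eq12}, one forms $\widehat{M}$ as above, checks via PBW that it is $\mathbb N$-graded with finite-dimensional graded pieces, and passes to its irreducible quotient $W$ (with lowest-energy $\gk$-module $L_\gk(\lambda)$ and $K=l$). The decisive point --- that the contravariant form on $W$ is positive-definite exactly when \eqref{eq12} holds, so that $W$ is genuinely a \emph{unitary} PER --- is the main obstacle of the whole statement, and rather than reproducing the inductive $\mathfrak{sl}_2$-reduction arguments behind it I would invoke the standard treatments in \cite{Kac94} and \cite{Was10} chapter III.
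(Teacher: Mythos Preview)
Your proof is correct and follows essentially the same path as the paper's discussion preceding the theorem: irreducibility of the lowest-energy $\gk$-module, Schur for $K$, the $\mathfrak{sl}_2$-triple $E_{-\theta}(1),F_{-\theta}(-1),H_{-\theta}(0)+K$ for the constraint $(\lambda|\theta)\le l$, and deferral to \cite{Kac94,Was10} for existence. The only cosmetic point is that in your first paragraph the PBW claim $U(\gh)M\cap W(n_0)=M$ needs $M$ to be $K$-stable as well, so either take $M$ to be a $(\gk\oplus\mathbb CK)$-submodule or first observe (by the same argument applied to $K$-eigenspaces) that $K$ acts as a scalar on $W(n_0)$ before invoking Schur.
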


Due to this theorem, for any $l\in\mathbb Z_{\geq0}$, we say that a dominant integral weight $\lambda$ of $\gk$ is \textbf{admissible at level $l$}, if $(\lambda|\theta)\leq l$. For such $\lambda$, we choose $L_\gk(\lambda,l)$ to be a standard PER of $\gh$ with highest weight $\lambda$ and level $l$.

\subsubsection*{Unitary affine VOAs}
The PER $L_\gk(0,l)$ of $\gh$ is called the level $l$ \textbf{vacuum representation} of $\gh$. We now extend such representation of $\gh$ to a unitary VOA structure.

\begin{thm}
Let $\mathfrak g$ be a finite-dimensional complex simple unitary Lie algebra. Fix $l\in\mathbb Z_{\geq0}$, and fix a highest weight vector $\Omega\in L_\gk(0,l)$ satisfying $\lVert\Omega\lVert=1$. Then there exists a unique unitary VOA structure of CFT type on the pre-Hilbert space $V^l_\gk=L_\gk(0,l)$, such that $\Omega$ is the vacuum vector, and 
\begin{align}
Y\big(X(-1)\Omega,x\big)=\sum_{n\in\mathbb Z}X(n)x^{-n-1}\qquad(X\in\gk).\label{eq20}
\end{align}
$V^l_\gk$ is called the \textbf{level $l$ unitary affine VOA of $\gk$}. The conformal vector $\nu$ of $V^l_\gk$ is given by the  formula
\begin{align}
\nu=\frac 1{2(l+h^\vee)}\sum_i X_i^*(-1)X_i(-1)\Omega,\label{eq25}
\end{align}
where $h^\vee$ is the dual Coxeter number of $\gk$, and $\{X_i \}$ is a set of orthonormal basis of $\mathfrak g$ under the normalized invariant inner product $(\cdot|\cdot)$ (see convention \ref{lb2}). The PCT operator $\Theta$ is determined by 
\begin{align}
\Theta X(-1)\Omega=-X^*(-1)\Omega\qquad(X\in\gk).\label{eq21}
\end{align} 

\end{thm}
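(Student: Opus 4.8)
The plan is to follow the classical two-step construction of affine (Kac--Moody) vertex operator algebras, add the unitary bookkeeping at the end, and read off uniqueness from the rigidity already established in Proposition \ref{lb3}. By Theorem \ref{lb4} taken at $\lambda=0$, a positive energy representation $L_\gk(0,l)$ exists, is unique up to unitary equivalence, and carries a positive-definite inner product; normalize so that $\|\Omega\|=1$, and set $W:=L_\gk(0,l)$, $K=l\cdot\id_W$. For each $X\in\gk$ introduce the field $\mathfrak X(x)=\sum_{n\in\mathbb Z}X(n)x^{-n-1}$ on $W$. The relations \eqref{eq26} say that these fields are pairwise local with OPE having at most a second-order pole; the highest-weight/cyclicity properties of $\Omega$ give $X(n)\Omega=0$ for $n\geq 0$, so $\mathfrak X(x)\Omega\in W[[x]]$ with constant term $X(-1)\Omega$; and PBW together with cyclicity shows that $W$ is spanned by the vectors $X_1(n_1)\cdots X_k(n_k)\Omega$ with all $n_i<0$. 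Hence the standard reconstruction/existence theorem for vertex algebras (e.g.\ \cite{Kac98}, \cite{LL04}) produces a vertex algebra structure $(W,Y,\Omega)$, unique subject to \eqref{eq20}, all of whose modes $Y(v,n)$ are then determined by the generating fields through the Jacobi identity.

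Next I install the conformal structure. Since $l\geq 0$ and $h^\vee>0$ we have $l+h^\vee>0$, so $\nu$ in \eqref{eq25} makes sense. The Sugawara computation (see e.g.\ \cite{Kac98} \S5.7) shows that $Y(\nu,x)=\sum_n L_nx^{-n-2}$ satisfies the Virasoro relations with central charge $l\dim\gk/(l+h^\vee)$, that $L_{-1}$ is the translation operator so that the translation property holds for all vertex operators, and that $[L_0,X(n)]=-nX(n)$, which forces $L_0$ to coincide with the energy grading of the PER. Because the lowest-energy $\gk$-module of $L_\gk(0,l)$ is the trivial module $\mathbb C\Omega$ concentrated in degree $0$, we obtain $W=\bigoplus_{n\geq 0}W(n)$ with $\dim W(n)<\infty$ and $W(0)=\mathbb C\Omega$. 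Thus $V^l_\gk:=(W,Y,\Omega,\nu)$ is a VOA of CFT type.

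For unitarity, define $\Theta$ on the generators by \eqref{eq21} and extend by $\Theta Y(v,x)=Y(\Theta v,x)\Theta$. This is a well-defined antilinear automorphism: it respects the defining relations of $W$ because $*$ is an anti-automorphism of $\gk$ with $**=\id$; it fixes $\Omega$; and it fixes $\nu$, since $\{X_i^*\}$ is again an orthonormal basis and $\sum_i[X_i,X_i^*]=0$ (this element is central in the simple algebra $\gk$, hence zero). To verify \eqref{eq18} it is enough to verify it for $v$ in the generating set $\{X(-1)\Omega:X\in\gk\}$, because formal adjunction is compatible with the mode operations in \eqref{eq24}, so the set of vectors satisfying the adjoint relation is closed under forming products $Y(u,m)w$ and hence is all of $W$ (cf.\ \cite{CKLW18}, \cite{Gui19} \S1). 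For $v=X(-1)\Omega$, a quasi-primary vector of conformal weight $1$, formula \eqref{eq29} reduces \eqref{eq18} to the identity $X(n)^\dagger=X^*(-n)$, i.e.\ to \eqref{eq27}, which holds precisely because $W$ is a unitary $\gh$-module. Hence $(V^l_\gk,Y,\Omega,\nu,\Theta)$ is a unitary VOA of CFT type.

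Finally, uniqueness. Any unitary CFT-type VOA structure on $L_\gk(0,l)$ with vacuum $\Omega$ satisfying \eqref{eq20} has its vertex operator determined on the generators $X(-1)\Omega$ by \eqref{eq20}, hence on all of $W$ by cyclicity and the Jacobi identity, so it equals the $Y$ constructed above; and its associated normalized invariant inner product, being unique (cf.\ the remarks following \eqref{eq18}), is the given one. By Proposition \ref{lb3}, the conformal vector and PCT operator of such a structure are determined by that inner product, so they must be given by \eqref{eq25} and \eqref{eq21}. I expect the Sugawara step to be the main obstacle: verifying that \eqref{eq25} is a Virasoro vector of the asserted central charge and, crucially, that its $L_0$ reproduces the PER grading — which is what yields CFT type and the correct grading — is where the dual Coxeter number genuinely enters, through the Casimir and normal-ordering bookkeeping; the remaining steps are either cited or of the routine "check on generators and propagate" type.
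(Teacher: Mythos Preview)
Your proof is correct and follows essentially the same route as the paper: existence of the VOA structure from standard references (you invoke the reconstruction theorem of \cite{Kac98,LL04} where the paper cites \cite{FZ92}), Sugawara for the conformal structure, verification of the adjoint relation \eqref{eq18} on the quasi-primary generating set $\{X(-1)\Omega\}$ followed by propagation (the paper phrases this via \cite{CKLW18} proposition 5.17 or cites \cite{DL14}), and uniqueness of $\nu,\Theta$ from Proposition \ref{lb3}. The only place where you are slightly more explicit than the paper is in checking that $\Theta$ fixes $\nu$ via $\sum_i[X_i,X_i^*]=0$; conversely, the paper is more explicit in deriving $L_n^\dagger=L_{-n}$ from Sugawara before invoking the generating-set criterion, whereas you absorb this into the propagation step.
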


\begin{proof}
The existence of a VOA structure on $V^l_\gk$ such that relation \eqref{eq20} holds, that $\Omega$ is the vacuum vector, and that the $\nu$ defined by \eqref{eq21} is the conformal vector, is given in \cite{FZ92} theorem 2.4.1. In this case, the set 
\begin{align}
E=\{X(-1)\Omega:X\in\gk \}\label{eq23}
\end{align}
generates $V^l_\gk$. So any antilinear automorphism on the VOA $V^l_\gk$ is determined by its values on $E$. That $V^l_\gk$ is unitary with PCT operator $\Theta$ determined by \eqref{eq21} is proved in \cite{DL14} section 4.2.   One can also prove the unitarity of $V^l_\gk$ using \cite{CKLW18} proposition 5.17, which is a  general criterion on the unitarity of a VOA. To use this proposition, one needs to show  that $L_n^\dagger=L_{-n}$, and that there exists a generating set $F$ of quasi-primary vectors in $V^l_\gk$, such that
\begin{align}
Y(v,x)^\dagger=x^{-2\Delta_v}Y(v,x^{-1})\qquad(\forall v\in F)\label{eq28}.
\end{align}
Indeed, using \eqref{eq25} and \eqref{eq24}, one easily obtains  Sugawara's formulae:
\begin{gather}
L_0=\frac 1{l+h^\vee}\bigg(\frac 12\sum_{i}X^*_i(0)X_i(0)+\sum_{n>0}\sum_i X^*_i(-n)X_i(n)\bigg),\label{eq46}\\
L_m=\frac 1{2(l+h^\vee)}\sum_{n\in\mathbb Z}\sum_i X^*_i(m-n)X_i(n)\qquad(m\neq0).
\end{gather}
Direct computation now gives $L_m^\dagger=L_{-m}$. Using \eqref{eq26} and \eqref{eq27}, it is not hard to check that the set $E$ is quasi-primary, and that its subset $F=\{X(-1)\Omega:X\in\gk,X=X^* \}$, which is also generating, satisfies condition \eqref{eq28}.

Since $E$ is generating, \eqref{eq20} uniquely determines the vertex-algebraic structure of $V^l_\gk$. Uniqueness of $\Theta$ and $\nu$ follows from proposition \ref{lb3}.
\end{proof}

\begin{thm}\label{lb5}
Fix $l\in\mathbb Z_{\geq0}$. 

(1) For any $\lambda\in P_+(\gk)$ satisfying $(\lambda|\theta)\leq l$, there exists a unique unitary representation of $V^l_\gk$ on the pre-Hilbert space $L_\gk(\lambda,l)$, the vertex operator $Y_\lambda$ of which satisfies
\begin{align}
Y_\lambda\big(X(-1)\Omega,x\big)w^{(\lambda)}=\sum_{n\in\mathbb Z}X(n)w^{(\lambda)}\cdot x^{-n-1}\qquad(X\in\gk,w^{(\lambda)}\in L_\gk(\lambda,l)).\label{eq22}
\end{align}
The $V^l_\gk$-module $L_\gk(\lambda,l)$ is irreducible.  $L_\gk(\lambda_1,l)$ is unitarily equivalent to $L_\gk(\lambda_2,l)$ if and only if $\lambda_1=\lambda_2$.

(2) Any $V^l_\gk$-module is a (finite) direct sum of irreducible representations of the form $L_\gk(\lambda,l)$, where $\lambda\in P_+(\mathfrak g)$ is admissible at level $l$. In particular, any $V^l_\gk$-module is unitarizable.
\end{thm}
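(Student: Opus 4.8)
The plan is to deduce (1) by combining the Frenkel--Zhu construction of modules over affine VOAs with the representation theory of $\gh$ already established, plus a short unitarity check, and to deduce (2) from the rationality of $V^l_\gk$.

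For part (1) I would first produce the $V^l_\gk$-module structure on $L_\gk(\lambda,l)$ satisfying \eqref{eq22} via \cite{FZ92}: since $E=\{X(-1)\Omega:X\in\gk\}$ generates $V^l_\gk$, the prescribed fields $Y_\lambda(X(-1)\Omega,x)=\sum_{n\in\mathbb Z}X(n)x^{-n-1}$ extend --- via the commutator formula \eqref{eq24}, equivalently by normal ordering --- to fields $Y_\lambda(v,x)$ for every $v\in V^l_\gk$; the Jacobi identity for $Y_\lambda$ then reduces, by the same generation argument, to the affine relations \eqref{eq26} on $L_\gk(\lambda,l)$, and the translation property follows from Sugawara's formula, which gives $[L_{-1},X(n)]=-nX(n-1)$ on $L_\gk(\lambda,l)$ and hence $\frac d{dx}Y_\lambda(X(-1)\Omega,x)=Y_\lambda(X(-2)\Omega,x)=Y_\lambda(L_{-1}X(-1)\Omega,x)$, a relation that then propagates to all of $V^l_\gk$. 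Uniqueness of this module structure is immediate from the same generation property, as \eqref{eq24} recovers every $Y_\lambda(v,\cdot)$ from the $Y_\lambda(X(-1)\Omega,\cdot)$.

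Next I would verify unitarity. Each $X(-1)\Omega$ is quasi-primary of conformal weight $1$ (indeed $L_1X(-1)\Omega=[L_1,X(-1)]\Omega=X(0)\Omega=0$, since $\gk$ annihilates the lowest-energy line $\mathbb C\Omega$), so by \eqref{eq29} the relation \eqref{eq19} for $v=X(-1)\Omega$ reads $Y_\lambda(X(-1)\Omega,x)^\dagger=-x^{-2}Y_\lambda(\Theta X(-1)\Omega,x^{-1})=x^{-2}Y_\lambda(X^*(-1)\Omega,x^{-1})$, which upon comparing modes is just $X(n)^\dagger=X^*(-n)$ --- valid because the $\gh$-module $L_\gk(\lambda,l)$ is unitary, so $X(n)^\dagger=X(n)^*=X^*(-n)$ by \eqref{eq27}. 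Since the set of $v\in V^l_\gk$ for which \eqref{eq19} holds is a subspace that contains $\Omega$ and is stable under $v\mapsto Y(u,n)v$ for $u$ in the set (a standard consequence of \eqref{eq24}; cf. \cite{CKLW18} or \cite{Gui19}), and $E$ generates $V^l_\gk$, relation \eqref{eq19} holds for all $v$, so $L_\gk(\lambda,l)$ is a unitary $V^l_\gk$-module. For irreducibility and the classification of equivalence classes I would note that a $V^l_\gk$-submodule of $L_\gk(\lambda,l)$, being stable under all modes $Y_\lambda(X(-1)\Omega,n)=X(n)$, is a $\gh$-submodule, and any $V^l_\gk$-module map between two such modules is likewise a $\gh$-module map; theorem \ref{lb4} then gives both $\gh$-irreducibility (hence $V^l_\gk$-irreducibility) of $L_\gk(\lambda,l)$ and the fact that $L_\gk(\lambda_1,l)\cong L_\gk(\lambda_2,l)$ forces $\lambda_1=\lambda_2$, while Schur's lemma makes the unitary structure unique up to a positive scalar.

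For part (2), given a $V^l_\gk$-module $W$, restricting along $X\mapsto Y_W(X(-1)\Omega,\cdot)$ (and using \eqref{eq14} with $u=Y(-1)\Omega$ to check the central term) exhibits $W$ as a $\gh$-module of level $l$. The structural input is that $V^l_\gk=L_\gk(0,l)$ is the irreducible quotient of the generalized Verma (Weyl) module $M_\gk(0,l)$ by the submodule generated by the singular vector $E_\theta(-1)^{l+1}\Omega$, where $E_\theta\in\gk_\theta$ is a root vector for the highest root $\theta$, so that Frenkel--Zhu's identification $A(V^l_\gk)\cong U(\gk)/\langle E_\theta^{l+1}\rangle$ of the Zhu algebra holds; since $E_\theta$ acts on $L_\gk(\mu)$ with nilpotency order $(\mu|\theta)+1$ (as the maximal $H_\theta$-eigenvalue on $L_\gk(\mu)$ is $(\mu|\theta)$, $\theta$ being dominant), the irreducible $A(V^l_\gk)$-modules are exactly the $L_\gk(\mu)$ with $(\mu|\theta)\le l$. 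The one deep ingredient is that $V^l_\gk$ is a rational VOA --- due to \cite{FZ92}, with complete proofs supplied in later work --- whence the standard Zhu-algebra theory yields that the irreducible $V^l_\gk$-modules correspond bijectively to the irreducible $A(V^l_\gk)$-modules (so, by part (1), are precisely the $L_\gk(\mu,l)$ with $(\mu|\theta)\le l$) and that every $V^l_\gk$-module is a finite direct sum of irreducibles; each summand is unitarizable by part (1), hence so is $W$. The only genuine obstacle in the whole argument is this rationality statement; the rest of part (1) is a routine combination of the Frenkel--Zhu construction with theorem \ref{lb4}, and the unitarity is a single computation on the generating set $E$.
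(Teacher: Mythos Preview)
Your argument is correct and follows essentially the same route as the paper: both invoke \cite{FZ92} for the module construction satisfying \eqref{eq22}, verify the unitarity relation \eqref{eq19} on the quasi-primary generating set $E=\{X(-1)\Omega:X\in\gk\}$ using \eqref{eq27} and \eqref{eq21}, and then propagate to all of $V^l_\gk$ via the criterion in \cite{Gui19} (the paper cites this as proposition 1.10); part (2) is likewise referred to \cite{FZ92} in both cases. Your write-up simply supplies more of the details that the paper leaves to citations or calls ``obvious'', in particular the explicit appeal to theorem \ref{lb4} for irreducibility and classification, and the Zhu-algebra mechanism underlying the rationality input.
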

\begin{proof}
(1) (cf. \cite{DL14} Proposition 4.9.) Existence of a $V^l_\gk$-module $L_\gk(\lambda,l)$ satisfying \eqref{eq22} is proved in \cite{FZ92} section 3.1. Since $L_\gk(\lambda)$ is unitary as a $\gh$ module,  by \eqref{eq27}, \eqref{eq20}, and \eqref{eq21}, the vertex operator $Y_\lambda$ satisfies relation \eqref{eq29} when  $v\in E=\{X(-1)\Omega:X\in\gk \}$. Since any $v\in E$ is quasi-primary, relation \eqref{eq19} also holds for $Y_\lambda$ and any $v\in E$. Therefore, by \cite{Gui19} proposition 1.10, $L_\gk(\lambda,l)$ is a unitary $V^l_\gk$-module. The rest of part (1) is obvious.

(2) This was also proved in \cite{FZ92} section 3.1.
\end{proof}

\begin{rem}
Let $L_\gk(\lambda,l)$ be a unitary irreducible $V^l_\gk$-module, where $\lambda\in P_+(\gk)$ is admissible at level $l$. Recall that $\gk$ is regarded as a Lie subalgebra of $\widehat\gk$ by identifying $X\in\gk$ with $X(0)$, and the action of $\gk$ on the lowest energy subspace of $L_\gk(\lambda,l)$ is  equivalent to $L_\gk(\lambda)$, the unitary highest weight representation of $\gk$ with highest weight $\lambda$. So we identify the vector space $L_\gk(\lambda)$ with the lowest energy subspace of $L_\gk(\lambda,l)$.

Choose $\Delta_\lambda\in\mathbb R$ such that $L_0|_{L_\gk(\lambda)}=\Delta_\lambda\cdot\id_{L_\gk(\lambda)}$, i.e. $\Delta_\lambda$ is the lowest conformal weight  of $L_\gk(\lambda,l)$. By Sugawara's construction, $\Delta_\lambda>0$. We say that $\Delta_\lambda$ is the \textbf{conformal weight} of $L_\gk(\lambda,l)$. Then the conformal weight $\Delta_w$ of any non-zero homogeneous vector $w\in L_\gk(\lambda,l)$ must be in $\Delta_\lambda+\mathbb Z_{\geq0}$.
\end{rem}

\begin{rem}
It is easy to see that the contragredient $V^l_\gk$-module of $L_\gk(\lambda,l)$ is $L_\gk(\overline\lambda,l)$. As an immediate consequence, we have
\begin{align}
(\lambda|\theta)=(\overline\lambda|\theta)\qquad(\forall\lambda\in P_+(\gk)).
\end{align}
Indeed, we can choose $l=(\lambda|\theta)$. Then since $L_\gk(\overline\lambda,l)$ is also an irreducible $V$-module, we must have $(\overline\lambda|\theta)\leq l=(\lambda|\theta)$. Replace $\lambda$ with $\overline\lambda$, we get $(\lambda|\theta)=(\overline{\overline\lambda}|\theta)\leq(\overline\lambda|\theta)$.
\end{rem}

\begin{thm}\label{lb18}
Let $\lambda\in P_+(\gk)$ be admissible at level $l$. Then $L_\gk(\lambda,l)$ is energy-bounded. Moreover, for any $X\in\gk$, $Y_\lambda(X(-1)\Omega,x)$ satisfies $1$-st order energy-bounds.
\end{thm}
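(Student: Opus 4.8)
The plan is to reduce everything to an elementary estimate on the current modes and then apply Proposition \ref{lb15}. Recall from \eqref{eq23} that $E=\{X(-1)\Omega:X\in\gk\}$ is a generating set of homogeneous (conformal weight $1$) vectors of $V^l_\gk$, and that by \eqref{eq22} the modes of $Y_\lambda(X(-1)\Omega,x)=\sum_{n\in\mathbb Z}X(n)x^{-n-1}$ on $L_\gk(\lambda,l)$ are exactly the operators $X(n)$. Since $X(-1)\Omega$ is quasi-primary of weight $1$, combining \eqref{eq29} with \eqref{eq21} gives $X(n)^\dagger=X^*(-n)$ on $L_\gk(\lambda,l)$. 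So it is enough to show that for each $X\in\gk$ there is an $M\geq0$ with
\begin{align*}
\lVert X(n)w\rVert\leq M(1+|n|)^{1/2}\lVert(1+L_0)w\rVert\qquad(n\in\mathbb Z,\ w\in L_\gk(\lambda,l)),
\end{align*}
this being precisely the $1$-st order energy bound for $Y_\lambda(X(-1)\Omega,x)$; then Proposition \ref{lb15} promotes it to energy-boundedness of the whole module $L_\gk(\lambda,l)$, proving both assertions at once and uniformly in $\lambda$.

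For the annihilation modes ($n\geq0$) I would read the bound directly off Sugawara's formula \eqref{eq46}, which holds on every $V^l_\gk$-module. Fix an orthonormal basis $\{X_i\}$ of $\gk$; pairing \eqref{eq46} with $w$ and using $X_i(n)^\dagger=X_i^*(-n)$ rewrites every summand on the right as $\lVert X_i(n)w\rVert^2\geq0$. Hence, for each fixed $i$ and each $n\geq0$, $\lVert X_i(n)w\rVert^2\leq 2(l+h^\vee)\bk{L_0w|w}\leq 2(l+h^\vee)\lVert(1+L_0)w\rVert^2$, where the last step uses $0\leq L_0$ and $1\leq 1+L_0$. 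Expanding an arbitrary $X$ in the basis and applying the triangle inequality gives $\lVert X(n)w\rVert\leq C_X\lVert(1+L_0)w\rVert$ for all $n\geq0$, with $C_X$ depending (linearly) on $X$.

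For the creation modes ($n<0$) I would feed this back into the current commutator \eqref{eq26}. It suffices to treat $X=X^*$, a general $X$ being a complex combination of two self-adjoint elements. Writing $m=-n>0$, the relation \eqref{eq26} gives $[X(m),X(-m)]=m\lVert X\rVert^2 l$, hence
\begin{align*}
\lVert X(n)w\rVert^2=\bk{X(m)X(-m)w|w}=\lVert X(m)w\rVert^2+m\lVert X\rVert^2 l\,\lVert w\rVert^2\leq\big(C_X^2+\lVert X\rVert^2 l\big)(1+|n|)\lVert(1+L_0)w\rVert^2,
\end{align*}
using again $\lVert w\rVert\leq\lVert(1+L_0)w\rVert$ and the $n\geq0$ bound on $\lVert X(m)w\rVert$. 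Together with the previous step this yields the displayed inequality with $t=1/2$ for all $n\in\mathbb Z$, and Proposition \ref{lb15} completes the argument.

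I do not expect a real obstacle: once the adjunction $X(n)^\dagger=X^*(-n)$ and Sugawara's formula are in hand, the proof is one positivity inequality plus one commutator. The only things needing care are the bookkeeping that passes from an orthonormal basis to a general element of $\gk$ with explicit constants, and the sign of the central term in \eqref{eq26}, which is exactly what makes the creation-mode estimate come out with the harmless growth factor $(1+|n|)^{1/2}$ rather than with something that cannot be dominated by $L_0$.
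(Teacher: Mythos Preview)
Your proof is correct and follows essentially the same approach the paper sketches: the paper cites \cite{BS90} section 2 and \cite{TL04} proposition II.1.2.1 for the Sugawara-plus-commutator estimate on the current modes, and then invokes Proposition \ref{lb15}, which is exactly your argument written out in detail. In fact your estimate, read with $\bk{L_0w|w}=\lVert(1+L_0)^{1/2}w\rVert^2$ in place of $\bk{L_0w|w}\leq\lVert(1+L_0)w\rVert^2$, yields the sharper $\tfrac12$-th order bound the paper mentions; either version suffices for the theorem as stated.
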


\begin{proof}
The argument in \cite{BS90} section 2 shows that $Y_\lambda(X(-1)\Omega,x)$ satisfies $1$-st order energy-bounds. Indeed, one can prove the $\frac 12$-th order energy bounds condition for $Y_\lambda(X(-1)\Omega,x)$ using the Sugawara's formula \eqref{eq46}, as shown in \cite{TL04} proposition II.1.2.1. Now the energy-boundedness of $Y_\lambda$ follows from proposition \ref{lb15}.
\end{proof}

\begin{subappendices}

\subsection{Proof of proposition \ref{lb11}}\label{lb12}

In this appendix section, we prove the "if" part of proposition \ref{lb11} using a method similar to \cite{TK88} proposition 2.3.

We first recall some basic facts about Verma modules. Let $A\gk$ be the free associative algebra generated by the \emph{vector space} $\gk$, i.e., $A\gk=\bigoplus_{n\in\mathbb Z_{\geq0}}\gk^{\otimes n}$. Let $I_\gk$ be the ideal of $A\gk$ generated by $XY-YX-[X,Y]$ ($X,Y\in\gk$). The universal enveloping algebra of $\gk$ is $U\gk=A\gk/I_\gk$. Now for any $\eta\in \mathfrak h^*$, we chose an arbitrary non-zero vector $v_\eta$. Then $U\gk\otimes v_\eta$ is a (left) $U\gk$-module. Identify $1\otimes v_\eta$ with $v_\eta$. Let $J_\eta$ be the $U\gk$-submodule of $U\gk\otimes v_\eta$ generated by the vectors $Hv_\eta-\bk{\eta,H}v_\eta$ ($H\in\mathfrak h$) and $Xv_\eta$ ($X\in\gk_+$). The quotient module $M_\eta=U\gk\otimes v_\eta/J_\eta$ is a Verma module of $\gk$ with highest weight $\eta$. $M_\eta$, as a $U\gk_-$ module, is canonically isomorphic to $U\gk_-\otimes v_\eta$, since, by Poincare-Birkhoff-Witt theorem, $U\gk= U\gk_-\otimes U\mathfrak h\otimes U\gk_+$. Last, let $\overline{M_\eta}$ be the complex conjugate of $M_\eta$, and let $C_\eta:M_\eta\rightarrow \overline{M_\eta},u^{(\eta)}\mapsto C_\eta u^{(\eta)}=\overline{u^{(\eta)}}$ be the antilinear isomorphism. We let $\gk$ act on $\overline {M_\eta}$ by setting $X\overline {u^{(\eta)}}=-\overline{ X^* u^{(\eta)}}$ ($u^{(\eta)}\in M_\eta$). Then $\overline {v_\eta}$ is a lowest weight vector of $\overline{M_\eta}$.

Now, for any  $\varphi\in L_\gk(\lambda)[\nu-\mu]^*$, we extend it to a $\gk$-invariant tri-linear form on $L_\gk(\lambda)\otimes M_\mu\otimes\overline{M_\nu}$. First, we extend $\varphi$ to an element in $L_\gk(\lambda)^*$ by setting $\varphi(u^{(\lambda)})=0$ when $u^{(\lambda)}\in L_\gk(\lambda)[\rho]$ and $\rho\neq\nu-\mu$. We now regard $\varphi$ as a linear map $L_\gk(\lambda)\otimes v_\mu\otimes \overline{v_\nu}\rightarrow \mathbb C$. Next, identify $v_\mu$ with $1\otimes v_\mu\in A\gk_-\otimes v_\mu$, and extend $\varphi$ to a linear map $L_\gk(\lambda)\otimes (A\gk_-\otimes v_\mu)\otimes\overline{v_\nu}\rightarrow\mathbb C$, satisfying
\begin{align}
\varphi(u^{(\lambda)}\otimes X_1\cdots X_n v_\mu\otimes \overline{v_\nu})=(-1)^n\varphi(X_n\cdots X_1u^{(\lambda)}\otimes v_\mu\otimes\overline{v_\nu} )\label{eq41}
\end{align}
for any $n\in\mathbb Z_{\geq0},X_1,\dots,X_n\in\gk_-$. One easily checks that $\varphi$ vanishes on $L_\gk(\lambda)\otimes (I_{\gk_-}\otimes v_\mu)\otimes\overline{v_\nu}$, where $I_{\gk_-}$ is the ideal of $\gk_-$ generated by $XY-YX-[X,Y]$ ($X,Y\in\gk_-$). So $\varphi$ factors through a linear map, also denoted by $\varphi$, mapping $L_\gk(\lambda)\otimes (U\gk_-\otimes v_\mu)\otimes\overline{v_\nu}\rightarrow \mathbb C$. Identify $U\gk_-\otimes v_\mu$ with $M_\mu$. Thus $\varphi$ is a linear functional on $L_\gk(\lambda)\otimes M_\mu\otimes\overline{v_\nu}$. 

Extend $\varphi$ to a linear functional on $L_\gk(\lambda)\otimes M_\mu\otimes (\overline{A\gk\otimes v_\nu})$ by setting, inductively,
\begin{align}
\varphi(u^{(\lambda)}\otimes u^{(\mu)}\otimes \overline{X_1 X_2\cdots X_n v_\nu})=&\varphi(X_1^*u^{(\lambda)}\otimes u^{(\mu)}\otimes \overline{ X_2\cdots X_n v_\nu})\nonumber\\
&+\varphi(u^{(\lambda)}\otimes X_1^*u^{(\mu)}\otimes \overline{ X_2\cdots X_n v_\nu}),\label{eq42}
\end{align}
where $X_1,\dots,X_n\in\gk$. Clearly $\varphi$ vanishes on $L_\gk(\lambda)\otimes M_\mu\otimes \overline{I_\gk\otimes v_\nu}$. So $\varphi$ factors through a linear functional on $L_\gk(\lambda)\otimes M_\mu\otimes\overline{U\gk\otimes v_\nu}$, also denoted by $\varphi$. Using \eqref{eq41} and the fact that $\varphi$ vanishes on $L_\gk(\lambda)[\rho]\otimes v_\mu\otimes\overline{v_\nu}$ when $\rho+\mu\neq\nu$, it is not hard to see that $\varphi$ also vanishes on $L_\gk(\lambda)\otimes M_\mu\otimes\overline{J_\nu\otimes v_\nu}$. So it factors through a linear functional on $L_\gk(\lambda)\otimes M_\mu\otimes\overline{M_\nu}$,  denoted again by $\varphi$. By \eqref{eq42}, $\varphi$ is clearly $\gk$-invariant.

Let $N_\mu$ be the $\gk_-$-submodule of $M_\mu$ generated by  $F_\alpha^{n_{\mu,\alpha}+1}v_\mu$ ($\alpha$ is a simple root,  $F_\alpha\in\gk_{-\alpha}$). Then it is well known that  each $F_\alpha^{n_{\mu,\alpha}+1}v_\mu$ is a highest weight vector, implying that $N_\mu$ is $\gk$-invariant. We also know that $L_\gk(\mu)=M_\mu/N_\mu$. Hence, if we assume that $\varphi\perp K^\mu_\gk(\lambda)[\nu-\mu]$, then by \eqref{eq41}, $\varphi$ vanishes on $L_\gk(\lambda)\otimes N_\mu\otimes \overline{v_\nu}$, and hence on $L_\gk(\lambda)\otimes N_\mu\otimes\overline{M_\nu}$ by the $\gk$-invariance of $\varphi$. So $\varphi$ can be regarded as a $\gk$-invariant linear functional on $L_\gk(\lambda)\otimes L_\gk(\mu)\otimes\overline{M_\nu}$. Let
\begin{align*}
O_\nu=\{v^{(\nu)}\in M_\nu:\varphi\perp L_\gk(\lambda)\otimes L_\gk(\mu)\otimes \overline{v^{(\nu)}} \}.
\end{align*}
Clearly $O_\nu$ is a $\gk$-submodule of $M_\nu$. For any $\rho\in\mathfrak h^*$, $O_\nu[\rho]=M_\nu[\rho]$ when $\rho$ is not a weight of $L_\gk(\lambda)\otimes L_\gk(\mu)$. So $M_\nu/O_\nu$ is a finite-dimensional $\gk$-module. Assume, without loss of generality, that $\varphi\neq0$. Then $M_\nu/O_\nu$ is non-trivial. So $M_\nu/O_\nu\simeq L_\gk(\nu)$. Thus $\varphi$ is a $\gk$-invariant linear functional on $L_\gk(\lambda)\otimes L_\gk(\mu)\otimes L_\gk(\overline\nu)$, which is equivalent to saying that $\varphi\in\Hom_\gk(\lambda\otimes\mu,\nu)$. It is obvious that $\Gamma\varphi$ is our original $\varphi$. Thus the "if" part of proposition \ref{lb11} is proved.

\end{subappendices}

\section{Compression principle}

\subsection{Unitary affine vertex subalgebras}\label{lb39}
It is well known that for a VOA $V$, its weight-$1$ subspace $V(1)$ has a natural Lie algebra structure. In this chapter we let $V$ be a unitary VOA of CFT type with PCT operator $\Theta$. We shall see that $V(1)$ is a unitary Lie algebra.
\begin{lm}
	Vectors in $V(1)$ are quasi-primary.
\end{lm}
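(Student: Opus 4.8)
The plan is to combine the $L_0$-grading with the unitarity identity $L_1^\dagger=L_{-1}$, which has already been recorded in the excerpt as a consequence of taking $v=\nu$ in the invariance relation \eqref{eq18}.

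First I would observe that, since $L_1$ lowers conformal weight by one and $V$ is of CFT type, $L_1$ maps $V(1)$ into $V(0)=\mathbb C\Omega$. Hence for $v\in V(1)$ we may write $L_1v=c\,\Omega$ for some scalar $c\in\mathbb C$, and it suffices to show $c=0$. Next I would invoke the standard VOA fact $L_{-1}\Omega=0$ (an immediate consequence of the creation and translation properties) together with $L_1^\dagger=L_{-1}$, which holds on all of $V$ precisely because $V$ is unitary. Pairing $L_1v$ against $\Omega$ and using that $\langle\cdot|\cdot\rangle$ is antilinear in the second variable, one obtains
$$c\,\langle\Omega|\Omega\rangle=\langle L_1v|\Omega\rangle=\langle v|L_{-1}\Omega\rangle=0.$$
Since $\langle\Omega|\Omega\rangle>0$, this forces $c=0$, so $L_1v=0$ and $v$ is quasi-primary.

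The argument is essentially a two-line computation, so there is no real obstacle here; the only points that require a moment's care are the appeal to $L_{-1}\Omega=0$ (part of the basic VOA formalism assumed at the start of the section) and the fact that it is genuinely unitarity, rather than the bare VOA structure, that supplies the adjoint relation $L_1^\dagger=L_{-1}$ used in the pairing.
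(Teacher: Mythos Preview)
Your proof is correct and follows exactly the same approach as the paper: both use that $L_1v\in V(0)=\mathbb C\Omega$ and then compute $\langle L_1v\mid\Omega\rangle=\langle v\mid L_{-1}\Omega\rangle=0$ via the adjoint relation $L_1^\dagger=L_{-1}$ and the identity $L_{-1}\Omega=0$. The paper's version is just more terse.
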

\begin{proof}
	Choose any $v\in V(1)$. Then $L_1v\in V(0)=\mathbb C\Omega$. But $\bk{L_1v|\Omega}=\bk{v|L_{-1}\Omega}=0$. So $L_1v=0$.
\end{proof}

\begin{pp}
	The vector space $V(1)$, equipped with the bracket relation 
	\begin{align*}
	[u,v]=Y(u,0)v\qquad(u,v\in V(1))
	\end{align*}
	and the $*$-structure
	\begin{align}
	v^*=-\Theta v\qquad(v\in V(1)),\label{eq30}
	\end{align}
	is a finite dimensional unitary complex Lie algebra. The inner product of $V$ restricts to an invariant inner product of $V(1)$.
\end{pp}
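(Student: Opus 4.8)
The plan is to verify, in turn, that the bracket and the $*$-operation are well defined on $V(1)$, that $(V(1),[\cdot,\cdot])$ satisfies the Lie algebra axioms, that $v\mapsto v^*=-\Theta v$ is an involutive anti-homomorphism, and that the restriction of $\bk{\cdot|\cdot}$ is an invariant inner product under which $*$ is anti-unitary; each point is a direct translation of a standard vertex-operator identity. Well-definedness is the quickest: by \eqref{eq4} the mode $Y(u,0)$ of a homogeneous $u$ raises conformal weight by $\Delta_u-1$, so $Y(u,0)$ preserves $V(1)$ and hence $[u,v]=Y(u,0)v\in V(1)$ for $u,v\in V(1)$; finiteness of $\dim V(1)$ is part of the CFT-type VOA axioms. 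Since $\Theta\nu=\nu$ and $\Theta Y(v,x)=Y(\Theta v,x)\Theta$, taking $v=\nu$ gives $[\Theta,L_0]=0$, so $\Theta$, and therefore $v\mapsto v^*$, also preserves $V(1)$.

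For the Lie algebra axioms I would argue as follows. Antisymmetry is extracted from the skew-symmetry relation $Y(u,x)v=e^{xL_{-1}}Y(v,-x)u$: comparing the coefficients of $x^{-1}$ on the two sides, and using that $V$ is of CFT type so that $Y(v,m)u\in V(1-m)$ vanishes unless $m\in\{0,1\}$, the only possible correction to $Y(u,0)v=-Y(v,0)u$ is the term $L_{-1}Y(v,1)u$, which lies in $L_{-1}V(0)=L_{-1}\mathbb C\Omega=0$. The Jacobi identity is the commutator formula \eqref{eq14} specialized to $\mathcal Y_\alpha=Y$ with all modules equal to $V$, at $m=s=0$: only the $l=0$ term survives, giving $[Y(u,0),Y(v,0)]=Y(Y(u,0)v,0)=Y([u,v],0)$, and applying this to $w\in V(1)$ is precisely $[[u,v],w]=[u,[v,w]]-[v,[u,w]]$. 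For the $*$-structure, $(v^*)^*=-\Theta(-\Theta v)=\Theta^2 v=v$ since $\Theta^2=\id_V$ (the corollary to Proposition~\ref{lb3}), while $[u,v]^*=-\Theta Y(u,0)v=-Y(\Theta u,0)\Theta v=-Y(u^*,0)v^*=-[u^*,v^*]=[v^*,u^*]$, where the middle steps use $\Theta u=-u^*$, linearity of $Y$ in its first slot, and the antisymmetry just established.

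Finally, for unitarity: a vector of $V(1)$ is quasi-primary by the preceding lemma, so \eqref{eq29} with $\Delta_u=1$ reads $Y(u,x)^\dagger=-x^{-2}Y(\Theta u,x^{-1})$, and comparing modes gives $Y(u,0)^\dagger=-Y(\Theta u,0)=Y(u^*,0)$. Hence for $u,v,w\in V(1)$ we get $([u,v]|w)=\bk{Y(u,0)v|w}=\bk{v|Y(u,0)^\dagger w}=\bk{v|[u^*,w]}$, which is exactly the invariance condition \eqref{eq13}; anti-unitarity of $*$ is immediate from the anti-unitarity of $\Theta$ on $V$, since $\bk{u^*|v^*}=\bk{\Theta u|\Theta v}=\bk{v|u}$; and positive-definiteness of the restricted form is inherited from that of $V$. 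I do not expect a genuine obstacle here. The step needing the most care is the antisymmetry: the CFT-type hypothesis is exactly what truncates the skew-symmetry sum, the relation $L_{-1}\Omega=0$ is what annihilates the would-be weight-one correction term, and one must track the signs carefully through the antilinear $\Theta$ in both the $*$-axioms and the anti-unitarity computation.
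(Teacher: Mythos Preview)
Your proof is correct and follows essentially the same approach as the paper's: the Jacobi identity via the commutator formula \eqref{eq14} and invariance via the adjoint relation $Y(u,0)^\dagger=Y(u^*,0)$ coming from \eqref{eq29}. In fact your write-up is more complete than the paper's, which skips the antisymmetry and the verification that $[u,v]^*=[v^*,u^*]$; your use of skew-symmetry together with $L_{-1}\Omega=0$ to kill the $n=1$ correction is exactly the right way to fill that gap.
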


\begin{proof}
	Equation \eqref{eq14} shows that
	\begin{align}
	[Y(u,0),Y(v,0)]=Y\big(Y(u,0)v,0\big).
	\end{align}
	Therefore, for any $u,v,w\in V(1)$,
	\begin{align*}
	[u,[v,w]]-[v,[u,w]]=&[u,Y(v,0)w]+[v,Y(u,0)w]=Y(u,0)Y(v,0)w-Y(v,0)Y(u,0)w\\
	=&Y(Y(u,0)v,0)w=[Y(u,0)v,w]=[[u,v],w].
	\end{align*}
	Thus the Jacobi identity is proved. 
	
	Now \eqref{eq30} and equation \eqref{eq29} imply
	\begin{align}
	Y(u,x)^\dagger=x^{-2}Y(u^*,x^{-1}),
	\end{align}
	or equivalently
	\begin{align}
	Y(u,n)^\dagger=Y(u^*,-n)\qquad(n\in\mathbb Z).\label{eq31}
	\end{align}
	Hence
	\begin{align*}
	\bk{[u,v]|w}=\bk{Y(u,0)v|w}=\bk{v|Y(u^*,0)w}=\bk{v|[u^*,w]}.
	\end{align*}
	Therefore the inner product on $V(1)$ inherited from $V$ is invariant. So the Lie algebra $V(1)$ equipped with the involution $*$ defined by \eqref{eq30} is unitary.
\end{proof}

Now we assume that $\gk$ is a unitary simple Lie subalgebra of $V(1)$. By definition, $\gk$ is preserved by the involution $*$ of $V(1)$  The invariant inner product $\bk{\cdot|\cdot}$ on $V(1)$ is also an invariant one of $\gk$. Recall that our normalized invariant inner product $(\cdot|\cdot)$ of $\mathfrak g$ is chosen under which the length of longest roots of $\gk$ is $\sqrt2$. Since $\gk$ is simple, there exists $l>0$ such that $\bk{X_1|X_2}=l(X_1|X_2)$ for any $X_1,X_2\in\gk$. We call $l$ \textbf{the  level of $\gk$ in $V$}.
\begin{lm}
	For any $X_1,X_2\in\gk\subset V(1)$,
	\begin{align}
	Y(X_1,1)X_2=l(X_1|X_2^*)\Omega.\label{eq32}
	\end{align}
\end{lm}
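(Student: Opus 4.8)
The plan is to compute $Y(X_1,1)X_2$ directly using the general identity \eqref{eq24} (the $m=0$ specialization of the Jacobi identity) together with the fact that the vertex operators restricted to the subalgebra $\gk\subset V(1)$ realize the affine Lie algebra relations \eqref{eq26}. First I would recall that since $X_1,X_2\in V(1)$ we have $X_1 = Y(X_1,-1)\Omega$ (creation property), so that $Y(X_1,1)X_2 = Y(X_1,1)Y(X_2,-1)\Omega$. The target lies in $V(0) = \mathbb C\Omega$, because $Y(X_1,1)$ lowers conformal weight by $1$ and $X_2$ has weight $1$; so it suffices to pair with $\Omega$ and compute the scalar $\bk{Y(X_1,1)X_2\,|\,\Omega}$ (using $\bk{\Omega|\Omega}=1$).

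The key computation proceeds in two steps. First, use the adjoint relation for weight-$1$ (hence quasi-primary) vectors, which is exactly \eqref{eq31}: $Y(X_1,1)^\dagger = Y(X_1^*,-1)$. Thus $\bk{Y(X_1,1)X_2\,|\,\Omega} = \bk{X_2\,|\,Y(X_1^*,-1)\Omega} = \bk{X_2\,|\,X_1^*}$, where the last equality is again the creation property applied to $X_1^*\in V(1)$. Second, translate this inner product back to the normalized invariant inner product on $\gk$: by the definition of the level $l$ of $\gk$ in $V$, we have $\bk{X_2\,|\,X_1^*} = l\,(X_2\,|\,X_1^*)$. Finally, by the anti-unitarity of $*$ on $\gk$ (stated in \eqref{eq13} and the surrounding discussion, $(X|Y) = (Y^*|X^*)$), we can rewrite $(X_2\,|\,X_1^*) = (X_1\,|\,X_2^*)$, giving $\bk{Y(X_1,1)X_2\,|\,\Omega} = l\,(X_1\,|\,X_2^*)$. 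Since $Y(X_1,1)X_2 = c\,\Omega$ for the scalar $c = \bk{Y(X_1,1)X_2\,|\,\Omega}$, we conclude $Y(X_1,1)X_2 = l(X_1\,|\,X_2^*)\Omega$, as desired.

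An alternative, purely algebraic route avoiding the inner product would be to apply \eqref{eq24} with $u = X_1$, $n = 1$, acting on the vector $X_2 = X_2(-1)\Omega$ viewed through \eqref{eq22}: the commutator terms reproduce the central term $m(X_1|X_2^*)\delta_{m,-n}K$ of \eqref{eq26}, and on the vacuum representation $K$ acts as the level $l$. I would likely present the inner-product argument since it is shortest and the relation \eqref{eq31} is already available.

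I do not anticipate a serious obstacle here — the statement is essentially a bookkeeping identity. The only point requiring a little care is keeping track of where the complex conjugation/$*$ lands, i.e., making sure the asymmetry between $X_1$ and $X_2$ in the formula $l(X_1|X_2^*)$ is consistent with the conventions for $\dagger$, for $*$, and for the anti-linearity of $\bk{\cdot|\cdot}$ in its second variable; the anti-unitarity relation $(X|Y) = (Y^*|X^*)$ is what reconciles the two natural-looking forms $l(X_1|X_2^*)$ and $l(X_2|X_1^*)$.
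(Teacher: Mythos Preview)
Your proposal is correct and follows essentially the same approach as the paper: observe that $Y(X_1,1)X_2\in V(0)=\mathbb C\Omega$, compute the scalar by pairing with $\Omega$ and using the adjoint relation \eqref{eq31} to get $\bk{X_2|X_1^*}=l(X_2|X_1^*)=l(X_1|X_2^*)$. The paper's proof is exactly this computation.
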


\begin{proof}
	Clearly $Y(X_1,1)X_2\in V(0)$. So there exists $c\in\mathbb C$ such that $Y(X_1,1)X_2=c\Omega$. We now compute, using relation \eqref{eq31}, that
	\begin{align*}
	c=\bk{c\Omega|\Omega}=\bk{Y(X_1,1)X_2|\Omega}=\bk{X_2|Y(X_1^*,-1)\Omega}=\bk{X_2|X_1^*}=l(X_2|X_1^*)=l(X_1|X_2^*).
	\end{align*}
\end{proof}
In the following theorem we show that the unitary simple Lie subalgebra $\gk\subset V(1)$ generates a unitary affine vertex subalgebra of $V$. This theorem can be regarded as the unitary analogue of \cite{DM06} theorem 3.1.
\begin{thm}\label{lb25}
	We have $l\in\mathbb Z_{\geq0}$. The map $\pi:\gh\rightarrow\End(V)$ defined by
	\begin{gather*}
	X(n)\mapsto Y(X,n)\qquad(X\in\gk),\\
	K\mapsto l\cdot\id_V
	\end{gather*}
	is a PER of $\gh$. Its unitary submodule  $\gh\curvearrowright\gh\Omega$ is unitarily equivalent to the vacuum representation $V^l_\gk\equiv L_\gk(0,l)$ of $\gh$, and $\Omega$ is a highest weight vector of $\gh\Omega$. Let $\varphi:V^l_\gk\rightarrow \gh\Omega$ be the unitary $\gh$-module isomorphism mapping $\Omega\in V^l_\gk$ to $\Omega\in \gh\Omega\subset V$. Then $\varphi$ embeds $V^l_\gk$ into $V$ as a unitary vertex subalgebra.
\end{thm}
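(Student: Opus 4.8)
The plan is to establish the four assertions in sequence: that $\pi$ is a Lie algebra representation of $\gh$ on $V$; that it is a positive energy representation (PER); that its cyclic submodule $\gh\Omega$ is a highest weight PER, which forces $l\in\mathbb Z_{\geq0}$ and, by Theorem~\ref{lb4}, identifies $\gh\Omega$ with the vacuum representation $V^l_\gk$; and finally that the resulting map $\varphi$ embeds $V^l_\gk$ into $V$ as a unitary vertex subalgebra. Throughout I write $Y^\flat$ and $\Theta^\flat$ for the vertex operator and PCT operator of $V^l_\gk$.

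First I would verify that $\pi$ respects the defining relations~\eqref{eq26} of $\gh$. Applying the commutator formula~\eqref{eq14}, with all three modules taken to be the vacuum module $V$ (so $Y_k=Y_j=Y$), to vectors $X_1,X_2\in\gk\subseteq V(1)$: since $Y(X_1,p)X_2$ has conformal weight $1-p$ and $V$ is of CFT type, only $p=0$ and $p=1$ contribute, giving $Y([X_1,X_2],m+n)$ and, by~\eqref{eq32}, $m\cdot l(X_1|X_2^*)\,Y(\Omega,m+n-1)=m\cdot l(X_1|X_2^*)\,\delta_{m,-n}\,\id_V$. As $\pi(K)=l\cdot\id_V$ and the inner product in~\eqref{eq26} is the normalized one, this is exactly $\pi$ applied to $[X_1(m),X_2(n)]$; the relation $[K,X(n)]=0$ is trivial, so $\pi$ is a Lie algebra homomorphism. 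Next, the conformal-weight grading $V=\bigoplus_{n\in\mathbb N}V(n)$ serves as the PER grading: for $X\in V(1)$ the mode $Y(X,m)$ sends $V(n)$ into $V(n-m)$, while $\pi(K)$ acts as a scalar and $\dim V(n)<\infty$. Unitarity is relation~\eqref{eq31}, $Y(X,m)^\dagger=Y(X^*,-m)=\pi(X(m)^*)$, together with $\pi(K)^\dagger=\pi(K)$ since $l\in\mathbb R$; the grading is orthogonal, being the eigenspace decomposition of the self-adjoint $L_0$. Hence $\pi$ is a PER.

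Now I would use the creation property: $Y(X,m)\Omega=0$ for all $X\in\gk$ and $m\geq0$, whence $\gk_+\Omega=0$, $\hk\Omega=0$, $\gh_{>0}\Omega=0$, and $K\Omega=l\Omega$; thus $\Omega$ is a highest weight vector of weight $0$ and level $l$ for the PER $\gh\Omega:=U(\gh)\Omega$, which is therefore a highest weight PER. Theorem~\ref{lb4} then delivers, all at once, that $\gh\Omega$ is irreducible, that $l\in\mathbb Z_{\geq0}$, and that $\gh\Omega$ is unitarily equivalent to $L_\gk(0,l)=V^l_\gk$. Let $\varphi\colon V^l_\gk\to\gh\Omega$ be the unitary $\gh$-module isomorphism with $\varphi\Omega=\Omega$, which is unique since the highest weight vector of an irreducible highest weight PER is unique up to a scalar and both vacua have unit norm.

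It remains to check that $\varphi$ is a unitary vertex subalgebra embedding. Since $\gh\Omega$ is spanned by conformal-weight-homogeneous vectors of $V$, it is a graded subspace, so condition (c) holds; (a) is $\varphi\Omega=\Omega$, and (e) holds because $\varphi$ is unitary. For (b) and (d) I would work from the generating set $E=\{X(-1)\Omega:X\in\gk\}$ of $V^l_\gk$: there $\varphi(X(-1)\Omega)=\pi(X(-1))\varphi\Omega=Y(X,-1)\Omega=X$, so by~\eqref{eq20} and the $\gh$-intertwining of $\varphi$, $\varphi\,Y^\flat(X(-1)\Omega,x)=\sum_n\pi(X(n))\varphi\,x^{-n-1}=Y(X,x)\varphi=Y(\varphi(X(-1)\Omega),x)\varphi$, which is (b) for $v\in E$; and by~\eqref{eq21} and~\eqref{eq30}, $\varphi\,\Theta^\flat(X(-1)\Omega)=\varphi(-X^*(-1)\Omega)=-X^*=\Theta X=\Theta\,\varphi(X(-1)\Omega)$, which is (d) for $v\in E$. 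An induction on the word length of a spanning monomial $Y^\flat(X_1(-1)\Omega,n_1)\cdots Y^\flat(X_k(-1)\Omega,n_k)\Omega$ of $V^l_\gk$, using the iterate (weak associativity) formula to rewrite $Y^\flat(Y^\flat(a,m)u,x)$ through $Y^\flat(a,\,\cdot\,)$ and $Y^\flat(u,x)$ --- together with the fact that $\Theta^\flat$ is a VOA automorphism --- then propagates (b) and (d) from $E$ to all of $V^l_\gk$, and en route shows $\gh\Omega$ is closed under the vertex operators of $V$. I expect this last step to be the main obstacle: propagating the intertwining identity from the generators to arbitrary vectors rests on the standard but mildly technical principle that vertex-operator identities are determined on a generating set. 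A secondary point needing care is tracking, in the first step, that the central term of $\pi([X_1(m),X_2(n)])$ comes out with precisely the factor $l$, i.e. that the form $\langle\cdot|\cdot\rangle$ inherited by $\gk\subseteq V(1)$ equals $l$ times the normalized invariant form of $\gk$. Conceptually nothing here is deep: the one substantive conclusion, the integrality $l\in\mathbb Z_{\geq0}$, is furnished for free by the classification in Theorem~\ref{lb4}.
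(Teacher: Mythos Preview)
Your proposal is correct and follows essentially the same route as the paper: compute the commutator via~\eqref{eq14} and~\eqref{eq32} to verify the $\gh$-relations, use~\eqref{eq31} for unitarity, invoke the creation property and Theorem~\ref{lb4} to identify $\gh\Omega\simeq V^l_\gk$ (thereby obtaining $l\in\mathbb Z_{\geq0}$), and then check the vertex-subalgebra and PCT intertwining on the generating set $E$ before propagating via the Jacobi identity. The paper's proof is terser but structurally identical; your remark that the propagation step is the only mildly technical point matches the paper's one-line appeal to~\eqref{eq24}.
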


\begin{proof}
	For any $X_1,X_2\in\gk, k=2,3,4,\dots$, $Y(X_1,k)X_2\in V(1-k)=0$. Therefore, by equations \eqref{eq14} and \eqref{eq32},
	\begin{align*}
	[\pi(X_1,m),\pi(X_2,n)]=&[Y(X_1,m),Y(X_2,n)]=\sum_{k\geq0}{m\choose k}Y\big(Y(X_1,k)X_2,m+n-k \big)\\
	=&Y\big(Y(X_1,0)X_2,m+n \big)+mY\big(Y(X_1,1)X_2,m+n-1 \big)\\
	=&Y([X_1,X_2],m+n)+ml(X_1|X_2^*)Y(\Omega,m+n-1)\\
	=&Y([X_1,X_2],m+n)+ml(X_1|X_2^*)\delta_{m,-n}\id_V\\
	=&\pi\big([X_1,X_2](m+n)+m(X_1|X_2^*)\delta_{m,-n}K\big)=\pi\big([X_1(m),X_2(n)]\big).
	\end{align*}
	Clearly $\pi(K)$ commutes with any $Y(X,n)$. By \eqref{eq31}, $\pi$ is a unitary representation of $\gh$. So $\pi$ is a level $l$ PER of $\gh$. In particular, $l\in\mathbb Z_{\geq0}$. By state-field correspondence, $Y(X,n)\Omega=0$ when $n\geq0$. So $\Omega$ is a highest weight vector with highest weight $0$. By theorem \ref{lb4}, there exists an isometry $\varphi:V^l_\gk\rightarrow V$, such that $\varphi(\Omega)=\Omega$, and $\varphi X(n)=\pi\big(X(n)\big)\varphi$ for any $X\in\gk,n\in\mathbb Z$. It is clear that $\varphi$ preserves the gradings of the two vector spaces.
	
	Write $U=V^l_\gk$ for simplicity. For clarity, we use different symbols $Y_U$ and $Y_V$ to denote vertex operators of $U$ and $V$. We also let $\Theta_U$ and $\Theta_V$ be the PCT operators of $U$ and $V$ respectively, and let $\Omega_U,\Omega_V$ be the vacuum vectors of $U$ and $V$ respectively. To show that $\varphi$ embeds $U$ into $V$ as a vertex subalgebra, we still need to check that
	\begin{gather}
	\varphi Y_U(u,x)=Y_V(\varphi u,x)\varphi,\label{eq33}\\
	\varphi\Theta_U\cdot u=\Theta_V\varphi\cdot u\label{eq34}
	\end{gather}
	for any $u\in U$.

	First, note that
	\begin{align*}
	\varphi Y_U(X,n)=\varphi X(n)=\pi(X(n))\varphi=Y_V(X,n)\varphi,
	\end{align*}
	and also
	\begin{align*}
	\varphi\Theta_U\cdot X(-1)\Omega_U=&-\varphi X^*(-1)\Omega_U=-\pi(X^*(-1))\varphi\Omega_U\\
	=&-Y_V(X^*,-1)\Omega_V=Y_V(\Theta_V X,-1)\Theta_V\Omega_V\\
	=&\Theta_V Y_V(X,-1)\Omega_V=\Theta_V\pi(X(-1))\varphi\Omega_U=\Theta_V\varphi\cdot X(-1)\Omega_U.
	\end{align*}
	So \eqref{eq33} and \eqref{eq34} hold for any $u\in U(1)$. Since $U(1)$ generates $U$, using Jacobi identity \eqref{eq24}, one can easily show that \eqref{eq33} and \eqref{eq34} hold for any $u\in U$.
\end{proof}
In the special case when $\gk=V(1)$ and $V(1)$ generates $V$, we have $V=\gh\Omega$. So $\varphi$ is a unitary equivalence between $V^l_\gk$ and $V$. The above theorem can be regarded as a uniqueness theorem for unitary affine VOA: If $V$ is a CFT type unitary VOA generated by the set $V(1)$ of weight-$1$ vectors, and if the Lie algebra $\gk=V(1)$ is simple, then the unitary VOA $V$ is equivalent to $V^l_\gk$ for some $l\in\mathbb Z_{\geq0}$. More generally, if $V(1)$ generats $V$ and  has  decomposition \eqref{eq36}, it is not hard to show the following equivalence of unitary VOAs:
\begin{align}
V\simeq V^1_{\mathfrak z}\otimes V_{\gk_1}^{l_1}\otimes\cdots\otimes  V_{\gk_n}^{l_n},
\end{align}
where $l_1,\dots,l_n\in\mathbb Z_{\geq0}$, and $V^1_{\mathfrak z}$ is the unitary (level $1$) Heisenberg VOA associated to the   unitary abelian Lie algebra $\mathfrak z$ whose unitary structure inherits from that of $V(1)$. (A finite dimensional complex vector space $\mathfrak h$ becomes a unitary abelian Lie algebra when $\mathfrak h$ is equipped with an inner product and an anti-unitary map $\Theta:\mathfrak h\rightarrow\mathfrak h$ satisfying $\Theta^2=\id_{\mathfrak h}$.)

\subsubsection*{Examples}

Let  $\pk$ be a finite dimensional unitary simple Lie algebra,  and assume that $\gk$ is a unitary simple Lie subalgebra of $\pk$. As our first example, we study the unitary affine vertex subalgebra arising from $\gk\subset V^l_\pk$. Let $(\cdot|\cdot)_\gk$ and $(\cdot|\cdot)_\pk$ be the normalized invariant inner product of $\gk$ and $\pk$ respectively. Then $(\cdot|\cdot)_\pk$ also restricts to an invariant inner product of $\gk$. So there exists $k>0$ such that
\begin{align}
(X_1|X_2)_\pk=k(X_1|X_2)_\gk\qquad(\forall X_1,X_2\in\gk).
\end{align}
$k$ is called the \textbf{Dynkin index} of the embedding $\gk\subset\pk$, and is denoted by $[\pk:\gk]$.

\begin{pp}
	For any $l\in\mathbb Z_{\geq0}$, the level of $\gk$ in $V^l_{\pk}$ is $kl$, where $k=[\pk:\gk]$. Hence we have a unitary VOA embedding $V^{kl}_\gk\subset V^l_\pk$.
\end{pp}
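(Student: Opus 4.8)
The plan is to apply Theorem \ref{lb25} with $V=V^l_\pk$, taking for the simple subalgebra the copy of $\gk$ sitting inside the weight-one space $V^l_\pk(1)$. Once the \emph{level of $\gk$ in $V^l_\pk$}, in the sense of that theorem (the ratio between the inner product $V^l_\pk$ induces on $\gk$ and the normalized inner product $(\cdot|\cdot)_\gk$), is shown to equal $kl$, Theorem \ref{lb25} immediately gives $kl\in\mathbb Z_{\geq0}$ and the desired unitary vertex subalgebra embedding $V^{kl}_\gk\subset V^l_\pk$. So the entire content is that level computation.

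First I would record the identification $\pk\hookrightarrow V^l_\pk(1)$, $X\mapsto X(-1)\Omega$ (an equality for $l\geq1$; the case $l=0$ is trivial), under which $Y(X,n)=X(n)$ and, by \eqref{eq21}, the $*$-structure \eqref{eq30} of $V^l_\pk(1)$ restricts to the intrinsic $*$ of $\pk$; thus $\gk\subset\pk\subset V^l_\pk(1)$ is a unitary simple Lie subalgebra and Theorem \ref{lb25} is applicable. Next I would compute the inner product $V^l_\pk$ puts on $\pk$: using $X(n)^\dagger=X^*(-n)$ together with the affine relation \eqref{eq26} and $X(n)\Omega=0$ for $n\geq0$, one finds $\langle X_1(-1)\Omega\,|\,X_2(-1)\Omega\rangle=l\,(X_1|X_2)_\pk$ for all $X_1,X_2\in\pk$ (equivalently, by \eqref{eq32}, the level of $\pk$ in $V^l_\pk$ is $l$, as it should be).

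Finally, I would restrict this to $\gk$: by the definition of the Dynkin index, $(X_1|X_2)_\pk=k\,(X_1|X_2)_\gk$ for $X_1,X_2\in\gk$, hence $\langle X_1\,|\,X_2\rangle=kl\,(X_1|X_2)_\gk$ on $\gk$, so by definition the level of $\gk$ in $V^l_\pk$ is $kl$, and Theorem \ref{lb25} finishes the proof. There is no real obstacle in this argument; the only point requiring care is the normalization bookkeeping of Convention \ref{lb2} — one must read off the factor $l$ against the \emph{normalized} inner product of $\pk$ and the level of $\gk$ against the \emph{normalized} inner product of $\gk$, the two normalizations differing by precisely $k=[\pk:\gk]$.
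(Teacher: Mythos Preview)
Your proposal is correct and follows essentially the same approach as the paper's proof: both compute that the inner product inherited from $V^l_\pk$ on $\pk\subset V^l_\pk(1)$ is $l(\cdot|\cdot)_\pk$, then restrict to $\gk$ using the Dynkin index to obtain $kl(\cdot|\cdot)_\gk$, and invoke Theorem~\ref{lb25}. Your write-up is simply more explicit about the inner-product computation (via \eqref{eq26} and $X(n)\Omega=0$ for $n\geq0$) where the paper just cites that the level of $\pk$ in $V^l_\pk$ is $l$.
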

\begin{proof}
	Since the level of $\pk$ in $V^l_{\pk}$ is $l$, the invariant inner product $\bk{\cdot|\cdot}$ on $\pk$ inherited from the inner product of $V^l_\pk$ is $l(\cdot|\cdot)_\pk$. When further restricted to $\gk$, it becomes $kl(\cdot|\cdot)_\gk$. Therefore the level of $\gk$ in $V^l_\pk$ is $kl$.
\end{proof}

\begin{co}
	The only possible values of Dynkin index $[\pk:\gk]$ are $1,2,3,\dots$.
\end{co}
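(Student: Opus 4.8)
The plan is to reduce this immediately to the integrality of levels established in Theorem~\ref{lb25}. Recall that $[\pk:\gk]$ is by definition the real number $k>0$ characterized by $(X_1|X_2)_\pk=k(X_1|X_2)_\gk$ for all $X_1,X_2\in\gk$; positivity of $k$ is automatic from the positive-definiteness of the two invariant inner products. So it suffices to prove the single fact that $k\in\mathbb Z_{\geq0}$, whence $k\in\{1,2,3,\dots\}$.

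For the integrality I would simply specialize the preceding Proposition to $l=1$: it asserts that the level of $\gk$ in $V^1_\pk$ equals $k\cdot 1=k$. Now $V^1_\pk$ is a unitary VOA of CFT type, and $\gk$ is a unitary simple Lie subalgebra of its weight-one space $V^1_\pk(1)=\pk$ (the invariant inner product that $\pk$ inherits from $V^1_\pk$ restricts to a positive multiple of $(\cdot|\cdot)_\gk$ on $\gk$, and the $*$-structure is preserved, so the hypotheses of Theorem~\ref{lb25} are met). Applying Theorem~\ref{lb25} to $V=V^1_\pk$ with this subalgebra then tells us precisely that the level of $\gk$ in $V^1_\pk$ lies in $\mathbb Z_{\geq0}$, i.e. $k\in\mathbb Z_{\geq0}$. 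Combining with $k>0$ gives $k\in\{1,2,3,\dots\}$.

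Since the argument is a two-line composition of results already in hand, I do not expect any genuine obstacle; the only point worth a moment's care is verifying that $\gk\subset\pk=V^1_\pk(1)$ is indeed a \emph{unitary} simple Lie subalgebra in the sense required by Theorem~\ref{lb25}, which is clear as noted above. One could also remark that this gives a conceptual, representation-theoretic reason why Dynkin indices are positive integers, paralleling the classical fact.
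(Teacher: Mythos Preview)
Your proof is correct and is exactly the argument the paper intends: the corollary is stated without proof because it follows immediately from the preceding proposition (with $l=1$) together with Theorem~\ref{lb25}, precisely as you outline. There is nothing to add.
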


Now let us turn to the second example. Choose $l_1,\dots,l_n\in\mathbb Z_{\geq0}$, and take $V=V^{l_1}_\gk\otimes\cdots\otimes V^{l_n}_\gk$. Then $V(1)=\gk\oplus\cdots\oplus\gk$. Regard $\gk$ as a unitary Lie subalgebra of $V(1)$ by embedding $\gk$ diagonally into $V(1)$: 
\begin{gather}
\gk\subset \gk\oplus\gk\oplus\cdots\oplus\gk,\nonumber\\
X\mapsto (X,X,\cdots,X).\label{eq39}
\end{gather}
The following result is easy to check.
\begin{pp}\label{lb33}
	The level of $\gk$ in $V^{l_1}_\gk\otimes\cdots\otimes V^{l_n}_\gk$ corresponding to the diagonal embedding \eqref{eq39} is $l_1+\cdots+l_n$. So we have $V^{l_1+\cdots+l_n}_\gk\subset V^{l_1}_\gk\otimes\cdots\otimes V^{l_n}_\gk$.
\end{pp}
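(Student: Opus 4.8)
The plan is to unwind the definition of ``the level of $\gk$ in a unitary VOA $V$'' from Section \ref{lb39} and thereby reduce the proposition to a one-line inner-product computation. Recall that once $\gk$ is known to be a unitary \emph{simple} Lie subalgebra of $V(1)$, its level is the unique scalar $l$ with $\bk{X_1|X_2}=l(X_1|X_2)_\gk$ for all $X_1,X_2\in\gk$, and Theorem \ref{lb25} then automatically upgrades this to $l\in\mathbb Z_{\geq0}$ together with a unitary vertex subalgebra embedding $V^l_\gk\hookrightarrow V$. So, writing $V=V^{l_1}_\gk\otimes\cdots\otimes V^{l_n}_\gk$, there are exactly two things to do: (i) verify that the diagonal copy of \eqref{eq39} really is a unitary simple Lie subalgebra of $V(1)$ isomorphic to $\gk$, so that Theorem \ref{lb25} applies; (ii) restrict the tensor-product inner product to it.

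For (i), first identify $V(1)$: since $\nu=\sum_m\Omega\otimes\cdots\otimes\nu^m\otimes\cdots\otimes\Omega$ forces $L_0$ on $V$ to be the sum of the $L_0$'s of the factors, and each $V^{l_m}_\gk$ is of CFT type with $(V^{l_m}_\gk)(1)=\gk$, we get $V(1)=\bigoplus_{m=1}^n\gk_{(m)}$, where $\gk_{(m)}$ consists of the vectors $e_m(X)=\Omega\otimes\cdots\otimes X\otimes\cdots\otimes\Omega$ with $X\in\gk$ in the $m$-th slot. The subalgebra of \eqref{eq39} is then the span of the vectors $\mathrm d(X):=\sum_{m=1}^n e_m(X)$. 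Using $Y(v^1\otimes\cdots,x)=Y^1(v^1,x)\otimes\cdots$, the vacuum axiom $Y^m(\Omega,x)=\id$, and $Y^m(X,0)\Omega=0$ for $X\in\gk$, the order-$0$ product $Y(\mathrm d(X_1),0)\mathrm d(X_2)$ collapses slot by slot: the $(m,m)$-terms give $e_m([X_1,X_2])$ and all $(m,m')$-terms with $m\neq m'$ vanish, so $Y(\mathrm d(X_1),0)\mathrm d(X_2)=\mathrm d([X_1,X_2])$ and $\mathrm d$ is a Lie algebra embedding; in particular its image is simple. Likewise $\Theta=\Theta^1\otimes\cdots\otimes\Theta^n$ with $\Theta^m$ acting on $(V^{l_m}_\gk)(1)=\gk$ as $X\mapsto-X^*$, whence $\mathrm d(X)^*=-\Theta\,\mathrm d(X)=\mathrm d(X^*)$; so the diagonal copy is $*$-stable, hence a unitary Lie subalgebra of the unitary Lie algebra $V(1)$, isomorphic to the simple Lie algebra $\gk$.

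For (ii), the tensor-product inner product makes the $n$ slots mutually orthogonal and, because each vacuum is normalized ($\lVert\Omega\rVert=1$), restricts on $\gk_{(m)}$ to the inner product of $V^{l_m}_\gk$, i.e.\ to $l_m(\cdot|\cdot)_\gk$. Hence
\begin{align*}
\bk{\mathrm d(X_1)|\mathrm d(X_2)}=\sum_{m=1}^n l_m(X_1|X_2)_\gk=(l_1+\cdots+l_n)(X_1|X_2)_\gk,
\end{align*}
so the level of the diagonal $\gk$ in $V$ is $l_1+\cdots+l_n$. Feeding this into Theorem \ref{lb25} yields $l_1+\cdots+l_n\in\mathbb Z_{\geq0}$ (which is clear anyway) and the desired unitary vertex subalgebra embedding $V^{l_1+\cdots+l_n}_\gk\subset V^{l_1}_\gk\otimes\cdots\otimes V^{l_n}_\gk$. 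There is no substantive obstacle here: the whole argument is routine once the general machinery of Section \ref{lb39} is in place, and the only points deserving (brief) care are the mode-$0$ bracket computation showing $\mathrm d$ is a Lie homomorphism and the use of the normalization $\lVert\Omega\rVert=1$ of each tensor factor to get the orthogonal-direct-sum form of the inner product.
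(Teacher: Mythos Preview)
Your proof is correct and is precisely the routine verification the paper leaves to the reader (the paper states the proposition as ``easy to check'' and gives no proof). Your mode-$0$ computation showing $\mathrm d$ is a Lie homomorphism, the $*$-stability check, and the inner-product sum are exactly what is needed, and the appeal to Theorem~\ref{lb25} for the embedding is the intended conclusion.
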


\subsubsection*{Unitary representations}

We now go back to general theory, and regard $V^l_\gk$ as a unitary vertex subalgebra \emph{inside} $V$. The following proposition is obvious.
\begin{pp}
	Let $W_i$ be a unitary representation of $V$. The map $\pi_i:\gh\rightarrow\End(W_i)$ defined by
	\begin{gather*}
	X(n)\mapsto Y_i(X,n)\qquad(X\in\gk),\\
	K\mapsto l\cdot\id_{W_i}
	\end{gather*}
	is a PER of $\gh$. We say that $V\curvearrowright W_i$ \textbf{restricts to} the PER $\gh\curvearrowright W_i$.
\end{pp}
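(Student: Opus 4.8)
The plan is to re-run the proof of Theorem~\ref{lb25}, reading the module $W_i$ in place of $V$ itself. We are in the situation where $\gk\subset V(1)$ is a unitary simple Lie subalgebra of level $l$, so that $\pi:\gh\to\End(V)$, $X(n)\mapsto Y(X,n)$, $K\mapsto l\cdot\id_V$, is already known (Theorem~\ref{lb25}) to be a level-$l$ PER with $l\in\mathbb Z_{\geq0}$. Three things have to be checked for $\pi_i$: that it is a Lie algebra homomorphism, that it is a unitary representation, and that it is positive-energy; each is the module analogue of the corresponding step for $\pi$.

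First I would verify that $\pi_i$ respects the brackets. Applying the commutator formula \eqref{eq14} --- which holds verbatim for the action of $V$ on the module $W_i$, being a specialization of the Jacobi identity --- to $u=X_1$ and $v=X_2$ in $\gk\subset V(1)$ gives
\begin{align*}
[Y_i(X_1,m),Y_i(X_2,n)]=\sum_{k\geq0}\binom{m}{k}\,Y_i\big(Y(X_1,k)X_2,\;m+n-k\big).
\end{align*}
Since $Y(X_1,k)X_2\in V(1-k)$ vanishes for $k\geq2$, while $Y(X_1,0)X_2=[X_1,X_2]$ and $Y(X_1,1)X_2=l(X_1|X_2^*)\Omega$ by \eqref{eq32}, and $Y_i(\Omega,j)=\delta_{j,-1}\id_{W_i}$, the right-hand side collapses to $Y_i([X_1,X_2],m+n)+m\,l(X_1|X_2^*)\delta_{m,-n}\id_{W_i}$, which is exactly $\pi_i\big([X_1(m),X_2(n)]\big)$ in view of \eqref{eq26}; and since $\pi_i(K)$ is a scalar it commutes with every $\pi_i(X(n))$, matching the relation $[K,X(n)]=0$. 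For unitarity I would use that each $X\in V(1)$ is quasi-primary of conformal weight $1$, so \eqref{eq29} applied to the unitary module $W_i$ yields $Y_i(X,n)^\dagger=Y_i(-\Theta X,-n)=Y_i(X^*,-n)$ (the module version of \eqref{eq31}, with $X^*=-\Theta X$ the $*$-structure inherited from $V(1)$), whence $\pi_i(X(n))^\dagger=\pi_i(X(n)^*)$; and $\pi_i(K)^\dagger=\overline l\cdot\id_{W_i}=\pi_i(K)=\pi_i(K^*)$ since $l$ is a non-negative integer. Extending by antilinearity of $*$ and of the adjoint then shows $\pi_i$ is a unitary representation of $\gh$.

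For the positive-energy condition, I would specialize \eqref{eq4} to the weight-$1$ vector $X$ (so $L_0X=X$), obtaining $[L_0,Y_i(X,s)]=-s\,Y_i(X,s)$; hence $X(m)$ lowers the $L_0$-eigenvalue by $m$, and $\pi_i(K)$ preserves each $L_0$-eigenspace. Since the $L_0$-eigenvalues on the unitary module $W_i$ are non-negative with finite-dimensional eigenspaces, and $W_i$ is a direct sum of irreducible $V$-modules whose conformal weights lie in cosets of $\mathbb Z$ bounded below, the $L_0$-grading of $W_i$, after the usual re-indexing by $\mathbb N$, furnishes precisely the grading demanded in the definition of a PER. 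Thus $\pi_i$ is a level-$l$ PER of $\gh$.

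I do not expect a genuine obstacle here --- the statement is, as the paper says, obvious. The only point requiring a little care is the last one: checking that the $L_0$-grading of $W_i$ really matches the $\mathbb N$-grading in the definition of a PER (boundedness below, integral spacing within each irreducible summand, and finiteness of the graded pieces), which is a fact about the structure of unitary $V$-modules rather than anything about $\gh$ itself.
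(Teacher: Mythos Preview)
Your proof is correct and is exactly what the paper has in mind: the paper declares this proposition ``obvious'' and gives no proof at all, the intended justification being precisely the observation that the computation in Theorem~\ref{lb25} goes through verbatim with $Y_i$ in place of $Y$ (the Jacobi identity \eqref{eq14}, Lemma \eqref{eq32}, and the adjoint relation \eqref{eq29} all hold for the module action). Your closing caveat about the $\mathbb N$-grading is the only point not literally contained in the proof of Theorem~\ref{lb25}, and you handle it correctly.
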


Clearly, the $V^l_\gk$-module $L_\gk(\lambda,l)$ described in theorem \ref{lb5} restricts to the PER $\gh\curvearrowright L_\gk(\lambda,l)$.

\begin{thm}\label{lb8}
	Let $W_i$ be a unitary $V$-module, $\lambda\in P_+(\gk)$, and $w_\lambda\in W_i$  a non-zero homogeneous vector. If $w_\lambda$ is a $\lambda$-highest weight vector of the restricted PER $\gh\curvearrowright W_i$, then $\gh w_\lambda$ is $V^l_\gk$-invariant, and the action $V^l_\gk\curvearrowright \gh w_\lambda$ is a unitary $V^l_\gk$-module equivalent to $L_\gk(\lambda,l)$.
\end{thm}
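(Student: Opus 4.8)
\subsubsection*{Proof proposal}

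The plan is to transport the $V^l_\gk$-module structure of the standard module $L_\gk(\lambda,l)$ onto $\gh w_\lambda$ through an isomorphism of $\gh$-modules, and to check that the transported structure coincides with the restriction of $Y_i$ — following closely the argument used to prove the intertwining relation \eqref{eq33} in Theorem \ref{lb25}. First I would note that, by the proposition preceding the statement, $V\curvearrowright W_i$ restricts to a PER $\pi_i\colon\gh\rightarrow\End(W_i)$ with $\pi_i(K)=l\cdot\id_{W_i}$. Since $w_\lambda$ is, by hypothesis, a $\lambda$-highest weight vector of this PER, the $\gh$-submodule $\gh w_\lambda=U(\gh)w_\lambda$ is a highest-weight PER of $\gh$ with highest weight $\lambda$ and level $l$; by Theorem \ref{lb4} it is irreducible and, as a $\gh$-module, unitarily equivalent to $L_\gk(\lambda,l)$. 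Fix a unitary $\gh$-module isomorphism $\varphi\colon L_\gk(\lambda,l)\rightarrow\gh w_\lambda$ (after rescaling $w_\lambda$ we may arrange $\varphi v_\lambda=w_\lambda$, though this normalization plays no role).

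Next I would prove that $\varphi$ intertwines the vertex operators, i.e. that $Y_i(u,n)\circ\varphi=\varphi\circ Y_\lambda(u,n)$ as maps $L_\gk(\lambda,l)\rightarrow W_i$ for every $u\in V^l_\gk$ and $n\in\mathbb Z$. This single statement yields both assertions of the theorem: it forces $\gh w_\lambda=\varphi\big(L_\gk(\lambda,l)\big)$ to be invariant under every mode $Y_i(u,n)$ with $u\in V^l_\gk$, and it exhibits $V^l_\gk\curvearrowright\gh w_\lambda$ (with the vertex operator obtained by restricting $Y_i$) as isomorphic to $L_\gk(\lambda,l)$. For $u\in E=\{X(-1)\Omega:X\in\gk\}$ the intertwining relation is immediate from \eqref{eq22}: both $Y_i(X(-1)\Omega,n)$ and $Y_\lambda(X(-1)\Omega,n)$ act as $X(n)$ through the respective $\gh$-actions, and $\varphi$ is a $\gh$-module map. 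Since $E$ generates $V^l_\gk$, the general case follows by induction via the associativity (iterate) formula for vertex operators, a consequence of the Jacobi identity \eqref{eq24} valid in any module: the set of $v\in V^l_\gk$ for which $Y_i(v,n)\varphi=\varphi Y_\lambda(v,n)$ holds for all $n$ is a subspace containing $\Omega$ and $E$ and stable under $v\mapsto Y_\lambda(a,p)v$ with $a$ in the subspace, hence equals $V^l_\gk$ — this is verbatim the mechanism by which \eqref{eq33} was deduced from its restriction to $U(1)$ in the proof of Theorem \ref{lb25}.

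Finally, for unitarity, $L_\gk(\lambda,l)$ is a unitary $V^l_\gk$-module by Theorem \ref{lb5}(1) and $\varphi$ is an isometry respecting the inner product inherited by $\gh w_\lambda$ from $W_i$, so $\gh w_\lambda$ is a unitary $V^l_\gk$-module equivalent to $L_\gk(\lambda,l)$. Alternatively one may argue directly: $V^l_\gk$ is a unitary vertex subalgebra of $V$, so its PCT operator is the restriction of $\Theta$; as $\gh w_\lambda$ is invariant under $Y_i(u,x)$ for all $u\in V^l_\gk$ and hence also under $Y_i(\Theta u,x)$, restricting the adjoint formula \eqref{eq19} from $W_i$ to $\gh w_\lambda$ shows it is unitary. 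I expect the only real work to be bookkeeping — making the inductive extension from $E$ to all of $V^l_\gk$ precise and confirming that the formal intertwining identity genuinely yields invariance of $\gh w_\lambda$ under every mode — but since this is the same argument already carried out (with $V$ in place of $W_i$) in Theorem \ref{lb25}, no new obstacle arises.
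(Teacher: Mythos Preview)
Your proposal is correct and follows essentially the same approach as the paper: construct the unitary $\gh$-module isomorphism $\varphi\colon L_\gk(\lambda,l)\to\gh w_\lambda$, verify the intertwining relation $Y_i(u,n)\varphi=\varphi Y_\lambda(u,n)$ on the generating set $E=\{X(-1)\Omega:X\in\gk\}$, and extend to all of $V^l_\gk$ by the Jacobi identity, exactly as in the proof of Theorem \ref{lb25}. The only cosmetic difference is ordering: the paper first notes $V^l_\gk$-invariance of $\gh w_\lambda$ and unitarity (via Remark \ref{lb6}) and then builds $\varphi$, whereas you derive invariance as a consequence of the intertwining relation; both routes are equivalent and your alternative unitarity argument is precisely the one the paper uses.
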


\begin{proof}
	Clearly $V^l_\gk$ leaves $\gh w_\lambda$ invariant. By remark \ref{lb6}, $V^l_\gk\curvearrowright W_i$ is a unitary $V^l_\gk$-module. Therefore  $V^l_\gk\curvearrowright \gh w_\lambda$ is also a unitary $V^l_\gk$-module.
	
	Choose a highest weight vector $v_\lambda$ of $L_\gk(\lambda,l)$ satisfying $\lVert v_\lambda\lVert=\lVert w_\lambda\lVert$. Since $V^l_\gk\curvearrowright \gh w_\lambda$ restricts to a highest weight PER of $\gh$ with highest weight $\lambda$ and level $l$, there exists an (unitary) equivalence $\varphi:L_\gk(\lambda,l)\rightarrow \gh w_\lambda$ of PERs of $\gh$ satisfying $\varphi v_\lambda=w_\lambda$. Clearly $\varphi$ intertwines the actions of $V(1)$ on $L_\gk(\lambda,l)$ and on $\gh w_\lambda$. Since $V(1)$ generates $V^l_\gk$, $\varphi$ also intertwines the actions of $V^l_\gk$. Therefore, $\varphi$ is an equivalence between the unitary $V^l_\gk$-modules $L_\gk(\lambda,l)$ and $\gh w_\lambda$.
\end{proof}

The following lemma will be used later.
\begin{lm}\label{lb9}
	In theorem \ref{lb8}, regard $L_\gk(\lambda,l)$ as a $V$-submodule of $W_i$ by identifying $L_\gk(\lambda,l)$ with $\gh w_\lambda$. Let $\{L_n \}$ and $\{L^\gk_n \}$ be the Virasoro operators of $V$ and $V^l_\gk$ respectively. Then there exists $a\in\mathbb R$ such that
	\begin{align}
	L_0w^{(\lambda)}=(L^\gk_0+a)w^{(\lambda)}\qquad(\forall w^{(\lambda)}\in\gh w_\lambda)\label{eq38}
	\end{align}
\end{lm}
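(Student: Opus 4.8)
The plan is to show that the operator $D:=L_0-L^\gk_0$ commutes with the action of the affine Lie algebra $\gh$ on $\gh w_\lambda$, and then to use that $w_\lambda$ is a cyclic vector for this action to pin $D$ down to a scalar on $\gh w_\lambda$; reality of the scalar will follow because $D$ is a difference of two self-adjoint operators.

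First I would establish two commutation relations on $W_i$. Since $\gk$ is a subspace of $V(1)$, each $X\in\gk$ is a weight-one vector of $V$, so the translation property \eqref{eq4} applied to the vertex operator $Y_i\in\mathcal V{i\choose 0~i}$ gives $[L_0,Y_i(X,m)]=-m\,Y_i(X,m)$; writing $X(m)$ for the operator $Y_i(X,m)=\pi_i(X(m))$ on $W_i$, this reads $[L_0,X(m)]=-m\,X(m)$. On the other hand, under the embedding $\varphi:V^l_\gk\hookrightarrow V$ of theorem \ref{lb25}, the vector $X(-1)\Omega\in V^l_\gk$ maps to the weight-one vector $X\in V(1)$, it is itself a weight-one vector of $V^l_\gk$ with respect to $L^\gk_0$ (by Sugawara's formula \eqref{eq46}), and the $V^l_\gk$-module field it produces on $W_i$ is literally $Y_i(X,x)$; so the same relation \eqref{eq4}, now read inside $V^l_\gk$, gives $[L^\gk_0,X(m)]=-m\,X(m)$. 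Subtracting, $[D,X(m)]=0$ for all $X\in\gk$ and all $m\in\mathbb Z$. Since $\gh$ is spanned by the $X(m)$ together with the central $K$ (which acts on $W_i$ as the scalar $l$), $D$ commutes with the whole $\gh$-action, hence in particular with the $\gh$-action on $\gh w_\lambda$.

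Finally I would evaluate $D$ on the cyclic vector. By hypothesis $w_\lambda$ is homogeneous in $W_i$, so $L_0w_\lambda=\Delta_{w_\lambda}w_\lambda$; and under the $\gh$-module isomorphism $\gh w_\lambda\simeq L_\gk(\lambda,l)$ of theorem \ref{lb8}, $w_\lambda$ corresponds to the highest-weight vector, which lies in the lowest-energy subspace, so $L^\gk_0w_\lambda=\Delta_\lambda w_\lambda$. Hence $Dw_\lambda=aw_\lambda$ with $a=\Delta_{w_\lambda}-\Delta_\lambda\in\mathbb R$. Now any $w^{(\lambda)}\in\gh w_\lambda$ is a linear combination of vectors $X_1(n_1)\cdots X_k(n_k)w_\lambda$, and since $D$ commutes with each $X_j(n_j)$ we get $Dw^{(\lambda)}=aw^{(\lambda)}$, which is exactly \eqref{eq38}. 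The argument has no serious obstacle; the one point needing care is the bookkeeping in the second step — checking that $X(-1)\Omega$ really has $L^\gk_0$-weight $1$ and that the $V^l_\gk$-module vertex operator it produces on $W_i$ is $Y_i(X,x)$ — both of which are immediate from the construction of $\varphi$ in theorem \ref{lb25} together with Sugawara's formula, but are precisely what let the same computation run verbatim on both sides.
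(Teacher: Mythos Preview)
Your proof is correct and takes essentially the same route as the paper: show that $D=L_0-L^{\gk}_0$ commutes with the $\gh$-action on $\gh w_\lambda$ via the translation property, then pin it down to a scalar. The paper's endgame is marginally less direct---it invokes irreducibility of the $V^l_\gk$-module and extracts an eigenvalue of $D$ on the finite-dimensional lowest-energy subspace, whereas you simply evaluate $D$ on the cyclic eigenvector $w_\lambda$ to get $a=\Delta_{w_\lambda}-\Delta_\lambda$ explicitly---but the core argument is the same.
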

\begin{proof}
	Regard $V^l_\gk$ as inside $V$ as usual. It is obvious that $L_0u=L^\gk_0u$ for any $u\in V^l_\gk$. Therefore, by translation property, for any $w^{(\lambda)}\in\gh w_\lambda,u\in V^l_\gk$,
	\begin{align*}
	[L_0,Y_i(u,x)]w^{(\lambda)}=&(Y_i(L_0u,x)+x\frac d{dx}Y_i(u,x))w^{(\lambda)}=(Y_i(L_0^\gk u,x)+x\frac d{dx}Y_i(u,x))w^{(\lambda)}\\
	=&[L^\gk_0,Y_i(u,x)]w^{(\lambda)}.
	\end{align*}
	So $T:=(L_0-L^\gk_0)|_{\gh w_\lambda}$ commutes with the action of $V^l_\gk$ on $\gh w_\lambda$. In particular, $T$ commutes with $L_0|_{\gh w_\lambda}$. So $T$ leaves the lowest energy subspace $\gk w_\lambda$ of $\gh w_\lambda$ invariant. Since $\gk w_\lambda$ is finite-dimensional, we can choose an eigenvalue $a\in\mathbb C$ of $T|_{\gk w_\lambda}$. So $\Ker(T-a)$ is a non-trivial subspace of $\gh w_\lambda$ which is clearly $V^l_\gk$-invariant. Since $\gh w_\lambda$ is an irreducible $V^l_\gk$-module, $\Ker(T-a)$ must be $\gh w_\lambda$. Thus we've proved \eqref{eq38}. Since the eigenvalues of $L_0$ and $L^\gk_0$ are non-negative numbers, $a$ must be real.
\end{proof}

\subsection{Intertwining operators of unitary affine VOAs}\label{lb7}

By theorem \ref{lb5}, to study intertwining operators of $V^l_\gk$, it is sufficient to study the irreducible ones. So we let $L_\gk(\lambda,l),L_\gk(\mu,l),L_\gk(\nu,l)$ be three irreducible representations of $V^l_\gk$, where $\lambda,\mu,\nu\in P_+(\gk)$ are admissible at level $l$. $\mathcal V^l_\gk{\nu\choose\lambda~\mu}$ denotes the vector space of type ${\nu\choose\lambda~\mu}\equiv{L_\gk(\lambda,l)\choose L_\gk(\mu,l)~L_\gk(\nu,l)}$ intertwining operators of $V^l_\gk$. If $\mathcal Y_\alpha\in \mathcal V^l_\gk{\nu\choose\lambda~\mu}$, we call 
\begin{align*}
\Delta_\alpha:=\Delta_\lambda+\Delta_\mu-\Delta_\nu
\end{align*}
the \textbf{conformal weight} of $\mathcal Y_\alpha$.

Consider the lowest energy subspaces $L_\gk(\lambda),L_\gk(\mu),L_\gk(\nu)$ of $L_\gk(\lambda,l),L_\gk(\mu,l),L_\gk(\nu,l)$ respectively. By \eqref{eq4}, for any $u^{(\lambda)}\in L_\gk(\lambda), u^{(\mu)}\in L_\gk(\mu)$, the conformal weight of $\mathcal Y_\alpha(u^{(\lambda)},\Delta_\alpha-1)u^{(\mu)}$ is $\Delta_\nu$. So $\mathcal Y_\alpha(u^{(\lambda)},\Delta_\alpha-1)u^{(\mu)}\in L_\gk(\nu)$. We thus define
\begin{align*}
\Psi:\mathcal V^l_\gk{\nu\choose\lambda~\mu}\rightarrow\Hom(L_\gk(\lambda)\otimes L_\gk(\mu),L_\gk(\nu) ),
\end{align*}
by sending each $\mathcal Y_\alpha\in\mathcal V^l_\gk{\nu\choose\lambda~\mu}$ to the element $\Psi\mathcal Y_\alpha$ satisfying
\begin{gather*}
\Psi\mathcal Y_\alpha:L_\gk(\lambda)\otimes L_\gk(\mu)\rightarrow L_\gk(\nu),\\
\Psi\mathcal Y_\alpha (u^{(\lambda)}\otimes u^{(\mu)})= \mathcal Y_\alpha(u^{(\lambda)},\Delta_\alpha-1)u^{(\mu)}.
\end{gather*}
By Jacobi identity \eqref{eq15}, for any $X\in\gk$ we have
\begin{align*}
X\cdot\mathcal Y_\alpha(u^{(\lambda)},\Delta_\alpha-1)u^{(\mu)}-\mathcal Y_\alpha(u^{(\lambda)},\Delta_\alpha-1)Xu^{(\mu)}=\mathcal Y_\alpha(Xu^{(\lambda)},\Delta_\alpha-1)u^{(\mu)}.
\end{align*}
So $\Psi\mathcal Y_\alpha$ intertwines the actions of $\gk$ on the tensor product $\gk$-module $L_\gk(\lambda)\otimes L_\gk(\mu)$ and on $L_\gk(\nu)$. Thus the image of $\Psi$ is inside $\Hom_\gk(\lambda\otimes\mu,\nu)$. We thus regard $\Psi$ as a map from $\mathcal V^l_\gk{\nu\choose\lambda~\mu}$ to $\Hom_\gk(\lambda\otimes\mu,\nu)$.

The following well known proposition (cf. \cite{TK88} proposition 2.1, \cite{FZ92} theorem 3.2.3) allows us  to study the intertwining operators of affine VOAs using Lie-algebraic methods. To make this paper self-contained, we present a proof here following the argument in \cite{TK88}.
\begin{pp}\label{lb48}
	The map $\Psi:\mathcal V^l_\gk{\nu\choose\lambda~\mu}\rightarrow\Hom_\gk(\lambda\otimes\mu,\nu)$ is injective.	As a consequence, 	the fusion rule $N^\nu_{\lambda\mu}\equiv\dim \mathcal V^l_\gk{\nu\choose\lambda~\mu}$ of $V^l_\gk$ is a finite number, and satisfies
	\begin{align*}
	N^\nu_{\lambda\mu}\leq\dim\Hom_\gk(\lambda\otimes\mu,\nu).
	\end{align*}
\end{pp}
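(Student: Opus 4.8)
The plan is to follow the classical argument of Tsuchiya--Kanie \cite{TK88}. Since $\Psi$ is linear, it is enough to show that $\Psi\mathcal Y_\alpha=0$ forces $\mathcal Y_\alpha=0$, so I assume $\mathcal Y_\alpha(u^{(\lambda)},\Delta_\alpha-1)u^{(\mu)}=0$ for all homogeneous $u^{(\lambda)}\in L_\gk(\lambda)$, $u^{(\mu)}\in L_\gk(\mu)$. The basic mechanism is that $L_\gk(\lambda,l)$ and $L_\gk(\mu,l)$ are generated from their lowest energy subspaces $L_\gk(\lambda)$ and $L_\gk(\mu)$ by the negative affine modes $\gh_{<0}$ (by theorem \ref{lb4} and PBW), together with one clean special case of the Jacobi identity: taking $u=X\in\gk$ and $w^{(i)}=u^{(\lambda)}\in L_\gk(\lambda)$ in \eqref{eq14}, only the $l=0$ term survives (since $X(l)u^{(\lambda)}=0$ for $l\geq1$), which yields
\[
X(n)\,\mathcal Y_\alpha(u^{(\lambda)},s)-\mathcal Y_\alpha(u^{(\lambda)},s)\,X(n)=\mathcal Y_\alpha(Xu^{(\lambda)},n+s)\qquad(X\in\gk,\ n\in\mathbb Z).
\]
This relation is the workhorse throughout.

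The heart of the proof is to upgrade the hypothesis from the single mode $s=\Delta_\alpha-1$ to all modes, i.e.\ to show $\mathcal Y_\alpha(u^{(\lambda)},s)u^{(\mu)}=0$ for all $u^{(\lambda)}\in L_\gk(\lambda)$, $u^{(\mu)}\in L_\gk(\mu)$, $s\in\mathbb R$. By \eqref{eq4}, $\mathcal Y_\alpha(u^{(\lambda)},s)u^{(\mu)}$ lies in the conformal weight $\Delta_\nu+(\Delta_\alpha-1-s)$ subspace of $L_\gk(\nu,l)$, so it vanishes automatically unless $s\in\{\Delta_\alpha-1,\Delta_\alpha-2,\dots\}$, and $s=\Delta_\alpha-1$ is the hypothesis. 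I then induct downward on $s$. For $s<\Delta_\alpha-1$, set $Z_s=\Span\{\mathcal Y_\alpha(u^{(\lambda)},s)u^{(\mu)}:u^{(\lambda)}\in L_\gk(\lambda),u^{(\mu)}\in L_\gk(\mu)\}$, a subspace of a single finite-dimensional $L_0$-eigenspace of $L_\gk(\nu,l)$. The displayed relation with a \emph{positive} mode $m\geq1$, using $X(m)u^{(\mu)}=0$, gives $X(m)\mathcal Y_\alpha(u^{(\lambda)},s)u^{(\mu)}=\mathcal Y_\alpha(Xu^{(\lambda)},m+s)u^{(\mu)}$, which vanishes by the induction hypothesis since $m+s>s$; hence $\gh_{>0}Z_s=0$. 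With $m=0$ the same relation shows $Z_s$ is $\gk$-invariant, so $Z_s$ is invariant under $\gk\oplus\gh_{>0}$ with $K$ acting by $l$. If $Z_s\neq0$, a $\gk_+$-highest weight vector $\zeta_0\in Z_s$ then satisfies all of conditions (a)--(d) defining a highest weight vector of the PER $L_\gk(\nu,l)$; by theorem \ref{lb4} such a vector is unique up to scalar and lies in the lowest energy subspace $L_\gk(\nu)$, contradicting $0\neq\zeta_0\in L_\gk(\nu,l)(\Delta_\alpha-1-s)$ with $\Delta_\alpha-1-s\geq1$. Thus $Z_s=0$, completing the downward induction.

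It remains to propagate this vanishing outward. First, for fixed $u^{(\lambda)}\in L_\gk(\lambda)$, I show $\mathcal Y_\alpha(u^{(\lambda)},s)w^{(\mu)}=0$ for all $w^{(\mu)}\in L_\gk(\mu,l)$ and all $s$, by induction on the conformal weight of $w^{(\mu)}$: writing $w^{(\mu)}=X(n)w'$ with $n<0$ and $w'$ of strictly smaller weight, the displayed relation expresses $\mathcal Y_\alpha(u^{(\lambda)},s)w^{(\mu)}$ in terms of $\mathcal Y_\alpha(u^{(\lambda)},s)w'$ and $\mathcal Y_\alpha(Xu^{(\lambda)},n+s)w'$, both zero by the inductive hypothesis, the base case ($w^{(\mu)}\in L_\gk(\mu)$) being the previous paragraph. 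Then I let the charge vector vary, by induction on the conformal weight of $w^{(\lambda)}\in L_\gk(\lambda,l)$: writing $w^{(\lambda)}=X(n)v'$ with $n<0$ and $v'$ of strictly smaller weight, the Jacobi identity \eqref{eq24} expresses $\mathcal Y_\alpha(w^{(\lambda)},s)$ as a locally finite sum of operators of the form $Y_\nu(X,\cdot)\mathcal Y_\alpha(v',\cdot)$ and $\mathcal Y_\alpha(v',\cdot)Y_\mu(X,\cdot)$, all of which annihilate $L_\gk(\mu,l)$ by the inductive hypothesis, the base case $w^{(\lambda)}\in L_\gk(\lambda)$ being the claim just proved; hence $\mathcal Y_\alpha=0$. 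This gives injectivity of $\Psi$, and since $L_\gk(\lambda)$ and $L_\gk(\mu)$ are finite dimensional, $\Hom_\gk(L_\gk(\lambda)\otimes L_\gk(\mu),L_\gk(\nu))$ is finite dimensional, so $N^\nu_{\lambda\mu}\leq\dim\Hom_\gk(\lambda\otimes\mu,\nu)<\infty$.

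The only genuinely delicate step is the downward induction on $s$: a naive weight or dimension count does not suffice there, and it is precisely the rigidity of irreducible positive energy representations — uniqueness of the highest weight vector in theorem \ref{lb4} — that forces the vanishing of $Z_s$.
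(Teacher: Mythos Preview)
Your proof is correct, but the key step takes a genuinely different route from the paper's. The paper pairs $\mathcal Y_\alpha(w^{(\lambda)},s)w^{(\mu)}$ against a test vector $w^{(\nu)}\in L_\gk(\nu,l)$ and inducts on the combined excess energy $k(w^{(\mu)},w^{(\nu)})=(\Delta_{w^{(\mu)}}-\Delta_\mu)+(\Delta_{w^{(\nu)}}-\Delta_\nu)$; the base case $k=0$ is then immediate, since orthogonality of distinct $L_0$-eigenspaces forces the pairing to vanish unless $s=\Delta_\alpha-1$, which is precisely the hypothesis $\Psi\mathcal Y_\alpha=0$. You instead work with the vectors $\mathcal Y_\alpha(u^{(\lambda)},s)u^{(\mu)}$ themselves (not their pairings) and kill them for $s<\Delta_\alpha-1$ by showing that a nonzero $Z_s$ would contain a $\gh$-highest weight vector at strictly positive energy, contradicting the uniqueness in theorem~\ref{lb4}. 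The paper's argument is shorter and leans on the unitary inner product; yours is a bit more structural and would go through unchanged in a non-unitary setting, the finite-dimensional $\gk$-module $Z_s$ having a $\gk_+$-highest weight vector regardless of any inner product. The subsequent propagation (first over $w^{(\mu)}$ via the commutator relation, then over $w^{(\lambda)}$ via \eqref{eq24}) is essentially the same in both proofs.
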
	
\begin{proof}
	Suppose $\mathcal Y_\alpha\in\mathcal V^l_\gk{\nu\choose\lambda~\mu}$ and $\Psi\mathcal Y_\alpha=0$. We show that for any $s\in\mathbb R$,
	\begin{align}
	\bk{\mathcal Y_\alpha(w^{(\lambda)},s)w^{(\mu)}|w^{(\nu)}}=0\label{eq37}
	\end{align}
	for any $w^{(\lambda)}\in L_\gk(\lambda,l),w^{(\mu)}\in L_\gk(\mu,l),w^{(\nu)}\in L_\gk(\nu,l)$. By Jacobi identity \eqref{eq24}, it suffices to prove this when $w^{(\lambda)}\in L_\gk(\lambda)$. Let
	
	\begin{align*}
	k(w^{(\mu)},w^{(\nu)})=\Delta_{w^{(\mu)}}-\Delta_\mu+\Delta_{w^{(\nu)}}-\Delta_\nu\in\mathbb Z_{\geq0},
	\end{align*}
	where $\Delta_{w^{(\mu)}},\Delta_{w^{(\nu)}}$ are the conformal weights of $w^{(\mu)},w^{(\nu)}$ respectively. We prove \eqref{eq37} by induction on $k(w^{(\mu)},w^{(\nu)})$.
	
	If $k(w^{(\mu)},w^{(\nu)})=0$, then $w^{(\mu)},w^{(\nu)}$ are also in the lowest energy subspaces. By \eqref{eq4}, $\mathcal Y_\alpha(w^{(\lambda)},s)w^{(\mu)}$ is a homogeneous vector with conformal weight $\Delta_\lambda+\Delta_\mu-s-1=\Delta_\alpha+\Delta_\nu-s-1$. So clearly \eqref{eq37} holds when $s\neq \Delta_\alpha-1$. Since $\Psi\mathcal Y_\alpha=0$, \eqref{eq37} is also true when $s=\Delta_\alpha-1$. So we've proved \eqref{eq37} for any $s$ when $k(w^{(\mu)},w^{(\nu)})=0$.
	
	Now choose an arbitrary $N\in\mathbb Z_{>0}$, and assume that for any $s$, \eqref{eq37} holds whenever $k(w^{(\mu)},w^{(\nu)})=0,1,2,\dots,N-1$. We show that this is also true when $k(w^{(\mu)},w^{(\nu)})=N$. Since $N>0$, either $w^{(\mu)}$ or $w^{(\nu)}$ must have non-lowest conformal weight (energy). We first assume that $w^{(\mu)}\notin L_\gk(\mu)$. Then $w^{(\mu)}$ can be written as a linear combination of vectors of the form $X(-n)w^{(\mu)}_0$, where $X\in\gk,n\in\mathbb Z_{>0}$, and $w^{(\mu)}_0\in L_\gk(\mu,l)$ is homogeneous. So $\Delta_{w^{(\mu)}_0}=\Delta_{w^{(\mu)}}-n$. By Jacobi identity \eqref{eq14},
	\begin{align*}
	&\bk{\mathcal Y_\alpha(w^{(\lambda)},s)X(-n)w^{(\mu)}_0|w^{(\nu)} }\\
	=&\bk {\mathcal Y_\alpha(w^{(\lambda)},s)w^{(\mu)}_0|X^*(n)w^{(\nu)} }-\sum_{l\in\mathbb Z_{\geq0}}{-n\choose l}\bk{\mathcal Y_\alpha(X(l)w^{(\lambda)},-n+s-l)w^{(\mu)}_0|w^{(\nu)} },
	\end{align*}
	Since $w^{(\lambda)}$ has lowest energy, the right hand side of the above equation becomes
	\begin{align*}
	\bk {\mathcal Y_\alpha(w^{(\lambda)},s)w^{(\mu)}_0|X^*(n)w^{(\nu)} }-\bk{\mathcal Y_\alpha(Xw^{(\lambda)},-n+s)w^{(\mu)}_0|w^{(\nu)} }.
	\end{align*}
	Since $k(w^{(\mu)}_0,X^*(n)w^{(\nu)} )<N$ and $k(w^{(\mu)}_0,w^{(\nu)} )<N$, by induction, the above expression is $0$. So we've proved \eqref{eq37} for $k(w^{(\mu)},w^{(\nu)})=N$ when $w^{(\mu)}\notin L_\gk(\mu)$. The case $w^{(\nu)}\notin L_\gk(\nu)$ is treated in a similar way.
\end{proof}

Thus the vector space $\mc V^l_\gk{\nu\choose\lambda~\mu}$ can be identified with its image under the map $\Psi$. For most  examples  considered in this paper, $\Psi$ is in fact surjective. But this is not true in general.   We refer the reader to \cite{FZ92} theorem 3.2.3 for a general description of the image of $\Psi$.

\begin{thm}[Compression principle]\label{lb10}
	Let $V$ be a unitary VOA of CFT type, and $\gk$ a unitary  simple Lie subalgebra of $V(1)$. Let $l$ be the level of $\gk$ in $V$. Let $W_i,W_j,W_k$ be unitary representations of $V$. Assume that $w_\lambda$ (resp. $w_\mu$, $w_\nu$) is a (non-zero) homogeneous highest weight vector of $\gh\curvearrowright W_i$ (resp. $\gh\curvearrowright W_j$, $\gh\curvearrowright W_k$) with highest weight $\lambda$ (resp. $\mu$, $\nu$), and choose isometric $\gk$-module homomorphisms $\varphi_\lambda:L_\gk(\lambda)\rightarrow W_i$ (resp. $\varphi_\mu:L_\gk(\mu)\rightarrow W_j$, $\varphi_\nu:L_\gk(\nu)\rightarrow W_k$) with image $\gk w_\lambda$ (resp. $\gk w_\mu$, $\gk w_\nu$). Let $\varphi_\nu^\dagger:W_k\rightarrow L_\gk(\nu)$ be the formal adjoint of $\varphi_\nu$, i.e., $\varphi_\nu^\dagger$ satisfies
	\begin{align*}
	\bk{\varphi_\nu u^{(\nu)}|w^{(k)}}=\bk{u^{(\nu)}|\varphi_\nu^\dagger w^{(k)} }\qquad(\forall u^{(\nu)}\in L_\gk(\nu), w^{(k)}\in W_k).
	\end{align*}
	If there exist $s\in\mathbb R$ and  a type $k\choose i~j$ intertwining operator $\mathfrak Y$ of $V$, such that the map
	\begin{gather}
	\varphi_\nu^\dagger\mathfrak Y(\varphi_\lambda,s)\varphi_\mu:L_\gk(\lambda)\otimes L_\gk(\mu)\rightarrow L_\gk(\nu),\nonumber\\
	u^{(\lambda)}\otimes u^{(\mu)}\mapsto \varphi_\nu^\dagger\mathfrak Y(\varphi_\lambda u^{(\lambda)},s)\varphi_\mu u^{(\mu)}
	\end{gather}
	is non-zero, then $\varphi_\nu^\dagger\mathfrak Y(\varphi_\lambda,s)\varphi_\mu\in \Hom_\gk(\lambda\otimes\mu,\nu)$, and there exists a type $\nu\choose\lambda~\mu$ intertwining operator $\mathcal Y$ of $V^l_\gk$ such that
	\begin{align}
	\Psi\mathcal Y=\varphi_\nu^\dagger\mathfrak Y(\varphi_\lambda,s)\varphi_\mu.
	\end{align}
	If, moreover, $u^{(\lambda)}\in L_\gk(\lambda)$, and $\mathfrak Y(\varphi_\lambda u^{(\lambda)},x)$ satisfies $r$-th order energy bounds for some $r\geq0$, then  $\mathcal Y(u^{(\lambda)},x)$ also satisfies $r$-th order energy bounds.
\end{thm}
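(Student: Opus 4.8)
Write $b=a_\lambda+a_\mu-a_\nu\in\mathbb R$, where $a_\lambda,a_\mu,a_\nu$ are the real constants produced by Lemma \ref{lb9} for the submodules $\gh w_\lambda\subset W_i$, $\gh w_\mu\subset W_j$, $\gh w_\nu\subset W_k$. Since the $V^l_\gk$-action on $\gh w_\lambda$ is determined by the $\gh$-action alone, Theorem \ref{lb8} lets us extend the given isometric $\gk$-maps to $V^l_\gk$-module (equivalently $\gh$-module) isomorphisms $\wtd\varphi_\lambda:L_\gk(\lambda,l)\xrightarrow{\ \sim\ }\gh w_\lambda$, and likewise $\wtd\varphi_\mu,\wtd\varphi_\nu$, after rescaling $w_\lambda,w_\mu,w_\nu$ by unit scalars (which changes nothing in the hypotheses). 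The plan is to take
\[
\mathcal Y(w^{(\lambda)},x)\;=\;x^{b}\,\wtd\varphi_\nu^\dagger\,\mathfrak Y(\wtd\varphi_\lambda w^{(\lambda)},x)\,\wtd\varphi_\mu\ :\ L_\gk(\mu,l)\to L_\gk(\nu,l)\{x\}\qquad(w^{(\lambda)}\in L_\gk(\lambda,l)),
\]
equivalently $\mathcal Y(w^{(\lambda)},\sigma)=\wtd\varphi_\nu^\dagger\mathfrak Y(\wtd\varphi_\lambda w^{(\lambda)},\sigma+b)\wtd\varphi_\mu$, and to show it is a type ${\nu\choose\lambda~\mu}$ intertwining operator of $V^l_\gk$ with $\Psi\mathcal Y=\varphi_\nu^\dagger\mathfrak Y(\varphi_\lambda,s)\varphi_\mu$; the monomial factor $x^b$ is forced on us because $L_0^V$ and the Sugawara $L_0^\gk$ differ by $a_\bullet$ on each $\gh w_\bullet$.

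First I would record the structural facts underlying everything. By Theorem \ref{lb8} each $\gh w_\bullet$ is a $V^l_\gk$-submodule of the respective $W$; by Remark \ref{lb6} these $W$'s are unitary $V^l_\gk$-modules, so the orthogonal complements of the $\gh w_\bullet$ are again $V^l_\gk$-submodules. Hence the orthogonal projections $P_\bullet$ onto $\gh w_\bullet$ commute with every mode $Y_\bullet(u,n)$, $u\in V^l_\gk$, and — since $L_0^V$ preserves each $\gh w_\bullet$ by Lemma \ref{lb9} and is self-adjoint — with $L_0^V$; also $\wtd\varphi_\bullet^\dagger=\wtd\varphi_\bullet^{-1}P_\bullet$ is a contraction. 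Next, the $m=0$ case of \eqref{eq14} for $\mathfrak Y$ with $u=X\in\gk\subset V(1)$ reads $Y_k(X,0)\mathfrak Y(w^{(i)},s)-\mathfrak Y(w^{(i)},s)Y_j(X,0)=\mathfrak Y(Y_i(X,0)w^{(i)},s)$, i.e. $\mathfrak Y(\cdot,s)$ intertwines the $\gk$-actions; since $\varphi_\lambda,\varphi_\mu$ and (by a direct computation using unitarity of $\gk$) $\varphi_\nu^\dagger$ are $\gk$-module maps, $\varphi_\nu^\dagger\mathfrak Y(\varphi_\lambda,s)\varphi_\mu\in\Hom_\gk(\lambda\otimes\mu,\nu)$. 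Moreover $\mathfrak Y(\varphi_\lambda u^{(\lambda)},s)\varphi_\mu u^{(\mu)}$ is $L_0^V$-homogeneous of weight $\Delta_{w_\lambda}+\Delta_{w_\mu}-s-1$ while $\varphi_\nu^\dagger$ kills every $L_0^V$-weight except $\Delta_{w_\nu}$, so the hypothesis that $\varphi_\nu^\dagger\mathfrak Y(\varphi_\lambda,s)\varphi_\mu\neq0$ forces $s=\Delta_{w_\lambda}+\Delta_{w_\mu}-\Delta_{w_\nu}-1=(\Delta_\lambda+\Delta_\mu-\Delta_\nu-1)+b$.

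To see that $\mathcal Y$ is an intertwining operator of $V^l_\gk$, by the Proposition characterizing intertwining operators via \eqref{eq6} and \eqref{eq5} it suffices to verify the $L_0$-commutator relation and the Jacobi identity with respect to $V^l_\gk$. For the Jacobi identity I would apply \eqref{eq5} for $\mathfrak Y$ with $u\in V^l_\gk\subset V$ and $w^{(i)}\in\gh w_\lambda$, precompose with $\wtd\varphi_\mu$, and postcompose with $P_\nu$; using that $Y_i(u,\cdot)$ preserves $\gh w_\lambda$, $Y_j(u,\cdot)$ preserves $\gh w_\mu$, and $Y_k(u,\cdot)$ commutes with $P_\nu$, and transporting through the $\wtd\varphi_\bullet$'s (which intertwine the $V^l_\gk$-module vertex operators), this becomes exactly \eqref{eq5} for $x^{-b}\mathcal Y$ over $V^l_\gk$; a one-line reindexing of the sums shows that multiplication by $x^b$ leaves \eqref{eq5} intact. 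For the $L_0$-relation I would compress \eqref{eq4} for $\mathfrak Y$ (written with $L_0^V$) the same way and substitute $L_0^V=L_0^\gk+a_\mu$ on $\gh w_\mu$, $L_0^V=L_0^\gk+a_\nu$ on $\gh w_\nu$, and $L_0^V\wtd\varphi_\lambda w^{(\lambda)}=\wtd\varphi_\lambda(L_0^\gk+a_\lambda)w^{(\lambda)}$; the constants collect into $b$, and together with the shift coming from $x^b$ they reproduce \eqref{eq4} for $\mathcal Y$ verbatim. Thus $\mathcal Y\in\mathcal V^l_\gk{\nu\choose\lambda~\mu}$. Finally, with $s$ as above and $\Delta_\alpha=\Delta_\lambda+\Delta_\mu-\Delta_\nu$ one has $\mathcal Y(u^{(\lambda)},\Delta_\alpha-1)=\wtd\varphi_\nu^\dagger\mathfrak Y(\wtd\varphi_\lambda u^{(\lambda)},s)\wtd\varphi_\mu$; for $u^{(\lambda)}\in L_\gk(\lambda),u^{(\mu)}\in L_\gk(\mu)$ the maps $\wtd\varphi_\lambda,\wtd\varphi_\mu$ agree with $\varphi_\lambda,\varphi_\mu$ on the lowest energy subspaces, and on $L_0^V$-weight-$\Delta_{w_\nu}$ vectors $\wtd\varphi_\nu^\dagger$ agrees with $\varphi_\nu^\dagger$ (both project onto $\gk w_\nu$ and invert $\varphi_\nu$ there), whence $\Psi\mathcal Y=\varphi_\nu^\dagger\mathfrak Y(\varphi_\lambda,s)\varphi_\mu$, which is nonzero by hypothesis; this also re-proves the $\Hom_\gk$-claim since $\Psi$ always lands in $\Hom_\gk(\lambda\otimes\mu,\nu)$.

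For the energy-bounds statement, assume $u^{(\lambda)}\in L_\gk(\lambda)$ and $\mathfrak Y(\varphi_\lambda u^{(\lambda)},x)$ satisfies $r$-th order energy bounds with constants $M,t$. Then for $w^{(\mu)}\in L_\gk(\mu,l)$, using $\mathcal Y(u^{(\lambda)},\sigma)=\wtd\varphi_\nu^\dagger\mathfrak Y(\varphi_\lambda u^{(\lambda)},\sigma+b)\wtd\varphi_\mu$, the contractivity of $\wtd\varphi_\nu^\dagger$, and $(1+L_0^V)^r\wtd\varphi_\mu w^{(\mu)}=\wtd\varphi_\mu(1+a_\mu+L_0^\gk)^r w^{(\mu)}$ (from Lemma \ref{lb9} and $\wtd\varphi_\mu$ being a $V^l_\gk$-module isometry), one gets
\[
\lVert\mathcal Y(u^{(\lambda)},\sigma)w^{(\mu)}\rVert\le M(1+|\sigma+b|)^t\lVert(1+a_\mu+L_0^\gk)^r w^{(\mu)}\rVert.
\]
Since $1+a_\mu+\Delta_\mu=1+\Delta_{w_\mu}\ge1$, one has $(1+a_\mu+x)^r\le C(1+x)^r$ on the spectrum of $L_0^\gk|_{L_\gk(\mu,l)}\subset[\Delta_\mu,\infty)$ for a constant $C=C(a_\mu,r)$, so by spectral calculus $\lVert(1+a_\mu+L_0^\gk)^r w^{(\mu)}\rVert\le C\lVert(1+L_0^\gk)^r w^{(\mu)}\rVert$; combined with $(1+|\sigma+b|)^t\le(1+|b|)^t(1+|\sigma|)^t$ this gives $r$-th order energy bounds for $\mathcal Y(u^{(\lambda)},x)$. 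The one genuinely delicate point in all of this — and the step I would treat most carefully — is Step 2: juggling the three gradings ($L_0^V$ on the $W$'s, the Sugawara $L_0^\gk$ on the $V^l_\gk$-modules, and their mismatch constants $a_\bullet$) while checking that compression-then-projection sends the $V$-Jacobi identity for $\mathfrak Y$ to the $V^l_\gk$-Jacobi identity for $\mathcal Y$; this rests entirely on the $V^l_\gk$-invariance of the $\gh w_\bullet$ and of their orthogonal complements, which is exactly where unitarity is used.
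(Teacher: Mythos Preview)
Your proof is correct and follows essentially the same route as the paper: extend the $\varphi_\bullet$ to isometric $V^l_\gk$-module maps via Theorem~\ref{lb8}, define $\mathcal Y$ as the compressed $\mathfrak Y$ corrected by a monomial $x^b$, and verify the Jacobi identity and the $L_0$-commutator relation using Lemma~\ref{lb9}. You are in fact more explicit than the paper in several places---identifying the shift constant as $b=a_\lambda+a_\mu-a_\nu$, pinning down the value of $s$, and spelling out the energy-bounds transfer with the spectral estimate on $(1+a_\mu+L_0^\gk)^r$---where the paper simply writes ``one can easily check''.
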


\begin{proof}
	By theorem \ref{lb8}, we can extend $\varphi_\lambda$ (resp. $\varphi_\mu$, $\varphi_\nu$) to an isometric $V^l_\gk$-module homomorphism $\phi_\lambda:L_\gk(\lambda,l)\rightarrow W_i$ (resp. $\phi_\mu:L_\gk(\mu,l)\rightarrow W_j$, $\phi_\nu:L_\gk(\nu,l)\rightarrow W_k$) with image $\gh w_\lambda$ (resp. $\gh w_\mu$, $\gh w_\nu$). Define
	\begin{gather*}
	\mathcal Y_0:L_\gk(\lambda,l)\otimes L_\gk(\mu,l)\rightarrow L_\gk(\nu,l)\{x\},\\
	v^{(\lambda)}\otimes v^{(\mu)}\mapsto \mathcal Y_0(v^{(\lambda)},x)v^{(\mu)} :=\phi_\nu^\dagger\mathfrak Y(\phi_\lambda v^{(\lambda)},x)\phi_\mu v^{(\mu)}.
	\end{gather*}
	Then $\mathcal Y_0$ clearly satisfies the Jacobi identity \eqref{eq5} with the vertex operator of $V^l_\gk$. Note that $\gh w_\lambda,\gh w_\mu,\gh w_\nu$ are graded subspaces of $W_i,W_j,W_k$ respectively. Let $\{L_n \}$ and $\{L_n^\gk \}$ be the Virasoro operators of $V$ and $V^l_\gk$ respectively. Then these subspaces are $L_0$ invariant. By the translation property \eqref{eq6} for $\mathfrak Y$, for any $v^{(\lambda)}\in L_\gk(\lambda,l)$,
	\begin{align*}
	\phi_\nu^\dagger L_0\phi_\nu\cdot \mathcal Y_0(v^{(\lambda)},x)-\mathcal Y_0(v^{(\lambda)},x)\cdot\phi_\mu^\dagger L_0\phi_\mu=\mathcal Y_0(\phi_\lambda^\dagger L_0\phi_\lambda v^{(\lambda)},x)+x\frac d{dx}\mathcal Y_0(v^{(\lambda)},x).
	\end{align*} 
	By lemma \ref{lb9}, the action of $\phi_\lambda^\dagger L_0\phi_\lambda-L^\gk_0$ (resp. $\phi_\mu^\dagger L_0\phi_\mu-L^\gk_0$, $\phi_\nu^\dagger L_0\phi_\nu-L^\gk_0$) on $L_\gk(\lambda,l)$ (resp. $L_\gk(\mu,l)$, $L_\gk(\nu,l)$) is a real constant. So we can easily find $t\in\mathbb R$ such that the map
	\begin{gather*}
	\mathcal Y:L_\gk(\lambda,l)\otimes L_\gk(\mu,l)\rightarrow L_\gk(\nu,l)\{x\},\\
	v^{(\lambda)}\otimes v^{(\mu)}\mapsto \mathcal Y(v^{(\lambda)},x)v^{(\mu)} := x^t\mathcal Y_0(v^{(\lambda)},x)v^{(\mu)}
	\end{gather*}
	satisfies the translation property
	\begin{align*}
	[L^\gk_0,\mathcal Y(v^{(\lambda)},x)]=\mathcal Y(L^\gk_0v^{(\lambda)},x)+x\frac d{dx}\mathcal Y(v^{(\lambda)},x).
	\end{align*}
	So $\mathcal Y$ is a type $\nu\choose\lambda~\mu$ intertwining operator of $V^l_\gk$. One can easily check that $\Psi\mathcal Y=\mathfrak Y$, and that for any homogeneous $v^{(\lambda)}\in L_\gk(\lambda,l)$, $\mathcal Y(v^{(\lambda)},x)$ satisfies $r$-th order energy bounds if $\mathfrak Y(\phi_\lambda v^{(\lambda)},x)$ does.
\end{proof}

In this article, compression principle is more often used in the following simpler form.
\begin{co}\label{lb13}
	Let $V,\gk,l,W_i,W_j,W_k,w_\lambda,w_\mu.w_\nu$ be as in theorem \ref{lb10}. Identify $\gh w_\lambda,\gh w_\mu, \gh w_\nu$ with $L_\gk(\lambda,l),L_\gk(\mu,l),L_\gk(\nu,l)$ respectively. If 
	\begin{align*}
	\dim\Hom_\gk(\lambda\otimes\mu,\nu)\leq1,
	\end{align*}
	and there exists a type $k\choose i~j$ intertwining operator $\mathfrak Y$ of $V$,  such that
	\begin{align*}
	\bk{\mathfrak Y(\gk w_\lambda,x)\gk w_\mu|\gk w_\nu}\neq0,
	\end{align*}
	then 
	\begin{align*}
	N^\nu_{\lambda\mu}=\dim\Hom_\gk(\lambda\otimes\mu,\nu)=1,
	\end{align*}
	where $N^\nu_{\lambda\mu}:=\dim\mathcal V^l_\gk{\nu\choose\lambda~\mu}$ is a fusion rule of $V^l_\gk$. Moreover, let $u^{(\lambda)}\in \gk w_\lambda$, and suppose that  $\mathfrak Y(u^{(\lambda)},x)$ satisfies $r$-th order energy bounds for some $r\geq0$. Then for any type $\nu\choose\lambda~\mu$ intertwining operator $\mathcal Y$ of $V^l_\gk$, $\mathcal Y(u^{(\lambda)},x)$ satisfies $r$-th order energy bounds.
\end{co}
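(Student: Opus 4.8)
The plan is to derive everything from the compression principle (theorem \ref{lb10}) together with the injectivity of $\Psi$ and the fusion-rule bound of proposition \ref{lb48}. The first step is to translate the non-vanishing hypothesis into the language of theorem \ref{lb10}. Since $\bk{\mathfrak Y(\gk w_\lambda,x)\gk w_\mu|\gk w_\nu}\neq 0$, there exist $u^{(\lambda)}\in\gk w_\lambda$, $u^{(\mu)}\in\gk w_\mu$, $u^{(\nu)}\in\gk w_\nu$ and a single value $s\in\mathbb R$ with $\bk{\mathfrak Y(u^{(\lambda)},s)u^{(\mu)}|u^{(\nu)}}\neq 0$. Choosing isometric $\gk$-module homomorphisms $\varphi_\lambda,\varphi_\mu,\varphi_\nu$ onto $\gk w_\lambda,\gk w_\mu,\gk w_\nu$ as in theorem \ref{lb10}, this is precisely the statement that the map $\varphi_\nu^\dagger\mathfrak Y(\varphi_\lambda,s)\varphi_\mu:L_\gk(\lambda)\otimes L_\gk(\mu)\to L_\gk(\nu)$ is non-zero for this $s$.

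Feeding this $s$ and $\mathfrak Y$ into theorem \ref{lb10}, one obtains $\varphi_\nu^\dagger\mathfrak Y(\varphi_\lambda,s)\varphi_\mu\in\Hom_\gk(\lambda\otimes\mu,\nu)$, so $\dim\Hom_\gk(\lambda\otimes\mu,\nu)\geq 1$, and hence equals $1$ by the standing hypothesis. The same theorem also produces a type ${\nu\choose\lambda~\mu}$ intertwining operator $\mathcal Y_0$ of $V^l_\gk$ with $\Psi\mathcal Y_0=\varphi_\nu^\dagger\mathfrak Y(\varphi_\lambda,s)\varphi_\mu\neq 0$; since $\Psi$ is injective (proposition \ref{lb48}), $\mathcal Y_0\neq 0$, so $N^\nu_{\lambda\mu}\geq 1$. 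Together with the bound $N^\nu_{\lambda\mu}\leq\dim\Hom_\gk(\lambda\otimes\mu,\nu)=1$ from proposition \ref{lb48}, this yields $N^\nu_{\lambda\mu}=\dim\Hom_\gk(\lambda\otimes\mu,\nu)=1$.

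For the energy-bounds assertion, one-dimensionality of $\mathcal V^l_\gk{\nu\choose\lambda~\mu}$ together with $\mathcal Y_0\neq 0$ forces every type ${\nu\choose\lambda~\mu}$ intertwining operator $\mathcal Y$ of $V^l_\gk$ to be a scalar multiple of $\mathcal Y_0$. Now if $u^{(\lambda)}\in\gk w_\lambda=L_\gk(\lambda)$ and $\mathfrak Y(\varphi_\lambda u^{(\lambda)},x)$ satisfies $r$-th order energy bounds, the final clause of theorem \ref{lb10} gives that $\mathcal Y_0(u^{(\lambda)},x)$ satisfies $r$-th order energy bounds; since multiplying an intertwining operator by a scalar only rescales the constant $M$ in the defining inequality, $\mathcal Y(u^{(\lambda)},x)$ satisfies $r$-th order energy bounds as well. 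The argument is really just bookkeeping; the only mild subtleties are (i) extracting a single value of $s$ from $\bk{\mathfrak Y(\gk w_\lambda,x)\gk w_\mu|\gk w_\nu}\neq 0$ so that theorem \ref{lb10} applies verbatim, and (ii) the step from the one intertwining operator produced by theorem \ref{lb10} to \emph{all} intertwining operators of that type, which is exactly where the freshly established equality $N^\nu_{\lambda\mu}=1$ does the work.
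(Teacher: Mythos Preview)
Your proof is correct and is exactly the argument the paper leaves implicit: the corollary is stated without proof immediately after theorem \ref{lb10}, and you have spelled out the intended deduction from that theorem together with proposition \ref{lb48}. One tiny remark: you invoke injectivity of $\Psi$ to conclude $\mathcal Y_0\neq 0$, but this already follows from $\Psi\mathcal Y_0\neq 0$ alone; injectivity is only needed (via the inequality $N^\nu_{\lambda\mu}\leq\dim\Hom_\gk(\lambda\otimes\mu,\nu)$) for the upper bound.
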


As an application of compression principle, we now show that when the level $l$  is large, the fusion rules of $V^l_\gk$ equal the corresponding tensor product rules of $\gk$, and  the energy bounds conditions for general intertwining operators can be deduced from those for creation operators. To begin with, we choose $\lambda,\mu,\nu\in P_+(\gk)$ admissible at level $l$. As usual, we let $\theta$ be the highest root of $\gk$.

\begin{pp}\label{lb29}
	Let $a=(\lambda|\theta)$. Assume that  $(\mu|\theta)\leq l-a$ or $(\nu|\theta)\leq l-a$. Then $$N^\nu_{\lambda\mu}=\dim\Hom_\gk(\lambda\otimes\mu,\nu),$$ i.e., the map $\Psi:\mathcal V^l_\gk{\nu\choose\lambda~\mu}\rightarrow \Hom_\gk(\lambda\otimes\mu,\nu)$ is bijective. Let  $\mathcal Y_{\kappa(\lambda)}\in\mathcal V^a_\gk{\lambda\choose\lambda~0}$ be the creation operator of $L_\gk(\lambda,a)$. Choose  $u^{(\lambda)}\in L_\gk(\lambda)$. Assume, moreover, that there exists $r\geq0$, such that $\mathcal Y_{\kappa(\lambda)}(u^{(\lambda)},x)$ satisfies $r$-th order energy bounds. Then for any $\mathcal Y_\alpha\in\mathcal V^l_\gk{\nu\choose\lambda~\mu}$, $\mathcal Y_\alpha(u^{(\lambda)},x)$ also satisfies $r$-th order energy bounds.
\end{pp}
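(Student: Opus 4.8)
With $a=(\lambda|\theta)$ as in the statement (so $\lambda$ is admissible at level $a$ and $l-a\ge 0$), the plan is to realise every type ${\nu\choose\lambda~\mu}$ intertwining operator of $V^l_\gk$ as a compression, in the sense of theorem~\ref{lb10}, of one explicit intertwining operator of a tensor product VOA. Assume first $(\mu|\theta)\le l-a$, so that $\mu$ is admissible at level $l-a$; the case $(\nu|\theta)\le l-a$ will be reduced to this one at the end. Set $V=V^a_\gk\otimes V^{l-a}_\gk$ and embed $\gk$ diagonally into $V(1)=\gk\oplus\gk$, so that by proposition~\ref{lb33} $\gk$ has level $a+(l-a)=l$ in $V$ and $V^l_\gk$ is a unitary vertex subalgebra of $V$. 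Consider the intertwining operator $\mathfrak Y=\mathcal Y_{\kappa(\lambda)}\otimes Y_\mu$ of $V$, where $Y_\mu$ is the vertex operator of $L_\gk(\mu,l-a)$; it has type ${W_k\choose W_i~W_j}$ with $W_i=L_\gk(\lambda,a)\otimes V^{l-a}_\gk$, $W_j=V^a_\gk\otimes L_\gk(\mu,l-a)$, and $W_k=L_\gk(\lambda,a)\otimes L_\gk(\mu,l-a)$.

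Next I identify the highest weight data. Writing $v_\lambda,v_\mu$ for highest weight vectors and $\Omega^{(a)},\Omega^{(l-a)}$ for the two vacua, one checks directly that $w_\lambda=v_\lambda\otimes\Omega^{(l-a)}\in W_i$ and $w_\mu=\Omega^{(a)}\otimes v_\mu\in W_j$ are homogeneous $\gh$-highest weight vectors of weights $\lambda,\mu$ and level $l$ for the diagonal action. If $\dim\Hom_\gk(\lambda\otimes\mu,\nu)=0$ then $N^\nu_{\lambda\mu}=0$ by proposition~\ref{lb48} and there is nothing to prove, so assume it is positive; then any $\gk$-highest weight vector $w_\nu$ of weight $\nu$ in the lowest energy subspace $W_k(n_0)=L_\gk(\lambda)\otimes L_\gk(\mu)$ of $W_k$ is again a homogeneous $\gh$-highest weight vector of weight $\nu$ and level $l$ (and $(\nu|\theta)\le(\lambda|\theta)+(\mu|\theta)\le l$, so $\nu$ is automatically admissible at level $l$). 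Take $\varphi_\lambda,\varphi_\mu$ to be the canonical inclusions of the lowest energy subspaces, and let $\varphi_\nu:L_\gk(\nu)\to W_k$ run over the isometric $\gk$-module embeddings with image $\gk w_\nu$. The key computation is that, since $Y_\mu(\Omega^{(l-a)},x)=\id$ and $\mathcal Y_{\kappa(\lambda)}(u^{(\lambda)},x)\Omega^{(a)}=e^{xL_{-1}}u^{(\lambda)}$ (creation property for the creation operator), one has $\mathfrak Y(\varphi_\lambda u^{(\lambda)},x)\varphi_\mu u^{(\mu)}=(e^{xL_{-1}}u^{(\lambda)})\otimes u^{(\mu)}$, whose constant term $u^{(\lambda)}\otimes u^{(\mu)}\in W_k(n_0)$ is the mode $\mathfrak Y(\varphi_\lambda u^{(\lambda)},-1)\varphi_\mu u^{(\mu)}$. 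Hence $\varphi_\nu^\dagger\mathfrak Y(\varphi_\lambda,-1)\varphi_\mu$ is the restriction of $\varphi_\nu^\dagger$ to $W_k(n_0)$, that is, the orthogonal projection of $L_\gk(\lambda)\otimes L_\gk(\mu)$ onto the submodule $\gk w_\nu\cong L_\gk(\nu)$; as $\varphi_\nu$ varies over all isometric embeddings these maps exhaust $\Hom_\gk(\lambda\otimes\mu,\nu)$ up to scalars. By theorem~\ref{lb10} each such map equals $\Psi\mathcal Y$ for some $\mathcal Y\in\mathcal V^l_\gk{\nu\choose\lambda~\mu}$, so $\Psi$ is onto; with its injectivity (proposition~\ref{lb48}) this gives $N^\nu_{\lambda\mu}=\dim\Hom_\gk(\lambda\otimes\mu,\nu)$.

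For the energy bounds, the point is that $\mathfrak Y(\varphi_\lambda u^{(\lambda)},x)=\mathcal Y_{\kappa(\lambda)}(u^{(\lambda)},x)\otimes\id$, which satisfies $r$-th order energy bounds on $W_j$ once $\mathcal Y_{\kappa(\lambda)}(u^{(\lambda)},x)$ does on $V^a_\gk$ (tensoring an $r$-th order bounded operator with the identity preserves the order, since $L_0$ on $W_j$ dominates $L_0\otimes\id$). The final clause of theorem~\ref{lb10} then gives that $\mathcal Y(u^{(\lambda)},x)$ satisfies $r$-th order energy bounds for every $\mathcal Y$ produced by this compression; since $\Psi$ is linear and injective, every $\mathcal Y_\alpha\in\mathcal V^l_\gk{\nu\choose\lambda~\mu}$ is a scalar multiple of such a $\mathcal Y$, and scalars do not affect energy bounds, so $\mathcal Y_\alpha(u^{(\lambda)},x)$ satisfies $r$-th order energy bounds for every $\mathcal Y_\alpha$.

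It remains to treat the case $(\nu|\theta)\le l-a$, which I would reduce to the above by passing to adjoint and conjugate intertwining operators. Since $(\overline\lambda|\theta)=(\lambda|\theta)=a$, the triple $(\overline\lambda,\nu,\mu)$ satisfies the hypothesis already handled. The creation operator $\mathcal Y_{\kappa(\overline\lambda)}$ of $L_\gk(\overline\lambda,a)$ spans the one-dimensional space $\mathcal V^a_\gk{\overline\lambda\choose\overline\lambda~0}$, which also contains the nonzero conjugate $\mathcal Y_{\overline{\kappa(\lambda)}}$ of $\mathcal Y_{\kappa(\lambda)}$; hence $\mathcal Y_{\kappa(\overline\lambda)}$ is a nonzero multiple of $\mathcal Y_{\overline{\kappa(\lambda)}}$, and proposition~\ref{lb14} gives that $\mathcal Y_{\kappa(\overline\lambda)}(C_\lambda u^{(\lambda)},x)$ satisfies $r$-th order energy bounds. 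Applying the already-proven case to $(\overline\lambda,\nu,\mu)$ and the vector $C_\lambda u^{(\lambda)}\in L_\gk(\overline\lambda)$ shows every $\mathcal Y_\beta\in\mathcal V^l_\gk{\mu\choose\overline\lambda~\nu}$ has $\mathcal Y_\beta(C_\lambda u^{(\lambda)},x)$ of $r$-th order; taking $\mathcal Y_\beta=\mathcal Y_{\alpha^*}$ for a given $\mathcal Y_\alpha$, and using $\alpha^{**}=\alpha$ and $C_{\overline\lambda}C_\lambda=\id$, a further application of proposition~\ref{lb14} yields that $\mathcal Y_\alpha(u^{(\lambda)},x)$ is $r$-th order bounded. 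The fusion-rule equality in this case follows from $N^\nu_{\lambda\mu}=N^\mu_{\overline\lambda\nu}$ together with the $\Ad$-symmetry $\dim\Hom_\gk(\lambda\otimes\mu,\nu)=\dim\Hom_\gk(\overline\lambda\otimes\nu,\mu)$. The main obstacle throughout is the surjectivity of $\Psi$: a single $\mathfrak Y$ yields, via one compression, only a one-dimensional family inside $\Hom_\gk(\lambda\otimes\mu,\nu)$, so one must exploit the freedom in the embedding $\varphi_\nu$ (equivalently, the multiplicity space of $L_\gk(\nu)$ in $L_\gk(\lambda)\otimes L_\gk(\mu)$) to sweep out the whole space, and the computation $\mathfrak Y(\varphi_\lambda u^{(\lambda)},-1)\varphi_\mu u^{(\mu)}=u^{(\lambda)}\otimes u^{(\mu)}$ is what makes this work.
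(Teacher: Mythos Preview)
Your proof is correct and follows essentially the same approach as the paper: the same tensor product $V^a_\gk\otimes V^{l-a}_\gk$, the same intertwining operator $\mathfrak Y=\mathcal Y_{\kappa(\lambda)}\otimes Y_\mu$, the same computation $\mathfrak Y(u^{(\lambda)}\otimes\Omega,-1)(\Omega\otimes u^{(\mu)})=u^{(\lambda)}\otimes u^{(\mu)}$, and the same reduction of the case $(\nu|\theta)\le l-a$ via conjugate and adjoint intertwining operators. The only cosmetic difference is that the paper fixes $T\in\Hom_\gk(\lambda\otimes\mu,\nu)$ and sets $\varphi_\nu=T^*$, while you let $\varphi_\nu$ range over isometric embeddings and read off $T=\varphi_\nu^\dagger$; these are the same parametrization up to a positive scalar.
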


\begin{proof}
	Let $b=l-a$. Suppose $(\mu|\theta)\leq b$. Choose nonzero $T\in\Hom_\gk(\lambda\otimes\mu,\nu)$. Since $L_\gk(\nu)$ is irreducible, $TT^*=c\cdot\id_{L_\gk(\nu)}$ for some $c>0$. Assume, without loss of generality, that $c=1$. So $T^*:L_\gk(\nu)\rightarrow L_\gk(\lambda)\otimes L_\gk(\mu)$ is an isometry. Let $V=V^a_\gk\otimes V^b_\gk$, and choose unitary $V$-modules
	\begin{align*}
	W_i=L_\gk(\lambda,a)\otimes L_\gk(0,b),\qquad W_j=L_\gk(0,a)\otimes L_\gk(\mu,b),\qquad W_k=L_\gk(\lambda,a)\otimes L_\gk(\mu,b).
	\end{align*}
	Let $Y_\mu$ be the vertex operator of $V^b_\gk$ on $L_\gk(\mu,b)$. So $Y_\mu\in\mathcal V^b_\gk{\mu\choose 0~\mu}$. Now it is clear that 
	$$\mathfrak Y:=\mathcal Y_{\kappa(\lambda)}\otimes Y_\mu$$
	is a type $k\choose i~j$ intertwining operator of $V$. Let $v_\lambda,v_\mu,v_\nu$ be the highest weight vectors of $L_\gk(\lambda),L_\gk(\mu),L_\gk(\nu)$ of unit length respectively. Choose an isometric $\gk$-module homomorphism $\varphi_\lambda:L_\gk(\lambda)\rightarrow W_i$ (resp. $\varphi_\mu:L_\gk(\mu)\rightarrow W_j$) with image $L_\gk(\lambda)\otimes\Omega$ (resp. $\Omega\otimes L_\gk(\mu)$) satisfying $\varphi_\lambda u^{(\lambda)}=u^{(\lambda)}\otimes\Omega$ (resp. $\varphi_\mu u^{(\mu)}=\Omega\otimes u^{(\mu)}$) for any $u^{(\lambda)}\in L_\gk(\lambda)$ (resp. $u^{(\mu)}\in L_\gk(\mu)$). Note that $L_\gk(\lambda)\otimes L_\gk(\mu)$ is the lowest energy subspace of $W_k$. We extend $T^*:L_\gk(\nu)\rightarrow L_\gk(\lambda)\otimes L_\gk(\mu)$ to $\varphi_\nu:L_\gk(\nu)\rightarrow W_k$. So $\varphi_\nu$ is also an isometric $\gk$-module homomorphism, and the image of $\varphi_\nu$ has the lowest conformal weight (energy). It follows that $\varphi_\nu v_\nu$ is a highest weight vector of $\gh\curvearrowright W_k$ with highest weight $\nu$. Now consider the $\gk$-module homomorphism
	\begin{gather*}
	\varphi_\nu^\dagger\mathfrak Y(\varphi_\lambda,-1)\varphi_\mu:L_\gk(\lambda)\otimes L_\gk(\mu)\rightarrow L_\gk(\nu).
	\end{gather*}
	For any $u^{(\lambda)}\in L_\gk(\lambda),u^{(\mu)}L_\gk(\mu)$, we compute
	\begin{align*}
	\varphi_\nu^\dagger\mathfrak Y(\varphi_\lambda u^{(\lambda)},-1)\varphi_\mu u^{(\mu)}=&\varphi_\nu^\dagger\mathfrak Y(u^{(\lambda)}\otimes\Omega,-1)(\Omega\otimes u^{(\mu)}) =\varphi_\nu^\dagger (\mathcal Y_{\kappa(\lambda)}(u^{(\lambda)},-1)\otimes \id)(\Omega\otimes u^{(\mu)})\\
	=&\varphi^\dagger_\nu(u^{(\lambda)}\otimes u^{(\mu)})=T(u^{(\lambda)}\otimes u^{(\mu)}).
	\end{align*}
	Hence $\varphi_\nu^\dagger\mathfrak Y(\varphi_\lambda ,-1)\varphi_\mu=T$. By theorem \ref{lb10}, there exists $\mathcal Y_\alpha\in\mathcal V^l_\gk{\nu\choose\lambda~\mu}$ satisfying $\Psi\mathcal Y_\alpha=T$, and if for a vector $u^{(\lambda)}\in L_\gk(\lambda)$, $\mathcal Y_{\kappa(\lambda)}(u^{(\lambda)},x)\otimes\id=\mathfrak Y(u^{(\lambda)}\otimes\Omega,x)$ satisfies $r$-th order energy bounds, then the same property holds for $\mathcal Y_\alpha(u^{(\lambda)},x)$.  
	
	Now suppose that $(\nu|\theta)\leq l-a$. Since $(\overline\lambda|\theta)=(\lambda|\theta)=a$, we have $N^\mu_{\overline\lambda\nu}=\dim\Hom_\gk(\overline\lambda\otimes\nu,\mu)$. Therefore
	\begin{align*}
	N^\nu_{\lambda\mu}=N^\mu_{\overline\lambda\nu}=\dim\Hom_\gk(\overline\lambda\otimes\nu,\mu)=\dim\Hom_\gk(\lambda\otimes\mu,\nu).
	\end{align*}
	Since the conjugate intertwining operator $\mathcal Y_{\overline{\kappa(\lambda)}}(C_\lambda u^{(\lambda)},x)$ of $\mathcal Y_{\kappa(\lambda)}(u^{(\lambda)},x)$ clearly satisfies $r$-th order energy bounds, by what we've proved above, $\mathcal Y_{\alpha^*}(C_\lambda u^{(\lambda)},x)$ satisfies $r$-th order energy bounds for any $\mathcal Y_\alpha\in\mathcal V^l_\gk{\nu\choose\lambda~\mu}$. By proposition \ref{lb14}, $\mathcal Y_\alpha(u^{(\lambda)},x)$ also satisfies $r$-th order energy bounds.
\end{proof}

The situation becomes more complicated when $l$ is not so large, say, when $(\mu|\theta)\leq l\leq (\lambda|\theta)+(\mu|\theta)$. In this case, the following technical lemma will be helpful. The reader might temporarily skip this lemma, and return to it later.

\begin{lm}\label{lb24}
	Let $l\in\mathbb Z_{\geq0}$. Choose $\lambda,\mu,\nu,\rho,\mu_1,\nu_1\in P_+(\gk)$ admissible at level $l$, and choose highest-weight vectors $v_{\mu_1}\in L_\gk(\mu_1),v_{\nu_1}\in L_\gk(\nu_1)$. Set $a=\max\{(\lambda|\theta),(\mu_1|\theta),(\nu_1|\theta) \}$.
	Assume that the weight spaces of $L_\gk(\lambda)$ have dimensions at most $1$, that 
	\begin{gather}
	a\leq l,\qquad (\rho|\theta)\leq l-a,
	\end{gather}
	that $\dim\mathcal V^a_\gk{\nu_1\choose\lambda~\mu_1}=1$, and that there exists $r\geq0$, such that for any $\mathcal Y_\sigma\in\mathcal V^a_\gk{\nu_1\choose\lambda~\mu_1}$ and $u^{(\lambda)}\in L_\gk(\lambda)$, $\mathcal Y_\sigma(u^{(\lambda)},x)$ satisfies $r$-th order energy bounds. If one of the  three conditions stated below holds, then $N^\nu_{\lambda\mu}:=\dim\mathcal V^l_\gk{\nu\choose\lambda~\mu}=1$, and for any $\mathcal Y_\alpha\in\mathcal V^a_\gk{\nu\choose\lambda~\mu}$ and $u^{(\lambda)}\in L_\gk(\lambda)$, $\mathcal Y_\alpha(u^{(\lambda)},x)$ satisfies $r$-th order energy bounds. The three conditions are:\\
	(a) $\mu=\mu_1+\rho$ and $\nu=\nu_1+\rho$.\\
	(b)  $\mu=\mu_1+\rho$, $L_\gk(\nu)$ is equivalent to a submodule of $L_\gk(\nu_1)\otimes L_\gk(\rho)$,  the weight spaces of $L_\gk(\nu_1)$ have dimensions at most $1$, and 
	\begin{align}
	T\big(L_\gk(\lambda)[\nu-\mu]\otimes v_{\mu_1}\big)\neq0\label{eq44}
	\end{align}
	for any non-zero  $T\in\Hom_\gk(\lambda\otimes\mu_1,\nu_1)$.\\
	(c)  $\nu=\nu_1+\rho$, $L_\gk(\mu)$ is equivalent to a submodule of $L_\gk(\mu_1)\otimes L_\gk(\rho)$,  the weight spaces of $L_\gk(\mu_1)$ have dimensions at most $1$, and for any non-zero  $T\in\Hom_\gk(\lambda\otimes\mu_1,\nu_1)$, there exist $u^{(\lambda)}_{\nu-\mu}\in L_\gk(\lambda)[\nu-\mu],u^{(\mu_1)}\in L_\gk(\mu_1)$, such that
	\begin{align}
	\langle T(u^{(\lambda)}_{\nu-\mu}\otimes u^{(\mu_1)})|v_{\nu_1}\rangle\neq0. \label{eq45}
	\end{align}
\end{lm}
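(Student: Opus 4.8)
The plan is to realize $\mathcal Y_\alpha$ as a compression of a tensor product intertwining operator via the compression principle (Corollary \ref{lb13}), in the spirit of Proposition \ref{lb29}. Write $b=l-a$, so that $(\rho|\theta)\leq b$ and each of $\lambda,\mu_1,\nu_1$ is admissible at level $a$. Take $V=V^a_\gk\otimes V^b_\gk$ and choose unitary $V$-modules built from the level-$a$ data on the first factor and from $L_\gk(\rho,b)$ on the second. Concretely, in case (a) set $W_i=L_\gk(\lambda,a)\otimes L_\gk(0,b)$, $W_j=L_\gk(\mu_1,a)\otimes L_\gk(\rho,b)$, and $W_k=L_\gk(\nu_1,a)\otimes L_\gk(\rho,b)$. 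Let $\mathcal Y_\sigma\in\mathcal V^a_\gk{\nu_1\choose\lambda~\mu_1}$ be the unique (up to scalar) intertwining operator, and let $Y_\rho\in\mathcal V^b_\gk{\rho\choose0~\rho}$ be the vertex operator of $L_\gk(\rho,b)$; then $\mathfrak Y:=\mathcal Y_\sigma\otimes Y_\rho$ is a type $k\choose i~j$ intertwining operator of $V$ whose energy bounds are of order $r$ on the vectors $u^{(\lambda)}\otimes\Omega$ by hypothesis and Proposition \ref{lb15}/the tensor product proposition. The highest-weight vectors $w_\lambda,w_\mu,w_\nu$ of $\gh$ inside $W_i,W_j,W_k$ are $v_\lambda\otimes\Omega$, $v_{\mu_1}\otimes v_\rho$, $v_{\nu_1}\otimes v_\rho$ respectively (these have weights $\lambda$, $\mu_1+\rho=\mu$, $\nu_1+\rho=\nu$ and lowest conformal weight in their $\gh$-orbits).

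The key computation is then to check that the compressed map
\begin{align*}
\varphi_\nu^\dagger\mathfrak Y(\varphi_\lambda,s)\varphi_\mu\colon L_\gk(\lambda)\otimes L_\gk(\mu)\to L_\gk(\nu)
\end{align*}
is nonzero for a suitable $s$, where $\varphi_\lambda,\varphi_\mu,\varphi_\nu$ are the isometric $\gk$-embeddings onto $\gk w_\lambda,\gk w_\mu,\gk w_\nu$. Since $\dim\Hom_\gk(\lambda\otimes\mu,\nu)\leq1$ will follow from Proposition \ref{lb47} together with the weight-multiplicity-one hypothesis on $L_\gk(\lambda)$ and the weight bookkeeping $\nu-\mu=\nu_1-\mu_1$, Corollary \ref{lb13} applies once nonvanishing is established, giving both $N^\nu_{\lambda\mu}=1$ and the $r$-th order energy bounds for $\mathcal Y_\alpha(u^{(\lambda)},x)$. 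For case (a) the nonvanishing is immediate: feeding in $v_\lambda\otimes v_{\mu_1}\otimes v_\rho$ and using that $Y_\rho(v_\rho$-component$)$ acts as the identity on the $v_\rho$ factor at the appropriate mode, the compression reduces to $\mathcal Y_\sigma$ evaluated between highest-weight vectors, which is nonzero because $\mathcal Y_\sigma\neq0$ and $\dim\mathcal V^a_\gk{\nu_1\choose\lambda~\mu_1}=1$ forces $\Psi\mathcal Y_\sigma\neq0$.

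For cases (b) and (c) the embeddings $\varphi_\mu$ and $\varphi_\nu$ must be chosen more carefully: in (b) one uses $L_\gk(\nu)\hookrightarrow L_\gk(\nu_1)\otimes L_\gk(\rho)$ to place $\gk w_\nu$ inside $W_k$ with lowest energy, and in (c) one uses $L_\gk(\mu)\hookrightarrow L_\gk(\mu_1)\otimes L_\gk(\rho)$ similarly for $\gk w_\mu$. The nonvanishing of the compressed homomorphism then becomes exactly the content of the stated hypotheses \eqref{eq44} and \eqref{eq45}: in (b), applying the compressed operator to a weight vector $u^{(\lambda)}_{\nu-\mu}\otimes v_{\mu_1}\otimes v_\rho$ and projecting, one lands on $(T(u^{(\lambda)}_{\nu-\mu}\otimes v_{\mu_1}))\otimes v_\rho$ up to the weight-one identification of $L_\gk(\nu)$ inside $L_\gk(\nu_1)\otimes L_\gk(\rho)$, which is nonzero by \eqref{eq44}; in (c), pairing against $v_{\nu_1}\otimes v_\rho$ reduces to the pairing $\langle T(u^{(\lambda)}_{\nu-\mu}\otimes u^{(\mu_1)})|v_{\nu_1}\rangle$, nonzero by \eqref{eq45}. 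The main obstacle is precisely this bookkeeping step: verifying that the weight-multiplicity-one hypotheses guarantee the relevant submodule of $L_\gk(\nu_1)\otimes L_\gk(\rho)$ (resp. the relevant piece of $L_\gk(\mu_1)\otimes L_\gk(\rho)$) meets the image of the compression in the expected one-dimensional weight space, so that nonvanishing of $T$ on the designated vectors really does propagate to nonvanishing of $\varphi_\nu^\dagger\mathfrak Y(\varphi_\lambda,s)\varphi_\mu$ rather than being killed by the orthogonal projection $\varphi_\nu^\dagger$. Once that is checked, Corollary \ref{lb13} finishes the proof in all three cases.
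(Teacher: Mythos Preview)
Your setup and the treatment of cases (a) and (b) match the paper's proof essentially verbatim: same tensor VOA $V^a_\gk\otimes V^b_\gk$, same modules $W_i,W_j,W_k$, same $\mathfrak Y=\mathcal Y_\sigma\otimes Y_\rho$, and the nonvanishing check via Corollary~\ref{lb13}. One small correction in (a): testing on $v_\lambda\otimes v_{\mu_1}\otimes v_\rho$ is not what you want, since $\langle T(v_\lambda\otimes v_{\mu_1})\mid v_{\nu_1}\rangle$ vanishes for weight reasons unless $\nu_1=\lambda+\mu_1$. The paper instead picks the weight vector $u^{(\lambda)}_{\nu-\mu}\in L_\gk(\lambda)[\nu-\mu]$ (which exists and is unique up to scalar because $\dim\mathcal V^a_\gk{\nu_1\choose\lambda~\mu_1}=1$ forces $\dim L_\gk(\lambda)[\nu_1-\mu_1]=1$) and invokes injectivity of $\Gamma\circ\Psi$ to get $\langle\mathcal Y_\sigma(u^{(\lambda)}_{\nu-\mu},s)v_{\mu_1}\mid v_{\nu_1}\rangle\neq0$. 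In (b), the paper handles the ``main obstacle'' you flag exactly as you suspect: after producing $u^{(\nu_1)}_{\nu-\rho}:=T(u^{(\lambda)}_{\nu-\mu}\otimes v_{\mu_1})\neq0$, one uses the weight-multiplicity-one hypothesis on $L_\gk(\nu_1)$ to conclude $\dim\Hom_\gk(\nu_1\otimes\rho,\nu)=1$, so injectivity of $\Gamma$ for that hom-space gives $\langle u^{(\nu_1)}_{\nu-\rho}\otimes v_\rho\mid w_\nu\rangle\neq0$.

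For case (c) the paper takes a different route than your direct one: rather than embedding $L_\gk(\mu)$ into $L_\gk(\mu_1)\otimes L_\gk(\rho)$ and redoing the projection analysis, it passes to adjoints. Condition~\eqref{eq45} is unwound as the statement that $\Ad T\big(L_\gk(\overline\lambda)[\mu-\nu]\otimes v_{\nu_1}\big)\neq0$, which is precisely hypothesis~\eqref{eq44} for the data $(\overline\lambda,\nu_1,\mu_1,\rho,\nu,\mu)$ in place of $(\lambda,\mu_1,\nu_1,\rho,\mu,\nu)$. Case (b) then yields $N^\mu_{\overline\lambda\,\nu}=1$ with the $r$-th order energy bounds for $\mathcal Y_{\alpha^*}(C_\lambda u^{(\lambda)},x)$, and Proposition~\ref{lb14} transfers both conclusions back to $\mathcal Y_\alpha$. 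Your direct approach would also work (the $v_\rho$-component of $w_\mu$ is nonzero by injectivity of $\Gamma$ for $\Hom_\gk(\mu_1\otimes\rho,\mu)$, and the relevant $u^{(\mu_1)}$ in \eqref{eq45} is automatically forced into the one-dimensional space $L_\gk(\mu_1)[\mu-\rho]$ by weight considerations), but the paper's duality trick avoids repeating the projection bookkeeping.
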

\begin{proof}
	Let $b=l-a$. Then $\rho$ is admissible at level $b$. Let $V=V^a_\gk\otimes V^b_\gk$, and choose unitary $V$-modules
	\begin{align*}
	W_i=L_\gk(\lambda,a)\otimes L_\gk(0,b),\qquad W_j=L_\gk(\mu_1,a)\otimes L_\gk(\rho,b),\qquad W_k=L_\gk(\nu_1,a)\otimes L_\gk(\rho,b).
	\end{align*}
Choose any non-zero $\mathcal Y_\sigma\in\mathcal V^a_\gk{\nu_1\choose\lambda~\mu_1}$.	Then
	$$\mathfrak Y=\mathcal Y_\sigma\otimes Y_\rho$$
	is a type $k\choose i~j$ intertwining operator of $V$. Consider the diagonal Lie subalgebra $\gk\subset V(1)=\gk\oplus\gk$. Then $\gk$ has level $l=a+b$ in $V$.
	
	Let $v_\lambda$ and $v_\rho$ be the highest weight vectors of $L_\gk(\lambda)$ and $L_\gk(\rho)$ respectively. Let $w_\lambda=v_\lambda\otimes\Omega$. Then $w_\lambda$ is a level $l$ highest weight vector of $\gh\curvearrowright W$ with highest weight $\lambda$. Since $\gk w_\lambda=L_\gk(\lambda)\otimes\Omega$, $\mathfrak Y(w^{(i)},x)$ satisfies $r$-th order energy bounds for any $w^{(i)}\in\gk w_\lambda$. In the following, we shall construct, for cases (a) and (b), highest weight vectors $w_\mu,w_\nu$ of the level $l$ $\gh$-modules $W_j,W_k$ with highest weights $\mu,\nu$ respectively, and show that
	\begin{align}
	\bk{\mathfrak Y(\gk w_\lambda,x)\gk w_\mu|\gk w_\nu}\neq0.\label{eq43}
	\end{align}
	Then corollary \ref{lb13} will imply the statement  claimed in this proposition. Case (c) will follow from case (b).

	(a) Let $w_\mu=v_{\mu_1}\otimes v_\rho$ and $w_\nu=v_{\nu_1}\otimes v_\rho$. Then these two vectors are highest weight vectors with weights $\mu$ and $\nu$ respectively. Note that by propositions \ref{lb47} and \ref{lb48},
	\begin{align*}
	1=\dim\mathcal V^a_\gk{\nu_1\choose\lambda~\mu_1}\leq\dim\Hom_\gk(\lambda\otimes\mu_1,\nu_1)\leq\dim L_\gk(\lambda)[\nu_1-\mu_1]=\dim L_\gk(\lambda)[\nu-\mu]\leq 1.
	\end{align*}
	So all the intermediate terms in the  above inequality equal $1$. We choose a non-zero vector $u^{(\lambda)}_{\nu-\mu}\in L_\gk(\lambda)[\nu-\mu]$. Then $u^{(\lambda)}_{\nu-\mu}\otimes\Omega\in\gk w_\lambda$. Let $\Delta_\sigma$ be the conformal weight of $\mathcal Y_\sigma$, and set $s=\Delta_\sigma-1$.  We compute
	\begin{align*}
	&\bk{\mathfrak Y(u^{(\lambda)}_{\nu-\mu}\otimes\Omega,s)w_\mu|w_\nu}\\
	=&\bk{\mathfrak Y(u^{(\lambda)}_{\nu-\mu}\otimes\Omega,s)(v_{\mu_1}\otimes v_\rho)|v_{\nu_1}\otimes v_\rho}=\lVert v_\rho\lVert^2 \bk{\mathcal Y_\sigma(u^{(\lambda)}_{\nu-\mu},s)v_{\mu_1}|v_{\nu_1} },
	\end{align*}
	which must be non-zero since the map $\Gamma\circ\Psi:\mathcal V^a_\gk{\nu_1\choose\lambda~\mu_1}\rightarrow  L_\gk(\lambda)[\nu_1-\mu_1]^*$ is injective. Thus \eqref{eq43} follows.
	
	(b) Condition \eqref{eq44} implies that $L_\gk(\lambda)[\nu-\mu]$ has positive dimension, which must be one. We again let $u^{(\lambda)}_{\nu-\mu}$ be a non-zero vector of $L_\gk(\lambda)[\nu-\mu]$. So $u^{(\lambda)}_{\nu-\mu}\otimes\Omega$ is inside $\gk w_\lambda$. Let $w_\mu=v_{\mu_1}\otimes v_\rho$ be a $\mu$-highest weight vector of $\gh\curvearrowright W_j$. We now define $w_\nu$. Since the non-trivial weight spaces of $L_\gk(\nu_1)$ have dimension $1$, $\dim\Hom_\gk(\nu_1\otimes\rho,\nu)=1$. Thus, up to scalar multiplication, there is a unique $\nu$-highest weight vector of $\gk\curvearrowright L_\gk(\nu_1)\otimes L_\gk(\rho)$. We let $w_\nu$ be such a vector. Identify $L_\gk(\nu_1)\otimes L_\gk(\rho)$ as the lowest energy (conformal weight) subspace of $W_k$. Then $w_\nu$ is a $\nu$-highest weight vector of $\gh\curvearrowright W_k$.

Since $\mathcal Y_\sigma\in\mathcal V^a_\gk{\nu_1\choose\lambda~\mu_1}$ is non-zero, $T=\Psi(\mathcal Y_\sigma)$ is a non-zero element in $\Hom_\gk(\lambda\otimes\mu_1,\nu_1)$.  Let again $s=\Delta_\sigma-1$.  Then the vector
	\begin{align}
	u^{(\nu_1)}_{\nu-\rho}:=\mathcal Y_\sigma(u^{(\lambda)}_{\nu-\mu},s)v_{\mu_1}
	\end{align}
	is non-zero by condition \eqref{eq44}. Clearly $u^{(\nu_1)}_{\nu-\rho}$ is a weight vector of $L_\gk(\nu_1)$ with weight $\nu-\mu+\mu_1=\nu-\rho$. So $u^{(\nu_1)}_{\nu-\rho}\in L_\gk(\nu_1)[\nu-\rho]$. Since the map $\Gamma:\Hom_\gk(\nu_1\otimes\rho,\nu)\rightarrow L_\gk(\nu_1)[\nu-\rho]^*$ is injective, we must have $\bk{u^{(\nu_1)}_{\nu-\rho}\otimes v_\rho|w_\nu}\neq0$. We now compute
	\begin{align*}
	&\bk{\mathfrak Y(u^{(\lambda)}_{\nu-\mu}\otimes\Omega,s)w_\mu|w_\nu}=\bk{\mathfrak Y(u^{(\lambda)}_{\nu-\mu}\otimes\Omega,s)(v_{\mu_1}\otimes v_\rho)|w_\nu}\\
	=&\bk{\mathcal Y_\sigma(u^{(\lambda)}_{\nu-\mu},s)v_{\mu_1}\otimes v_\rho|w_\nu}=\bk{u^{(\nu_1)}_{\nu-\rho}\otimes v_\rho|w_\nu}\neq0.
	\end{align*}
	Thus \eqref{eq43} is proved.
	
	(c) Choose an arbitrary non-zero $T\in\Hom_\gk(\lambda\otimes\mu_1,\nu_1)$. Then $\Ad T\in\Hom_\gk(\overline\lambda\otimes\nu_1,\mu_1)$, and the condition that \eqref{eq45} holds for some $u^{(\lambda)}_{\nu-\mu}\in L_\gk(\lambda),u^{(\mu_1)}\in L_\gk(\mu_1)$ is  equivalent to 
	\begin{align}
	\Ad T\big(L_\gk(\overline\lambda)[\mu-\nu]\otimes v_{\nu_1}\big)\neq0.
	\end{align}
	Here we use the fact that $C_\lambda\cdot L_\gk(\lambda)[\eta]=L_\gk(\overline\lambda)[-\eta]$ for any $\eta\in\mathfrak h^*$, which is easy to check using the definition of dual representations, and we let in particular $\eta$ be $\nu-\mu$. Now, $\dim\mathcal V^a_\gk{\mu_1\choose\overline\lambda~\nu_1}=\dim\mathcal V^a_\gk{\nu_1\choose\lambda~\mu_1}=1$. By proposition \ref{lb14}, $\mathcal Y_{\sigma^*}(C_\lambda u^{(\lambda)},x)$ satisfies $r$-th order energy bounds . Thus case (b) implies that $N^\mu_{\overline\lambda\nu}=1$, and for any $\mathcal Y_\alpha\in\mathcal V^a_\gk{\nu\choose\lambda~\mu}$ and $u^{(\lambda)}\in L_\gk(\lambda)$, $\mathcal Y_{\alpha^*}(C_\lambda u^{(\lambda)},x)$ satisfies $r$-th order energy bounds. Therefore, $N^\nu_{\lambda\mu}=N^\mu_{\overline\lambda\nu}=1$ and, by proposition \ref{lb14} again, $\mathcal Y_\alpha(u^{(\lambda)},x)$ satisfies $r$-th order energy bounds.
\end{proof}

\section{Type $B$}

\subsection{$\so_{2n+1}\subset\so_{2n+2}$}\label{lb21}

Let $n=2,3,4,\dots$. $\so_{2n+2}$ is the unitary Lie algebra of skew-symmetric linear operators on $\mathbb C^{2n+2}$. Likewise, elements in $\so_{2n+1}$ are skew-symmetric linear operators on $\mathbb C^{2n+1}$.  Let $e_1,e_2,\dots,e_{2n+2}$ be the standard orthonormal basis of $\mathbb C^{2n+2}$. Regard $\mathbb C^{2n+1}$ as a subspace of $\mathbb C^{2n+2}$ spanned by $e_1,\dots,e_{2n+1}$. For any $X\in\so_{2n+1}$, we extend it to a linear operator  on $\mathbb C^{2n+2}$ by setting $Xe_{2n+2}=0$. In this way, we embed $\so_{2n+1}$ as a unitary Lie subalgebra of $\so_{2n+2}$. 

For any $i,j=1,2,\dots,2n+2$, we choose $E_{i,j}\in\End(\mathbb C^{2n+2})$ to satisfy $E_{i,j}e_k=\delta_{j,k}e_i$ ($\forall k=1,2,\dots,2n+2$), then for any $X=\sum c_{ij}E_{i,j}\in\End(\mathbb C^{2n+2})$, $X\in\so_{2n+2}$ if and only if $c_{ij}=-c_{ji}$ for all $i,j=1,\dots,2n+2$. If $X\in\so_{2n+2}$, then $X\in\so_{2n+1}$ if and only if $c_{k,2n+2}=c_{2n+2,k}=0$ for all $k=1,\dots 2n+2$.

Write $\gk=\so_{2n+1}$ and $\pk=\so_{2n+2}$ for simplicity. In this section, we collect some basic facts about these two Lie algebras. First we choose Cartan subalgebras. See, for instance, \cite{FH91} for more details. For any $i=1,\dots,n+1$, we let
\begin{align*}
S_i=-\mathbf iE_{2i+1,2i+2}+\mathbf i E_{2i+2,2i+1}.
\end{align*}
Here $\mathbf i$ is the imaginary number $\sqrt{-1}$, not to be confused with the index $i$. Let  $\tk=\Span_\mathbb C\{S_1,\dots,S_{n+1} \}$, and $\hk=\Span_\mathbb C\{S_1,\dots,S_n\}$. Then $\tk$ is a Cartan subalgebra of $\pk$, and $\hk$ is a Cartan subalgebra of $\gk$, which is a unitary Lie subalgebra of $\tk$ at the same time. Let $\{\theta_1,\dots,\theta_{n+1} \}$ be the dual basis of $\{S_1,\dots,S_{n+1}\}$, i.e., for any $i,j=1,\dots,n+1$, $\theta_i\in\tk^*$ satisfies $\bk{\theta_i,S_j}=\delta_{ij}$. Let $\vartheta_i,\dots,\vartheta_n$ be respectively the restrictions of $\theta_1,\dots,\theta_n$ on $\hk$. Then $\{\vartheta_i,\dots,\vartheta_n \}$ is a basis of $\hk^*$, which is the dual basis of $\{S_1,\dots,S_n \}\subset \hk$. Clearly the restriction of $\theta_{n+1}$ on $\hk$ is $0$.

We also introduce the  relation $\succsim$ between real numbers to simplify  our discussion. Given $a,b\in\mathbb R$, we say  $b\succsim a$ if $b-a\in\{0,1,2,\dots \}$.\\

\subsubsection*{Facts about $\pk=\so_{2n+2}$}
\noindent
Simple roots: $\theta_1-\theta_2,\theta_2-\theta_3,\dots,\theta_{n-1}-\theta_n,\theta_n\pm\theta_{n+1}$.\\
Roots: $\pm\theta_i\pm\theta_j$ ($i\neq j$).\\
$\pk_+$: matrices whose non-zero entries are above the diagonal $2\times2$ blocks $S_1,\dots,S_{n+1}$.\\
Fundamental weights:
\begin{gather*}
\theta_1+\cdots+\theta_i\qquad(i=1,2,\dots,n-1),\\
\varsigma_+=\frac 1 2(\theta_1+\cdots+\theta_n+\theta_{n+1}),\\
\varsigma_-=\frac 1 2(\theta_1+\cdots+\theta_n-\theta_{n+1}).
\end{gather*}
Dominant integral weights: $\lambda=f_1\theta_1+\cdots+f_{n+1}\theta_{n+1}$, where
\begin{align}
&f_1\succsim\cdots\succsim f_n\succsim |f_{n+1}|\succsim 0\qquad (\text{single-valued}),\label{eq50}\\
\textrm{or }&f_1\succsim\cdots\succsim f_n\succsim |f_{n+1}|\succsim \frac 1 2\qquad(\text{double-valued}).\label{eq51}
\end{align}
Highest root: $\theta_1+\theta_2$.\\
The normalized invariant inner product satisfies $(S_i|S_j)=(\theta_i|\theta_j)=\delta_{i,j}$.\\
Weights of $L_\pk(\theta_1)$: $\pm\theta_1,\dots,\pm\theta_n,\pm\theta_{n+1}$.\\
Weights of $L_\pk(\varsigma_\pm)$: 
\begin{gather*}
\frac {(-1)^{\sigma_1}\theta_1+\cdots+(-1)^{\sigma_{n+1}}\theta_{n+1}}2\qquad (\sigma_1,\dots,\sigma_{n+1}\in\mathbb Z),\\
\sigma_1+\cdots+\sigma_{n+1}\in \left\{ \begin{array}{ll}
2\mathbb Z & \textrm{for $\varsigma_+$}\\
2\mathbb Z+1 & \textrm{for $\varsigma_-$.}\\
\end{array} \right.
\end{gather*}
The weight spaces of $L_\pk(\theta_1),L_\pk(\varsigma_+)$, and $L_\pk(\varsigma_-)$ have dimensions at most $1$.

\subsubsection*{Facts about $\gk=\so_{2n+1}$}
\noindent
Simple roots: $\vartheta_1-\vartheta_2,\vartheta_2-\vartheta_3,\dots,\vartheta_{n-1}-\vartheta_n,\vartheta_n$.\\
Roots: $\pm\vartheta_i\pm\vartheta_j$ ($i\neq j$), $\pm\vartheta_i$.\\
$\gk_+=\pk_+\cap\gk$.\\
Fundamental weights:
\begin{gather*}
\vartheta_1+\cdots+\vartheta_i\qquad(i=1,2,\dots,n-1),\\
\varsigma=\frac 1 2(\vartheta_1+\cdots+\vartheta_n).
\end{gather*}
Dominant integral weights: $\lambda=f_1\vartheta_1+\cdots+f_n\vartheta_n$, where
\begin{align}
&f_1\succsim\cdots\succsim f_n\succsim 0\qquad (\text{single-valued}),\label{eq48}\\
\textrm{or }&f_1\succsim\cdots\succsim f_n\succsim \frac 1 2\qquad(\text{double-valued}).\label{eq49}
\end{align}
Highest root: $\vartheta_1+\vartheta_2$.\\
The normalized invariant inner product satisfies $(S_i|S_j)=(\vartheta_i|\vartheta_j)=\delta_{i,j}$.\\
Weights of $L_\gk(\vartheta_1)$: $\pm\vartheta_1,\dots,\pm\vartheta_n,0$.\\
Weights of $L_\gk(\varsigma)$: 
\begin{gather}
\frac {(-1)^{\sigma_1}\vartheta_1+\cdots+(-1)^{\sigma_n}\vartheta_n}2\qquad (\sigma_1,\dots,\sigma_n\in\mathbb Z).\label{eq47}
\end{gather}
The weight spaces of $L_\gk(\vartheta_1)$ and $L_\gk(\varsigma)$ have dimensions at most $1$.\\

It is clear that the Dynkin index of $\gk\subset\pk$ is $1$.

\subsection{Main result}

The dominant integral weights of $\pk$ admissible at level $1$ are $\theta_1,\varsigma_+,\varsigma_-$. Choose $l=1,2,\dots$. The following results were essentially proved in \cite{TL04}. The case $l=1$ should be proved using Frenkel-Kac construction and lattice VOAs, see corollary \ref{lb41}. The higher level cases can be reduced to the level $1$ case using compression principle, see \cite{TL04} chapter VI sections 2 and 3.

\begin{thm}\label{lb16}
Let $\pk=\mathfrak{so}_{2n}$ ($n\geq3$), and choose $\lambda,\mu,\nu\in P_+(\pk)$ admissible at level $l$.

(a) If $\lambda=\theta_1$ or $\varsigma_\pm$, then the maps $\Psi:\mathcal V^l_\pk{\nu\choose\lambda~\mu}\rightarrow \Hom_\pk(\lambda\otimes\mu,\nu)$ and $\Gamma:\Hom_\pk(\lambda\otimes\mu,\nu)\rightarrow L_\pk(\lambda)[\nu-\mu]^*$ are bijective.

(b) The $V^l_\pk$-modules $L_\pk(\varsigma_+,l)$ and $L_\pk(\varsigma_-,l)$ are generating (see definition \ref{lb42}).

(c) If $\lambda=\varsigma_\pm$, then any $\mathcal Y_\alpha\in\mathcal V^l_\pk{\nu\choose\lambda~\mu}$ is energy-bounded.

(d) If $\lambda=\theta_1$, then for any $\mathcal Y_\alpha\in\mathcal V^l_\pk{\nu\choose\lambda~\mu}$ and $u^{(\lambda)}\in L_\pk(\lambda)$, $\mathcal Y_\alpha(u^{(\lambda)},x)$ satisfies $0$-th order energy bounds. In particular, $\mathcal Y_\alpha$ is energy-bounded.
\end{thm}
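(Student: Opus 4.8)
\emph{Overall plan.} The plan is to prove (a), (c), (d) simultaneously by induction on the level $l$, with the base case $l=1$ supplied by the Frenkel--Kac and lattice-VOA analysis of the appendix (Corollary~\ref{lb41}) and the inductive step powered by the compression machinery of Section~\ref{lb7}; (b) will then follow from (a). The key structural input, recorded in Section~\ref{lb21} for $\so_{2n+1}\subset\so_{2n+2}$ and used here with the obvious relabelling, is that $\theta_1,\varsigma_+,\varsigma_-$ are precisely the minuscule fundamental weights of $\so_{2n}$, so that the weight spaces of $L_\pk(\theta_1),L_\pk(\varsigma_+),L_\pk(\varsigma_-)$ all have dimension at most $1$ and, more importantly, $L_\pk(\lambda)\otimes L_\pk(\mu)=\bigoplus L_\pk(\mu+w)$ with the sum over weights $w$ of $L_\pk(\lambda)$ such that $\mu+w$ is dominant. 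From the latter one reads off the bijectivity of $\Gamma$ claimed in (a): for dominant $\mu,\nu$ and $\lambda\in\{\theta_1,\varsigma_\pm\}$, both $\dim\Hom_\pk(\lambda\otimes\mu,\nu)$ and $\dim L_\pk(\lambda)[\nu-\mu]$ equal $1$ if $\nu-\mu$ is a weight of $L_\pk(\lambda)$ and $0$ otherwise, so $\Gamma$ (injective by Proposition~\ref{lb47}) is onto. Since moreover $N^\nu_{\lambda\mu}\le\dim\Hom_\pk(\lambda\otimes\mu,\nu)\le\dim L_\pk(\lambda)[\nu-\mu]\le1$ by Propositions~\ref{lb48} and \ref{lb47}, proving the bijectivity of $\Psi$ reduces to exhibiting, whenever $\nu-\mu$ is a weight of $L_\pk(\lambda)$ and $\mu,\nu$ are admissible at level $l$, a single non-zero $\mathcal Y_\alpha\in\mathcal V^l_\pk{\nu\choose\lambda~\mu}$ --- and the same constructions will carry the energy bounds. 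Finally, $(\theta_1|\theta)=(\varsigma_\pm|\theta)=1$ for the highest root $\theta=\theta_1+\theta_2$, so the relevant creation operators live at level $1$.

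\emph{Base case.} The weights admissible at level $1$ are $0,\theta_1,\varsigma_+,\varsigma_-$, and by Frenkel--Kac $V^1_\pk$ is unitarily equivalent to the lattice VOA $V_Q$ with $Q$ the root lattice of $\so_{2n}$. The explicit form of the intertwining operators of $V_Q$ --- developed in the appendix from the intertwining operators of Heisenberg and lattice vertex algebras and the Boson--Fermion correspondence --- gives the bijectivity of $\Psi$ at level $1$, hence (a); it gives (d) at level $1$ because the charge vectors attached to $L_\pk(\theta_1,1)$ have norm $(\theta_1|\theta_1)=1$, so the corresponding intertwining operators are compressions of the vertex operator of the free-fermion vertex superalgebra $V_{\mathbb Z^n}$ and satisfy $0$-th order energy bounds; and it gives (c) at level $1$ because the charge vectors attached to $L_\pk(\varsigma_\pm,1)$ have norm $n/4$, a general value, so the tensor-product argument of \cite{TL04} yields energy-boundedness. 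In particular $\mathcal Y_{\kappa(\theta_1)}(u,x)$ satisfies $0$-th order energy bounds for every $u\in L_\pk(\theta_1)$, and $\mathcal Y_{\kappa(\varsigma_\pm)}$ is energy-bounded.

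\emph{Inductive step.} Assume (a), (c), (d) at all levels $<l$. Fix $\lambda\in\{\theta_1,\varsigma_+,\varsigma_-\}$ and $\mu,\nu$ admissible at level $l$ with $\nu-\mu$ a weight of $L_\pk(\lambda)$ (otherwise there is nothing to prove). If $(\mu|\theta)\le l-1$ or $(\nu|\theta)\le l-1$, Proposition~\ref{lb29}, applied with $a=(\lambda|\theta)=1$ and the level-$1$ creation operator $\mathcal Y_{\kappa(\lambda)}$, gives bijectivity of $\Psi$ and transfers the energy bound from $\mathcal Y_{\kappa(\lambda)}(u^{(\lambda)},x)$ (known from the base case); combined with the reduction above this yields (a), (c), (d) for this triple. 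The remaining case is $(\mu|\theta)=(\nu|\theta)=l$; here one chooses a decomposition $\mu=\mu_1+\rho$, $\nu=\nu_1+\rho$ with $\mu_1,\nu_1\in\{\theta_1,\varsigma_+,\varsigma_-\}$ and $\rho\in P_+(\pk)$ admissible at level $l-1$, and invokes Lemma~\ref{lb24}(a) (falling back on its more flexible forms (b), (c), governed by the non-vanishing conditions \eqref{eq44}, \eqref{eq45}, when no such common-$\rho$ decomposition exists). Its hypotheses are met by the inductive data: $a=\max\{(\lambda|\theta),(\mu_1|\theta),(\nu_1|\theta)\}=1$ and $\rho$ is admissible at level $b=l-1$; $\dim\mathcal V^1_\pk{\nu_1\choose\lambda~\mu_1}=\dim L_\pk(\lambda)[\nu_1-\mu_1]=\dim L_\pk(\lambda)[\nu-\mu]=1$ by the base case; and $\mathcal Y_\sigma(u^{(\lambda)},x)$ has the required finite-order energy bound ($0$-th order if $\lambda=\theta_1$) for every $\mathcal Y_\sigma\in\mathcal V^1_\pk{\nu_1\choose\lambda~\mu_1}$ and $u^{(\lambda)}\in L_\pk(\lambda)$. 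Lemma~\ref{lb24} then delivers $N^\nu_{\lambda\mu}=1$ together with the energy bound for $\mathcal Y_\alpha(u^{(\lambda)},x)$, closing the induction. Finally, for $\lambda=\varsigma_\pm$ one promotes the bound on a single $\mathcal Y_\alpha(u^{(\varsigma_\pm)},x)$ to energy-boundedness of the whole $\mathcal Y_\alpha$ via Proposition~\ref{lb17}, using that $L_\pk(\mu,l)$ and $L_\pk(\nu,l)$ are energy-bounded (Theorem~\ref{lb18}); the same gives the last clause of (d).

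\emph{Part (b) and the main obstacle.} For (b), part (a) identifies $N^\nu_{\varsigma_\pm\,\mu}$ with the tensor product multiplicity $\dim\Hom_\pk(\varsigma_\pm\otimes\mu,\nu)$ for $\mu,\nu$ admissible at level $l$; since $L_\pk(\varsigma_+)$ and $L_\pk(\varsigma_-)$, together with their contragredients, tensor-generate $\Rep(\so_{2n})$ --- the fundamental representations $L_\pk(\theta_1+\cdots+\theta_i)$ occur in $L_\pk(\varsigma_+)^{\otimes 2}$, $L_\pk(\varsigma_+)\otimes L_\pk(\varsigma_-)$, $\dots$ --- and since for any admissible $\nu$ one can arrange such a chain of non-zero multiplicities with all intermediate weights admissible at level $l$, Definition~\ref{lb42} gives that $\{L_\pk(\varsigma_+,l),L_\pk(\varsigma_-,l)\}$ is generating. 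The main obstacle is twofold. First, the base case is far from elementary: it rests on the full theory of intertwining operators of Heisenberg and lattice vertex algebras and on the Boson--Fermion correspondence, which must be developed separately (in the appendix, following \cite{TL04} and \cite{DL93}). Second --- and this is the crux of the reduction --- one must verify, using only the explicit combinatorics of dominant integral weights of $\so_{2n}$, that in the boundary case $(\mu|\theta)=(\nu|\theta)=l$ a decomposition $\mu=\mu_1+\rho$, $\nu=\nu_1+\rho$ with $\mu_1,\nu_1$ at level $1$ and $\rho$ admissible at level $l-1$ --- or a substitute satisfying \eqref{eq44} or \eqref{eq45} of Lemma~\ref{lb24} --- is always available; this case analysis, essentially \cite{TL04} chapter VI, is where the real work lies.
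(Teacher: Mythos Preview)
Your proposal is correct and follows essentially the same route the paper indicates: the paper does not give its own proof of this theorem but states it as a result of \cite{TL04}, pointing to Corollary~\ref{lb41} for $l=1$ (Frenkel--Kac plus lattice/Heisenberg energy bounds) and to the compression principle for higher levels --- exactly the two-stage argument you sketch, with Proposition~\ref{lb29} handling the non-boundary case and Lemma~\ref{lb24} the case $(\mu|\theta)=(\nu|\theta)=l$. Your identification of the residual work (the explicit combinatorial decomposition $\mu=\mu_1+\rho$, $\nu=\nu_1+\rho$ with $\mu_1,\nu_1$ at level $1$, which is carried out in \cite{TL04} chapter VI) is accurate, and your handling of the minuscule structure of $\theta_1,\varsigma_\pm$ to get bijectivity of $\Gamma$ is clean.
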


Now we assume that $\lambda,\mu,\nu$ are dominant integral weights of $\gk$ admissible at level $l$, and that $\lambda=\vartheta_1$ or $\varsigma$. Then the weight spaces of $L_\gk(\lambda)$ have dimensions at most $1$. We shall calculate the fusion rule $N^\nu_{\lambda\mu}=\dim\mathcal V^l_\gk{\nu\choose\lambda~\mu}$, and establish the energy bounds condition for type $\nu\choose\lambda~\mu$ intertwining operators of $V^l_\gk$. Since $N^\nu_{\lambda\mu}\leq\dim L_\gk(\lambda)[\nu-\mu]\leq1$, in order to know when $N^\nu_{\lambda\mu}$ equals $1$, we can assume that $\nu-\mu$ is a weight of $L_\gk(\lambda)$.

\subsubsection*{Case $\lambda=\varsigma$}

First we assume that $\mu$ is single-valued. Then, since $\nu-\mu$ is a weight of $L_\gk(\lambda)$, which can be written as \eqref{eq47}, $\mu$ must be double-valued. Write
\begin{gather*}
\nu-\mu=\frac 12((-1)^{\sigma_1}\vartheta_1+\cdots+(-1)^{\sigma_n}\vartheta_n),\\
\mu=f_1\vartheta_1+\cdots+f_n\vartheta_n,\\
\nu=g_1\vartheta_1+\cdots+g_n\vartheta_n.
\end{gather*}
Then $f_1,\dots,f_n$ satisfy \eqref{eq48}, $g_1,\dots,g_n$ satisfy \eqref{eq49}, and $g_i=f_i+(-1)^{\sigma_i}/2$ ($i=1,\dots,n$). Let
\begin{gather*}
\wtd\lambda=\left\{ \begin{array}{ll}
\varsigma_+ & (\textrm{if }\sigma_1+\cdots+\sigma_n\textrm{ is even})\\
\varsigma_- & (\textrm{if }\sigma_1+\cdots+\sigma_n\textrm{ is odd})\\
\end{array} \right.\\
\wtd\mu=f_1\theta_1+\cdots+f_n\theta_n,\\
\wtd\nu=g_1\theta_1+\cdots+g_n\theta_n+\frac {\theta_{n+1}}2 .
\end{gather*}
Then by \eqref{eq50} and \eqref{eq51}, $\wtd\mu$ and $\wtd\nu$ are dominant integral weights of $\pk$, which are clearly admissible at level $l$, and
\begin{gather*}
\wtd\nu-\wtd\mu=\frac 12((-1)^{\sigma_1}\theta_1+\cdots+(-1)^{\sigma_n}\theta_n+\theta_{n+1})
\end{gather*}
is a weight of $L_\pk(\wtd\lambda)$. So by theorem \ref{lb16}-(a), $\dim \mathcal V^l_\pk{\wtd\nu\choose\wtd\lambda~\wtd\mu}=1$. 

Choose a non-zero $\mathfrak Y\in \mathcal V^l_\pk{\wtd\nu\choose\wtd\lambda~\wtd\mu}$. Then $\mathfrak Y$ is energy-bounded by theorem \ref{lb16}-(c). Let $w_\lambda,w_\mu$ and $w_\nu$ be respectively highest weight vectors of $\ph\curvearrowright L_\pk(\wtd\lambda,l),L_\pk(\wtd\mu,l),L_\pk(\wtd\nu,l)$. Then they are also highest weight vectors of the action of $\gh$ on these vector spaces with weights $\lambda,\mu,\nu$ respectively. If we can show that
\begin{align}
\bk{\mathfrak Y(\gk w_\lambda,x)\gk w_\mu|\gk w_\nu}\neq0,\label{eq52}
\end{align}
then by corollary \ref{lb13}, $\dim\mathcal V^l_\gk{\nu\choose\lambda\mu}=1$, and for any $\mathcal Y\in\mathcal V^l_\gk{\nu\choose\lambda~\mu}$ and $ u^{(\lambda)}\in L_\gk(\lambda)$, $\mathcal Y(u^{(\lambda)},x)$ is energy-bounded. Therefore, by theorems \ref{lb17} and \ref{lb18}, $\mathcal Y$ is energy bounded.

To prove \eqref{eq52}, we first choose a non-zero $u^{(\wtd\lambda)}\in L_\pk(\wtd\lambda)[\wtd\nu-\wtd\mu]$. Since the map $\Gamma\Psi:\mathcal V^l_\pk{\wtd\nu\choose\wtd\lambda~\wtd\mu}\rightarrow L_\pk(\wtd\lambda)[\wtd\nu-\wtd\mu]^*$ is injective, we can choose a suitable  $s\in\mathbb R$ such that $\bk{\mathfrak Y(u^{(\wtd\lambda)},s) w_\mu| w_\nu}\neq0$. Since we know all the possible weights of  $\gk w_\lambda\simeq L_\gk(\varsigma)$, and since each weigh space has dimension no greater than $1$, we can easily compute that $\gk w_\lambda$ has dimension $2^n$. But $L_\pk(\wtd\lambda)$ also has dimension $2^n$, which can be computed in a similar way. Therefore $\gk w_\lambda=L_\pk(\wtd\lambda)$, and hence $u^{(\wtd\lambda)}\in\gk w_\lambda$. Thus \eqref{eq52} is proved.

The case $\mu$ is double-valued (and hence $\nu$ is single-valued) can be treated in a similar way. Finally, using the knowledge of fusion rules, one can easily check that $L_\gk(\varsigma,l)$ is a generating $V^l_\gk$-module.

\subsubsection*{Case $\lambda=\vartheta_1$}

As we have seen, our analysis of the case $\lambda=\varsigma$ relies on the fact that the action $\pk\curvearrowright L_\pk(\varsigma_\pm)$, when restricted to $\gk$, is also irreducible and equivalent to $L_\gk(\varsigma)$. This is achieved by a calculation of dimensions. However, we don't have a similar result when $\lambda=\vartheta_1$. Let $v_{\theta_1}$ be a highest weight vector of $\pk\curvearrowright L_\pk(\theta_1)$. Then $v_{\theta_1}$ is  a weight-$\vartheta_1$ highest weight vector of $\gk\curvearrowright L_\pk(\theta_1)$, and hence $\gk v_{\theta_1}$ is equivalent to $L_\gk(\vartheta_1)$. We have $\dim\gk v_{\theta_1}=\dim L_\gk(\vartheta_1)=2n+1$, and $\dim L_\pk(\theta_1)=2n+2$. So $\gk v_{\theta_1}$ is a proper subspace of $L_\pk(\theta_1)$. We still have the following:

\begin{lm}\label{lb19}
Identify $\gk v_{\theta_1}$ with $L_\gk(\vartheta_1)$.	

(a) For any $i=1,2,\dots,n$, we have $L_\gk(\vartheta_1)[\vartheta_i]=L_\pk(\theta_1)[\theta_i]$ and $L_\gk(\vartheta_1)[-\vartheta_i]=L_\pk(\theta_1)[-\theta_i]$.

(b) For any non-zero $u_0\in L_\gk(\vartheta_1)[0]$, there exist non-zero vectors $u_+\in L_\pk(\theta_1)[\theta_{n+1}]$ and $u_-\in L_\pk(\theta_1)[-\theta_{n+1}]$, such that $u_0=u_++u_-$.
\end{lm}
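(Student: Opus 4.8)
The plan is to work throughout with the defining realizations $L_\pk(\theta_1)=\mathbb C^{2n+2}=\Span_{\mathbb C}\{e_1,\dots,e_{2n+2}\}$ and $L_\gk(\vartheta_1)=\mathbb C^{2n+1}=\Span_{\mathbb C}\{e_1,\dots,e_{2n+1}\}$. First I would record the weight vectors: for each $i$, the vector $e_{2i-1}\pm\mathbf i e_{2i}$ is a $\tk$-weight vector of $L_\pk(\theta_1)$ of weight $\pm\theta_i$ (for $i=1,\dots,n+1$), and, when $i\leq n$, it is also an $\hk$-weight vector of $L_\gk(\vartheta_1)$ of weight $\pm\vartheta_i$; moreover $e_{2n+1}$ spans $L_\gk(\vartheta_1)[0]$ and $e_{2n+2}$ spans the trivial $\gk$-submodule of $L_\pk(\theta_1)$ complementary to $L_\gk(\vartheta_1)$. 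Since the highest weight vector $v_{\theta_1}$ may be taken to be $e_1+\mathbf i e_2\in\mathbb C^{2n+1}$ and $\gk$ acts irreducibly on $\mathbb C^{2n+1}$, the identification $\gk v_{\theta_1}=L_\gk(\vartheta_1)=\mathbb C^{2n+1}\subseteq L_\pk(\theta_1)$ from the statement is exactly this inclusion; by complete reducibility $L_\pk(\theta_1)|_{\gk}=L_\gk(\vartheta_1)\oplus L_\gk(0)$ with $L_\gk(0)=\mathbb C e_{2n+2}$.

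For part (a) I would use that $\theta_i|_{\hk}=\vartheta_i$ for $i\leq n$ while $\theta_{n+1}|_{\hk}=0$, so among the $\tk$-weights $\pm\theta_1,\dots,\pm\theta_{n+1}$ of $L_\pk(\theta_1)$ the only one restricting to the nonzero weight $\vartheta_i$ is $\theta_i$ itself. Hence the $\hk$-weight-$\vartheta_i$ subspace of $L_\pk(\theta_1)$ coincides with its $\tk$-weight-$\theta_i$ subspace. On the other hand, since the trivial summand $\mathbb C e_{2n+2}$ lies in $\hk$-weight $0$, the $\hk$-weight-$\vartheta_i$ subspace of $L_\pk(\theta_1)$ equals that of $L_\gk(\vartheta_1)$. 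Combining the two gives $L_\gk(\vartheta_1)[\vartheta_i]=L_\pk(\theta_1)[\theta_i]$, and the same argument with $-\vartheta_i,-\theta_i$ gives the other equality.

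For part (b) I would examine the $\hk$-weight-$0$ subspace $L_\pk(\theta_1)[0]$, which is two-dimensional: from the $\gk$-decomposition it equals $L_\gk(\vartheta_1)[0]\oplus L_\gk(0)=\mathbb C e_{2n+1}\oplus\mathbb C e_{2n+2}$, and from the $\tk$-weight decomposition it equals $L_\pk(\theta_1)[\theta_{n+1}]\oplus L_\pk(\theta_1)[-\theta_{n+1}]$ (these being the two $\tk$-weights restricting to $0$). The key observation is that the line $L_\gk(\vartheta_1)[0]=\mathbb C e_{2n+1}$ meets neither summand $L_\pk(\theta_1)[\pm\theta_{n+1}]=\mathbb C(e_{2n+1}\pm\mathbf i e_{2n+2})$, since a nonzero vector of the latter form has nonzero $e_{2n+2}$-component and thus does not lie in $\mathbb C^{2n+1}$. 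Therefore the two projections of $L_\gk(\vartheta_1)[0]$ onto $L_\pk(\theta_1)[\pm\theta_{n+1}]$ (along the other summand) are both nonzero, hence isomorphisms of lines; writing $u_\pm$ for the two components of a nonzero $u_0\in L_\gk(\vartheta_1)[0]$ yields nonzero $u_\pm\in L_\pk(\theta_1)[\pm\theta_{n+1}]$ with $u_0=u_++u_-$. Concretely, $u_0=c\,e_{2n+1}$ and $u_\pm=\tfrac c2(e_{2n+1}\pm\mathbf i e_{2n+2})$.

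The argument is entirely elementary; there is no real obstacle, and the only thing requiring care is keeping the two Cartan subalgebras $\hk\subset\tk$ separate, tracking how $\tk$-weights restrict to $\hk$-weights, and using the small dimension count $\dim L_\pk(\theta_1)=\dim L_\gk(\vartheta_1)+1$ to pin down the trivial $\gk$-summand.
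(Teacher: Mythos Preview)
Your argument is correct and follows essentially the same approach as the paper's proof: both identify $L_\gk(\vartheta_1)$ with $\mathbb C^{2n+1}\subset\mathbb C^{2n+2}$, use the restriction $\theta_i|_{\hk}=\vartheta_i$ (and $\theta_{n+1}|_{\hk}=0$) together with one-dimensionality of the weight spaces for (a), and for (b) observe that $L_\gk(\vartheta_1)[0]=\mathbb C e_{2n+1}$ is not contained in either line $L_\pk(\theta_1)[\pm\theta_{n+1}]$. Your version is slightly more explicit (writing out the weight vectors $e_{2i-1}\pm\mathbf i e_{2i}$ and the decomposition $u_\pm=\tfrac c2(e_{2n+1}\pm\mathbf i e_{2n+2})$), but the content is the same.
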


\begin{proof}
It is clear that for any $i=1,\dots,n$, $L_\pk(\theta_1)[\theta_i]$ is the weight-$\vartheta_i$ subspace of $\gk\curvearrowright L_\pk(\theta_1)$,  $L_\pk(\theta_1)[-\theta_i]$ is the weight-$(-\vartheta_i)$ subspace of $\gk\curvearrowright L_\pk(\theta_1)$ and  $L_\pk(\theta_1)[\theta_{n+1}]\oplus L_\pk(\theta_1)[-\theta_{n+1}]$ is the weight $0$ subspace of $\gk\curvearrowright L_\pk(\theta_1)$. Thus
\begin{gather}
L_\gk(\vartheta_1)[\vartheta_i]\subset L_\pk(\theta_1)[\theta_i],\label{eq53}\\
L_\gk(\vartheta_1)[-\vartheta_i]\subset L_\pk(\theta_1)[-\theta_i],\label{eq54}\\
L_\gk(\vartheta_1)[0]\subset L_\pk(\theta_1)[\theta_{n+1}]\oplus L_\pk(\theta_1)[-\theta_{n+1}].
\end{gather}
Since both sides of \eqref{eq53} and \eqref{eq54} have dimension $1$, we actually have $L_\gk(\vartheta_1)[\vartheta_i]=L_\pk(\theta_1)[\theta_i]$ and $L_\gk(\vartheta_1)[-\vartheta_i]=L_\pk(\theta_1)[-\theta_i]$.

Now identify $L_\pk(\theta_1)$ with the standard representation $\so_{2n+2}\curvearrowright\mathbb C^{2n+2}$, and let $e_1,\dots,e_{2n+2}$ be the standard orthonormal basis. Then it is clear that $L_\gk(\vartheta_1)[0]=\mathbb C\cdot e_{2n+1}$. Clearly $e_{2n+1}$ is not a weight vector of $\pk\curvearrowright L_\pk(\theta)$. So
$$L_\gk(\vartheta_1)[0]\nsubset L_\pk(\theta_1)[\theta_{n+1}]\cup L_\pk(\theta_1)[-\theta_{n+1}].$$
This proves (b).
\end{proof}

We are now ready to compute fusion rules and prove the energy bounds condition. Write $\mu=\sum_{j=1}^nf_j\vartheta_j$ and $\nu=\sum_{j=1}^ng_j\vartheta_j$ as usual. Let $\wtd\lambda=\theta_1$. In the following, we shall either show that $\dim\Hom_\gk(\lambda\otimes\mu,\nu)=0$ (which indicates $\dim\mathcal V^l_\gk{\nu\choose\lambda~\mu}=0$), or choose $\wtd\mu,\wtd\nu\in P_+(\pk)$ admissible at level $l$, such that the restrictions of these weights to $\mathfrak h^*$ are $\mu$ and $ \nu$, and that $\wtd\nu-\wtd\mu$ is a weight of $L_\pk(\lambda)$. Then we choose   highest weight vectors $w_\lambda,w_\mu,w_\nu$ of $\ph\curvearrowright L_\pk(\wtd\lambda,l),L_\pk(\wtd\mu,l),L_\pk(\wtd\nu,l)$ respectively, which are also highest weight vectors of $\gh\curvearrowright L_\pk(\wtd\lambda,l),L_\pk(\wtd\mu,l),L_\pk(\wtd\nu,l)$ with highest weights $\lambda,\mu,\nu$ respectively. Choose a non-zero weight-$(\nu-\mu)$ vector $u$ of $\gk\curvearrowright \gk w_\lambda$. Choose a non-zero $\mathfrak Y\in\mathcal V^l_\pk{\wtd\nu\choose\wtd\lambda~\wtd\nu}$. We shall show that  $\bk{\mathfrak Y(u,x) w_\mu| w_\nu}\neq0$. This will imply, by corollary \ref{lb13} and theorem \ref{lb16}-(d), that $\dim\mathcal V^l_\gk{\nu\choose\lambda~\mu}=1$, and that for any $\mathcal Y\in\mathcal V^l_\gk{\nu\choose\lambda~\mu}$ and $u^{(\lambda)}\in L_\gk(\lambda)$, $\mathcal Y(u^{(\lambda)},x)$ satisfies $0$-th order energy-bounds.\\

Subcase 1: $\nu-\mu$ equals $\vartheta_i$ or $-\vartheta_i$ ($i=1,\dots,n$), and $\mu$ is single-valued. Then, since $\nu-\mu$ is a weight of $L_\gk(\vartheta_1)$, $\nu$ must also be single-valued.  Set
 $$\wtd\mu=\sum^n_jf_j\theta_j,\qquad\wtd\nu=\sum^n_jg_j\theta_j.$$ 
Then $\wtd\nu-\wtd\mu$ equals $\theta_i$ or $-\theta_i$, which is a weight of $L_\pk(\theta_1)$. If $u\in(\gk w_\lambda)[\nu-\mu]$ is non-zero, then by lemma \ref{lb19}-(a), $u\in L_\pk(\wtd\lambda)[\wtd\nu-\wtd\mu]$. Therefore, as $\Gamma\Psi$ is injective, $\bk{\mathfrak Y(u,x) w_\mu| w_\nu}\neq0$.

Subcase 2: $\nu-\mu$ equals $\vartheta_i$ or $-\vartheta_i$ ($i=1,\dots,n$), and $\mu$ is double-valued. This subcase can be treated in a similar way as we treat subcase 1, except that we let $$\wtd\mu=\sum^n_jf_j\theta_j+\frac 1 2\theta_{n+1},\qquad\wtd\nu=\sum^n_jg_j\theta_j+\frac 1 2\theta_{n+1}.$$

Subcase 3: $\mu=\nu$ are single-valued, and $f_n=g_n\geq1$. Set $$\wtd\mu=\sum^n_jf_j\theta_j,\qquad\wtd\nu=\sum^n_jg_j\theta_j+\theta_{n+1}~~(=\wtd\mu+\theta_{n+1}).$$
Choose a non-zero $u\in(\gk w_\lambda)[\nu-\mu]$. Then by lemma \ref{lb19}-(b), there exist non-zero  $u_+\in L_\pk(\theta_1)[\theta_{n+1}]$ and $u_-\in L_\pk(\theta_1)[-\theta_{n+1}]$ satisfying $u_0=u_++u_-$. By injectivity of $\Gamma\Psi$, $\bk{\mathfrak Y(u_+,s) w_\mu| w_\nu}\neq0$ for some $s\in\mathbb R$. The vector $\mathfrak Y(u_-,s) w_\mu$ is a weight-$(\wtd\mu-\theta_{n+1})$ vector of $\pk\curvearrowright L_\pk(\wtd\nu,l)$. So we must have $\bk{\mathfrak Y(u_-,s) w_\mu| w_\nu}=0$. Therefore $\bk{\mathfrak Y(u,s) w_\mu| w_\nu}\neq0$.

Subcase 4: $\mu=\nu$ are double-valued. We can prove this by choosing
$$\wtd\mu=\sum^n_jf_j\theta_j-\frac 1 2\theta_{n+1},\qquad\wtd\nu=\sum^n_jg_j\theta_j+\frac 1 2\theta_{n+1}.$$
and using a similar argument as in subcase 3.

Subcase 5: $\mu=\nu$ are single-valued, and $f_n=g_n=0$. We know that $\alpha=\vartheta_n$ is a simple root of $\gk$. Then $n_{\mu,\alpha}:=2(\mu|\vartheta_n)/(\vartheta_n|\vartheta_n)=0$, and hence
\begin{align*}
\dim L_\gk(\lambda)[\nu-\mu+(n_{\mu,\alpha}+1)\alpha]=\dim L_\gk(\vartheta_1)[\vartheta_n]=1.
\end{align*}
By corollary \ref{lb20}, $\dim\Hom_\gk(\lambda\otimes\mu,\nu)=0$. Therefore $\dim\mathcal V^l_\gk{\nu\choose\lambda~\mu}=0$.

Now that we know the fusion rules, we can easily show that  $L_\gk(\varsigma,l)$ is generating. Thus we've proved the main result of this chapter.

\begin{thm}\label{lb44}
	Let $\gk=\mathfrak{so}_{2n+1}$ ($n\geq2$), and choose $\lambda,\mu,\nu\in P_+(\gk)$  admissible at level $l$.
	
	(a) If $\lambda=\vartheta_1$ or $\varsigma$, then  $\Psi:\mathcal V^l_\gk{\nu\choose\lambda~\mu}\rightarrow \Hom_\gk(\lambda\otimes\mu,\nu)$ is bijective. 
	
	(b) If $\lambda=\varsigma$, then  $\Gamma:\Hom_\gk(\lambda\otimes\mu,\nu)\rightarrow L_\gk(\lambda)[\nu-\mu]^*$ is bijective. If $\lambda=\vartheta_1$, then  $\Gamma$ is bijective if and only if $(\mu|\vartheta_n)>0$; otherwise $\dim\Hom_\gk(\lambda\otimes\mu,\nu)=0$.
	
	(c) $L_\gk(\varsigma,l)$ is a generating $V^l_\gk$-module.
	
	(d) If $\lambda=\varsigma$, then any $\mathcal Y_\alpha\in\mathcal V^l_\gk{\nu\choose\lambda~\mu}$ is energy-bounded.
	
	(e) If $\lambda=\vartheta_1$, then for any $\mathcal Y_\alpha\in\mathcal V^l_\gk{\nu\choose\lambda~\mu}$ and $u^{(\lambda)}\in L_\gk(\lambda)$, $\mathcal Y_\alpha(u^{(\lambda)},x)$ satisfies $0$-th order energy bounds. In particular, $\mathcal Y_\alpha$ is energy-bounded.
\end{thm}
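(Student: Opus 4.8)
The plan is to deduce all five statements from the type $D$ results of theorem \ref{lb16} (applied to $\pk=\so_{2n+2}$) via the Dynkin-index-$1$ embedding $\gk=\so_{2n+1}\subset\pk=\so_{2n+2}$ and the compression principle in the form of corollary \ref{lb13}, with $V=V^l_\pk$ and $\gk\subset V(1)=\pk$ of level $l$. Since every weight space of $L_\gk(\lambda)$ with $\lambda\in\{\vartheta_1,\varsigma\}$ has dimension at most $1$, propositions \ref{lb47} and \ref{lb48} already give $N^\nu_{\lambda\mu}\leq\dim L_\gk(\lambda)[\nu-\mu]\leq1$; so (a) and (b) amount to deciding, case by case, when this bound is attained, and (c),(d),(e) then follow once the compression realizations are in hand, via theorems \ref{lb16}, \ref{lb17}, \ref{lb18}. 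The recurring mechanism is: given $\mu,\nu\in P_+(\gk)$ admissible at level $l$ with $\nu-\mu$ a weight of $L_\gk(\lambda)$, lift $\mu,\nu$ to admissible $\wtd\mu,\wtd\nu\in P_+(\pk)$ restricting to $\mu,\nu$, choose $\wtd\lambda\in\{\theta_1,\varsigma_+,\varsigma_-\}$ so that $\wtd\nu-\wtd\mu$ is a weight of $L_\pk(\wtd\lambda)$, take a nonzero $\mathfrak Y\in\mathcal V^l_\pk{\wtd\nu\choose\wtd\lambda~\wtd\mu}$ (one-dimensional by theorem \ref{lb16}(a)), and apply corollary \ref{lb13}, whose hypothesis $\bk{\mathfrak Y(\gk w_\lambda,x)\gk w_\mu|\gk w_\nu}\neq0$ is verified using injectivity of $\Gamma\circ\Psi$.

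For $\lambda=\varsigma$ the decisive extra input is a dimension count: $\gk w_\lambda\cong L_\gk(\varsigma)$ and $L_\pk(\varsigma_\pm)$ both have dimension $2^n$, so $\gk w_\lambda=L_\pk(\wtd\lambda)$, i.e.\ the half-spin module of $\so_{2n+2}$ restricts irreducibly to $\so_{2n+1}$. Hence every weight-$(\nu-\mu)$ vector of $L_\pk(\wtd\lambda)$ already sits in $\gk w_\lambda$, and injectivity of $\Gamma\circ\Psi$ forces $\bk{\mathfrak Y(\gk w_\lambda,x)\gk w_\mu|\gk w_\nu}\neq0$ after choosing a suitable spectral parameter. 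Corollary \ref{lb13} then yields $N^\nu_{\lambda\mu}=\dim\Hom_\gk(\lambda\otimes\mu,\nu)=\dim L_\gk(\varsigma)[\nu-\mu]=1$ (this is the $\Gamma$-bijectivity half of (b)), and, since $\mathfrak Y$ is energy-bounded by theorem \ref{lb16}(c), that $\mathcal Y_\alpha(u^{(\lambda)},x)$ is energy-bounded for every $\mathcal Y_\alpha\in\mathcal V^l_\gk{\nu\choose\lambda~\mu}$ and every $u^{(\lambda)}\in L_\gk(\lambda)$; combined with theorem \ref{lb18} and proposition \ref{lb17}, this is (d).

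For $\lambda=\vartheta_1$ the obstacle is that $\gk v_{\theta_1}\cong L_\gk(\vartheta_1)$ is a \emph{proper} subspace of $L_\pk(\theta_1)$, and this is exactly what lemma \ref{lb19} is for: it identifies $L_\gk(\vartheta_1)[\pm\vartheta_i]$ with $L_\pk(\theta_1)[\pm\theta_i]$ and decomposes the weight-$0$ vector as $u_0=u_++u_-$ with $0\neq u_\pm\in L_\pk(\theta_1)[\pm\theta_{n+1}]$. I would split along $\nu-\mu\in\{\pm\vartheta_i\}$ versus $\nu=\mu$, and within each along single-/double-valuedness, lifting $\wtd\mu,\wtd\nu$ by adjoining $0$, $\tfrac12\theta_{n+1}$, $-\tfrac12\theta_{n+1}$, or $\theta_{n+1}$ accordingly. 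In the $\pm\vartheta_i$ subcases the relevant weight vector lies in $\gk w_\lambda$ by lemma \ref{lb19}(a), so injectivity of $\Gamma\circ\Psi$ gives nonvanishing. In the $\nu=\mu$ subcases one writes the weight-$0$ vector as $u_0=u_++u_-$, notes that $\mathfrak Y(u_-,s)w_\mu$ lands in the wrong weight space of $L_\pk(\wtd\nu,l)$ and hence pairs to $0$ with $w_\nu$, while $\mathfrak Y(u_+,s)w_\mu$ pairs nontrivially by injectivity of $\Gamma\circ\Psi$, so $\bk{\mathfrak Y(u_0,s)w_\mu|w_\nu}\neq0$. Since $\gk w_\lambda\subseteq L_\pk(\theta_1)$ and theorem \ref{lb16}(d) gives $0$-th order bounds for all of $L_\pk(\theta_1)$, corollary \ref{lb13} then delivers $N^\nu_{\lambda\mu}=1$ and $0$-th order energy bounds for $\mathcal Y_\alpha(u^{(\lambda)},x)$, $u^{(\lambda)}\in L_\gk(\lambda)$, and proposition \ref{lb17} (with theorem \ref{lb18}) upgrades this to energy-boundedness of $\mathcal Y_\alpha$, which is (e). The one remaining subcase is $\nu=\mu$ single-valued with $f_n=(\mu|\vartheta_n)=0$: then $\alpha=\vartheta_n$ is a simple root, $n_{\mu,\alpha}=0$, and $\dim L_\gk(\vartheta_1)[\vartheta_n]=1$, so corollary \ref{lb20} gives $\dim\Hom_\gk(\lambda\otimes\mu,\nu)=0$, hence $N^\nu_{\lambda\mu}=0$; this is precisely where surjectivity of $\Gamma$ fails, completing (a) and (b).

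Finally, (c) is a combinatorial consequence of the now-known fusion rules with $L_\gk(\varsigma,l)$ together with the fact that every dominant weight of $\so_{2n+1}$ is self-dual (so $\overline{\mathcal F}=\mathcal F$ for $\mathcal F=\{L_\gk(\varsigma,l)\}$): repeatedly fusing with $\varsigma$ builds up every admissible $\mu$, so every irreducible $V^l_\gk$-module is reached and $\mathcal F$ is generating in the sense of definition \ref{lb42}. I expect the main difficulty to be the $\lambda=\vartheta_1$ bookkeeping, and within it the $\nu=\mu$ (weight-$0$) subcases, where the $\gk$-restriction of $L_\pk(\theta_1)$ is not multiplicity-free enough to contain the weight vector one wants to pair against $w_\nu$; the ``wrong weight space vanishes'' trick built on lemma \ref{lb19}(b) is the device that resolves it. A secondary point needing care is the dimension identity $\dim L_\gk(\varsigma)=\dim L_\pk(\varsigma_\pm)=2^n$ underpinning the $\lambda=\varsigma$ case.
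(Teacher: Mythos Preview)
Your proposal is correct and follows essentially the same approach as the paper: compression via the Dynkin-index-$1$ embedding $\so_{2n+1}\subset\so_{2n+2}$ combined with theorem \ref{lb16}, the dimension identity $\dim L_\gk(\varsigma)=\dim L_\pk(\varsigma_\pm)=2^n$ for the spin case, lemma \ref{lb19} and the $u_0=u_++u_-$ wrong-weight-vanishing trick for the vector case, and corollary \ref{lb20} for the exceptional $\nu=\mu$, $f_n=0$ subcase. The paper's case split is organized identically (its Subcases 1--5 are exactly your $\pm\vartheta_i$ single/double-valued and $\nu=\mu$ single/double-valued subcases), and it likewise dismisses (c) as a routine consequence of the fusion rules.
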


\section{Type $C$}

\subsection{$\smp_{2n}\subset \so_{4n}$}

Choose $n=2,3,4,\dots$. Let $I_n$ be the $n\times n$ identity matrix, and let $J_{2n}=\left(\begin{array}{cc}
0&I_n\\
-I_n&0
\end{array}\right)$ The complex symplectic Lie algebra $\smp_{2n}$ is the unitary Lie subalgebra of $\gl_{2n}=\End(\mathbb C^{2n})$ whose elements $X\in\gl_{2n}$ satisfy $J_{2n}X+X^\tr J_{2n}=0$. Write $X=\left(\begin{array}{cc}
A&B\\
C&D
\end{array}\right)$, where $A,B,C,D$ are $n\times n$ matrices. Then $X\in\smp_{2n}$ if and only if $D=-A^\tr$, and $B$ and $C$ are symmetric. 

We first choose a Cartan subalgebra of $\gk:=\smp_{2n}$. As in the last section, we let $e_1,\dots,e_{2n}$ be the standard orthonormal basis of $\mathbb C^{2n}$. For any $i,j=1,\dots,2n$, we define $E_{i,j}\in\End(\mathbb C^{2n})$ to satisfy  $E_{i,j}e_k=\delta_{j,k}e_i$ ($\forall k=1,\dots,2n$). For any $i=1,\dots,n$, we let
\begin{align}
T_i=E_{i,i}-E_{n+i,n+i}.\label{eq55}
\end{align}
A cartan subalgebra can be chosen to be $\hk=\Span_{\mathbb C}\{T_1,\dots,T_n \}$. Choose a basis $\{\vartheta_1,\dots,\vartheta_n \}$ of $\hk^*$ to be the dual basis of $\{T_1,\dots,T_n \}$.

As in the last chapter, given two real numbers $a,b$, we say $b\succsim a$ if $b-a=0,1,2,\dots$.

\subsubsection*{Facts about $\gk=\smp_{2n}$}
\noindent
Simple roots: $\vartheta_1-\vartheta_2,\vartheta_2-\vartheta_3,\dots,\vartheta_{n-1}-\vartheta_n,2\vartheta_n$.\\
Roots: $\pm\vartheta_i\pm\vartheta_j$ ($i\neq j$), $\pm2\vartheta_i$.\\
Positive roots: $\vartheta_i\pm\vartheta_j$ ($i<j$), $2\vartheta_i$.\\ 
If $\alpha$ is a positive root, then the root space
\begin{gather*}
\gk_\alpha=\left\{
\begin{array}{ll}
\mathbb C(E_{i,n+j}+E_{j,n+i}) & (\textrm{if }\alpha=\vartheta_i+\vartheta_j,~~1\leq i,j\leq n)\\
\mathbb C(E_{i,j}-E_{n+j,n+i}) & (\textrm{if }\alpha=\vartheta_i-\vartheta_j,~~1\leq i<j\leq n)
\end{array}
\right.
\end{gather*}
Fundamental weights: $\vartheta_1+\cdots+\vartheta_i\qquad(i=1,2,\dots,n)$.\\
Dominant integral weights: 
\begin{gather}
\lambda=f_1\vartheta_1+\cdots+f_n\vartheta_n\qquad(f_1\succsim\cdots\succsim f_n\succsim 0).
\end{gather}
Highest root: $2\vartheta_1$.\\
The normalized invariant inner product satisfies $(T_i|T_j)=2\delta_{i,j}$, $(\vartheta_i|\vartheta_j)=\frac 12\delta_{i,j}$.\\
Weights of $L_\gk(\vartheta_1)$: $\pm\vartheta_1,\dots,\pm\vartheta_n$.\\
The weight spaces of $L_\gk(\vartheta_1)$ has dimensions at most $1$.

\subsubsection*{Embedding $\smp_{2n}\subset \so_{4n}$}

Note that $L_\gk(\vartheta_1)$ is equivalent to the standard representation $\gk\curvearrowright \mathbb C^{2n}$. We now consider the representation $L_\gk(\vartheta_1)\oplus L_\gk(\overline{\vartheta_1})$. Equivalently, we consider the embedding
\begin{gather}
\smp_{2n}\hookrightarrow \End(\mathbb C^{4n}),\qquad X\mapsto \left(\begin{array}{cc}
X&0\\
0&-X^\tr
\end{array}\right).\label{eq81}
\end{gather}
We now regard $\gk$ as a unitary Lie subalgebra of $\End(\mathbb C^{4n})$ in this way. Then a $4n\times 4n$ matrix $X$ is an element of $\gk$ if and only if
\begin{align*}
X=\left(
\begin{array}{cccc}
A&B&0&0\\
C&-A^\tr&0&0\\
0&0&-A^\tr&-C\\
0&0&-B&A
\end{array}
\right),
\end{align*}
where $A,B,C,D$ are $n\times n$ matrices, and $B,C$ are symmetric. Then \eqref{eq55} should be replaced by
\begin{align}
T_i=E_{i,i}-E_{n+i,n+i}-E_{2n+i,2n+i}+E_{3n+i,3n+i}\qquad(1\leq i\leq n).
\end{align}
For a positive root $\alpha$ of $\gk$, the root space
\begin{gather*}
\gk_\alpha=\left\{
\begin{array}{ll}
\mathbb C(E_{i,n+j}+E_{j,n+i}-E_{3n+i,2n+j}-E_{3n+j,2n+i}) & (\textrm{if }\alpha=\vartheta_i+\vartheta_j,~1\leq i,j\leq n)\\
\mathbb C(E_{i,j}-E_{n+j,n+i}-E_{2n+j,2n+i}+E_{3n+i,3n+j}) & (\textrm{if }\alpha=\vartheta_i-\vartheta_j,~1\leq i<j\leq n)
\end{array}
\right..
\end{gather*}

Set $K_{4n}=\left(\begin{array}{cc}
0&I_{2n}\\
I_{2n}&0
\end{array}\right)$, where $I_{2n}$ is the identity $2n\times 2n$ matrix. Let $\pk$ be the unitary Lie subalgebra of $\End(\mathbb C^{4n})$ whose elements $Y\in\End(\mathbb C^{4n})$ satisfy $K_{4n}Y+Y^\tr K_{4n}=0$. Then $\pk$ is $*$-isomorphic to $\so_{4n}$. Indeed, if we choose $U=\frac 1{\sqrt2}\left(\begin{array}{cc}
I_{2n}&I_{2n}\\
\mathbf{i} I_{2n}& -\mathbf{i} I_{2n}
\end{array}\right)$, then $U^\tr U=K_{4n}$. The Lie algebra isomorphism can be defined as
\begin{gather*}
\pk\rightarrow\so_{4n},\qquad Y\mapsto UYU^{-1}.
\end{gather*}
Since $U$ is unitary, i.e., $U^*=U^{-1}$, this isomorphism preserves the $*$-structures of $\pk$ and $\so_{4n}$, both defined by the adjoint of operators.

A $4n\times 4n$ matrix $Y$ is an element in $\pk$ if and only if  $Y=\left(\begin{array}{cc}
P&Q\\
R&-P^\tr
\end{array}\right)$, where $P,Q,R$ are $2n\times 2n$ matrices, and $Q,R$ are skew-symmetric. Let $e_1,\dots,e_{4n}$ be the standard orthonormal basis of $\mathbb C^{4n}$ as usual, and define the $4n\times 4n$ matrix $E_{i,j}$ ($i\leq i,j\leq 4n$) as usual. For any $1\leq i\leq 2n$ we set 
$$S_i= E_{i,i}-E_{2n+i,2n+i}.$$
Then $\tk=\Span_{\mathbb C} \{S_1,\dots,S_{2n} \}$ is a Cartan subalgebra of $\pk$, and $\hk\subset\tk$. We let $\theta_1,\dots,\theta_{2n}$ be the dual basis of $S_1,\dots,S_{2n}$. Then for any $1\leq i\leq n$, the restrictions of $\theta_i,\theta_{n+i}\in\tk^*$ to $\hk$ are $\vartheta_i,-\vartheta_i$ respectively.

The facts about $\pk$ described in section \ref{lb21} also hold for $\theta_1,\dots,\theta_{2n}$. However, if we choose simple roots of $\pk$ as in section \ref{lb21}, then $\gk_+$ will no longer be a subset of $\pk_+$. So let us choose another set of simple roots instead, and consider relevant facts below. This new set of simples roots is chosen with respect to the order $1,2,\dots,n,2n,2n-1,2n-2,\dots,n+2,n+1$, rather than the standard  $1,2,\dots,2n$ considered in section \ref{lb21}.\\

\noindent
Simple roots: $\theta_1-\theta_2,\theta_2-\theta_3,\dots,\theta_{n-1}-\theta_n,\theta_{n}-\theta_{2n},\theta_{2n}-\theta_{2n-1},\theta_{2n-1}-\theta_{2n-2},\dots,\theta_{n+2}-\theta_{n+1},\theta_{n+2}+\theta_{n+1}$.\\
Roots: $\pm\theta_i\pm\theta_j$ ($i\neq j$).\\
Positive roots: $\theta_i+\theta_j$ for all $1\leq i\neq j\leq 2n$; $\theta_i-\theta_j,-\theta_{n+i}+\theta_{n+j}$ for all $1\leq i<j\leq n$;  $\theta_i-\theta_{n+j}$ for all $1\leq i,j\leq n$.\\
If $\alpha$ is a positive root of $\pk$, then the root space
\begin{gather}
\gk_\alpha=\left\{
\begin{array}{ll}
\mathbb C(E_{i,2n+j}-E_{j,2n+i}) & (\textrm{if }\alpha=\theta_i+\theta_j,~~1\leq i\neq j\leq 2n)\\
\mathbb C(E_{i,j}-E_{2n+j,2n+i}) & (\textrm{if }\alpha=\theta_i-\theta_j,~~1\leq i<j\leq n)\\
\mathbb C(E_{n+j,n+i}-E_{3n+i,3n+j}) & (\textrm{if }\alpha=-\theta_{n+i}+\theta_{n+j},~~1\leq i<j\leq n)\\
\mathbb C(E_{i,n+j}-E_{3n+j,2n+i}) & (\textrm{if }\alpha=\theta_i-\theta_{n+j},~~1\leq i,j\leq n)
\end{array}
\right..\label{eq61}
\end{gather}
Non-zero dominant integral weights admissible at level $1$:
\begin{gather*}
\theta_1,\qquad
\varsigma_\pm=\frac 1 2\bigg(\pm\theta_{n+1}+\sum_{i\neq n+1}\theta_i\bigg).
\end{gather*}
Highest root: $\theta_1+\theta_2$.\\
The normalized invariant inner product satisfies $(S_i|S_j)=(\theta_i|\theta_j)=\delta_{i,j}$.\\
Weights of $L_\pk(\theta_1)$: $\pm\theta_1,\dots,\pm\theta_{2n}$.\\
Weights of $L_\pk(\varsigma_\pm)$: 
\begin{gather*}
\frac {(-1)^{\sigma_1}\theta_1+\cdots+(-1)^{\sigma_{2n}}\theta_{2n}}2\qquad (\sigma_1,\dots,\sigma_{2n}\in\mathbb Z),\\
\sigma_1+\cdots+\sigma_{2n}\in \left\{ \begin{array}{ll}
2\mathbb Z & \textrm{for $\varsigma_+$}\\
2\mathbb Z+1 & \textrm{for $\varsigma_-$.}\\
\end{array} \right.
\end{gather*}
The weight spaces of $L_\pk(\theta_1),L_\pk(\varsigma_+)$, and $L_\pk(\varsigma_-)$ have dimensions at most $1$.\\

Using this information, one can check that $\gk\subset\pk$ has Dynkin index $1$, and that
\begin{align}
\gk_+\subset&\pk_+\cap \bigg(\bigoplus^\perp_{1\leq i\neq j\leq 2n}\pk_{\theta_i+\theta_j}\bigg)^\perp\nonumber\\
=&\bigg(\bigoplus^\perp_{1\leq i<j\leq 2n}\pk_{\theta_i-\theta_j}\bigg)\oplus^\perp \bigg(\bigoplus^\perp_{1\leq i<j\leq 2n}\pk_{-\theta_{n+i}+\theta_{n+j}}\bigg)\oplus^\perp \bigg(\bigoplus^\perp_{1\leq i,j\leq 2n}\pk_{\theta_i-\theta_{n+j}}\bigg).\label{eq56}
\end{align}

\subsection{Main result}

In this section, we show, for any $\mu,\nu\in P_+(\gk)$ admissible at level $l$, that the fusion rule $N^\nu_{\vartheta_1\mu}$ equals $1$ when $\nu-\mu$ is a weight of $L_\gk(\vartheta_1)$, and equals $0$ otherwise. This will imply that $L_\gk(\vartheta_1,l)$ is a generating $V^l_\gk$-module. We also prove the $0$-th order energy bounds condition for any $\mathcal Y\in\mathcal V^l_\gk{\nu\choose\vartheta_1~\mu}$. We first do this for level $1$.
\subsubsection*{Level $1$}
Dominant integral weights of $\gk$ admissible at level $1$ are $0,\vartheta_1,\vartheta_1+\vartheta_2,\dots,\vartheta_1+\vartheta_2+\cdots+\vartheta_n$. Now assume $\mu,\nu$ are among these weights. Since $\Gamma\Psi:\mathcal V^1_\gk{\nu\choose\vartheta_1~\mu}\rightarrow L_\gk(\vartheta_1)[\nu-\mu]^*$ is injective, we can assume that $\nu-\mu$ is a weight of $L_\gk(\vartheta_1)$. So there exists $k=1,\dots,n$, such that $\nu-\mu=\pm\vartheta_k$. 

Let us first assume that $\nu-\mu=\vartheta_k$. Then we must have
\begin{gather*}
\mu=\vartheta_1+\dots+\vartheta_{k-1},\qquad \nu=\vartheta_1+\cdots+\vartheta_k
\end{gather*}
(set $\mu=0$ when $k=1$). 
Set
\begin{gather*}
\varsigma_\mu=\varsigma_+,\qquad\varsigma_\nu=\varsigma_-\qquad(\textrm{if $k$ is odd}),\\
\varsigma_\mu=\varsigma_-,\qquad\varsigma_\nu=\varsigma_+\qquad(\textrm{if $k$ is even}).
\end{gather*}
We now choose highest weight vectors of $\gk$ in $L_\pk(\theta_1,1),L_\pk(\varsigma_\mu,1),L_\pk(\varsigma_\nu,1)$ respectively. Identify $\pk\curvearrowright L_\pk(\theta_1)$ with the standard representation $\pk\curvearrowright \mathbb C^{4n}$, and choose $w_{\vartheta_1}=e_{3n+1}$. One can easily check that  $w_{\vartheta_1}$ is a $\vartheta_1$-highest weight vector of the level $1$ representation $\gh\curvearrowright L_\pk(\theta_1,1)$.

Let $w_\mu$ (resp. $w_\nu$) be a non-zero weight vector $L_\pk(\varsigma_\mu)$ (resp. $L_\pk(\varsigma_\nu)$) with weight
\begin{gather*}
\bigg(\sum_{i=1}^n\theta_i-\sum_{i=n+1}^{n+k-1}\theta_i+\sum_{i=n+k}^{2n}\theta_i\bigg)/2\\
(\textrm{resp. }\bigg(\sum_{i=1}^n\theta_i-\sum_{i=n+1}^{n+k}\theta_i+\sum_{i=n+k+1}^{2n}\theta_i\bigg)/2).
\end{gather*}
 Now, using \eqref{eq56}, we can check that $w_\mu$ and $w_\nu$ are highest weight vectors of $\gh\curvearrowright L_\pk(\varsigma_\mu,1),L_\pk(\varsigma_\nu,1)$ with highest weights $\mu$ and $\nu$ respectively. Suppose that we can prove lemma \ref{lb22}, then, since  $e_{3n+k}\in L_\pk(\theta_1)[-\theta_{n+k}]$, by the injectivity of $\Psi$, for any non-zero $\mathfrak Y\in\mathcal V^1_\pk{\varsigma_\nu\choose\theta_1~\varsigma_\mu}$ (which exists due to theorem \ref{lb16}-(a)), we have $\bk{\mathfrak Y(e_{3n+k},x)w_\mu|w_\nu }\neq0$. Since $e_{3n+k}\in 0\oplus\mathbb C^{2n}=\gk e_{3n+1}=\gk w_{\vartheta_1}$, we actually have $\bk{\mathfrak Y(\gk w_{\vartheta_1},x)w_\mu|w_\nu }\neq0$. Therefore, by corollary \ref{lb13} and theorem \ref{lb16}-(d), $\dim\mathcal V^1_\gk{\nu\choose\vartheta_1~\mu}=1$, and for any $\mathcal Y\in\mathcal V^1_\gk{\nu\choose\vartheta_1~\mu}$ and $u^{(\vartheta_1)}\in L_\gk(\vartheta_1)$, $\mathcal Y(u^{(\vartheta_1)},x)$ satisfies $0$-th order energy bounds.

\begin{lm}\label{lb22}
For any non-zero $T\in\Hom_\pk(\theta_1\otimes\varsigma_\mu,\varsigma_\nu)$, inequality  $\bk{T(e_{3n+k}\otimes w_\mu)|w_\nu }\neq 0$ holds.
\end{lm}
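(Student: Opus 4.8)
The plan is to reduce the statement to a single explicit computation. First, by theorem \ref{lb16}(a) the map $\Gamma\colon\Hom_\pk(\theta_1\otimes\varsigma_\mu,\varsigma_\nu)\to L_\pk(\theta_1)[\varsigma_\nu-\varsigma_\mu]^*$ is a bijection; moreover $\varsigma_\nu-\varsigma_\mu=\pm\theta_{n+1}$ is a weight of $L_\pk(\theta_1)$ with one-dimensional weight space, so $\dim\Hom_\pk(\theta_1\otimes\varsigma_\mu,\varsigma_\nu)=1$. Since $w_\mu$ and $w_\nu$ are fixed nonzero vectors, it is therefore enough to exhibit one nonzero $T_0$ in this Hom-space with $\bk{T_0(e_{3n+k}\otimes w_\mu)|w_\nu}\neq0$; every other nonzero $T$ is then a scalar multiple of $T_0$.

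For $T_0$ I would use the standard spinor realization of the half-spin modules. The subspace $V^+=\Span_{\mathbb C}\{e_1,\dots,e_{2n}\}\subset\mathbb C^{4n}$ is maximal isotropic for the bilinear form given by $K_{4n}$, and one identifies $L_\pk(\varsigma_+)\oplus L_\pk(\varsigma_-)$ with the exterior algebra $\Lambda^\bullet V^+$ on which $e_j$ ($1\le j\le 2n$) acts by $e_j\wedge(-)$ and $e_{2n+j}$ acts by a fixed nonzero multiple of the contraction $\iota_{e_j^*}$. Then $e_{\{1,\dots,2n\}}$ is a highest-weight vector of weight $\varsigma_+$ lying in $\Lambda^{\mathrm{even}}V^+$, so $L_\pk(\varsigma_+)=\Lambda^{\mathrm{even}}V^+$ and $L_\pk(\varsigma_-)=\Lambda^{\mathrm{odd}}V^+$, and Clifford multiplication $\mathbb C^{4n}\otimes\Lambda^\bullet V^+\to\Lambda^\bullet V^+$ reverses parity, hence restricts to a nonzero $\pk$-module homomorphism $T_0\colon L_\pk(\theta_1)\otimes L_\pk(\varsigma_\mu)\to L_\pk(\varsigma_\nu)$. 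Comparing weights, and using that the weight spaces of $L_\pk(\varsigma_\pm)$ are at most one-dimensional, identifies $w_\mu$ up to a nonzero scalar with $e_S$ for $S=\{1,\dots,n\}\cup\{n+k,\dots,2n\}$, and $w_\nu$ with $e_{S'}$ for $S'=S\setminus\{n+k\}$; similarly $e_{3n+k}=e_{2n+(n+k)}$ acts as a nonzero multiple of $\iota_{e_{n+k}^*}$. Since $n+k\in S$, $T_0(e_{3n+k}\otimes w_\mu)$ is a nonzero multiple of $\iota_{e_{n+k}^*}(e_S)=\pm e_{S'}$, hence of $w_\nu$, and therefore $\bk{T_0(e_{3n+k}\otimes w_\mu)|w_\nu}\neq0$.

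A conceptual alternative, avoiding the spinor model, is a symmetry reduction: $L_\pk(\varsigma_\pm)$ are minuscule, so $w_\mu$ and $w_\nu$ are extremal weight vectors, and one can choose a unitary representative $\dot\sigma$ of a Weyl-group element of $\pk$ carrying the highest-weight vector $v_{\varsigma_\mu}$ into $\mathbb C^\times w_\mu$ and a spanning vector $u_0$ of $L_\pk(\theta_1)[\varsigma_\nu-\varsigma_\mu]$ into $\mathbb C^\times e_{3n+k}$; weight bookkeeping then forces $\dot\sigma v_{\varsigma_\nu}\in\mathbb C^\times w_\nu$, so $\pk$-equivariance of $T$ together with unitarity of $\dot\sigma$ reduces the claim to $\bk{T(u_0\otimes v_{\varsigma_\mu})|v_{\varsigma_\nu}}\neq0$, which is exactly the assertion that $\Gamma T\neq0$.

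The main difficulty in either route is bookkeeping rather than structure. In the spinor route it is fixing the normalizations so that Clifford multiplication genuinely intertwines the $\so_{4n}$-action attached to $K_{4n}$ and the splitting $V^+\oplus V^-$, and checking that the parities of $|S|$ and $|S'|$ match the labels $\varsigma_+,\varsigma_-$ assigned to $\varsigma_\mu,\varsigma_\nu$ (this is the parity condition $\sigma_1+\cdots+\sigma_{2n}\in2\mathbb Z$ versus $2\mathbb Z+1$ in the description of the weights of $L_\pk(\varsigma_\pm)$). In the symmetry route it is producing one Weyl element that simultaneously takes $\varsigma_\mu$ to the weight of $w_\mu$ and the weight of $u_0$ to $-\theta_{n+k}$; this comes down to the short computation that $\theta_{n+k}$ lies in the relevant $\mathrm{Stab}(\varsigma_\mu)$-orbit, equivalently that $(\theta_{n+k}\,|\,\text{weight of }w_\mu)=\frac12$.
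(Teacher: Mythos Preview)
Your spinor-module argument is correct and is a genuinely different route from the paper's proof. The paper argues by induction on $k$: writing $w_{k-1}=w_\mu$, $w_k=w_\nu$, it shows that $\bk{T(e_{3n+k-2}\otimes w_{k-3})|w_{k-2}}\neq0$ implies $\bk{T(e_{3n+k}\otimes w_{k-1})|w_k}\neq0$ by applying the raising and lowering operators $E_{\alpha_i},F_{\alpha_i}$ for $\alpha_i=\theta_{n+i}+\theta_{n+i+1}$ and killing unwanted terms by weight considerations; the base cases $k=1,2$ are then handled directly via injectivity of $\Gamma$. Your approach instead realizes $L_\pk(\varsigma_\pm)$ concretely as $\Lambda^{\text{even/odd}}V^+$ and $T_0$ as Clifford multiplication, reducing the statement to the single observation that contraction of $e_S$ by $e_{n+k}^*$ is nonzero because $n+k\in S$. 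The parity check you flag as bookkeeping ($|S_\mu|=2n-k+1$, $|S_\nu|=2n-k$) works out exactly as needed for both parities of $k$.

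What each approach buys: the paper's induction stays entirely inside the abstract weight/root formalism already set up in the chapter, so no new structure (Clifford algebras, spinor models) needs to be introduced or normalized. Your spinor route is shorter and more transparent once the model is in place, and it explains \emph{why} the inequality holds rather than verifying it step by step; it also generalizes immediately to other extremal weight vectors. Your Weyl-group alternative is closer in spirit to the paper's method (both ultimately reduce to the highest-weight case where $\Gamma$ applies), but packages the reduction as a single symmetry rather than an inductive chain; the existence of the required Weyl element follows since for half-spin weights the stabilizer is the full symmetric group on each sign class and $n+k$ lies in the $+\tfrac12$ class of $w_\mu$, so one can freely arrange $\sigma(\theta_{n+1})=\pm\theta_{n+k}$ while sending $\varsigma_\mu$ to the weight of $w_\mu$.
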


\begin{proof}
We write $w_{k-1}=w_\mu,w_k=w_\nu$, and prove
\begin{align}
\bk{T(e_{3n+k}\otimes w_{k-1})|w_k }\neq 0\label{eq57}
\end{align}
by induction on $k$. For any positive root $\alpha$ of $\pk$, we choose  $E_\alpha,F_\alpha,H_\alpha$ as in \eqref{eq11}. Now for any $i=1,\dots,n-1$ we choose $\alpha_i=\theta_{n+i}+\theta_{n+i+1}$. Then, by \eqref{eq61},
\begin{gather}
E_{\alpha_i}\in\mathbb C(E_{n+i,3n+i+1}-E_{n+i+1,3n+i}),\label{eq62}\\
F_{\alpha_i}\in\mathbb C(E_{3n+i+1,n+i}-E_{3n+i,n+i+1}).\label{eq63}
\end{gather}

Given a general $k$, we first show that inequality
\begin{align}
\bk{T(e_{3n+k-2}\otimes w_{k-3})|w_{k-2}}\neq 0\label{eq58}
\end{align}
implies \eqref{eq57}. Assume \eqref{eq58} is true. Then
\begin{align}
\bk{T(e_{3n+k}\otimes F_{\alpha_{k-2}}w_{k-3})|F_{\alpha_{k-1}}w_{k-2}}=&\bk{T(E_{\alpha_{k-1}}e_{3n+k}\otimes F_{\alpha_{k-2}}w_{k-3})|w_{k-2}}\nonumber\\
&+\bk{T(e_{3n+k}\otimes E_{\alpha_{k-1}} F_{\alpha_{k-2}}w_{k-3})|w_{k-2}}.\label{eq59}
\end{align}
However, the weight of $E_{\alpha_{k-1}} F_{\alpha_{k-2}}w_{k-3}$ is $\cdots+\frac 3 2\theta_{n+k}+\cdots$, which is not a weight of $L_\pk(\varsigma_\pm)$. Therefore, the second summand in equation \eqref{eq59} vanishes. Thus \eqref{eq59} equals
\begin{align}
\bk{T(E_{\alpha_{k-1}}e_{3n+k}\otimes F_{\alpha_{k-2}}w_{k-3})|w_{k-2}}=&-\bk{T(F_{\alpha_{k-2}}E_{\alpha_{k-1}}e_{3n+k}\otimes w_{k-3})|w_{k-2}}\nonumber\\
&+\bk{T(E_{\alpha_{k-1}}e_{3n+k}\otimes w_{k-3})|E_{\alpha_{k-2}}w_{k-2}}.\label{eq60}
\end{align}
Again, the weight of $E_{\alpha_{k-2}}w_{k-2}$ is $\cdots+\frac 3 2\theta_{n+k-1}+\cdots$. So the second summand of \eqref{eq60} equals $0$. By \eqref{eq62} and \eqref{eq63}, there exists $c\in\mathbb C\setminus\{0 \}$ such that $F_{\alpha_{k-2}}E_{\alpha_{k-1}}e_{3n+k}=c\cdot e_{3n+k-2}$. Therefore \eqref{eq59} equals the left hand side of \eqref{eq58} multiplied by $-c$. So \eqref{eq59} is non-zero. Since $F_{\alpha_{k-2}}w_{k-3}$ and $w_{k-1}$ have the same weight, they must be proportional. Similarly, $F_{\alpha_{k-1}}w_{k-2}$ and $w_k$ are proportional. Thus inequality \eqref{eq57} is proved.

Now, from what we have proved above, in order to finish the proof of this lemma, it suffices to prove \eqref{eq57} when $k=1,2$. If $k=1$, then $w_{k-1}$ and $w_k$ are highest weight vectors of $L_\pk(\varsigma_\mu)=L_\pk(\varsigma_+)$ and $L_\pk(\varsigma_\nu)=L_\pk(\varsigma_-)$ respectively. Then \eqref{eq57} follows from the injectivity of $\Gamma:\Hom_\pk(\theta_1\otimes\varsigma_+,\varsigma_-)\rightarrow L_\pk(\theta_1)[-\theta_{n+1}]^*$. If $k=2$, then $w_1$ is highest weight vector but $w_2$ is not. Choose a highest weight vector $w_0$ of $L_\pk(\varsigma_\nu)=L_\pk(\varsigma_+)$, Then, since $E_{\alpha_1}w_1=0$, 
\begin{align}
\bk{T(e_{3n+2}\otimes w_1)|F_{\alpha_1}w_0 }=\bk{T(E_{\alpha_1}e_{3n+2}\otimes w_1)|w_0 }.\label{eq64}
\end{align}
By \eqref{eq62}, the right hand side of  \eqref{eq64} equals $\bk{T(e_{n+1}\otimes w_1)|w_0 }$ multiplied by a non-zero complex number. Since the difference of the weights of $w_0$ and $w_1$ is $\varsigma_+-\varsigma_-=\theta_{n+1}$, which is the same as the weight of $e_{n+1}$, from the injectivity of $\Gamma:\Hom_\pk(\theta_1\otimes\varsigma_-,\varsigma_+)\rightarrow L_\pk(\theta_1)[\theta_{n+1}]^*$, we find that $\bk{T(e_{n+1}\otimes w_1)|w_0 }$ is non-zero. So \eqref{eq64} is also non-zero. Since $F_{\alpha_1}w_0$ and $w_2$ have the same weight, they are proportional. Thus \eqref{eq57} is proved when $k=2$.
\end{proof}

The case $\nu-\mu=-\vartheta_k$ can be treated in a similar way. Alternatively, we first show $\dim\Hom_\gk(\vartheta_1\otimes\vartheta_1,0)=1$ using corollary \ref{lb20}, which implies $\overline{\vartheta_1}=\vartheta_1$ by corollary \ref{lb23}. Then we use the result of case $\nu-\mu=-\vartheta_k$ to prove this case by applying theorem \ref{lb14}.

\subsubsection*{Level $l$}

We assume $\nu-\mu=\vartheta_k$ ($k=1,\dots,n$). The case $\nu-\mu=-\vartheta_k$ can be treated either in a similar way, or by applying theorem \ref{lb14}. Write $\mu=\sum_{i=1}^n f_i\vartheta_i,\nu=\sum_{i=1}^n g_i\vartheta_i$. Then $g_k=f_k+1$, and $f_i=g_i$ when $i\neq k$. Let $a_n=f_n,b_n=g_n$, and let $a_i=f_i-f_{i+1},b_i=g_i-g_{i+1}$ for any $i=1,\dots,n-1$.  Consider the fundamental weights $\lambda_1=\vartheta_1,\lambda_2=\vartheta_1+\vartheta_2,\dots,\lambda_n=\vartheta_1+\vartheta_2+\cdots+\vartheta_n$. Then $\mu=\sum^n_{i=1}a_i\lambda_i,\nu=\sum_{i=1}^n b_i\lambda_i$.

If $2\leq k\leq n-1$, then $b_{k-1}=a_{k-1}-1,b_k=a_k+1$, and $b_i=a_i$ for any $i\neq k-1,k$. Let
\begin{gather*}
\rho=(a_{k-1}-1)\lambda_{k-1}+\sum_{i\neq k-1}a_i\lambda_i,\qquad \mu_1=\lambda_{k-1},\qquad \nu_1=\lambda_k.
\end{gather*}
Then $\rho,\mu_1,\nu_1\in P_+(\gk)$,  $\mu_1,\nu_1$ are admissible at level $1$, $\rho$ is admissible at level $l-1$, and $\mu=\rho+\mu_1,\nu=\rho+\nu_1$. Thus, by the result of level $1$ and lemma \ref{lb24}-(a), $N^\nu_{\vartheta_1\mu}=1$, and for any $\mathcal Y\in\mathcal V^l_\gk{\nu\choose\vartheta_1~\mu}$ and $u^{(\vartheta_1)}\in L_\gk(\vartheta_1)$, $\mathcal Y(u^{(\vartheta_1)},x)$ satisfies $0$-th order energy bounds. 

Now assume that $k=1$. Then $b_1=a_1+1$, and $b_i=a_i$ for any $i\neq k-1$. By setting $\rho=\mu,\mu_1=0,\nu_1=\lambda_1=\vartheta_1$, we again prove the desired result using lemma \ref{lb24}-(a). We summarize the result of this chapter as follows:

\begin{thm}\label{lb43}
Let $\gk=\smp_{2n}$ ($n\geq2$), and choose $\mu,\nu\in P_+(\gk)$ admissible at level $l$.

(a) The maps $\Psi:\mathcal V^l_\gk{\nu\choose\vartheta_1~\mu}\rightarrow \Hom_\gk(\vartheta_1\otimes\mu,\nu)$ and $\Gamma:\Hom_\gk(\vartheta_1\otimes\mu,\nu)\rightarrow L_\gk(\vartheta_1)[\nu-\mu]^*$ are bijective.

(b) $L_\gk(\vartheta_1)$ is a generating $V^l_\gk$-module.

(c) For any $\mathcal Y\in\mathcal V^l_\gk{\nu\choose\vartheta_1~\mu}$ and $u^{(\vartheta_1)}\in L_\gk(\vartheta_1)$, $\mathcal Y(u^{(\vartheta_1)},x)$ satisfies $0$-th order energy bounds. In particular, $\mathcal Y$ is energy-bounded.
\end{thm}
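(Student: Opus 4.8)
The plan is to prove Theorem \ref{lb43} by applying the compression principle (corollary \ref{lb13} and lemma \ref{lb24}) to the embedding $\gk=\smp_{2n}\subset\pk=\so_{4n}$ of Dynkin index $1$, reducing everything to the already-established type $D$ results in theorem \ref{lb16}. The overall structure mirrors the type $B$ argument of the previous chapter: since $\Gamma\Psi:\mathcal V^l_\gk{\nu\choose\vartheta_1~\mu}\rightarrow L_\gk(\vartheta_1)[\nu-\mu]^*$ is injective and the weight spaces of $L_\gk(\vartheta_1)$ are at most $1$-dimensional, it suffices to treat the case where $\nu-\mu=\pm\vartheta_k$ is a weight of the standard representation; and by the self-duality $\overline{\vartheta_1}=\vartheta_1$ (proved via corollary \ref{lb20} and corollary \ref{lb23}, using proposition \ref{lb14} to pass between $\mathcal Y_\alpha$ and $\mathcal Y_{\alpha^*}$), it suffices to handle $\nu-\mu=\vartheta_k$.

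First I would do the level $1$ case directly. Here $\mu,\nu$ are among $0,\vartheta_1,\vartheta_1+\vartheta_2,\dots,\vartheta_1+\cdots+\vartheta_n$, and $\nu-\mu=\vartheta_k$ forces $\mu=\vartheta_1+\cdots+\vartheta_{k-1}$, $\nu=\vartheta_1+\cdots+\vartheta_k$. The key is to realize $\mu,\nu$ as restrictions to $\hk$ of level-$1$ dominant weights of $\pk$ — namely the half-spin weights $\varsigma_\pm$ — and to pick explicit $\gh$-highest-weight vectors $w_{\vartheta_1}=e_{3n+1}$, $w_\mu$, $w_\nu$ inside $L_\pk(\theta_1,1)$, $L_\pk(\varsigma_\mu,1)$, $L_\pk(\varsigma_\nu,1)$, checking highest-weightness using the explicit description \eqref{eq56} of $\gk_+$ inside $\pk$. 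Then one takes a nonzero $\mathfrak Y\in\mathcal V^1_\pk{\varsigma_\nu\choose\theta_1~\varsigma_\mu}$ (which exists by theorem \ref{lb16}-(a)) and must show $\bk{\mathfrak Y(\gk w_{\vartheta_1},x)w_\mu|w_\nu}\neq0$; since $e_{3n+k}\in\gk w_{\vartheta_1}$, this follows once $\bk{\mathfrak Y(e_{3n+k},x)w_\mu|w_\nu}\neq0$, which reduces via the injectivity of $\Gamma\Psi$ to the nonvanishing statement $\bk{T(e_{3n+k}\otimes w_\mu)|w_\nu}\neq0$ for nonzero $T\in\Hom_\pk(\theta_1\otimes\varsigma_\mu,\varsigma_\nu)$ — this is lemma \ref{lb22}. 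With \eqref{eq52}-type nonvanishing in hand, corollary \ref{lb13} plus theorem \ref{lb16}-(d) give $\dim\mathcal V^1_\gk{\nu\choose\vartheta_1~\mu}=1$ and $0$-th order energy bounds for $\mathcal Y(u^{(\vartheta_1)},x)$; if $\nu-\mu$ is not a weight the fusion rule is $0$.

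Next I would bootstrap to level $l$ by applying lemma \ref{lb24}-(a) with $a=1$. Writing $\mu=\sum a_i\lambda_i$, $\nu=\sum b_i\lambda_i$ in terms of the fundamental weights $\lambda_i=\vartheta_1+\cdots+\vartheta_i$, and $\nu-\mu=\vartheta_k$, one has $b_{k-1}=a_{k-1}-1$, $b_k=a_k+1$ (with the obvious degenerations when $k=1$ or $k=n$), so one can set $\rho=(a_{k-1}-1)\lambda_{k-1}+\sum_{i\neq k-1}a_i\lambda_i$, $\mu_1=\lambda_{k-1}$, $\nu_1=\lambda_k$, which are admissible at levels $l-1$ and $1$ respectively and satisfy $\mu=\rho+\mu_1$, $\nu=\rho+\nu_1$. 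The level $1$ result just proved gives $\dim\mathcal V^1_\gk{\nu_1\choose\vartheta_1~\mu_1}=1$ and $0$-th order energy bounds, so lemma \ref{lb24}-(a) yields $N^\nu_{\vartheta_1\mu}=1$ and the energy bounds at level $l$. Finally, knowing all fusion rules $N^\nu_{\vartheta_1\mu}$, one checks by a direct combinatorial argument on weights that $L_\gk(\vartheta_1,l)$ tensor-generates in the sense of definition \ref{lb42}, which gives part (b), and parts (a), (c) then assemble from proposition \ref{lb47}, proposition \ref{lb48}, theorem \ref{lb17}, and theorem \ref{lb18}.

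The main obstacle is lemma \ref{lb22}: one must show that an arbitrary nonzero $\gk$-morphism $T\in\Hom_\pk(\theta_1\otimes\varsigma_\mu,\varsigma_\nu)$ does not annihilate the specific tensor $e_{3n+k}\otimes w_\mu$. The natural strategy is induction on $k$, using the $\smp$-type root vectors $\alpha_i=\theta_{n+i}+\theta_{n+i+1}$ (which lie in $\gk$ and whose $\mathfrak{sl}_2$-triples $E_{\alpha_i},F_{\alpha_i}$ act on $\mathbb C^{4n}$ and on the spin modules in a controlled way by \eqref{eq61}) to transport the nonvanishing from $k-2$ to $k$ via a commutator computation; here one crucially exploits that intermediate weights like $E_{\alpha_{k-1}}F_{\alpha_{k-2}}w_{k-3}$ are \emph{not} weights of $L_\pk(\varsigma_\pm)$, so the unwanted terms vanish. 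The base cases $k=1,2$ reduce to the injectivity of $\Gamma$ on $\Hom_\pk(\theta_1\otimes\varsigma_\pm,\varsigma_\mp)$ from theorem \ref{lb16}-(a), after another short $\mathfrak{sl}_2$-manipulation in the $k=2$ case. Keeping track of which root vectors live in $\gk$ versus merely in $\pk$, and verifying the highest-weight conditions against \eqref{eq56}, is where the bookkeeping is heaviest.
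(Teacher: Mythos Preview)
Your proposal is correct and follows essentially the same route as the paper: level~$1$ via the index-$1$ embedding $\smp_{2n}\subset\so_{4n}$ with the spin modules of $\so_{4n}$ as ambient charge/source/target, the nonvanishing lemma~\ref{lb22} proved by the two-step induction on $k$, then bootstrap to level~$l$ by lemma~\ref{lb24}-(a) with $a=1$. One small slip to fix when you write it out: in lemma~\ref{lb22} the map $T$ lies in $\Hom_{\pk}(\theta_1\otimes\varsigma_\mu,\varsigma_\nu)$, not $\Hom_{\gk}$, and the $E_{\alpha_i},F_{\alpha_i}$ for $\alpha_i=\theta_{n+i}+\theta_{n+i+1}$ are $\pk$-root vectors (they do \emph{not} lie in $\gk$); the commutator manipulation works precisely because $T$ is $\pk$-equivariant.
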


\begin{rem}
Our arguments in this chapter also work pretty well for affine $\mathfrak{sl}_n$ VOAs. Indeed, one has a similar embedding $\mathfrak{sl}_n\subset\mathfrak{so}_{2n}$ with Dynkin index $1$ defined by \eqref{eq81}. Then one can show, similarly, that any  irreducible intertwining operator of $V^1_{\mathfrak {sl}_n}$ whose charge space restricts to the standard representation $\mathbb C^n=L_{\mathfrak{sl}_n}(\vartheta_1)$ of $\mathfrak{sl}_n$ is a compression of an irreducible intertwining operator of $V^1_{\mathfrak{so}_{2n}}$ whose charge space is $L_{\mathfrak{so}_{2n}}(\theta_1,1)$, and whose source and target spaces are $L_{\mathfrak{so}_{2n}}(\varsigma_\pm,1)$. The higher level case can similarly be reduced to the level $1$ one using lemma \ref{lb24}-(a).

We remark that the method used in \cite{Was98} for type $A$ also works for our type $C$. This method also uses essentially the index $1$ embedding $\mathfrak{sl}_n\subset\mathfrak{so}_{2n}$, but it uses free fermions instead of the intertwining operators of affine $V^1_{\mathfrak{so}_{2n}}$. In fact, if we let $F_{2n}$ be the $2n$-dimensional free Fermion unitary vertex super algebra, whose even part is a unitary VOA, then its weight $1$ subspace $F_{2n}(1)$, as a unitary Lie algebra, is equivalent to $\mathfrak{so}_{2n}$, and the level of $\mathfrak{so}_{2n}$ in (the even part of) $F_{2n}$ is $1$. Therefore, the level of $\mathfrak{sl}_n$ in $F_{2n}$ is also one. One can then realize the intertwining operators of $V^1_{\mathfrak{sl}_n}$ whose charge spaces are $L_{\mathfrak{sl}_n}(\vartheta_1,1)$ as compressions of the vertex operator of $F_{2n}$, and thus proves the results for $V^1_{\mathfrak{sl}_n}$. This method can easily be adapted to $\mathfrak{sp}_{2n}\subset\mathfrak{so}_{4n}\subset F_{4n}$ to prove the results for $V^1_{\mathfrak{sp}_{2n}}$. One thus sees that type $A$ and type $C$ are similar in many aspects. However, the lattice VOA method works only for type $A$, but (clearly) not for type $C$.
\end{rem}

For the reader's convenience, we list the result for type $A$ as follows.
\begin{thm}[cf.\cite{Was98}]\label{lb45}
	Let $\gk=\mathfrak{sl}_n$ ($n\geq2$), and choose $\mu,\nu\in P_+(\gk)$ admissible at level $l$. Let $\vartheta_1$ be the integral dominant weight of $\gk$ so that $L_\gk(\vartheta_1)$ is the standard representation $\mathbb C^n$ of $\mathfrak{sl}_n$. Then (a), (b), (c) of theorem \ref{lb43} hold verbatim.
\end{thm}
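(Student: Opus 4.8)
The statement to prove is Theorem \ref{lb45}, the type $A$ analogue of Theorem \ref{lb43}. Since the paper explicitly says parts (a), (b), (c) of Theorem \ref{lb43} hold verbatim, and the preceding remark sketches that the type $C$ arguments carry over, my proposal is to mirror the type $C$ proof using the Dynkin-index-$1$ embedding $\mathfrak{sl}_n \subset \mathfrak{so}_{2n}$ defined by \eqref{eq81}.

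The plan is as follows. First I would set up the embedding $\mathfrak{sl}_n \hookrightarrow \mathfrak{so}_{2n}$ via $X \mapsto \mathrm{diag}(X, -X^{\tr})$ (analogous to \eqref{eq81}), choose compatible Cartan subalgebras $\mathfrak h \subset \mathfrak t$, and verify that with an appropriately reordered set of simple roots of $\mathfrak{so}_{2n}$, one has $\mathfrak{sl}_n{}_+ \subset \mathfrak{so}_{2n}{}_+$, together with a refinement like \eqref{eq56} pinning down which root spaces of $\mathfrak{so}_{2n}{}_+$ the raising operators of $\mathfrak{sl}_n$ land in. Second, for level $1$: the dominant weights of $\mathfrak{sl}_n$ admissible at level $1$ are $0, \vartheta_1, \vartheta_1+\vartheta_2, \dots, \vartheta_1+\cdots+\vartheta_{n-1}$ (the fundamental weights, since $L_{\mathfrak{sl}_n}(\vartheta_1) = \mathbb C^n$). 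Using injectivity of $\Gamma\Psi$ (propositions \ref{lb47}, \ref{lb48}) I may assume $\nu - \mu = \pm\vartheta_k$ is a weight of $L_{\mathfrak{sl}_n}(\vartheta_1)$, forcing $\mu = \vartheta_1+\cdots+\vartheta_{k-1}$, $\nu = \vartheta_1+\cdots+\vartheta_k$. I would then lift $\mu, \nu$ to weights $\varsigma_\mu, \varsigma_\nu \in \{\varsigma_+, \varsigma_-\}$ of $\mathfrak{so}_{2n}$ admissible at level $1$ (whose restrictions to $\mathfrak h^*$ are $\mu, \nu$ — this works because $\varsigma_\pm$ restrict to spinor-type weights and the $\vartheta_j$ extend to $\theta_j$), pick a suitable weight vector $w_{\vartheta_1}$ of $L_{\mathfrak{so}_{2n}}(\theta_1,1) = \mathbb C^{2n}$ (a basis vector) that is an $\mathfrak{sl}_n$-highest weight vector of $\widehat{\mathfrak{sl}_n} \curvearrowright L_{\mathfrak{so}_{2n}}(\theta_1,1)$, and pick weight vectors $w_\mu, w_\nu$ in $L_{\mathfrak{so}_{2n}}(\varsigma_\mu,1), L_{\mathfrak{so}_{2n}}(\varsigma_\nu,1)$ that are $\widehat{\mathfrak{sl}_n}$-highest weight vectors with highest weights $\mu, \nu$ — verified using the root-space containment from step one. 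Then by Theorem \ref{lb16}(a) there is a nonzero $\mathfrak Y \in \mathcal V^1_{\mathfrak{so}_{2n}}{\varsigma_\nu \choose \theta_1~\varsigma_\mu}$, which is energy-bounded with $0$-th order bounds on its $L_{\mathfrak{so}_{2n}}(\theta_1)$-inputs by Theorem \ref{lb16}(d).

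The crucial remaining point for level $1$ is to show the non-vanishing $\bk{\mathfrak Y(\mathfrak{sl}_n\, w_{\vartheta_1}, x) w_\mu | w_\nu} \neq 0$, which reduces (by injectivity of $\Gamma\Psi$ for $\mathfrak{so}_{2n}$) to showing $\bk{T(e_{j} \otimes w_\mu)|w_\nu} \neq 0$ for some basis vector $e_j \in \mathfrak{sl}_n\, w_{\vartheta_1}$ of the right weight, for every nonzero $T \in \Hom_{\mathfrak{so}_{2n}}(\theta_1 \otimes \varsigma_\mu, \varsigma_\nu)$. This is the $\mathfrak{sl}_n$-analogue of Lemma \ref{lb22}, and I would prove it by the same induction-on-$k$ scheme: express the desired matrix coefficient in terms of one with smaller $k$ by acting with suitable $\mathfrak{sl}_2$-triples $E_\alpha, F_\alpha$ for roots $\alpha$ of $\mathfrak{so}_{2n}$ of the form $\theta_{n+i} + \theta_{n+i+1}$, using that intermediate weights that fall outside the weight set of $L_{\mathfrak{so}_{2n}}(\varsigma_\pm)$ contribute zero, and reducing to the base cases $k = 1, 2$ which follow directly from injectivity of $\Gamma : \Hom_{\mathfrak{so}_{2n}}(\theta_1 \otimes \varsigma_\pm, \varsigma_\mp) \to L_{\mathfrak{so}_{2n}}(\theta_1)[\mp\theta_{n+1}]^*$. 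Having established level $1$, I would then handle the general level $l$ by decomposing $\mu, \nu$ in terms of fundamental weights $\lambda_i = \vartheta_1 + \cdots + \vartheta_i$ (now $i = 1, \dots, n-1$), writing $\mu = \rho + \mu_1$, $\nu = \rho + \nu_1$ with $\mu_1, \nu_1$ admissible at level $1$ and $\rho$ admissible at level $l-1$, and invoking Lemma \ref{lb24}(a) exactly as in the type $C$ level-$l$ argument to conclude $N^\nu_{\vartheta_1\mu} = 1$ and the $0$-th order energy bounds. The case $\nu - \mu = -\vartheta_k$ follows either symmetrically or, after checking $\overline{\vartheta_1} = \vartheta_1$ fails for $n \geq 3$ (so $\overline{\vartheta_1} = \vartheta_1 + \cdots + \vartheta_{n-1}$), by applying Proposition \ref{lb14} to the adjoint/conjugate intertwining operators. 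Part (b), that $L_{\mathfrak{sl}_n}(\vartheta_1,l)$ is generating, then follows from the knowledge of these fusion rules: every admissible weight is reachable by iterated fusion with $\vartheta_1$ or its conjugate, exactly as in type $C$.

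The main obstacle I anticipate is the bookkeeping in step one and in the $\mathfrak{sl}_n$-version of Lemma \ref{lb22}: one must choose the reordering of the simple roots of $\mathfrak{so}_{2n}$ and the precise basis vectors $w_{\vartheta_1}, w_\mu, w_\nu$ so that all the highest-weight-vector conditions for $\widehat{\mathfrak{sl}_n}$ hold simultaneously, and so that the inductive step of the lemma produces a genuinely nonzero scalar rather than an accidental cancellation — this requires careful tracking of which matrix units $E_{i,j}$ appear in each root space (the analogue of \eqref{eq61} and \eqref{eq56}). Unlike type $C$, the embedding $\mathfrak{sl}_n \subset \mathfrak{so}_{2n}$ here has $L_{\mathfrak{sl}_n}(\vartheta_1) = \mathbb C^n$ realized as (say) the span of $e_{1}, \dots, e_{n}$ or the last $n$ coordinates of $\mathbb C^{2n}$, and one must be consistent about which copy. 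Everything else is a routine transcription of the type $C$ proof, since the structural ingredients — Dynkin index $1$, spinor modules of $\mathfrak{so}_{2n}$ restricting nicely, injectivity of $\Psi$ and $\Gamma\Psi$, and compression principle (Corollary \ref{lb13}, Lemma \ref{lb24}) — are all already in place.
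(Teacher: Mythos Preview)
Your proposal is correct and follows exactly the approach the paper indicates: the remark preceding Theorem \ref{lb45} sketches precisely this argument (Dynkin-index-$1$ embedding $\mathfrak{sl}_n\subset\mathfrak{so}_{2n}$ via \eqref{eq81}, compression of level-$1$ intertwiners from the $\theta_1$/$\varsigma_\pm$ modules of $\mathfrak{so}_{2n}$, then Lemma \ref{lb24}(a) for higher level), and the paper simply cites \cite{Was98} rather than writing it out. One minor caution: the weights of $L_{\mathfrak{sl}_n}(\vartheta_1)=\mathbb C^n$ are not of the form $\pm\vartheta_k$ (that symmetry is special to type $C$), so your case split and the handling of the conjugate should be phrased in terms of the actual weights $\epsilon_1,\dots,\epsilon_n$ of the standard representation; otherwise the transcription goes through as you describe.
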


\section{Type $G_2$}

\subsection{$\gk_2\subset\ek_8$}

\subsubsection*{Facts about $\gk_2$}

The root system of $\gk_2$ is shown in figure \ref{fig1}. Among these roots, we choose $\alpha_1,\alpha_2$ to be a set of simple roots. Then $\vartheta_1$ and $\vartheta_2$ are the fundamental weights, and $\vartheta_2$ is the highest root of $\gk_2$. Let  $(\cdot|\cdot)$ be the normalized invariant inner product of $\gk_2$. Then clearly $(\vartheta_1|\vartheta_2)=1$, and $(\vartheta_2|\vartheta_2)=2$.

\begin{figure}[h]
	\centering
	\includegraphics[width=0.3\linewidth]{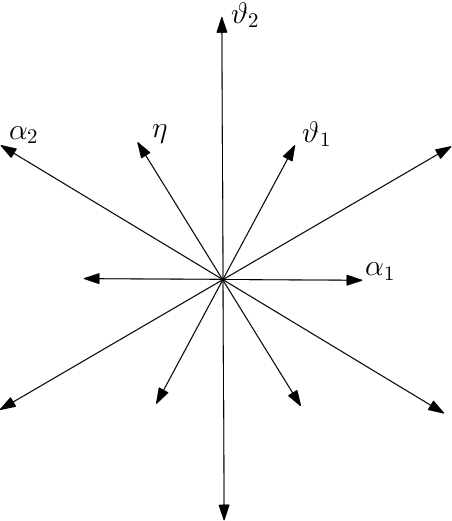}
	\caption[Figure]{. $G_2$ root system}
	\label{fig1}
\end{figure}

It is well known that $L_{\gk_2}(\vartheta_1)$ has dimension $7$,  its weights are $\pm\vartheta_1,\pm\eta,\pm\alpha_1,0$, and for any weight the weight space has dimension $1$. (Here $\eta=\alpha_2-\alpha_1$. See figure \ref{fig1}.) We now determine, for any integral dominant weights $\mu,\nu$ of $\gk_2$, the tensor product rule $\dim\Hom_{\gk_2}(\vartheta_1\otimes\mu,\nu)$. If this number is non-zero, then $\nu-\mu$ must be one of the 7 weights of $L_{\gk_2}(\vartheta_1)$. Write $\nu=n_1\vartheta_1+n_2\vartheta_2$ with $n_1,n_2\in\mathbb Z_{\geq0}$. Then, using corollary \ref{lb20}, one can show that $\dim\Hom_{\gk_2}(\vartheta_1\otimes\mu,\nu)=1$ if  $\nu-\mu\in\{\pm\vartheta_1,\pm\eta,\pm\alpha_1\}$, or if $\mu=\nu$ and $n_1>0$; otherwise $\dim\Hom_{\gk_2}(\vartheta_1\otimes\mu,\nu)=0$. This result can be summarized by the tensor graph \ref{fig2}.
\begin{figure}[h]
	\centering
	\includegraphics[width=0.25\linewidth]{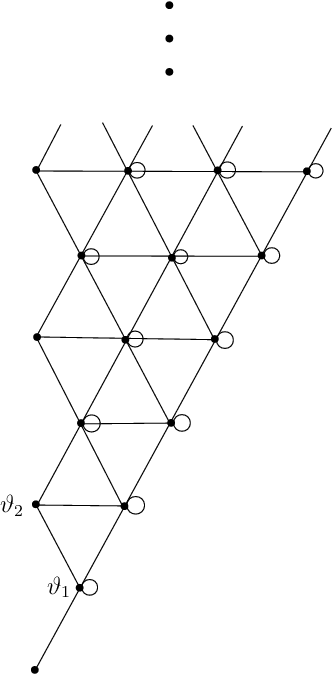}
	\caption[Figure]{. $L_\gk(\vartheta_1)$ tensor graph}
	\label{fig2}
\end{figure}

Graph \ref{fig2} is read as follows. Dots represent dominant integral weights of $\gk_2$. If two dots represent $\mu,\nu\in P_+(\gk_2)$, then a line segment connecting these two points means $\dim\Hom_{\gk_2}(\vartheta_1\otimes\mu,\nu)=\dim\Hom_{\gk_2}(\vartheta_1\otimes\nu,\mu)=1$. The equality of these two dimensions stems from the fact that $\vartheta_1=\overline{\vartheta_1}$, which is a consequence of $\dim\Hom_{\gk_2}(\vartheta_1\otimes\vartheta_1,0)=1$ and proposition \ref{lb23}. When two dots coincide, a line segment becomes a loop.

\subsubsection*{Embedding $\gk_2\oplus\fk_4\subset\ek_8$}

In \cite{Dyn52}, E.B.Dynkin gave a systematic treatment of maximal Lie subalgebras of a simple Lie algebra. We will use the example  $\gk_2\oplus\fk_4\subset\ek_8$ to prove the energy bounds conditions of intertwining operators of $\gk_2$. Let us first review the construction of the subalgebra $\gk_2\oplus\fk_4$ given in \cite{Dyn52} table 35.
\begin{figure}[h]
	\centering
	\includegraphics[width=0.5\linewidth]{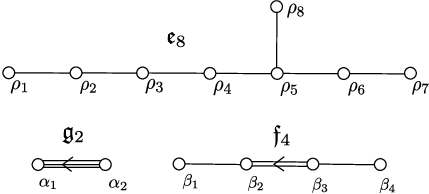}
	\caption[Figure]{.}
	\label{fig3}
\end{figure}

The simple roots $\rho_1,\dots,\rho_8$ of $\ek_8$ and $\beta_1,\dots,\beta_4$ of $\fk_4$ are chosen as in figure \ref{fig3}. We choose  raising operators $P_1\in(\ek_8)_{\rho_1},\dots,P_8\in(\ek_8)_{\rho_8}$ of unit length. Now, for any $i_1,i_2,\dots,i_n\in\{1,\dots,8 \}$, we write $P_{i_1i_2\cdots i_n}=[P_{i_1},[P_{i_2},\cdots,[P_{i_{n-1}},P_{i_n}]]]$. Set
\begin{gather}
A_1=P_{234568543}-P_{234567854}+P_{234567856},\qquad A_2=P_1,\label{eq66}\\
B_1=P_3+P_7,\qquad B_2=P_4+P_6,\qquad B_3=P_5,\qquad B_4=P_8.\label{eq67}
\end{gather}

\begin{thm}[\cite{Dyn52} theorem 13.1 and table 35]\label{lb31}
The unitary Lie subalgebra of $\ek_8$  generated by $A_1,A_2,B_1,\dots,B_4$ is equivalent to $\gk_2\oplus\fk_4$. It is a maximal unitary Lie subalgebra of $\ek_8$.  Moreover, $A_1\in(\gk_2)_{\alpha_1},A_2\in(\gk_2)_{\alpha_2},B_1\in(\fk_4)_{\beta_1},\dots,B_4\in(\fk_4)_{\beta_4}$, and these elements are non-zero.
\end{thm}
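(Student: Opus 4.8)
The plan is to split the statement into a ``complex'' part, which is Dynkin's, and a ``unitary'' refinement, which is the only point genuinely needing an argument here. For the complex part I would simply cite \cite{Dyn52}: Theorem 13.1 together with the relevant row of Table 35 says that the complex Lie subalgebra $\tk\subset\ek_8$ generated by $A_1,A_2,B_1,\dots,B_4$ (together with the lowering root vectors they determine) is isomorphic to $\gk_2\oplus\fk_4$, is a maximal Lie subalgebra of $\ek_8$, and that under this isomorphism $A_1,A_2$ are root vectors for the simple roots $\alpha_1,\alpha_2$ of the $\gk_2$ summand and $B_1,\dots,B_4$ for the simple roots $\beta_1,\dots,\beta_4$ of the $\fk_4$ summand. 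Non-vanishing is then immediate: $A_2=P_1$, $B_3=P_5$, $B_4=P_8$ are individual (unit-length) root vectors, while in $A_1$, $B_1$, $B_2$ the summands lie in pairwise distinct root spaces of $\ek_8$ (compare the root-string labels; e.g.\ in the three terms of $A_1$ the coefficients of $\rho_3$ are $2,1,1$ and those of $\rho_6$ are $1,1,2$), so no cancellation can occur.

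It remains to upgrade $\tk$ to a \emph{unitary} Lie subalgebra equivalent to $\gk_2\oplus\fk_4$ with its compact structure. Since each $P_i$ is a unit-length raising root vector of $\ek_8$, each $P_i^*$ is a unit-length lowering root vector for the simple root $\rho_i$; applying $*$ termwise to \eqref{eq66} and \eqref{eq67} and using $[X,Y]^*=[Y^*,X^*]$ repeatedly writes $A_1^*,A_2^*,B_1^*,\dots,B_4^*$ as explicit combinations of iterated brackets of the $P_i^*$. One then checks that $A_i^*\in\tk_{-\alpha_i}$ and $B_j^*\in\tk_{-\beta_j}$; these root spaces being one-dimensional, each $A_i^*$ (resp.\ $B_j^*$) is a nonzero multiple of the lowering simple-root vector of $\tk$ supplied by Dynkin. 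Hence $\tk$ is $*$-stable, so it is exactly the unitary Lie subalgebra of $\ek_8$ generated by $A_1,A_2,B_1,\dots,B_4$, and it is also maximal among such (any unitary subalgebra strictly between $\tk$ and $\ek_8$ would contradict complex maximality). Finally, rescale $E_i=c_iA_i$ ($i=1,2$) and $E_{2+j}=d_jB_j$ ($j=1,\dots,4$), with $F_i:=E_i^*$ and $H_i:=[E_i,F_i]$, chosen so that $(E_i,F_i,H_i)$ is a Chevalley system of $\tk$ adapted to $\alpha_1,\alpha_2,\beta_1,\dots,\beta_4$ (possible because the constraint $F_i=E_i^*$ only pins down $|c_i|$, $|d_j|$); then $H_i^*=[E_i,F_i]^*=[F_i^*,E_i^*]=[E_i,F_i]=H_i$, so $*$ fixes the Cartan $\bigoplus_i\mathbb C H_i$ and interchanges the $\alpha_i$- and $(-\alpha_i)$- (resp.\ $\beta_j$- and $(-\beta_j)$-) root spaces. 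By uniqueness up to conjugacy of the compact real form of each simple summand, $*|_\tk$ is the compact $*$-structure of $\gk_2\oplus\fk_4$, and the ``moreover'' assertions about $A_i,B_j$ have been re-verified along the way.

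The genuinely hard work is Dynkin's and I will not reproduce it: verifying that the high-degree iterated brackets in \eqref{eq66}--\eqref{eq67}, e.g.\ $P_{234568543}=[P_2,[P_3,[\cdots,[P_4,P_3]]]]$, together with their adjoints satisfy the Serre relations of $\gk_2\oplus\fk_4$ and the cross-commutation $[A_i,B_j]=0$, and that this copy of $\gk_2\oplus\fk_4$ is not contained in any proper reductive subalgebra of $\ek_8$ — these are precisely the content of \cite{Dyn52} Theorem 13.1 and Table 35. On our side the only nontrivial step is the bookkeeping showing $A_i^*,B_j^*$ land in the expected lowering root spaces of $\tk$, which is what guarantees that the \emph{unitary} subalgebra generated by the listed elements is $\tk$ itself and nothing larger; after that, identifying the restricted $*$-structure with the compact one is purely formal.
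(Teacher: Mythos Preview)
The paper does not supply a proof of this theorem at all: it is stated with a citation to \cite{Dyn52} (Theorem 13.1 and Table 35), followed only by the one-line clarification that ``a \emph{unitary} Lie subalgebra generated by some elements is the Lie subalgebra generated by these elements together with their adjoints.'' So your proposal already goes well beyond what the paper does, and your overall structure --- cite Dynkin for the complex statement, then argue separately that the subalgebra is $*$-stable --- is exactly the right division of labor.

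One point deserves more care than you give it. For $A_2,B_3,B_4$ (single $\ek_8$-root vectors) and $B_1,B_2$ (sums of two), the claim $A_i^*\in\tk_{-\alpha_i}$, $B_j^*\in\tk_{-\beta_j}$ is indeed essentially immediate once one observes $\hk_\tk\subset\hk$. But for $A_1$, which is a sum of three vectors in distinct $\ek_8$-root spaces, the $\hk_\tk$-weight space of $\ek_8$ containing $A_1^*$ is three-dimensional while $\tk_{-\alpha_1}$ is one-dimensional; saying ``one then checks'' hides a genuine computation matching the coefficients $(+1,-1,+1)$ of $A_1^*$ against Dynkin's lowering generator. Alternatively, one can bypass this by arguing via maximality: the $*$-closed subalgebra $\mathfrak s$ generated by the $A_i,B_j$ contains Dynkin's $\tk$ (once one knows Dynkin's lowering vectors lie in $\mathfrak s$) and is proper in $\ek_8$ (e.g.\ because the level computation in the paper's Lemma \ref{lb26} forces highest weights in $\mathfrak s\ominus\gk_2$ to be admissible at level $1$), hence $\mathfrak s=\tk$ by maximality. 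Either route works; just be aware that the three-term case is where the actual content lies.
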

Note that a \emph{unitary} Lie subalgebra generated by some elements is the Lie subalgebra generated by these elements together with their adjoints.

\subsubsection*{Irreducible decomposition of $\gk_2\curvearrowright\ek_8$}
Since $\gk_2$ is a unitary Lie subalgebra of $\ek_8$, the adjoint representation $\gk_2\curvearrowright\ek_8$ (which is the restriction of the adjoint representation $\ek_8\curvearrowright\ek_8$ to $\gk_2$) is a unitary representation of $\gk_2$. We now study the irreducible decomposition of this representation.

\begin{lm}\label{lb26}
The embedding $\gk_2\subset\ek_8$ has Dynkin index $1$
\end{lm}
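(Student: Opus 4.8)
The plan is to compute the Dynkin index directly from the definition $[\ek_8:\gk_2]=k$, where $(X_1|X_2)_{\ek_8}=k(X_1|X_2)_{\gk_2}$ for $X_1,X_2\in\gk_2$, using the explicit generators of $\gk_2$ inside $\ek_8$ provided by Theorem \ref{lb31}. Rather than computing inner products of arbitrary elements, I would evaluate both sides on the coroot $H_{\alpha_2}$ attached to the highest root $\alpha_2=\vartheta_2$ of $\gk_2$ (recall $\vartheta_2$ is a long root, so $\lVert\alpha_2\rVert^2_{\gk_2}=2$ under the normalized inner product of $\gk_2$). Since $A_2=P_1$ is, by Theorem \ref{lb31}, a nonzero root vector of $\gk_2$ for the root $\alpha_2$, and since $P_1\in(\ek_8)_{\rho_1}$ has unit length in $\ek_8$, the element $A_2$ is a unit-length root vector for a \emph{long} root of $\gk_2$ — but it is also a root vector for a root of $\ek_8$, which is simply-laced with all roots of length $\sqrt2$ in the normalized inner product of $\ek_8$.

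The key computation is then the following. Using formula \eqref{eq11} applied inside $\gk_2$, the $\mathfrak{sl}_2$-triple associated to $\alpha_2$ built from $A_2$ has the form $E_{\alpha_2}=cA_2$, $F_{\alpha_2}=\bar c A_2^*$, $H_{\alpha_2}=\frac{2}{\lVert\alpha_2\rVert^2_{\gk_2}}h_{\alpha_2}=h_{\alpha_2}$, where $c=\sqrt2/(\lVert A_2\rVert\,\lVert\alpha_2\rVert)$ is computed with the $\gk_2$-inner product; but the \emph{same} element $A_2=P_1$, viewed inside $\ek_8$, sits in a root space $(\ek_8)_{\rho_1}$ with $\lVert\rho_1\rVert^2_{\ek_8}=2$ and $\lVert P_1\rVert_{\ek_8}=1$. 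Because the Dynkin index rescales the inner product by $k$, we get $\lVert A_2\rVert^2_{\ek_8}=k\lVert A_2\rVert^2_{\gk_2}$ and $\lVert\alpha_2\rVert^2_{\ek_8}=k^{-1}\lVert\alpha_2\rVert^2_{\gk_2}$ (the identification $\hk^*\simeq\hk$ scales inversely). The relation \eqref{eq26}–\eqref{eq32}-style bracket computation, or equivalently the $\mathfrak{sl}_2$-normalization, forces $\lVert A_2\rVert^2_{\ek_8}\cdot\lVert\alpha_2\rVert^2_{\ek_8}$ to be an invariant of the triple independent of the ambient algebra, and I can read off $k$ from comparing the two normalizations: since the longest roots of $\ek_8$ are the \emph{only} roots and a long root of $\gk_2$ embeds as such, a long root of $\gk_2$ maps to a root of $\ek_8$ with no rescaling of squared length, giving $(\alpha_2|\alpha_2)_{\ek_8}=(\alpha_2|\alpha_2)_{\gk_2}$, hence $k=1$. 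Concretely: $A_2=P_1$ gives $(A_2|A_2^*)_{\ek_8}=1$ (unit root vector in simply-laced $\ek_8$) while $(A_2|A_2^*)_{\gk_2}=\lVert A_2\rVert^2_{\gk_2}$; the normalized $\mathfrak{sl}_2$-triple condition fixes $\lVert A_2\rVert^2_{\gk_2}\cdot\lVert\alpha_2\rVert^2_{\gk_2}=2$, and the identical condition inside $\ek_8$ together with $\lVert A_2\rVert^2_{\ek_8}=1$ forces $\lVert\alpha_2\rVert^2_{\ek_8}=2=\lVert\alpha_2\rVert^2_{\gk_2}$, whence $k=1$.

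The main obstacle is making the bookkeeping of the two inner products precise — in particular, being careful that the antilinear identification $\hk^*\to\hk$, $\lambda\mapsto h_\lambda$, scales by $k^{-1}$ when the inner product on $\gk_2$ scales by $k$, so that the \emph{coroot} $H_{\alpha_2}$ is genuinely inner-product-independent (which is exactly the content of \eqref{eq65}, the uniqueness of the unitary $\mathfrak{sl}_2$ relations). Given that, the cleanest route is: (i) observe that $A_2=P_1$, $A_2^*$, $[A_2,A_2^*]$ generate an $\mathfrak{sl}_2$ inside $\ek_8$ with $[A_2,A_2^*]=h$ a coroot of $\ek_8$, hence $\lVert A_2\rVert^2_{\ek_8}\lVert\alpha_2\rVert^2_{\ek_8}=2$; (ii) since $\ek_8$ is simply-laced and $P_1$ was chosen of unit length, $\lVert\alpha_2\rVert^2_{\ek_8}=2$; (iii) but $\alpha_2=\vartheta_2$ is a long (hence length-$\sqrt2$) root in the normalized $\gk_2$-inner product, so $\lVert\alpha_2\rVert^2_{\gk_2}=2$ as well; (iv) therefore the two inner products agree on $h_{\alpha_2}$, and by invariance and irreducibility of the adjoint action (simplicity of $\gk_2$) they agree on all of $\gk_2$, so $[\ek_8:\gk_2]=1$. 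Alternatively, and perhaps more robustly, one can cite the general fact that an embedding of simple Lie algebras sending a long root to a long root has Dynkin index $1$ (a standard consequence of the index formula $k=\sum_i \lVert\text{image of }\alpha_i\rVert^2/\lVert\theta\rVert^2$ specialized via the highest-root generator), applied to the highest-root generator $A_2=P_1$ of $\gk_2$ landing in a root space of $\ek_8$.
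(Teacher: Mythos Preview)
Your approach is correct and essentially identical to the paper's: both arguments use that $A_2=P_1$ is a unit-length root vector in $\ek_8$ sitting in the long-root space $(\gk_2)_{\alpha_2}$, verify that $P_1,P_1^*,[P_1,P_1^*]$ satisfy the unitary $\mathfrak{sl}_2$ relations, and then invoke the inner-product-independent normalization of such triples (your invariant $\lVert X_\alpha\rVert^2\lVert\alpha\rVert^2=2$, the paper's appeal to \eqref{eq11} and \eqref{eq65}) to conclude that $\lVert\alpha_2\rVert^2=2$ in the restricted $\ek_8$-inner product, hence $k=1$. Your writeup is more discursive, but the mathematical content of the argument is the same.
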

\begin{proof}
Let $(\cdot|\cdot)$ be the normalized invariant inner product of $\ek_8$. Recall that $(P_1|P_1)=1$. Let $H_1=[P_1,P_1^*]$. As $(\rho_1|\rho_1)=2$, by \eqref{eq11}, $P_1,P_1^*,H_1$ satisfy the unitary $\mathfrak{sl}_2$-relation. Since $A_1=P_1$ and $H_1=[A_1,A_1^*]$, by \eqref{eq11} and \eqref{eq65}, there exists $c\in\mathbb C$ of unit length, such that $cA_1= \sqrt 2 A_1/(\lVert A_1 \lVert \lVert \alpha_2 \lVert)=\sqrt2 A_1/\lVert \alpha_2 \lVert$. Compare the norms of both sides, we get $\lVert \alpha_2 \lVert=\sqrt2$, i.e., the longest roots of $\gk_2$ has norm $\sqrt2$ under the normalized invariant inner product of $\ek_8$. Hence $(\cdot|\cdot)$ restricts to the normalized invariant inner product of $\gk_2$. 
\end{proof}

\begin{lm}\label{lb27}
Let $\pk$ be a  unitary simple Lie algebra, and $\gk$ a unitary simple Lie subalgebra of $\pk$. Assume that $\gk\subset\pk$ has Dynkin index $k$. Consider the adjoint action of $\gk$ on the orthogonal complement $\gk^\perp=\pk\ominus^\perp\gk$. If $v_\lambda\in \gk^\perp$ is a highest weight vector of $\gk\curvearrowright\gk^\perp$ with highest weight $\lambda$, then $\lambda$ is admissible at level $k$. 
\end{lm}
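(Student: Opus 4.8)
The plan is to exhibit $v_\lambda$ as the highest-weight vector of a positive-energy representation (PER) of $\gh$ of level $k$, and then to read off $(\lambda|\theta)\leq k$ from the constraint \eqref{eq12}. Since the Dynkin index $[\pk:\gk]$ equals $k$, the level of $\gk$ in the unitary affine VOA $V^1_\pk$ is $k$, so by Theorem \ref{lb25} the restriction of the $\ph$-module $V^1_\pk$ to $\gh$ is a PER of level $k$. I would identify $\pk$ with the weight-$1$ subspace $V^1_\pk(1)$ via $Z\mapsto Z(-1)\Omega$, so that $v_\lambda\in\gk^\perp\subseteq\pk$ is regarded as a weight-$1$ vector of $V^1_\pk$.

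Next I would check that $v_\lambda$ is a highest-weight vector of the PER $\gh\curvearrowright V^1_\pk$ with highest weight $\lambda$ and level $k$. It is homogeneous, $\hk$ acts on it by $\lambda$ (it is a $\gk$-weight vector of weight $\lambda$), and $K$ acts by $k$, which gives conditions (a)--(c) in the definition. For (d): $\gk_+v_\lambda=0$ because $v_\lambda$ is a $\gk$-highest weight vector; $X(n)v_\lambda\in V^1_\pk(1-n)=0$ for $X\in\gk$ and $n\geq2$; and for $n=1$, using $X(1)\Omega=0$, $[X,v_\lambda](0)\Omega=0$ and $K_\pk\Omega=\Omega$,
\[
X(1)v_\lambda=[X(1),v_\lambda(-1)]\Omega=[X,v_\lambda](0)\Omega+(X|v_\lambda^*)_\pk\,\Omega=(X|v_\lambda^*)_\pk\,\Omega .
\]
Since $\gk$, and hence $\gk^\perp$, is $*$-stable, we have $v_\lambda^*\in\gk^\perp$ while $X\in\gk$, so $(X|v_\lambda^*)_\pk=0$; thus $X(1)v_\lambda=0$ and $\gh_{>0}v_\lambda=0$, proving (d).

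Finally, the $\gh$-submodule $\gh v_\lambda\subseteq V^1_\pk$ generated by $v_\lambda$ is a highest-weight PER, hence irreducible by Theorem \ref{lb4}; applying condition \eqref{eq12} of that theorem to its highest weight $\lambda$ and level $k$ gives $\lambda\in P_+(\gk)$ and $(\lambda|\theta)\leq k$, i.e.\ $\lambda$ is admissible at level $k$. (Alternatively one can invoke directly the $\mathfrak{sl}_2$-triple argument preceding Theorem \ref{lb4}, which establishes \eqref{eq12} for the highest weight of any highest-weight vector of a PER, without passing to $\gh v_\lambda$.) Everything here is routine bookkeeping except the computation of $X(1)v_\lambda$, which is where---and the only place where---the hypothesis $v_\lambda\in\gk^\perp$ rather than merely $v_\lambda\in\pk$ enters, through the vanishing of $(X|v_\lambda^*)_\pk$; I expect this to be the crux. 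It is also exactly the step that fails for the adjoint summand $\gk\subseteq\pk$, whose highest weight $\theta$ has $(\theta|\theta)=2$ and so need not be admissible at small levels.
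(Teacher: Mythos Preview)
Your proof is correct and follows essentially the same approach as the paper: identify $v_\lambda$ with $v_\lambda(-1)\Omega\in V^1_\pk(1)$, verify it is a $\gh$-highest-weight vector of the level-$k$ PER $\gh\curvearrowright V^1_\pk$ by the same commutator computation $X(1)v_\lambda(-1)\Omega=(X|v_\lambda^*)\Omega=0$ (using $v_\lambda\in\gk^\perp$), and conclude via Theorem~\ref{lb4}. The paper's argument is identical in structure and in the key step.
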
	

\begin{proof}
The PER $\ph\curvearrowright L_\pk(0,1)$ restricts to a level $k$ PER $\gh\curvearrowright L_\pk(0,1)$. Let $\Omega$ be the vacuum vector of $L_\pk(0,1)$. If we can show that $v_\lambda(-1)\Omega$ is a weight $\lambda$ highest weight vector of this PER, then $\gh\curvearrowright \gh v_\lambda(-1)\Omega$ is unitarily equivalent to $\gh\curvearrowright L_\gk(\lambda,k)$, and hence $\lambda$ is admissible at level $k$.

Choose any $X\in\gk$ and $n=1,2,\dots$. If $n>1$, then the conformal weight of $X(n)v_\lambda(-1)\Omega$ is negative, which immediately shows $X(n)v_\lambda(-1)\Omega=0$. In the case that $n=1$, we compute
\begin{align*}
X(1)v_\lambda(-1)\Omega=[X(1),v_\lambda(-1)]\Omega=[X,v_\lambda](0)\Omega+(X|v_\lambda^*)\Omega=(v_\lambda|X^*)\Omega.
\end{align*}
Since $v_\lambda\in\gk^\perp$, the right hand side of the above equation vanishes. Therefore $X(1)v_\lambda(-1)\Omega=0$.

Now let us assume $X\in\gk_+$. Then $[X,v_\lambda]=\Ad(X)v_\lambda=0$. Therefore
\begin{align*}
X(0)v_\lambda(-1)\Omega=[X(0),v_\lambda(-1)]\Omega=[X,v_\lambda](-1)\Omega=0.
\end{align*}
Finally, if $H\in\hk$ where $\hk$ is the Cartan subalgebra of $\gk$, then $[H,v_\lambda]=\bk{\lambda,H}v_\lambda$. Thus
\begin{align*}
H(0)v_\lambda(-1)\Omega=[H,v_\lambda](-1)\Omega=\bk{\lambda,H}v_\lambda(-1)\Omega.
\end{align*}
Our proof is finished.
\end{proof}

Now we are ready to study the irreducible decomposition of $\gk_2\curvearrowright\ek_8$. In the following, we let $(\cdot|\cdot)$ be the normalized invariant inner product of $\ek_8$, which restricts to the normalized invariant inner product of $\gk_2$. Consider the orthogonal decomposition of unitary $\gk_2$-module:
\begin{align}
\ek=\gk_2\oplus\fk_4\oplus(\gk_2\oplus\fk_4)^\perp.
\end{align}
The adjoint action $\gk_2\curvearrowright\gk_2$ is unitarily equivalent to $\gk_2\curvearrowright L_{\gk_2}(\vartheta_2)$. Since $[\gk_2,\fk_4]=0$, the action $\gk_2\curvearrowright\fk_4$ is trivial, i.e., it is an orthogonal direct sum of $\dim\fk_4=52$ pieces of irreducible trivial $\gk_2$-modules. 

It remains to study $\gk_2\curvearrowright(\gk_2\oplus\fk_4)^\perp$. We first show that this representation has no trivial irreducible submodules. Let $\kk=\{X\in\ek_8:[\gk_2,X]=0 \}$. Then
\begin{gather*}
[\gk_2,[\kk,\kk]]=[[\gk_2,\kk],\kk]+[\kk,[\gk_2,\kk]]=0,\\
[\gk_2,\kk^*]=[\kk,\gk_2^*]^*=[\kk,\gk_2]^*=0.
\end{gather*} 
which shows that $[\kk,\kk]\subset\kk$ and $\kk^*\subset\kk$, i.e., $\kk$ is a unitary Lie subalgebra of $\ek_8$. Moreover, since $\gk_2$ is simple,
\begin{align*}
(\gk_2|\kk)=([\gk_2,\gk_2]|\kk)=(\gk_2|[\gk_2^*,\kk])=(\gk_2|[\gk_2,\kk])=0.
\end{align*}
Thus $\kk$ is orthogonal to $\gk_2$. Hence $\ek_8$ contains a unitary Lie subalgebra $\gk_2\oplus\kk$. Obviously $\fk_4\subset\kk$. Since $\gk_2\oplus\fk_4$ is a maximal unitary Lie subalgebra of $\ek_8$, $\fk_4$ must equal $\kk$. Therefore, for any $v\in(\gk_2\oplus\fk_4)^\perp$, if $\gk_2$ acts trivially on $v$ (which is equivalent to saying that $[\gk_2,v]=0$), then $v\in\kk=\fk_4$, which forces $v$ to be $0$. We conclude that $\gk_2\curvearrowright(\gk_2\oplus\fk_4)^\perp$ has no irreducible submodule equivalents to $L_{\gk_2}(0)=\mathbb C$.

Now, by lemmas \ref{lb26} and \ref{lb27}, any irreducible submodule of $\gk_2\curvearrowright (\gk_2\oplus\fk_4)^\perp$ must be a non-trivial irreducible $\gk_2$-module whose highest weight is admissible at level $1$. The only choice of such weight is $\vartheta_1$. Therefore $(\gk_2\oplus\fk_4)^\perp$ is equivalent to  a direct sum of $L_{\gk_2}(\vartheta_1)$. Using the dimensions of $\gk_2,\fk_4,\ek_8$, and $L_{\gk_2}(\vartheta_1)$, one checks that the multiplicity of $L_{\gk_2}(\vartheta_1)$ in $(\gk_2\oplus\fk_4)^\perp$ is $(248-14-52)/7=26$. We conclude the following:
\begin{thm}\label{lb28}
The adjoint representation $\gk_2\curvearrowright\ek_8$ has the irreducible decomposition
\begin{align}
\ek_8\simeq 52\mathbb C\oplus 26L_{\gk_2}(\vartheta_1)\oplus L_{\gk_2}(\vartheta_2).
\end{align}
Moreover, we have the following identifications:
\begin{align}
52\mathbb C=\fk_4,\qquad 26L_{\gk_2}(\vartheta_1)=(\gk_2\oplus\fk_4)^\perp,\qquad L_{\gk_2}(\vartheta_2)=\gk_2.
\end{align}
\end{thm}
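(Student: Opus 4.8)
The plan is to split $\ek_8$ into an orthogonal direct sum of $\gk_2$-submodules and identify each summand. By Theorem \ref{lb31} the unitary Lie subalgebra $\gk_2\oplus\fk_4$ of $\ek_8$ is maximal, so as a unitary $\gk_2$-module we may write $\ek_8=\gk_2\oplus^\perp\fk_4\oplus^\perp(\gk_2\oplus\fk_4)^\perp$. The summand $\gk_2$ carries the adjoint action of $\gk_2$, which is irreducible with highest weight equal to the highest root $\vartheta_2$, hence $\gk_2\simeq L_{\gk_2}(\vartheta_2)$. The summand $\fk_4$ is acted on trivially since $[\gk_2,\fk_4]=0$, contributing $(\dim\fk_4)\,\mathbb C=52\mathbb C$. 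So everything reduces to understanding $\gk_2\curvearrowright(\gk_2\oplus\fk_4)^\perp$.

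For that module I would first show it contains no trivial $\gk_2$-submodule. Set $\kk=\{X\in\ek_8:[\gk_2,X]=0\}$; the Jacobi identity together with the $*$-structure shows $[\kk,\kk]\subset\kk$ and $\kk^*\subset\kk$, so $\kk$ is a unitary Lie subalgebra, and invariance of the inner product together with simplicity of $\gk_2$ (i.e.\ $\gk_2=[\gk_2,\gk_2]$) gives $(\gk_2|\kk)=0$. Thus $\gk_2\oplus\kk$ is a unitary Lie subalgebra of $\ek_8$ containing $\gk_2\oplus\fk_4$, and maximality forces $\kk=\fk_4$; consequently any $v\in(\gk_2\oplus\fk_4)^\perp$ with $[\gk_2,v]=0$ lies in $\kk=\fk_4$, hence $v=0$. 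Next, Lemma \ref{lb26} gives that $\gk_2\subset\ek_8$ has Dynkin index $1$, so by Lemma \ref{lb27} every highest weight occurring in $(\gk_2\oplus\fk_4)^\perp$ is admissible at level $1$. The only dominant integral weights of $\gk_2$ admissible at level $1$ are $0$ and $\vartheta_1$, and $0$ has just been excluded, so every irreducible summand is $L_{\gk_2}(\vartheta_1)$, which has dimension $7$. A dimension count, $\dim\ek_8-\dim\gk_2-\dim\fk_4=248-14-52=182=7\cdot 26$, pins down the multiplicity at $26$, giving $\ek_8\simeq 52\mathbb C\oplus 26L_{\gk_2}(\vartheta_1)\oplus L_{\gk_2}(\vartheta_2)$ together with the three claimed identifications.

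The main obstacle is the ``no trivial submodule'' step: it is what identifies $\fk_4$ exactly with the $\gk_2$-fixed subspace and therefore removes all ambiguity from the decomposition, and it genuinely relies on the maximality assertion in Theorem \ref{lb31} rather than on mere dimension bookkeeping. Once that is in place, the two remaining constraints (Dynkin index $1$ via Lemma \ref{lb26}, level-$1$ admissibility via Lemma \ref{lb27}) and the final arithmetic are routine.
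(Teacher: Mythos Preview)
Your proposal is correct and follows essentially the same argument as the paper: the orthogonal decomposition $\ek_8=\gk_2\oplus^\perp\fk_4\oplus^\perp(\gk_2\oplus\fk_4)^\perp$, the centralizer argument using maximality (Theorem \ref{lb31}) to exclude trivial summands, the application of Lemmas \ref{lb26} and \ref{lb27} to restrict highest weights to level~$1$ admissible ones, and the final dimension count are all exactly what the paper does.
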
	

\subsection{Main result}
In this section we set $\gk=\gk_2$ and $\pk=\ek_8$.

We first prove the energy bounds condition for the creation operator of $L_\gk(\vartheta_1,1)$. Choose an arbitrary $\mathcal Y\in\mathcal V^1_\gk{\vartheta_1\choose \vartheta_1~0}$. We show that for any $u^{(\vartheta_1)}\in L_\gk(\vartheta_1)\subset L_\gk(\vartheta_1,1)$, $\mathcal Y(u^{(\vartheta_1)},x)$ satisfies $1$-st order energy bounds. Let $Y_\pk$ be the vertex operator of $V^1_\pk$ representing the vacuum module. Then $Y_\pk\in\mathcal V^1_\pk{0\choose 0~0}$. Let $\Omega$ be the vacuum vector of $V^1_\pk$. Then $\Omega$ is a $0$-highest weight vector of $\gh\curvearrowright V^1_\pk$. Now, by theorem \ref{lb28}, we can choose a $\vartheta_1$-highest weight vector $v_{\vartheta_1}$ of $\gk\curvearrowright (\gk_2\oplus\fk_4)^\perp$. Using the argument in the proof of lemma \ref{lb27}, one can show that $v_{\vartheta_1}(-1)\Omega$ is a $\vartheta_1$-highest weight vector of $\gh\curvearrowright V^1_\pk$.  Now the creation property $\lim_{x\rightarrow 0}Y_\pk(v_{\vartheta_1}(-1)\Omega,x)\Omega=v_{\vartheta_1}(-1)\Omega$ gives $\bk{Y_\pk(v_{\vartheta_1}(-1)\Omega,x)\Omega|v_{\vartheta_1}(-1)\Omega}\neq0$. By theorem \ref{lb18}, for any $u\in \gk v_{\vartheta_1}(-1)\Omega\subset V^1_\pk(1)$, $Y_\pk(u,x)$ satisfies $1$-st order energy bounds. Thus we can prove the desired results using corollary \ref{lb13}.

We now fix an arbitrary $l=1,2,\dots$. For any $\mu,\nu\in P_+(\gk)$ admissible at level $l$, we shall show that $\dim\mathcal V^l_\gk{\nu\choose\vartheta_1~\mu}=\dim\Hom_\gk(\vartheta_1\otimes\mu,\nu)$, and that for any $\mathcal Y\in\mathcal V^1_\gk{\nu\choose\vartheta_1~\mu}$ and $u^{(\vartheta_1)}\in L_\gk(\vartheta_1)$, $\mathcal Y(u^{(\vartheta_1)},x)$ satisfies $1$-st order energy bounds. Note that when $(\mu|\vartheta_2)<l$ or $(\nu|\vartheta_2)<l$, this result follows from proposition \ref{lb29} and the energy bounds condition for level $1$ creation operators. Therefore we may well assume that $(\mu|\vartheta_2)=(\nu|\vartheta_2)=l$. This means that on the tensor graph \ref{fig2} $\mu$ and $\nu$ has the same height $l$ (if we assume that the height of $\vartheta_1$ is 1).

Now we assume that $\dim\Hom_\gk(\vartheta_1\otimes\mu,\nu)=1$. Then on the tensor graph, $\mu$ and $\nu$ are connected by either a horizontal line segment or a loop. (In the later case $\mu=\nu$.) Using the tensor graph, it is not hard to find $a\in\mathbb Z_{>0}$ and $\rho,\mu_1,\nu_1\in P_+(\gk)$, such that $a\leq l$, $\rho$ is admissible at level $l-a$, $\mu_1,\nu_1$ are admissible at level $a$, $\mu=\rho+\mu_1$, $\nu=\rho+\nu_1$, and one of the following three cases holds: (a) $a=1,\mu_1=\nu_1=\vartheta_1$; (b) $a=2,\mu_1=2\vartheta_1,\nu_1=\vartheta_2$; (c) $a=2,\mu_1=\vartheta_2,\nu_1=2\vartheta_1$. Therefore, to prove the result for intertwining operators of general types, it suffices, by lemma \ref{lb24}-(a), to prove this result in the following three special  cases:\\
(I) $l=1,\mu=\nu=\vartheta_1$.\\
(II) $l=2,\mu=2\vartheta_1,\nu=\vartheta_2$.\\ 
(III) $l=2,\mu=\vartheta_2,\nu=2\vartheta_1$.\\
Since $\vartheta_1=\overline{\vartheta_1}$, case (III) will follow from case (II) and proposition \ref{lb14}. Thus we only need to prove cases (I) and (II).

\subsubsection*{Case (I)}

To prove this case we need the following lemma.
\begin{lm}\label{lb30}
There exsit $\vartheta_1$-highest weight vectors $v_{\vartheta_1}^1,v_{\vartheta_1}^2,v_{\vartheta_1}^3\in (\gk_2\oplus\fk_4)^\perp$ satisfying $([\gk_2v_{\vartheta_1}^1,\gk_2v_{\vartheta_1}^2]|\gk_2v_{\vartheta_1}^3)\neq0$.
\end{lm}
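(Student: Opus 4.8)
The plan is to reduce the lemma to two assertions: (i) the $\gk_2$-invariant trilinear form $\tau(X,Y,Z):=([X,Y]|Z)$ on $\mathfrak m:=(\gk_2\oplus\fk_4)^\perp$ does not vanish identically, and (ii) once $\tau\not\equiv 0$, its non-vanishing can already be witnessed using highest-weight vectors of the $L_{\gk_2}(\vartheta_1)$-isotypic components. Throughout I would use theorem \ref{lb28}, which gives $\mathfrak m\simeq 26\,L_{\gk_2}(\vartheta_1)$ as a $\gk_2$-module, together with the fact that $\mathfrak m$, being the orthogonal complement of the subalgebra $\gk_2\oplus\fk_4$ for the $(\gk_2\oplus\fk_4)$-invariant inner product of $\ek_8$, is stable under the adjoint action of $\gk_2\oplus\fk_4$.

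To prove (i) I would argue by contradiction. If $\tau\equiv 0$ then $[\mathfrak m,\mathfrak m]\perp\mathfrak m$, i.e. $[\mathfrak m,\mathfrak m]\subseteq\gk_2\oplus\fk_4$. Combined with $[\gk_2\oplus\fk_4,\mathfrak m]\subseteq\mathfrak m$, a short computation with the Jacobi identity shows that $\mathfrak m+[\mathfrak m,\mathfrak m]$ is a nonzero ideal of $\ek_8$, hence all of $\ek_8$; therefore $[\mathfrak m,\mathfrak m]=\gk_2\oplus\fk_4$ and $\ek_8=(\gk_2\oplus\fk_4)\oplus\mathfrak m$ is a $\mathbb Z/2$-grading. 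Equivalently, $\sigma:=\id_{\gk_2\oplus\fk_4}\oplus(-\id_{\mathfrak m})$ is an involutive automorphism of $\ek_8$ whose fixed-point subalgebra is exactly $\gk_2\oplus\fk_4$. This contradicts the classification of involutions of $\ek_8$: the fixed-point subalgebra of such an involution is $\mathfrak{so}_{16}$ or $\ek_7\oplus\mathfrak{sl}_2$, so a symmetric space of $\ek_8$ has dimension $128$ or $112$, whereas $\dim\mathfrak m=248-66=182$.

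For (ii), write $\mathfrak m=\bigoplus_{a=1}^{26}M_a$ with each $M_a\simeq L_{\gk_2}(\vartheta_1)$. By multilinearity of $\tau$ there are indices $a,b,c$ and vectors in $M_a,M_b,M_c$ on which $\tau$ is nonzero; fixing $\gk_2$-isomorphisms $M_a\simeq M_b\simeq M_c\simeq L_{\gk_2}(\vartheta_1)$ turns $\tau|_{M_a\times M_b\times M_c}$ into a $\gk_2$-invariant trilinear form on $L_{\gk_2}(\vartheta_1)^{\otimes 3}$. From $L_{\gk_2}(\vartheta_1)^{\otimes 2}\simeq L_{\gk_2}(2\vartheta_1)\oplus L_{\gk_2}(\vartheta_2)\oplus L_{\gk_2}(\vartheta_1)\oplus L_{\gk_2}(0)$ (together with $\overline{\vartheta_1}=\vartheta_1$) one reads off $\dim\Hom_{\gk_2}\big(L_{\gk_2}(\vartheta_1)^{\otimes 3},\mathbb C\big)=1$, a generator $\omega$ being the (totally antisymmetric) $G_2$-invariant $3$-form; hence $\tau|_{M_a\times M_b\times M_c}$ is a nonzero scalar multiple of $\omega$. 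Let $v^1,v^2,v^3$ be highest-weight vectors of $M_a,M_b,M_c$. The weight diagram of $L_{\gk_2}(\vartheta_1)$ shows that $\gk_2 v^i$ is the sum of all weight spaces of $L_{\gk_2}(\vartheta_1)$ other than the lowest-weight one, of weight $-\vartheta_1$. Since $\omega$ is $\hk$-invariant, it is supported on triples of weight vectors whose weights sum to $0$; up to the Weyl group of $\gk_2$ the only such triples of weights of $L_{\gk_2}(\vartheta_1)$ are $\{\mu,-\mu,0\}$ with $\mu$ a short weight and the ``triangles'' $\{\vartheta_1,-\alpha_1,\alpha_1-\vartheta_1\}$, and in each family one can choose the three weights so that none equals $-\vartheta_1$ (e.g. $\{\alpha_1,-\alpha_1,0\}$, respectively $\{\vartheta_1,-\alpha_1,\alpha_1-\vartheta_1\}$ itself). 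As $\omega\neq 0$, it therefore does not vanish identically on $(\gk_2 v^1)\times(\gk_2 v^2)\times(\gk_2 v^3)$, which is exactly the assertion $([\gk_2 v^1,\gk_2 v^2]|\gk_2 v^3)\neq 0$.

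The crux is step (i): ruling out that $(\ek_8,\gk_2\oplus\fk_4)$ is a symmetric pair. I would dispose of it by the dimension count against the classification of involutions of $\ek_8$ as above; a more self-contained alternative would be to determine the $(\gk_2\oplus\fk_4)$-module structure of $\mathfrak m$ and check directly that the bracket $\mathfrak m\wedge\mathfrak m\to\ek_8$ has a nonzero component along $\mathfrak m$, but that is considerably more laborious. The $G_2$ weight bookkeeping in step (ii) is routine.
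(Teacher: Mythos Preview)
Your argument is correct, but the paper takes a much more direct route. For step~(i) it simply exhibits explicit elements of $\mathfrak m=(\gk_2\oplus\fk_4)^\perp$ whose bracket again lies in $\mathfrak m$: with the Dynkin generators \eqref{eq66}--\eqref{eq67} already in hand, one sets $X=P_2$ and $Y=P_4-P_6$, checks by comparison with those formulae that $X,Y\in\mathfrak m$, and computes $[X,Y]=P_{24}-P_{26}\in\mathfrak m$. This settles $([\mathfrak m,\mathfrak m]\,|\,\mathfrak m)\neq 0$ in two lines, without appealing to the classification of involutions of $\ek_8$. So the ``more self-contained alternative'' you flag as laborious is, with the explicit root data available, in fact the short path; your symmetric-pair argument trades that small computation for a substantial external classification result.

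Your step~(ii) is also correct but unnecessary: throughout the paper the notation $\gk w$ for a highest-weight vector $w$ denotes the full $\gk$-submodule generated by $w$ (see e.g.\ theorem~\ref{lb10} and corollary~\ref{lb13}), not the linear span $\{Xw:X\in\gk\}$. With that reading, $\gk_2 v^i_{\vartheta_1}=M_i$, and the lemma follows immediately from $([M_i,M_j]\,|\,M_k)\neq 0$; your weight-space analysis of the invariant $3$-form proves a strictly stronger statement than required.
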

\begin{proof}
Let $M=(\gk_2\oplus\fk_4)^\perp$. Recall that the raising operators $P_1,\cdots,P_8$ of $\ek_8$ have unit length. Choose $X=P_2,Y=P_4-P_6\in\ek_8$. Then $[X,Y]=P_{24}-P_{26}$. By comparing them with \eqref{eq66} and \eqref{eq67}, one can easily check that $X,Y,[X,Y]\in M$. This proves $[M,M]\cap M\neq0$, and hence $([M,M]|M)\neq0$. By theorem \ref{lb28}, the representation $\gk_2\curvearrowright M$ has the orthogonal irreducible decomposition $M=\bigoplus_{i=1}^{26}M_i$, where each $M_i$ is unitarily equivalent to $L_{\gk_2}(\vartheta_1)$. So there exist $i,j,k=1,\dots,26$ such that $([M_i,M_j]|M_k)$. Choose highest weight vectors $v_{\vartheta_1}^1,v_{\vartheta_1}^2,v_{\vartheta_1}^3$ of $M_i,M_j,M_k$ respectively. Then $([\gk_2v_{\vartheta_1}^1,\gk_2v_{\vartheta_1}^2]|\gk_2v_{\vartheta_1}^3)\neq0$.
\end{proof}

Let again $Y_\pk$ be the vertex operator for the vacuum module of $V^l_\pk$. Choose $v_{\vartheta_1}^1,v_{\vartheta_1}^2,v_{\vartheta_1}^3\in (\gk_2\oplus\fk_4)^\perp$ as in lemma \ref{lb30}. From the proof of lemma \ref{lb27}, $v_{\vartheta_1}^1(-1)\Omega,v_{\vartheta_1}^2(-1)\Omega,v_{\vartheta_1}^3(-1)\Omega$ are level $1$ $\vartheta_1$-highest weight vectors of $\gh\curvearrowright V^1_\pk$. Since $\gk v_{\vartheta_1}^1(-1)\Omega\subset V^1_\pk(1)$, by theorem \ref{lb18},  $Y_\pk(u,x)$ satisfies $1$-st order energy bounds for any $u\in \gk v_{\vartheta_1}^1(-1)\Omega$. By lemma \ref{lb30}, we can choose $u_1\in \gk v_{\vartheta_1}^1,u_2\in \gk v_{\vartheta_1}^2,u_3\in \gk v_{\vartheta_1}^3$ satisfying $([u_1,u_2]|u_3)\neq0$. Hence
\begin{align*}
&\bk{ Y(u_1(-1)\Omega,0)u_2(-1)\Omega|u_3(-1)\Omega }=\bk{u_1(0)u_2(-1)\Omega|u_3(-1)\Omega}\\
=&\bk{[u_1(0),u_2(-1)]\Omega|u_3(-1)\Omega}=\bk{[u_1,u_2](-1)\Omega|u_3(-1)\Omega}=\bk{u_3(-1)^*[u_1,u_2](-1)\Omega|\Omega}\\
=&\bk{u_3^*(1)[u_1,u_2](-1)\Omega|\Omega}=\bk{[u_3^*,[u_1,u_2]](0)\Omega|\Omega}+(u_3^*|[u_1,u_2]^*)\bk{\id\Omega|\Omega}\\
=&([u_1,u_2]|u_3)\neq0.
\end{align*}
We can therefore use corollary \ref{lb13} to prove the desired result.

\subsubsection*{Case (II)}

To prove this case we also need a lemma.
\begin{lm}\label{lb32}
Choose a non-zero $T\in\Hom_{\gk_2}(\vartheta_1\otimes\vartheta_1,\vartheta_1)$. Let $u_1$ be a non-zero element in $L_{\gk_2}(\vartheta_1)[-\alpha_1]$, and let $u_2\in L_{\gk_2}(\vartheta_1)[\vartheta_1]$ be a highest weight vector of $L_{\gk_2}(\vartheta_1)$. Then $T(u_1\otimes u_2)\neq0$.
\end{lm}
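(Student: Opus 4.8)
The plan is to combine the one-dimensionality of $\Hom_{\gk_2}(\vartheta_1\otimes\vartheta_1,\vartheta_1)$ with the injectivity of the map $\Gamma$ (Proposition \ref{lb47}), and then transport nonvanishing between weight spaces using the $\mathfrak{sl}_2$-triple attached to the simple root $\alpha_1$. Throughout I will write $v_{\vartheta_1}$ for the highest weight vector of $L_{\gk_2}(\vartheta_1)$; after rescaling we may assume $u_2=v_{\vartheta_1}$.

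The first step is to record that $T(u_0\otimes v_{\vartheta_1})\neq 0$, where $u_0$ spans the one-dimensional zero-weight space $L_{\gk_2}(\vartheta_1)[0]$. Indeed, since $\dim\Hom_{\gk_2}(\vartheta_1\otimes\vartheta_1,\vartheta_1)=1$ and $T\neq 0$, Proposition \ref{lb47} gives $\Gamma T\neq 0$; as $L_{\gk_2}(\vartheta_1)[\nu-\mu]=L_{\gk_2}(\vartheta_1)[0]$ is one-dimensional, this forces $\Gamma T(u_0)=\langle T(u_0\otimes v_{\vartheta_1})\,|\,v_{\vartheta_1}\rangle\neq 0$, so $T(u_0\otimes v_{\vartheta_1})$ is a nonzero multiple of $v_{\vartheta_1}$.

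Next I would bring in the $\mathfrak{sl}_2$-triple $E_{\alpha_1},F_{\alpha_1},H_{\alpha_1}$ of \eqref{eq11} for the simple root $\alpha_1$. The vector $u_1$ has weight $-\alpha_1$, hence $H_{\alpha_1}$-eigenvalue $-2<0$; by elementary $\mathfrak{sl}_2$-representation theory a nonzero vector of negative $H$-eigenvalue is not annihilated by $E_{\alpha_1}$, so $E_{\alpha_1}u_1$ is a nonzero vector of weight $0$, i.e.\ a nonzero multiple of $u_0$. Applying $E_{\alpha_1}$ to $T(u_1\otimes u_2)$, using that $T$ intertwines the $\gk_2$-action together with $E_{\alpha_1}u_2=E_{\alpha_1}v_{\vartheta_1}=0$, I obtain
\begin{align*}
E_{\alpha_1}\bigl(T(u_1\otimes u_2)\bigr)=T(E_{\alpha_1}u_1\otimes u_2)=c\,T(u_0\otimes v_{\vartheta_1})
\end{align*}
for some $c\neq 0$. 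By the first step the right-hand side is nonzero, and therefore $T(u_1\otimes u_2)\neq 0$, as desired.

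I do not expect a genuine obstacle here: the argument uses only elementary $\mathfrak{sl}_2$ theory together with the fact (recorded in the list of facts about $\gk_2$) that the weight spaces $[0]$, $[-\alpha_1]$, and $[\vartheta_1]$ of $L_{\gk_2}(\vartheta_1)$ are one-dimensional, which is exactly what makes the scalars $c$ and $\Gamma T(u_0)$ unambiguous. The only thing to keep straight is the weight bookkeeping along the $\alpha_1$-string, which is trivial here.
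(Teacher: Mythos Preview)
Your proof is correct and follows essentially the same approach as the paper's. Both arguments use the $\mathfrak{sl}_2$-triple for the simple root $\alpha_1$ to push the question from the weight-$(\vartheta_1-\alpha_1)$ space back to the weight-$\vartheta_1$ space, where nonvanishing is witnessed by the injectivity of $\Gamma$; the only cosmetic difference is that the paper pairs $T(u_1\otimes u_2)$ with $A_1^*u_2$ and uses adjointness, whereas you apply $E_{\alpha_1}$ directly to $T(u_1\otimes u_2)$ --- these are the same computation.
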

\begin{proof}
Our notation is as in theorem \ref{lb31}, where $A_1$ is a non-zero element in $(\gk_2)_{\alpha_1}$. Then $A_1u_2=0$. By analyzing the $\mathfrak{sl}_2$-module  $L_{\gk_2}(\vartheta_1)[\alpha_1]\oplus L_{\gk_2}(\vartheta_1)[0]\oplus L_{\gk_2}(\vartheta_1)[-\alpha_1]$, it is  easy to see that $A_1u_1\neq0$. Therefore
\begin{align}\label{eq68}
\bk{T(u_1\otimes u_2)|A_1^*u_2}=-\bk{T(A_1u_1\otimes u_2|u_2)}.
\end{align}
Now $A_1u_1$ is a non-zero element in $L_{\gk_2}(\vartheta_1)[0]$. Hence, by the injectivity of $\Gamma:\Hom_{\gk_2}(\vartheta_1\otimes\vartheta_1,\vartheta_1)\rightarrow L_{\gk_2}(\vartheta_1)[0]$, the right hand side of equation \eqref{eq68} is non-zero. Therefore $T(u_1\otimes u_2)\neq0$.
\end{proof}

Now for case (II), we let $a=1,\rho=\vartheta_1,\mu_1=\nu_1=\vartheta_1$. Then $\mu=\rho+\mu_1,L_\gk(\nu)\subset L_\gk(\rho)\otimes L_\gk(\nu_1)$. By lemma \ref{lb32}, condition \eqref{eq44} is satisfied. By case (I), $\dim\mathcal V^a_\gk{\nu_1\choose\vartheta_1~\mu_1}=\dim\mathcal V^1_\gk{\vartheta_1\choose\vartheta_1~\vartheta_1}=1$, and for any $\mathcal Y\in \mathcal V^a_\gk{\nu_1\choose\vartheta_1~\mu_1}$ and $u\in L_\gk(\vartheta_1)$, $\mathcal Y(u,x)$ satisfies $1$-st order energy bounds. Therefore case (II) can be proved using lemma \ref{lb24}-(b). 

Now, since we know the fusion rules of  $L_\gk(\vartheta_1)$ with another irreducible $V^l_\gk$-module, we can  deduce that $L_\gk(\vartheta_1,l)$ is generating. The main result of this chapter is thus proved.

\begin{thm}\label{lb46}
	Let $\gk=\gk_2$, and choose $\mu,\nu\in P_+(\gk)$ admissible at level $l$.
	
	(a) The map $\Psi:\mathcal V^l_\gk{\nu\choose\vartheta_1~\mu}\rightarrow \Hom_\gk(\vartheta_1\otimes\mu,\nu)$ is bijective. The dimension of these two isomorphic spaces can be read from figure \ref{fig2}.
	
	(b) $L_\gk(\vartheta_1)$ is a generating $V^l_\gk$-module.
	
	(c) For any $\mathcal Y\in\mathcal V^l_\gk{\nu\choose\vartheta_1~\mu}$ and $u^{(\vartheta_1)}\in L_\gk(\vartheta_1)$, $\mathcal Y(u^{(\vartheta_1)},x)$ satisfies $1$-st order energy bounds. In particular, $\mathcal Y$ is energy-bounded.
\end{thm}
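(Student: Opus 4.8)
The plan is to assemble the pieces built up in the preceding discussion into the three claims. I would begin with part (c). By Proposition \ref{lb48} and the injectivity of $\Gamma$ in Proposition \ref{lb47}, the space $\mathcal V^l_\gk{\nu\choose\vartheta_1~\mu}$ vanishes unless $\nu-\mu$ is one of the seven weights of $L_\gk(\vartheta_1)$, so in the vanishing case there is nothing to prove. When $\dim\Hom_\gk(\vartheta_1\otimes\mu,\nu)=1$, split into two regimes. If $(\mu|\vartheta_2)<l$ or $(\nu|\vartheta_2)<l$, then $a:=(\vartheta_1|\vartheta_2)=1$ satisfies $(\mu|\vartheta_2)\le l-a$ or $(\nu|\vartheta_2)\le l-a$, so Proposition \ref{lb29} applies and reduces the statement to the $1$-st order energy bounds for the level-$1$ creation operator $\mathcal Y_{\kappa(\vartheta_1)}\in\mathcal V^1_\gk{\vartheta_1\choose\vartheta_1~0}$, which were established above using the index-$1$ embedding $\gk_2\subset\ek_8$, Theorem \ref{lb28}, the creation property, Theorem \ref{lb18}, and Corollary \ref{lb13}.

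If instead $(\mu|\vartheta_2)=(\nu|\vartheta_2)=l$, then on the tensor graph of figure \ref{fig2} both nodes sit in the top row, so $\mu$ and $\nu$ are joined by a horizontal edge or by a loop; reading off the graph one writes $\mu=\rho+\mu_1$, $\nu=\rho+\nu_1$ with $\rho$ admissible at level $l-a$ and $(a,\mu_1,\nu_1)$ one of $(1,\vartheta_1,\vartheta_1)$, $(2,2\vartheta_1,\vartheta_2)$, $(2,\vartheta_2,2\vartheta_1)$. Lemma \ref{lb24}(a) then reduces everything to the three special cases (I) $l=1,\mu=\nu=\vartheta_1$; (II) $l=2,\mu=2\vartheta_1,\nu=\vartheta_2$; (III) $l=2,\mu=\vartheta_2,\nu=2\vartheta_1$. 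Case (III) follows from case (II) by Proposition \ref{lb14} since $\vartheta_1=\overline{\vartheta_1}$; case (I) is handled by exhibiting a non-vanishing matrix coefficient $\bk{Y_\pk(u_1(-1)\Omega,0)u_2(-1)\Omega|u_3(-1)\Omega}=([u_1,u_2]|u_3)\neq0$ via Lemma \ref{lb30} and invoking Corollary \ref{lb13}; case (II) follows from case (I) through Lemma \ref{lb24}(b), whose hypothesis \eqref{eq44} is precisely Lemma \ref{lb32}. This argument simultaneously yields $N^\nu_{\vartheta_1\mu}=\dim\Hom_\gk(\vartheta_1\otimes\mu,\nu)$ in every case (including $N^\nu_{\vartheta_1\mu}=0$ when the $\Hom$-space is zero), which together with the injectivity of $\Psi$ from Proposition \ref{lb48} gives part (a); the dimensions themselves are the values computed via Corollary \ref{lb20} and recorded in figure \ref{fig2}.

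For part (b), once the fusion rules $N^\nu_{\vartheta_1\mu}$ are known I would observe that the tensor graph of figure \ref{fig2} is connected, so for each admissible $\nu$ there is a path in the graph from the node $0$ to the node $\nu$; translating such a path into a chain of non-trivial intertwining spaces of type $\cdot\choose\vartheta_1~\cdot$ shows that $\{L_\gk(\vartheta_1)\}$ is generating in the sense of Definition \ref{lb42}. The only genuinely new analytic input is the $1$-st order energy bound for the level-$1$ creation operator, already obtained above from $\gk_2\subset\ek_8$; the main obstacle is the top-row case $(\mu|\vartheta_2)=(\nu|\vartheta_2)=l$, where Proposition \ref{lb29} is unavailable and one must instead set up the auxiliary tensor-product VOA $V^a_\gk\otimes V^{l-a}_\gk$ and verify the compatibility conditions \eqref{eq44} (resp. \eqref{eq45}) of Lemma \ref{lb24} through the representation-theoretic Lemmas \ref{lb30} and \ref{lb32} — exactly where the explicit root-space computations in $\ek_8$ and the $\mathfrak{sl}_2$-analysis inside $L_{\gk_2}(\vartheta_1)$ enter.
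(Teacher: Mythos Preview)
Your proposal is correct and follows essentially the same route as the paper: establish the $1$-st order bounds for the level-$1$ creation operator via $\gk_2\subset\ek_8$ and Corollary \ref{lb13}; handle the generic case with Proposition \ref{lb29}; reduce the boundary case $(\mu|\vartheta_2)=(\nu|\vartheta_2)=l$ to the three special cases (I), (II), (III) by Lemma \ref{lb24}(a); treat (I) via Lemma \ref{lb30}, (II) via Lemmas \ref{lb32} and \ref{lb24}(b), and (III) via Proposition \ref{lb14}; and read off parts (a) and (b) from the resulting equality of fusion and tensor-product rules together with the connectedness of the graph in figure \ref{fig2}. The only cosmetic point is that in Definition \ref{lb42} the starting module $j_1$ must lie in $\mathcal F\cup\overline{\mathcal F}=\{\vartheta_1\}$, not at $0$; since $0$ and $\vartheta_1$ are adjacent in the fusion graph this makes no difference.
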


\section{Concluding remarks}\label{lb38}

Assume that $V$ is a unitary rational VOA whose representation category exists a modular tensor category in the sense of \cite{Hua08}. We've defined, in \cite{Gui19,Gui17}, a sesquilinear form $\Lambda$ on each dual vector space of intertwining operators of $V$, which is automatically non-degenerate. To obtain a unitary tensor product theory, one needs to prove the positivity of $\Lambda$. In those two papers, we gave two criteria on the positivity of $\Lambda$, both concerning the energy-bounds condition for intertwining operators and vertex operators. See conditions A and B in \cite{Gui17} section 5.3. 

Now let $V$ be a unitary affine VOA $V^l_\gk$. If  there exists a generating set $\mathcal F$ of irreducible $V^l_\gk$-modules, such that any (irreducible) intertwining operator of $V^l_\gk$ whose charge space lies   inside $\mathcal F$ is energy-bounded, then condition A is satisfied, and hence, by \cite{Gui17} theorem 7.8, $\Lambda$ is positive, and  the modular tensor category is unitary. Now by the results of \cite{Was98} and \cite{TL04}, and by the main results we've obtained in this paper (see theorems \ref{lb16}, \ref{lb44}, \ref{lb43}, \ref{lb45}, \ref{lb46}), $V^l_\gk$ satisfies such requirement when $\gk$ is of type $A,B,C,D,G_2$. Therefore we have

\begin{thm}\label{lb49}
Let $\gk$ be a (unitary) complex simple Lie algebra of type $A$, $B$, $C$, $D$, or $G_2$. Choose $l=0,1,2,\dots$. Then on any dual vector space of intertwining operators of $V^l_\gk$, the sesquilinear form $\Lambda$ defined in \cite{Gui17} section 6.2 is positive-definite. Moreover $\Lambda$ induces a unitary structure on the modular tensor category of $V^l_\gk$.
\end{thm}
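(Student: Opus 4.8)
The plan is to assemble Theorem \ref{lb49} from the pieces already proved. The statement has two parts: (1) the sesquilinear form $\Lambda$ on each dual space of intertwining operators of $V^l_\gk$ is positive-definite, and (2) $\Lambda$ induces a unitary structure on the modular tensor category of $V^l_\gk$, for $\gk$ of type $A$, $B$, $C$, $D$, or $G_2$. The key external input is \cite{Gui17} theorem 7.8, which says: if there exists a generating set $\mathcal F$ of irreducible $V^l_\gk$-modules such that every irreducible intertwining operator of $V^l_\gk$ whose charge space lies in $\mathcal F$ is energy-bounded (condition A), then $\Lambda$ is positive-definite and induces a unitary MTC structure. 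So the entire proof reduces to producing, case by case, such a generating set $\mathcal F$ together with the energy bounds.

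First I would dispose of $l=0$ trivially: $V^0_\gk = \mathbb C$ is the trivial VOA, the tensor category is trivial, and there is nothing to check. For $l\geq 1$, I would go through the five Lie types. For type $A$ ($\gk=\mathfrak{sl}_n$), take $\mathcal F=\{L_\gk(\vartheta_1,l)\}$ where $L_\gk(\vartheta_1)$ is the standard representation; by theorem \ref{lb45} (citing \cite{Was98}), this is generating and every intertwining operator with charge space $L_\gk(\vartheta_1,l)$ is energy-bounded. For type $D$ ($\gk=\mathfrak{so}_{2n}$), take $\mathcal F=\{L_\gk(\varsigma_+,l),L_\gk(\varsigma_-,l)\}$; theorem \ref{lb16} parts (b) and (c) give that this set is generating and the relevant intertwining operators are energy-bounded (here one also needs to handle the low-rank cases $\mathfrak{so}_4,\mathfrak{so}_6,\dots$, but $\mathfrak{so}_4$ is not simple and $\mathfrak{so}_6\cong\mathfrak{sl}_4$ is covered by type $A$, so effectively $n\geq 3$ or fold into type $A$). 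For type $B$ ($\gk=\mathfrak{so}_{2n+1}$, $n\geq 2$), take $\mathcal F=\{L_\gk(\varsigma,l)\}$; theorem \ref{lb44} parts (c) and (d) give generating and energy-bounded. For type $C$ ($\gk=\mathfrak{sp}_{2n}$, $n\geq 2$), take $\mathcal F=\{L_\gk(\vartheta_1,l)\}$; theorem \ref{lb43} parts (b) and (c) apply. For type $G_2$, take $\mathcal F=\{L_\gk(\vartheta_1,l)\}$; theorem \ref{lb46} parts (b) and (c) apply. In each case condition A of \cite{Gui17} is satisfied, so theorem 7.8 there gives positivity of $\Lambda$ and unitarity of the MTC.

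The last bookkeeping step is to note that one should also check the base cases $\mathfrak{sp}_2\cong\mathfrak{sl}_2$ and $\mathfrak{so}_3\cong\mathfrak{sl}_2$, $\mathfrak{so}_5\cong\mathfrak{sp}_4$, etc., but these coincidences mean the $n=1$ and small-$n$ instances reduce to type $A$ (already covered by \cite{Was98}), so no new work is needed; the cited theorems of this paper are stated precisely in the ranges where they are genuinely new. Also, the existence of the modular tensor category structure on the representation category of $V^l_\gk$ (in the sense of \cite{Hua08}) is classical and is the hypothesis under which $\Lambda$ was constructed in \cite{Gui19,Gui17}, so I would invoke it rather than reprove it.

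The main obstacle is not in this final assembly — which is essentially a citation exercise — but lies upstream: it is the content of theorems \ref{lb16}, \ref{lb44}, \ref{lb43}, \ref{lb45}, \ref{lb46}, namely establishing the energy bounds for the generating intertwining operators in each type via the compression principle (theorem \ref{lb10}) and the reduction of higher levels to level $1$ via lemma \ref{lb24}. Once those are in hand, the proof of theorem \ref{lb49} is a short deduction: verify that in each case a generating family with energy-bounded charge-space intertwining operators exists, then quote \cite{Gui17} theorem 7.8. I would write it as: "By theorems \ref{lb16}, \ref{lb44}, \ref{lb43}, \ref{lb45}, and \ref{lb46}, together with the results of \cite{Was98, TL04}, for each $\gk$ of type $A$, $B$, $C$, $D$, or $G_2$ there is a generating set $\mathcal F$ of irreducible $V^l_\gk$-modules such that every irreducible intertwining operator of $V^l_\gk$ with charge space in $\mathcal F$ is energy-bounded. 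Hence condition A of \cite{Gui17} section 5.3 holds, and by \cite{Gui17} theorem 7.8, $\Lambda$ is positive-definite and induces a unitary structure on the modular tensor category of $V^l_\gk$. The case $l=0$ is trivial."
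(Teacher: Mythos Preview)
Your proposal is correct and follows essentially the same approach as the paper: the paper's argument (given in the paragraph immediately preceding the theorem) is precisely to note that theorems \ref{lb16}, \ref{lb44}, \ref{lb43}, \ref{lb45}, \ref{lb46} together with \cite{Was98,TL04} supply, for each type, a generating set $\mathcal F$ with energy-bounded intertwining operators, so that condition A of \cite{Gui17} holds and theorem 7.8 there yields positivity of $\Lambda$ and unitarity of the modular tensor category. Your added bookkeeping on low-rank coincidences and the $l=0$ case is extra detail not spelled out in the paper but is harmless and correct.
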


The remaining unsolved cases are type $E$ and $F_4$. Note that the type $E$ level $1$ cases are known due to corollary \ref{lb41}. The methods we have used for $G_2$ in this article, in particular, the embedding $\gk_2+\fk_4\subset\ek_8$, might also be helpful in the $F_4$ level $1$ case. However, the analysis might be more complicated due to the increase of the dimension. For type $E_6,E_7,F_4$, it is also possible, though much more complicated, to reduce the higher level cases to the level 1 ones using compression principle. This method, however, does not work for $E_8$, as the representation theory of type $E_8$ level $1$ is trivial. The smallest non-trivial situation for $E_8$ is when the level equals $2$, in which case there will be fusion rules greater than $1$. This phenomenon  tremendously increases the difficulty of analysis, so one needs to look for other methods to work with this case.

\begin{appendices}
	
	\section{Heisenberg and lattice VOAs}\label{lb34}
	
	In this appendix chapter we discuss the energy-bounds conditions for the intertwining operators of Heisenberg VOAs and even lattice VOAs.
	
	\subsubsection*{Unitary Heisenberg VOAs}
	
	Let $\hk$ be a unitary abelian Lie algebra of dimension $n$. In particular, an inner product $(\cdot|\cdot)$ and an anti-unitary involution $*$ is chosen. We let $\hk_{\mathbb R}=\{X\in\hk:X^*=-X \}$. The affine Lie algebra $\hh$ is defined in much the same way as we define $\gh$ when $\gk$ is simple. The irreducible PER theory of $\hh$ is also similar to $\gh$, except that for any $\lambda\in (i\hk_{\mathbb R})^*$ and $l>0$, there exists a unique (up to equivalence) unitary $\hh$-module $L_\hk(\lambda,l)$, which is also  isomorphic to $L_\hk(\lambda,1)$. So we can always assume that the level $l=1$.  Similar to unitary affine VOAs, there exists a unitary VOA structure $V^1_\hk$ on $L_\hk(0,1)$ of CFT type, and equation \eqref{eq25} gives the conformal vector of $V^1_\hk$ by setting $h^\vee=0$ (and of course, $l=1$). $V^1_\hk$ is called the \textbf{unitary Heisenberg VOA} associated associated to $\hk$. Any unitary irreducible $V^1_\hk$-module  can be restricted to the unitary $\hh$-module $L_\hk(\lambda,1)$ for some $\lambda\in (\ih)^*$. The conformal weight of $L_\hk(\lambda,1)$ is $(\lambda|\lambda)/2$. Conversely, for any $\lambda\in (\ih)^*$, the irreducible PER $L_\hk(\lambda,1)$ of $\hh$ can be extended to a unitary $V^1_\hk$-module.  See \cite{LL04} for more details. Unfortunately, there are uncountably many irreducible $V^1_\hk$-modules. So $V^1_\hk$ is not a rational VOA.
	
	The inner product on $\hk$ identifies the dual space $\hk^*$ of $\hk$ as the conjugate of $\hk$, which in turn identifies the $\mathbb R$-vector space $\ih$ with its (real) dual space $(\ih)^*$. We assume this identification in the following. One of the most important problems in the theory of Heisenberg VOAs and lattice VOAs was to determine the intertwining operators of $V^1_\hk$, as well as their braid and fusion relations. This was mainly achieved in \cite{DL93}. See also \cite{TZ11} for a brief exposition of this theory. The result can be summarized as below. For any $\alpha,\beta,\gamma\in\ih$, if $\gamma=\alpha+\beta$ then the fusion rule $N^\gamma_{\alpha\beta}=\dim\mathcal V^1_\hk{\gamma\choose\alpha~\beta}=1$. Otherwise $N^\gamma_{\alpha\beta}=0$. Intertwining operators of type $\alpha+\beta\choose\alpha~\beta$ can be described as follows. First, we fix for any $\lambda\in\ih$ a unital highest weight vector $v_\lambda$ in $L_\hk(\lambda,1)$. We choose $v_0$ to be the vacuum vector $\Omega\in V^1_\hk=L_\hk(0,1)$. Then for any $\mu\in\ih$, one can easily define a unitary map $c_\lambda:L_\hk(\mu,1)\rightarrow L_\hk(\lambda+\mu,1)$ such that
	\begin{align}
	c_\lambda\cdot X_1(-n_1)\cdots X_k(-n_k)v_\mu=X_1(-n_1)\cdots X_k(-n_k)v_{\lambda+\mu}
	\end{align}
	for any $X_1,\dots,X_k\in\hk,n_1,\dots,n_k\in\mathbb Z_{>0}$. It follows that $[X(n),c_\lambda]=\delta_{n,0}(X|\lambda)c_\lambda$ for any $X\in\hk$ and $n\in\mathbb Z$. We also have $v_\lambda=c_\lambda\Omega$. Let $x^{\lambda(0)}$ be an operator-valued formal series of $x$, so that $x^{\lambda(0)}= x^{(\lambda|\mu)}\cdot\id$ when acting on $L_\hk(\mu,1)$. We also defined operator-valued formal power series  
	\begin{align}
	E^\pm(\lambda,x)=\exp(\mp\sum_{n\in\mathbb Z_{>0}}\frac{\lambda(\pm n)}{n}x^{\mp n})
	\end{align}
acting on each $L_\hk(\mu,1)$. Then  a standard (non-zero) type $\alpha+\beta\choose\alpha~\beta$ intertwining operator $\mathcal Y^{\alpha+\beta}_{\alpha,\beta}$ can be defined to be
\begin{align}
\mathcal Y^{\alpha+\beta}_{\alpha,\beta}(c_\alpha u,x)=c_\alpha E^-(\alpha,x)Y(u,x)E^+(\alpha,x)x^{\alpha(0)},
\end{align}
where $u\in L_\hk(0,1)=V^1_\hk$, and $Y(u,x)$ is the vertex operator of $V^1_\hk$. To simplify our notations, we let $\mathcal Y_\alpha(c_\alpha u,x)$ act on any $L_\hk(\mu,1)$ (where $\mu\in\ih$) to be $\mathcal Y^{\alpha+\mu}_{\alpha,\mu}(c_\alpha u,x)$. Now we can let $x$ be a complex variable $z$ in $\mathbb C^\times=\mathbb C\setminus\{0\}$. Then $\mathcal Y_\alpha (c_\alpha u,z)$ is a multivalued function of $z$ depending on its argument $\arg z$.  The following fusion and braid relations are due to \cite{DL93} theorem 5.1. 
\begin{thm}\label{lb35}
Choose  $\alpha,\beta,\gamma\in\ih,u,v\in V^1_\hk,z_1,z_2\in\mathbb C^\times=\mathbb C\setminus\{0\}$. Then the following fusion and braid relations hold when acting on $L_\hk(\gamma,1)$. 

(a) If $0<|z_1-z_2|<|z_2|<|z_1|$ and $\arg z_1=\arg z_2=\arg(z_1-z_2)$, then
\begin{align}
\mathcal Y_\alpha(c_\alpha u,z_1)\mathcal Y_\beta(c_\beta v,z_2)=\mathcal Y_{\alpha+\beta}\big(\mathcal Y_\alpha(c_\alpha u,z_1-z_2)c_\beta v,z_2 \big).\label{eq69}
\end{align}

(b) If $|z_1|=|z_2|=1$ and $\arg z_2<\arg z_1<\arg z_2+2\pi$, then
\begin{align}
\mathcal Y_\alpha(c_\alpha u,z_1)\mathcal Y_\beta(c_\beta v,z_2)=e^{i\pi(\alpha|\beta)}\mathcal Y_\beta(c_\beta v,z_2)\mathcal Y_\alpha(c_\alpha u,z_1),\label{eq70}
\end{align}
where the left hand side is defined by first replacing $z_1$ with $rz_1$ where $r>1$, then taking the product of the two intertwining operators, and finally letting $r\rightarrow 1$; the right hand side is defined similarly, except that we replace $z_2$ with $rz_2$.
\end{thm}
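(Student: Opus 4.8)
The plan is to prove both relations by a direct computation starting from the explicit realization $\mathcal Y_\alpha(c_\alpha u,x)=c_\alpha E^-(\alpha,x)Y(u,x)E^+(\alpha,x)x^{\alpha(0)}$, reducing everything to identities of formal (then convergent) power series. The one nontrivial algebraic input is a normal-ordering lemma: since at level $1$ one has $[\lambda(m),\mu(n)]=m(\lambda|\mu)\delta_{m+n,0}$, the Baker--Campbell--Hausdorff identity $e^Ae^B=e^{[A,B]}e^Be^A$ (valid because $[A,B]$ is a scalar here) gives
\begin{align*}
E^+(\lambda,z_1)E^-(\mu,z_2)=\Bigl(1-\frac{z_2}{z_1}\Bigr)^{(\lambda|\mu)}E^-(\mu,z_2)E^+(\lambda,z_1)
\end{align*}
as operators on each $L_\hk(\gamma,1)$, valid for $|z_1|>|z_2|$ with the principal branch that is $1$ at $z_2=0$. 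Alongside it one records the elementary facts that $c_\lambda$ commutes with every $E^\pm(\mu,\cdot)$ and with $\mu(n)$ for $n\neq0$, that $z^{\lambda(0)}c_\mu=z^{(\lambda|\mu)}c_\mu z^{\lambda(0)}$, that the negative modes commute with each $c_\gamma$, that $E^\pm(\lambda+\mu,x)=E^\pm(\lambda,x)E^\pm(\mu,x)$, and that $E^+(\lambda,x)$ and the positive modes fix every highest-weight vector $v_\gamma$.

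\textbf{Reduction to the vacuum argument.} Since the negative modes commute with $c_\alpha$, we have $c_\alpha u\in U(\hh_{<0})v_\alpha$ as $u$ runs over $V^1_\hk$; combined with the commutation relations \eqref{eq15} for $V^1_\hk$, which for $w\in L_\hk(\alpha,1)$ read
\begin{align*}
[\lambda(m),\mathcal Y_\alpha(w,x)]=(\lambda|\alpha)x^m\mathcal Y_\alpha(w,x)+\sum_{l\geq1}{m\choose l}\mathcal Y_\alpha(\lambda(l)w,x)x^{m-l},
\end{align*}
an induction on word length shows that it is enough to prove (a) and (b) for $u=v=\Omega$, i.e.\ for $\mathcal Y_\gamma(v_\gamma,z)=c_\gamma E^-(\gamma,z)E^+(\gamma,z)z^{\gamma(0)}$. (One may instead run the computation uniformly in $u,v$; the bookkeeping is the same but carries $Y(u,\cdot),Y(v,\cdot)$ throughout and, on the right side of (a), invokes the iterate/associativity formula $Y(Y(u,z_1-z_2)v,z_2)$ for the Heisenberg vacuum vertex algebra.)

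\textbf{The fusion relation.} With $u=v=\Omega$ and $Y(\Omega,x)=\id$, move all the $c$'s to the far left and normal-order the left side of (a) with the lemma; absorbing $z_1^{(\alpha|\beta)}(1-z_2/z_1)^{(\alpha|\beta)}=(z_1-z_2)^{(\alpha|\beta)}$ gives
\begin{align*}
c_{\alpha+\beta}\,(z_1-z_2)^{(\alpha|\beta)}\,E^-(\alpha,z_1)E^-(\beta,z_2)E^+(\alpha,z_1)E^+(\beta,z_2)\,z_1^{\alpha(0)}z_2^{\beta(0)}.
\end{align*}
On the right side, $E^+(\alpha,z_1-z_2)v_\beta=v_\beta$ and $(z_1-z_2)^{\alpha(0)}v_\beta=(z_1-z_2)^{(\alpha|\beta)}v_\beta$, so $\mathcal Y_\alpha(v_\alpha,z_1-z_2)v_\beta=(z_1-z_2)^{(\alpha|\beta)}c_{\alpha+\beta}E^-(\alpha,z_1-z_2)\Omega$. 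Feeding this into $\mathcal Y_{\alpha+\beta}(\cdot,z_2)$ and using the reconstruction formula for the Heisenberg vertex operator,
\begin{align*}
Y(E^-(\alpha,y)\Omega,z)=E^-(\alpha,z+y)E^-(\alpha,z)^{-1}(1+y/z)^{\alpha(0)}E^+(\alpha,z+y)E^+(\alpha,z)^{-1},
\end{align*}
which one derives mode-by-mode from $Y(\alpha(-m)\Omega,z)=\frac{1}{(m-1)!}\partial_z^{m-1}\alpha(z)$ and $Y$ of a normally-ordered exponential being the normally-ordered exponential of $Y$ --- note $z_2+(z_1-z_2)=z_1$ --- a last round of normal ordering produces exactly the displayed expression, all series being expanded in the region $|z_1-z_2|<|z_2|<|z_1|$ of (a).

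\textbf{The braid relation; the main obstacle.} By the same normal ordering, $\mathcal Y_\alpha(c_\alpha u,z_1)\mathcal Y_\beta(c_\beta v,z_2)$ converges for $|z_1|>|z_2|$ to $c_{\alpha+\beta}$ times an explicit product whose only multivalued factors are $(z_1-z_2)^{(\alpha|\beta)}$ and $z_1^{\alpha(0)}z_2^{\beta(0)}$, while $\mathcal Y_\beta(c_\beta v,z_2)\mathcal Y_\alpha(c_\alpha u,z_1)$ converges for $|z_2|>|z_1|$ to the same product with $(z_1-z_2)^{(\alpha|\beta)}$ replaced by $(z_2-z_1)^{(\alpha|\beta)}$; both extend to $|z_1|=|z_2|=1$. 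In the range $\arg z_2<\arg z_1<\arg z_2+2\pi$, with the branches dictated by the $r\to1^+$ limiting prescriptions in the statement, the ratio of the two factors is $(z_1-z_2)^{(\alpha|\beta)}/(z_2-z_1)^{(\alpha|\beta)}=e^{\mathbf i\pi(\alpha|\beta)}$, which is (b). The normal-ordering algebra of the earlier steps is routine; the delicate point --- and the main obstacle --- is this last one: matching the $r\to1^+$ prescriptions to the principal branches used in expanding $E^+(\alpha,z_1)E^-(\beta,z_2)$ for $|z_1|>|z_2|$ versus $E^+(\beta,z_2)E^-(\alpha,z_1)$ for $|z_2|>|z_1|$, and tracking the $z^{\lambda(0)}$ factors along the continuation, so that the monodromy of $(z_1-z_2)^{(\alpha|\beta)}$ comes out as $e^{+\mathbf i\pi(\alpha|\beta)}$ rather than $e^{-\mathbf i\pi(\alpha|\beta)}$ or $e^{2\pi\mathbf i(\alpha|\beta)}$.
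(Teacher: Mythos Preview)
The paper does not prove this theorem; it simply attributes the result to \cite{DL93} theorem 5.1 and moves on. Your proposal supplies exactly the standard computation behind that citation: the BCH normal-ordering lemma $E^+(\lambda,z_1)E^-(\mu,z_2)=(1-z_2/z_1)^{(\lambda|\mu)}E^-(\mu,z_2)E^+(\lambda,z_1)$, the reduction to $u=v=\Omega$ via the Jacobi identity, and the explicit matching of both sides using the reconstruction formula for $Y(E^-(\alpha,y)\Omega,z)$. This is correct and is essentially how \cite{DL93} (and the textbook accounts in \cite{Kac98}, \cite{FLM89}) proceed.

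A few minor remarks. Your reconstruction formula for $Y(E^-(\alpha,y)\Omega,z)$ is right; it follows cleanly from writing $\sum_{m\geq1}\frac{y^m}{m!}\partial_z^{m-1}\alpha(z)=F(z+y)-F(z)$ with $F$ a formal antiderivative of $\alpha(z)$, which splits into the $E^\pm$-pieces plus $\alpha(0)\log(1+y/z)$. For the reduction step, the relevant identity is \eqref{eq24} rather than \eqref{eq15}, since you need to rewrite $\mathcal Y_\alpha(\lambda(-m)w,x)$ in terms of $\lambda(k)$ acting outside; you allude to this but cite the commutator form. Finally, you are right that the only genuinely delicate point is the branch bookkeeping in (b): the factor $(1-z_2/z_1)^{(\alpha|\beta)}$ is the principal branch by construction of the power-series expansion, and combining it with $z_1^{(\alpha|\beta)}$ (coming from $z_1^{\alpha(0)}c_\beta$) under the $r\to1^+$ prescription and the constraint $\arg z_2<\arg z_1<\arg z_2+2\pi$ does give $e^{+i\pi(\alpha|\beta)}$ rather than its inverse. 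You have identified this correctly; a reader wanting full rigor would want one more sentence tracking $\arg(z_1-z_2)$ along the limit.
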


We now  discuss adjoint intertwining operators. First, let $\Theta$ be the PCT operator of $V^1_\hk$. For any $\alpha\in\ih$ we define an anti-unitary map $-: L_\hk(\alpha,1)\rightarrow L_\hk(-\alpha,1)$ satisfying $\overline {c_\alpha u }=c_{-\alpha}\Theta u$ for any $u\in V^1_\hk$. So more explicitly, for any $X_1,\dots,X_k\in \hk,n_1,\dots,n_k\in\mathbb Z$ we have
\begin{align}
\overline {X_1(n_1)\cdots X_k(n_k)v_\alpha}=(-1)^kX_1^*(n_1)\cdots X_k^*(n_k)v_{-\alpha}.
\end{align}	
It is easy to check that $-$ induces a unitary $V^1_\hk$-module equivalence between $L_\hk(-\alpha,1)$ and the contragredient module $L_\hk(\overline\alpha,1)$  of $L_\hk(\alpha,1)$. Now one can easily compute that the adjoint intertwining operator $(\mathcal Y^{\alpha+\beta}_{\alpha,\beta})^\dagger$ of $\mathcal Y^{\alpha+\beta}_{\alpha,\beta}$ is\footnote{An easy way to see this  relation is to verify it for the vector $v_\alpha=c_\alpha\Omega$. This is sufficient since $(\mathcal Y^{\alpha+\beta}_{\alpha,\beta})^\dagger$ must be a type $\beta\choose-\alpha~\alpha+\beta$ intertwining operator, which is therefore proportional to $\mathcal Y^\beta_{-\alpha,\alpha+\beta}$.}
\begin{align}
(\mathcal Y^{\alpha+\beta}_{\alpha,\beta})^\dagger= e^{i\pi\cdot\frac {(\alpha|\alpha)}2}\mathcal Y^\beta_{-\alpha,\alpha+\beta}.\label{eq76}
\end{align}

Theorem \ref{lb18} clearly holds with $\gk$ replaced by $\hk$, i.e., for any $\lambda\in \ih$, $L_\hk(\lambda,1)$ is energy-bounded, and for any $X\in\hk$, $Y_\lambda(X(-1)\Omega,x)$ satisfies $1$-st order energy bounds.  The energy bounds condition of intertwining operators was established in \cite{TL04}.

\begin{thm}\label{lb37}
Choose any $\alpha,\beta\in\ih$.

(a) If $(\alpha|\alpha)\leq 1$, then $\mathcal Y_\alpha(c_\alpha\Omega,x)$ satisfies $0$-th order energy bounds.

(b) If $(\alpha|\alpha)\leq 2$, then $\mathcal Y_\alpha(c_\alpha\Omega,x)$ satisfies $1$-st order energy bounds.

(c) For general $\alpha$ and any $u\in V^1_\hk$, $\mathcal Y_\alpha(c_\alpha u,x)$ is energy-bounded.
\end{thm}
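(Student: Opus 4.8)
The plan is to follow \cite{TL04}, reducing every case to a rank-one Heisenberg algebra and then to two ``base cases''. First I would reduce to $\dim\hk=1$: decomposing $\hk=\mathbb C\widehat\alpha\oplus^\perp\alpha^\perp$ with $\widehat\alpha=\alpha/\lVert\alpha\lVert$ (the case $\alpha=0$ being trivial) gives $V^1_\hk=V^1_{\mathbb C\widehat\alpha}\otimes V^1_{\alpha^\perp}$, and for $u=u_\parallel\otimes u_\perp$ the explicit formula for the standard intertwining operators shows $\mathcal Y_\alpha(c_\alpha u,x)=[\mathcal Y_\alpha(c_\alpha u_\parallel,x)]\otimes[Y_{\alpha^\perp}(u_\perp,x)]$ (since $\widehat\alpha(\pm n)$, $\widehat\alpha(0)$ and $c_\alpha$ all act only on the first tensor factor). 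Because $Y_{\alpha^\perp}(u_\perp,x)$ is energy-bounded by the Heisenberg analogue of theorem \ref{lb18} and proposition \ref{lb15}, tensoring with it preserves energy-boundedness (the proposition following proposition \ref{lb14}), while tensoring with the identity does not raise the energy-bounds order (as $(1+L_0)^r$ dominates the corresponding operator of the first factor). Thus (a), (b), (c) reduce to $\hk=\mathbb C\widehat\alpha$ with $u\in V^1_{\mathbb C\widehat\alpha}$.

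Next I would treat the two base cases in rank one. For $(\alpha|\alpha)=1$ (so $\alpha=\pm\widehat\alpha$): by the Frenkel--Kac / boson--fermion correspondence (\cite{Kac98} Ch.~5, \cite{DL93}) the lattice vertex superalgebra $V_{\mathbb Z\widehat\alpha}$ is the charged free fermion vertex superalgebra, whose generating field $\psi^\pm(x)$ has conformal weight $\tfrac12$ and whose compression to the charge sectors is $\mathcal Y_{\pm\widehat\alpha}(c_{\pm\widehat\alpha}\Omega,x)$ (for the sectors $L(\mu\widehat\alpha,1)$ with $\mu\notin\mathbb Z$ one uses the $\mu$-twisted free fermion, which again obeys the canonical anticommutation relations); the CAR make every mode of $\psi^\pm$ bounded in operator norm, uniformly, giving $0$-th order bounds. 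For $(\alpha|\alpha)=2$: the $A_1$ root lattice VOA $V_{\sqrt2\,\mathbb Z\widehat\alpha}$ is unitarily $V^1_{\mathfrak{sl}_2}$, under which $e^{\pm\alpha}$ corresponds to the affine current $E_{\pm\alpha}(-1)\Omega$; by theorem \ref{lb18}, $Y(e^{\pm\alpha},x)$ satisfies $1$-st order bounds, and by theorem \ref{lb40} this vertex operator decomposes over the Heisenberg subalgebra into a cocycle multiple of $\mathcal Y_{\pm\alpha}(c_{\pm\alpha}\Omega,x)$, which therefore satisfies $1$-st order bounds as well.

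To pass from the base cases to arbitrary $(\alpha|\alpha)\le1$ (resp.\ $\le2$), I would use the tensor-product/compression trick of \cite{TL04}. Given $t:=(\alpha|\alpha)<1$ (resp.\ $<2$), enlarge $\hk$ by a one-dimensional unitary abelian summand $\hk_0$ and choose $\beta\in\hk_0$ with $\beta^*=-\beta$ and $(\beta|\beta)=1-t$ (resp.\ $2-t$), so $(\alpha\oplus\beta|\alpha\oplus\beta)=1$ (resp.\ $2$) and $\mathcal Y_{\alpha\oplus\beta}(c_{\alpha\oplus\beta}\Omega,x)=\mathcal Y_\alpha(c_\alpha\Omega,x)\otimes\mathcal Y_\beta(c_\beta\Omega,x)$. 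For a lowest-weight vector $v_\gamma\in L_{\hk_0}(\gamma,1)$ one has $\mathcal Y_\beta(c_\beta\Omega,x)v_\gamma=x^{(\beta|\gamma)}E^-(\beta,x)v_{\beta+\gamma}$, whose lowest-degree term is $v_{\beta+\gamma}$; hence projecting the output of $\mathcal Y_{\alpha\oplus\beta}(c_{\alpha\oplus\beta}\Omega,s')\,(w\otimes v_\gamma)$ onto the one-dimensional degree-zero subspace $\mathbb C v_{\beta+\gamma}$ of the $\hk_0$-factor isolates exactly $\mathcal Y_\alpha(c_\alpha\Omega,\,s'+(\beta|\gamma))w\otimes v_{\beta+\gamma}$. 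Since $\lVert(1+L_0)^r(w\otimes v_\gamma)\lVert=\lVert(1+\Delta_\gamma+L_0)^r w\lVert\,\lVert v_\gamma\lVert\le(1+\Delta_\gamma)^r\lVert(1+L_0)^rw\lVert\,\lVert v_\gamma\lVert$ and the index shift $(\beta|\gamma)$ is a fixed constant, the $0$-th (resp.\ $1$-st) order bound for $\mathcal Y_{\alpha\oplus\beta}$ descends with the \emph{same} order to $\mathcal Y_\alpha$, proving (a) and (b). For (c) with $\hk=\mathbb C\widehat\alpha$ and general $u$, I would induct on the number of Heisenberg creation operators in $u$; the base $u=\Omega$ follows by splitting $(\alpha|\alpha)=t_1+\cdots+t_k$ with $t_j\le1$, realizing $\alpha$ as an orthogonal sum $\alpha_1\oplus\cdots\oplus\alpha_k$, and applying part (a) together with the tensor-product proposition after proposition \ref{lb14}; for the inductive step, writing $a=\widehat\alpha$ and $c_\alpha(a(-m)u')=Y_{L_\hk(\alpha,1)}(a(-1)\Omega,-m)c_\alpha u'$, the intertwining-operator Jacobi identity \eqref{eq24} expresses $\mathcal Y_\alpha(c_\alpha(a(-m)u'),s)$ as a finite sum of terms $a(k)\,\mathcal Y_\alpha(c_\alpha u',s')$ and $\mathcal Y_\alpha(c_\alpha u',s'')\,a(k')$ with coefficients and indices polynomially bounded in $s$, and commuting powers of $1+L_0$ through via \eqref{eq4} shows $\mathcal Y_\alpha(c_\alpha u,x)$ is energy-bounded (its order may grow with the number of creation operators but stays finite).

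The main obstacles I anticipate are the two base cases and the transfer step. For $(\alpha|\alpha)=1$ one must pin down $\mathcal Y_{\pm\widehat\alpha}(c_{\pm\widehat\alpha}\Omega,x)$ precisely as a (possibly twisted) free fermion field, so that the CAR can be invoked, and for $(\alpha|\alpha)=2$ one must match the Heisenberg components of the $\mathfrak{sl}_2$-current via theorem \ref{lb40}. In the compression step the delicate point is that the energy-bounds \emph{order} — not merely energy-boundedness — is preserved when passing from $\mathcal Y_{\alpha\oplus\beta}$ down to $\mathcal Y_\alpha$; this is what the lowest-weight projection above is designed to guarantee.
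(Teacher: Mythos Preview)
Your approach is correct and essentially the same as the paper's: both use the free-fermion (resp.\ $\mathfrak{sl}_2$) identification for the base cases $(\alpha|\alpha)=1$ (resp.\ $=2$), the tensor-product/compression trick of \cite{TL04} to pass to $(\alpha|\alpha)<1$ (resp.\ $<2$), and then bootstrap to general $\alpha$ and general $u$. The only presentational differences are that you reduce to rank one at the outset (the paper keeps $\hk$ general and enlarges it to $\hk\oplus\hk$ directly), and for part (c) with general $u$ you run the Jacobi-identity induction explicitly whereas the paper simply invokes proposition~\ref{lb17}; neither difference is substantive.
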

The proof of this theorem can be sketched as follows. First, if $(\alpha|\alpha)=1$, the operator $Y_\alpha(x):=E^-(\alpha,x)E^+(\alpha,x)$ and its adjoint satisfy free-fermion like anti-commuting relation: Write $Y_\alpha(x)=\sum_{n\in\mathbb Z}Y_\alpha(n)x^{-n}$, then $$Y_\alpha(n)Y_\alpha(m)^\dagger+Y_\alpha(m-1)^\dagger Y_\alpha(n-1)=\delta_{n,m}.$$ From this we can deduce the $0$-th order energy bounds. See \cite{TL04} chapter VI proposition 1.2.1 for more details.\footnote{Another way to see this is to note that the lattice vertex superalgebra for the integral lattice $\mathbb Z$ is the $1$-dimensional free fermion vertex superalgebra. Likewise, when $(\alpha|\alpha)=2$, one has the $1$-st order energy bounds condition since the lattice VOA for $\sqrt 2\mathbb Z$ is equivalent to the affine VOA $V^1_{\mathfrak{sl}_2}$.} (Note that it doesn't matter which $L_\hk(\mu,1)$ the operator $Y_\alpha(x)$ is acting on, since $Y_\alpha(x)$ commutes with the unitary operator $c_\beta$ for any $\beta\in\ih$.) If $(\alpha|\alpha)=2$, then a similar argument shows that $Y_\alpha(x)$ and its adjoint satisfy an affine-Lie-algebra like relation. So the $1$-st order energy bounds can be established using a similar argument as in theorem \ref{lb18}. Now suppose that $(\alpha|\alpha)<1$ or $<2$. Identify $\hk$ with $\hk\oplus 0$ in $\hk^1=\hk\oplus \hk$. Now choose $\beta\in 0\oplus\ih$ so that $(\beta|\beta)+(\alpha|\alpha)=1$ or $2$, and let $\gamma=\alpha+\beta\in i\hk^1_{\mathbb R}$. Then $(\gamma|\gamma)=1$ or $2$, and $Y_\gamma(x)=Y_\alpha(x)\otimes Y_\beta(x)$. Then the $0$-th order or $1$-st order energy bounds condition of $Y_\alpha(x)$ follows from that of $Y_\gamma(x)$. Thus (a) and (b) are proved.\footnote{This tensor product trick is also due to \cite{TL04} proposition VI.1.2.1.} Note that conversely, if we know the energy-bounds condition of $Y_\alpha(x)$ and $Y_\beta(x)$, then we can also easily know the energy bounds condition of $Y_\gamma(x)$. Thus by induction and (a), one can easily prove (c) when $u=\Omega$, and hence for any $u$ due to proposition \ref{lb17}. Hence this theorem is proved.

\subsubsection*{Even lattice VOAs}	

In some sense, the relation between lattice VOAs and Heisenberg VOAs is similar to that between simple Lie algebras and their Cartan subalgebras. Let us choose a non-degenerate even lattice $\Lambda$ in $\ih$.  Here ``non-degenerate" means that the rank of $\Lambda$ equals the dimension of $\ih$, and ``even" means that $(\alpha|\alpha)\in 2\mathbb Z$ for any $\alpha\in \Lambda$. In particular $(\Lambda|\Lambda)\subset\mathbb Z$. The reason we require evenness is to ensure that the conformal weights of $V^1_\hk$-modules are integers.

We now extend the action of $V^1_\hk$ on  $V_\Lambda=\bigoplus_{\alpha\in\Lambda} L_\hk(\alpha,1)$ to a VOA structure. The vacuum vector and the conformal vector of $V_\Lambda$ equals those of $L_\hk(0,1)=V^1_\hk$. For any $\alpha\in\Lambda$ and $u\in V^1_\hk$, we define the vertex operator $Y^\Lambda(c_\alpha u,x)$ of $V_\Lambda$ to be 
\begin{align}
Y^\Lambda(c_\alpha u,x)w^{(\mu)}=\epsilon(\alpha,\mu)\mathcal Y_\alpha(c_\alpha u,x)w^{(\mu)}\label{eq77}
\end{align}
for any $w^{(\mu)}\in L_\hk(\mu,1)$ where $\mu\in\Lambda$. Here $\epsilon(\alpha,\mu)\in S^1\subset\mathbb C$ will be determined shortly. $Y^\Lambda$ satisfies translation property since each $\mathcal Y_\alpha$ does. So in order for $V_\Lambda$ to be a VOA, the sufficient and necessary condition is that $Y^\Lambda$  satisfies creation property and Jacobi identity. Creation property is equivalent to
\begin{align}
\epsilon(\alpha,0)=1\qquad(\forall\alpha\in\Lambda).\label{eq74}
\end{align}
The Jacobi identity on $Y^\Lambda$ is equivalent to the following associativity and commutativity properties
\begin{gather}
Y^\Lambda(c_\alpha u,z_1)Y^\Lambda(c_\beta v,z_2)= Y^\Lambda\big(Y^\Lambda(c_\alpha u,z_1-z_2)c_\beta v,z_2 \big),\label{eq71}\\
Y^\Lambda(c_\alpha u,z_1)Y^\Lambda(c_\beta v,z_2)=Y^\Lambda(c_\beta v,z_2)Y^\Lambda(c_\alpha u,z_1),\label{eq72}
\end{gather}
which are understood in a similar way as in theorem \ref{lb35}. By the fusion relation \eqref{eq69}, equation \eqref{eq71} is equivalent to 
\begin{align}
\epsilon(\alpha,\beta+\gamma)\epsilon(\beta,\gamma)=\epsilon(\alpha,\beta)\epsilon(\alpha+\beta,\gamma)\qquad(\forall\alpha,\beta,\gamma\in\Lambda),\label{eq73}
\end{align}
i.e., $\epsilon$ is a $S^1$-valued cocycle on $\Lambda$. If we assume \eqref{eq73}, then, by the braid relation \ref{eq70}, the commutativity \eqref{eq72} is equivalent to that
\begin{align}
\epsilon(\alpha,\beta)=(-1)^{(\alpha|\beta)}\epsilon (\beta,\alpha).\label{eq75}
\end{align}
The existence of $\epsilon$ satisfying \eqref{eq74}, \eqref{eq73}, and \eqref{eq75} is guaranteed by the following well known proposition (cf. \cite{FLM89} proposition 5.2.3 or \cite{Kac98} lemma 5.5):
\begin{pp}\label{lb36}
If $\Lambda\simeq \mathbb Z^n$, $(A,\cdot)$ is an abelian group, and  $\omega:\Lambda\times\Lambda\rightarrow A$ satisfies $\omega(\alpha+\beta,\gamma)=\omega(\alpha,\gamma)\omega(\beta,\gamma)$ and $\omega(\alpha,\beta)=\omega(\beta,\alpha)^{-1}$ for any $\alpha,\beta,\gamma$, then there is a $A$-valued cocycle $\epsilon$ on $\Lambda$ such that $\epsilon(\alpha,\beta)=\omega(\alpha,\beta)\epsilon(\beta,\alpha)$, i.e., the commutator of $\epsilon$ is $\omega$. Moreover, if $b:\Lambda\times\Lambda\rightarrow A$ is a coboundary, i.e., $b(\alpha,\beta)=f(\alpha)f(\beta)f(\alpha+\beta)^{-1}$ for some function $f:\Lambda\rightarrow A$, then $\epsilon'=\epsilon b$ is also a cocycle with commutant $\omega$.
\end{pp}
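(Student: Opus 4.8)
The plan is the standard one: construct $\epsilon$ by prescribing it freely on a $\mathbb Z$-basis of $\Lambda$ and extending \emph{bimultiplicatively}, then read off the cocycle, normalization and commutator conditions directly from bimultiplicativity. First I would fix an ordered basis $e_1,\dots,e_n$ of $\Lambda\simeq\mathbb Z^n$ and record that $\omega$ is bimultiplicative in both arguments: multiplicativity in the first slot is assumed, and multiplicativity in the second slot follows from it together with $\omega(\alpha,\beta)=\omega(\beta,\alpha)^{-1}$ and the commutativity of $A$. Hence $\omega$ is determined by the values $\omega(e_i,e_j)$, and $\omega(e_i,e_i)^2=1$; in the situation relevant to \eqref{eq75}, where $\omega(\alpha,\beta)=(-1)^{(\alpha|\beta)}$ on the even lattice $\Lambda$, one has $\omega(e_i,e_i)=1$, which is what the construction uses (and is in any case forced: setting $\beta=\alpha$ in the asserted identity $\epsilon(\alpha,\beta)=\omega(\alpha,\beta)\epsilon(\beta,\alpha)$ gives $\omega(\alpha,\alpha)=1$).

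Next I would define $\epsilon$ on basis pairs by $\epsilon(e_i,e_j)=\omega(e_i,e_j)$ for $i>j$ and $\epsilon(e_i,e_j)=1$ for $i\le j$, and extend $\epsilon$ to all of $\Lambda\times\Lambda$ by bimultiplicativity; this extension exists and is unique precisely because $\Lambda$ is free. A bimultiplicative $\epsilon$ automatically satisfies the cocycle identity \eqref{eq73}, since both sides of it expand to $\epsilon(\alpha,\beta)\epsilon(\alpha,\gamma)\epsilon(\beta,\gamma)$, and it automatically satisfies $\epsilon(\alpha,0)=\epsilon(0,\beta)=1$, which is the normalization \eqref{eq74}. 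For the commutator, put $c(\alpha,\beta)=\epsilon(\alpha,\beta)\epsilon(\beta,\alpha)^{-1}$; because $A$ is abelian, $c$ is again bimultiplicative, hence determined by its values on basis pairs, and a direct check on $e_i,e_j$ (using antisymmetry of $\omega$ when $i<j$, the definition when $i>j$, and $\omega(e_i,e_i)=1$ on the diagonal) shows $c(e_i,e_j)=\omega(e_i,e_j)$; therefore $c=\omega$ on all of $\Lambda\times\Lambda$, which is exactly $\epsilon(\alpha,\beta)=\omega(\alpha,\beta)\epsilon(\beta,\alpha)$.

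For the last sentence of the proposition, recall that every coboundary is a $2$-cocycle, so $\epsilon'=\epsilon b$ is a product of cocycles and hence a cocycle; and $b(\alpha,\beta)=f(\alpha)f(\beta)f(\alpha+\beta)^{-1}$ is symmetric because $A$ is abelian, so $b(\alpha,\beta)b(\beta,\alpha)^{-1}=1$ and $\epsilon'$ has the same commutator as $\epsilon$, namely $\omega$. I do not expect a genuine obstacle: this is elementary group cohomology of $\mathbb Z^n$, and the only points needing a little care are the well-definedness of the bimultiplicative extension (which rests on freeness of $\Lambda$) and the observation that the commutator of a bimultiplicative cochain is again bimultiplicative — this last is what reduces the commutator condition from all of $\Lambda\times\Lambda$ to the finitely many basis pairs, where it is checked by hand.
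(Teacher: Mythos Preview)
Your proof is correct and is essentially the standard argument found in the references the paper cites (\cite{FLM89} proposition 5.2.3, \cite{Kac98} lemma 5.5); the paper itself does not give a proof of this proposition but only points to those sources. Your observation that the conclusion $\epsilon(\alpha,\beta)=\omega(\alpha,\beta)\epsilon(\beta,\alpha)$ forces $\omega(\alpha,\alpha)=1$ for all $\alpha$, and that this is indeed satisfied in the intended application \eqref{eq75} on an even lattice, is a useful clarification of an implicit hypothesis in the statement.
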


Hence, once such $\epsilon$ is chosen, we have a VOA $V_\Lambda$ with vertex operator $Y^\Lambda$. Define an anti-unitary map $\Theta^\Lambda:V_\Lambda\rightarrow V_\Lambda$ such that for any $\alpha\in\Lambda,u\in V^1_\hk$,
\begin{align}
\Theta^\Lambda c_\alpha u=(-1)^{\frac {(\alpha|\alpha)}2}\epsilon(-\alpha,\alpha)^{-1}\overline{c_\alpha u}=(-1)^{\frac {(\alpha|\alpha)}2}\epsilon(-\alpha,\alpha)^{-1}c_{-\alpha} \Theta u.
\end{align}
Then using \eqref{eq74}, \eqref{eq73}, and \eqref{eq75}, one can check that $\Theta^\Lambda$ is an anti-automorphism. Using relation \eqref{eq76}, one arrives at the following (see also \cite{DL14} theorem 4.12):
\begin{thm} 
$(V_\Lambda,Y^\Lambda)$ is a unitary VOA of CFT type with PCT operator $\Theta^\Lambda$. 
\end{thm}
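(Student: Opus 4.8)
The plan is to verify the three defining properties of a unitary VOA of CFT type for $(V_\Lambda, Y^\Lambda, \Omega, \nu, \Theta^\Lambda)$: that it is a VOA of CFT type, that $\Theta^\Lambda$ is an antilinear automorphism of the VOA, and that the inner product is invariant in the sense of \eqref{eq18}. The first point is already in hand: we established that the cocycle conditions \eqref{eq74}, \eqref{eq73}, \eqref{eq75}—which exist by Proposition \ref{lb36}—are exactly what makes $Y^\Lambda$ satisfy creation property and the Jacobi identity, while translation property is inherited from each $\mathcal Y_\alpha$. That $V_\Lambda$ is of CFT type follows because $\Lambda$ is even, so all conformal weights $(\alpha|\alpha)/2$ are non-negative integers, with $V_\Lambda(0) = \mathbb C\Omega$ coming solely from $\alpha = 0$ (non-degeneracy is not even needed here, only evenness). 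That $\Theta^\Lambda$ fixes $\Omega$ is immediate from $\epsilon(0,0)=1$ via \eqref{eq74}, and it fixes $\nu$ since $\nu$ lies in $L_\hk(0,1) = V^1_\hk$ and $\Theta^\Lambda$ restricts to $\Theta$ there.

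Next I would show $\Theta^\Lambda$ is an anti-automorphism of the VOA, i.e. $\Theta^\Lambda Y^\Lambda(v,x) = Y^\Lambda(\Theta^\Lambda v, x)\Theta^\Lambda$. By the generating property it suffices to check this on vectors of the form $c_\alpha u$ acting on $c_\mu w$, and then to reduce further to $u = \Omega$ using the Heisenberg vertex operators; the key inputs are the anti-unitary map $-$ on the Heisenberg modules, the explicit form of $\mathcal Y_\alpha$, and the adjoint relation \eqref{eq76}. This is essentially a bookkeeping computation tracking the scalar $(-1)^{(\alpha|\alpha)/2}\epsilon(-\alpha,\alpha)^{-1}$ through the fusion relation, and the cocycle identities \eqref{eq73} and \eqref{eq75} together with \eqref{eq74} are what make the phase factors cancel consistently. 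For the unitarity relation \eqref{eq18}, I would again reduce by the generating property to the case of a generating set $F$ of quasi-primary vectors, or directly compute $\langle Y^\Lambda(c_\alpha u, x) w_1 | w_2\rangle$ and compare with the right-hand side of \eqref{eq18}; here the crucial fact is precisely \eqref{eq76}, which says the adjoint of $\mathcal Y^{\alpha+\beta}_{\alpha,\beta}$ is $e^{i\pi(\alpha|\alpha)/2}\mathcal Y^\beta_{-\alpha,\alpha+\beta}$, and the factor $(-1)^{(\alpha|\alpha)/2} = e^{i\pi(\alpha|\alpha)/2}$ in the definition of $\Theta^\Lambda$ is chosen to absorb exactly this phase. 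One then invokes \cite{CKLW18} proposition 5.17 (as was done for affine VOAs): it suffices to check $L_n^\dagger = L_{-n}$—inherited from $V^1_\hk$ since $\nu \in V^1_\hk$—and the existence of a generating set of quasi-primary vectors satisfying \eqref{eq28}, which reduces to the $u=\Omega$ case of the relation just discussed together with a choice of $\epsilon$ normalized so that $\epsilon(-\alpha,\alpha)$ has the right sign.

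The main obstacle I anticipate is not any single deep step but the careful tracking of the cocycle $\epsilon$ and the half-integer-free phases $(-1)^{(\alpha|\alpha)/2}$, $\epsilon(-\alpha,\alpha)^{-1}$, and $e^{i\pi(\alpha|\beta)}$ through the fusion and braid relations of Theorem \ref{lb35}, making sure that the multivaluedness of $\mathcal Y_\alpha(c_\alpha u, z)$ in $\arg z$ is handled with consistent branch conventions (the relations in Theorem \ref{lb35} come with specific argument ranges, and composing them requires care). In particular, verifying that $\Theta^\Lambda$ is \emph{involutive}—or at least that it is a genuine antilinear automorphism so that the Corollary after Proposition \ref{lb3} applies—requires $\epsilon(-\alpha,\alpha)\epsilon(\alpha,-\alpha)^{-1} = (-1)^{(\alpha|\alpha)}$, which follows from \eqref{eq75}, but one must check the phase $(-1)^{(\alpha|\alpha)/2}\epsilon(-\alpha,\alpha)^{-1}$ squares correctly. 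Since all of this is, as the excerpt notes, already essentially carried out in \cite{DL14} theorem 4.12, I would present the verification at the level of indicating which cocycle identity controls which property and citing \cite{DL14} for the detailed phase computations, rather than reproducing them in full.
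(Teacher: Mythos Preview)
Your proposal is correct and follows essentially the same approach as the paper: the paper's proof is a single sentence indicating that one checks $\Theta^\Lambda$ is an anti-automorphism using the cocycle identities \eqref{eq74}, \eqref{eq73}, \eqref{eq75}, and then deduces unitarity from the adjoint formula \eqref{eq76}, citing \cite{DL14} theorem 4.12. Your elaboration of these steps, including the appeal to \cite{CKLW18} proposition 5.17 for the unitarity criterion, is consistent with the paper's methodology and simply spells out what the paper leaves implicit.
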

Note that if we choose $f,b,\epsilon'$ as in proposition \ref{lb36}, and assume that $f(0)=1$, then the unitary VOA $V_\Lambda$ defined by $\epsilon$ is clearly equivalent to the one defined by $\epsilon'$, with the unitary equivalence map defined by $f$.

Let $\Lambda^\circ$ be the dual lattice of $\Lambda$, i.e., the lattice in $\ih$ generated by the dual basis of a basis of $\Lambda$. So $\Lambda\subset\Lambda^\circ$, and we have quotient map $[\cdot]:\Lambda^\circ\rightarrow \Lambda^\circ/\Lambda$, $\lambda\mapsto [\lambda]$. Choose a cocycle $\epsilon$ on $\Lo$ whose  commutator $\omega$ satisfies $\omega(\alpha,\beta)=(-1)^{(\alpha|\beta)}$ for any $\alpha,\beta\in\Lambda$.  As above, we let $W_{\Lo}=\bigoplus_{\lambda\in\Lo}L_\hk(\lambda,1)$, and let $V^1_\Lambda$ act on $W_{\Lo}$ with vertex operator $Y^\Lambda$ defined by \eqref{eq77} for any $\mu\in\Lo$. Then, using a similar argument as above, one can show that $V_\Lo$ is a unitary $V_\Lambda$-module. For any $\lambda\in\Lo$, $W_{[\lambda]}=\bigoplus_{\alpha\in\Lambda}L_\hk(\lambda+\alpha,1)$ is clearly an irreducible unitary $V_\Lambda$-submodule of $W_\Lo$, and any irreducible unitary $V_\Lambda$-module is of this form. (See also \cite{DL14} theorem 4.12.)

We now describe intertwining operators of $V_\Lambda$. If $\lambda_0,\mu_0,\nu_0\in\Lo$, then any type $[\nu_0]\choose[\lambda_0]~[\nu_0]$ intertwining operator of $V_\Lambda$ is clearly also an intertwining operator of $V^1_\hk$. From this we can easily see that there are not  type $[\nu_0]\choose[\lambda_0]~[\mu_0]$ intertwining operators when $\nu_0-\lambda_0-\mu_0\notin\Lambda_0$. Now we look at type $[\lambda_0+\mu_0]\choose[\lambda_0]~[\mu_0]$ intertwining operators. If $\mathfrak Y\in\mathcal V{[\lambda_0+\mu_0]\choose[\lambda_0]~[\mu_0]}$, then, since it is an intertwining operator of $V^1_\hk$, there exists $\kappa:(\lambda_0+\Lambda)\times(\mu_0+\Lambda)\rightarrow \mathbb C$ satisfying that for any $\lambda\in\lambda_0+\Lambda,\mu\in\mu_0+\Lambda,w^{(\mu)}\in L_\hk(\mu,1),v\in V^1_\hk$,
\begin{align}
\mathfrak Y(c_\lambda v,x)w^{(\mu)}=\kappa(\lambda,\mu)\mathcal Y_\lambda(c_\alpha v,x)w^{(\mu)}.
\end{align}
$\mathfrak Y$ should satisfy Jacobi identity, which says that for any $\alpha\in\Lambda,u\in V^1_\hk$,
\begin{gather}
Y^\Lambda(c_\alpha u,z_1)\mathfrak Y(c_\lambda v,z_2)= \mathfrak Y\big(Y^\Lambda(c_\alpha u,z_1-z_2)c_\lambda v,z_2 \big),\label{eq78}\\
Y^\Lambda(c_\alpha u,z_1)\mathfrak Y(c_\lambda v,z_2)=\mathfrak Y(c_\lambda v,z_2)Y^\Lambda(c_\alpha u,z_1).\label{eq79}
\end{gather}
By the fusion and braid relations \eqref{eq69}, \eqref{eq70}, these two relations are equivalent to the  condition that for any $\alpha\in\Lambda,\lambda\in\lambda_0+\Lambda,\mu\in\mu_0+\Lambda$,
\begin{gather}
\epsilon(\alpha,\lambda+\mu)\kappa(\lambda,\mu)=\epsilon(\alpha,\lambda)\kappa(\alpha+\lambda,\mu)=(-1)^{(\alpha|\lambda)}\kappa(\lambda,\alpha+\mu)\epsilon(\alpha,\mu).\label{eq80}
\end{gather}
From this it is easy to see that the value of $\kappa$ is uniquely determined by $\kappa(\lambda_0,\mu_0)$. Hence the dimension of $\mathcal V{[\lambda_0+\mu_0]\choose[\lambda_0]~[\mu_0]}$ is at most $1$. Conversely, if we define, for any $\lambda\in\lambda_0+\Lambda,\mu\in\mu_0+\Lambda$,
\begin{align}
\kappa(\lambda,\mu)=\epsilon(\lambda,\mu)\omega(\mu-\mu_0,\lambda)e^{i\pi(\mu-\mu_0|\lambda)}
\end{align}
(cf. \cite{TL04} chapter V equation (5.3.1)), then $\kappa$ satisfies condition \eqref{eq80}. Hence $\mathfrak Y$ satisfies Jacobi identity. Since $\mathfrak Y$ clearly satisfies translation property as it is an intertwining operator of $V^1_\hk$, $\mathfrak Y$ is a non-zero type type $[\lambda_0+\mu_0]\choose[\lambda_0]~[\mu_0]$ intertwining operator of $V_\Lambda$. We arrive at the following theorem proved in \cite{DL93} proposition 12.2:
\begin{thm}\label{lb40}
Choose $\lambda_0,\mu_0,\nu_0\in\Lo$. If $\nu_0-\lambda_0-\mu_0\in\Lambda$, then the fusion rule $N^{[\nu_0]}_{[\lambda_0][\mu_0]}=1$, and a non-zero  type $[\nu_0]\choose[\lambda_0]~[\mu_0]$ intertwining operator of $V_\Lambda$ can be defined such that for any $\lambda\in\lambda_0+\Lambda,\mu\in\mu_0+\Lambda,w^{(\mu)}\in L_\hk(\mu,1),v\in V^1_\hk$,
\begin{align}
\mathfrak Y(c_\lambda v,x)w^{(\mu)}=\epsilon(\lambda,\mu)\omega(\mu-\mu_0,\lambda)e^{i\pi(\mu-\mu_0|\lambda)}\mathcal Y_\lambda(c_\lambda v,x)w^{(\mu)}.
\end{align}
If $\nu_0-\lambda_0-\mu_0\notin\Lambda$, then  $N^{[\nu_0]}_{[\lambda_0][\mu_0]}=0$.
\end{thm}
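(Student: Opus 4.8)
The plan is to reduce the whole statement to the already-established structure theory for intertwining operators of the Heisenberg VOA $V^1_\hk$ (the fusion rules recalled before Theorem \ref{lb35}, together with the fusion and braid relations \eqref{eq69}, \eqref{eq70}). First I would note that since $V^1_\hk$ is a vertex subalgebra of $V_\Lambda$, any type $[\nu_0]\choose[\lambda_0]~[\mu_0]$ intertwining operator $\mathfrak Y$ of $V_\Lambda$ is in particular a $V^1_\hk$-intertwining operator whose charge, source and target spaces are $\bigoplus_{\alpha\in\Lambda}L_\hk(\lambda_0+\alpha,1)$, $\bigoplus_{\beta\in\Lambda}L_\hk(\mu_0+\beta,1)$ and $\bigoplus_{\gamma\in\Lambda}L_\hk(\nu_0+\gamma,1)$. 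Because the Heisenberg fusion rule $N^\delta_{\eta\zeta}$ vanishes unless $\delta=\eta+\zeta$, the restriction of $\mathfrak Y$ can only be nonzero on the components with $\nu_0+\gamma=\lambda_0+\mu_0+\alpha+\beta$; for such a component to lie in the target one needs $\gamma\in\Lambda$, i.e. $\nu_0-\lambda_0-\mu_0\in\Lambda$. This immediately gives $N^{[\nu_0]}_{[\lambda_0][\mu_0]}=0$ when $\nu_0-\lambda_0-\mu_0\notin\Lambda$.

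Now assume $\nu_0-\lambda_0-\mu_0\in\Lambda$, so that $[\nu_0]=[\lambda_0+\mu_0]$ and we may take $\nu_0=\lambda_0+\mu_0$. Since each Heisenberg fusion space is one-dimensional, $\mathfrak Y(c_\lambda v,x)w^{(\mu)}=\kappa(\lambda,\mu)\,\mathcal Y_\lambda(c_\lambda v,x)w^{(\mu)}$ for a scalar function $\kappa$ on $(\lambda_0+\Lambda)\times(\mu_0+\Lambda)$. The new content beyond the Heisenberg level is that $\mathfrak Y$ must satisfy the Jacobi identity with the vertex operator $Y^\Lambda$ of $V_\Lambda$; by the standard equivalence of the Jacobi identity with associativity and commutativity (as in \cite{FHL93}), this is the pair of relations \eqref{eq78}, \eqref{eq79}. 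Substituting the formulas $Y^\Lambda(c_\alpha u,x)=\epsilon(\alpha,\cdot)\mathcal Y_\alpha(c_\alpha u,x)$ and $\mathfrak Y(c_\lambda v,x)=\kappa(\lambda,\cdot)\mathcal Y_\lambda(c_\lambda v,x)$ and applying the Heisenberg fusion and braid relations \eqref{eq69}, \eqref{eq70} turns \eqref{eq78}, \eqref{eq79} into the single numerical constraint \eqref{eq80} on $\kappa$, valid for all $\alpha\in\Lambda$. From \eqref{eq80} one sees that $\kappa$ is determined by the single value $\kappa(\lambda_0,\mu_0)$, hence $N^{[\nu_0]}_{[\lambda_0][\mu_0]}\le 1$.

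For existence I would exhibit $\kappa(\lambda,\mu)=\epsilon(\lambda,\mu)\,\omega(\mu-\mu_0,\lambda)\,e^{i\pi(\mu-\mu_0|\lambda)}$ and check that it satisfies \eqref{eq80}. This is a cocycle bookkeeping computation using that $\epsilon$ is a cocycle on $\Lo$ (the analogue of \eqref{eq73}), that its commutator on $\Lambda$ is $\omega$ with $\omega(\alpha,\beta)=(-1)^{(\alpha|\beta)}$ (the analogue of \eqref{eq75}), and the bilinearity of $\omega$ and of $(\cdot|\cdot)$; the factor $e^{i\pi(\mu-\mu_0|\lambda)}$ is chosen precisely so as to absorb the sign $(-1)^{(\alpha|\lambda)}$ occurring in \eqref{eq80}. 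Once \eqref{eq80} holds, the resulting $\mathfrak Y$ satisfies the Jacobi identity; it automatically satisfies translation and lower truncation because it already is a $V^1_\hk$-intertwining operator; and it is nonzero since each $\mathcal Y_\lambda$ is nonzero and $\kappa$ is everywhere nonzero. Hence $N^{[\nu_0]}_{[\lambda_0][\mu_0]}=1$ and the displayed formula defines the desired $\mathfrak Y$.

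The main obstacle I expect is the careful passage between the formal-variable Jacobi identity for intertwining operators and the analytic associativity/commutativity identities \eqref{eq69}, \eqref{eq70}, \eqref{eq78}, \eqref{eq79}, keeping the branches of the multivalued functions $\mathcal Y_\lambda(c_\lambda v,z)$ mutually consistent; this is exactly where \cite{DL93} (and \cite{FHL93} for the reformulation) must be invoked with care, and where the phase factors $\omega$ and $e^{i\pi(\cdot|\cdot)}$ have to be tracked precisely. The verification of \eqref{eq80} for the explicit $\kappa$ is then routine.
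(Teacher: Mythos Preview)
Your proposal is correct and follows essentially the same approach as the paper: reduce to Heisenberg intertwining operators via the inclusion $V^1_\hk\subset V_\Lambda$, translate the $V_\Lambda$-Jacobi identity into the associativity/commutativity relations \eqref{eq78}, \eqref{eq79}, use Theorem \ref{lb35} to convert these into the numerical cocycle condition \eqref{eq80} on $\kappa$, deduce uniqueness from \eqref{eq80}, and verify existence by exhibiting the explicit $\kappa$. The obstacle you flag (tracking branches in the passage from the formal Jacobi identity to the analytic fusion/braid relations) is exactly the point where the paper invokes \cite{DL93}.
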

This theorem, together with theorem \ref{lb37}, immediately implies
\begin{thm}
Let $\lambda_0\in\Lo$, and $\mathfrak Y$ an intertwining operator of $V_\Lambda$ with charge space $W_{[\lambda_0]}$, then $\mathfrak Y$ is energy-bounded. Moreover, for any $\lambda\in\lambda_0+\Lambda$,  $\mathfrak Y(c_\lambda \Omega,x)$ satisfies $0$-th order energy-bounds when $(\lambda|\lambda)\leq 1$, and satisfies $1$-st order energy-bounds when $(\lambda|\lambda)\leq 2$.
\end{thm}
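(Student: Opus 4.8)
The plan is to deduce the statement directly from the explicit description of the intertwining operators of $V_\Lambda$ provided by theorem \ref{lb40}, combined with the Heisenberg energy bounds of theorem \ref{lb37}.

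First, I would reduce to the case of irreducible source and target. Since $\Rep(V_\Lambda)$ is semisimple, the source module of $\mathfrak Y$ is a finite orthogonal direct sum of modules $W_{[\mu_0]}$ and the target a finite orthogonal direct sum of modules $W_{[\nu_0]}$; decomposing $\mathfrak Y$ into the corresponding components, it suffices to treat an intertwining operator of type $[\nu_0]\choose[\lambda_0]~[\mu_0]$. Moreover every lattice module $W_{[\kappa]}=\bigoplus_{\mu\in\kappa+\Lambda}^\perp L_\hk(\mu,1)$ ($\kappa\in\Lo$) is energy-bounded as a $V_\Lambda$-module: by proposition \ref{lb15} it is enough to bound the $V_\Lambda$-vertex operators of the generating set $\{X(-1)\Omega:X\in\hk\}\cup\{c_\gamma\Omega:\gamma\in\Lambda\}$, and $Y^\Lambda(X(-1)\Omega,x)$ is the Heisenberg current (energy-bounded by the Heisenberg analogue of theorem \ref{lb18}), while $Y^\Lambda(c_\gamma\Omega,x)$ acts on each $L_\hk(\mu,1)$ as a unimodular scalar times $\mathcal Y_\gamma(c_\gamma\Omega,x)$, which is energy-bounded by theorem \ref{lb37}(c). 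Consequently proposition \ref{lb17} applies, so once we exhibit one nonzero homogeneous charge vector on which $\mathfrak Y$ is energy-bounded, $\mathfrak Y$ itself is energy-bounded.

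Next I would invoke theorem \ref{lb40}: the space $\mathcal V{[\nu_0]\choose[\lambda_0]~[\mu_0]}$ is zero unless $\nu_0-\lambda_0-\mu_0\in\Lambda$, in which case it is one-dimensional and spanned by the operator with
\begin{align*}
\mathfrak Y(c_\lambda v,x)w^{(\mu)}=\epsilon(\lambda,\mu)\,\omega(\mu-\mu_0,\lambda)\,e^{i\pi(\mu-\mu_0|\lambda)}\,\mathcal Y_\lambda(c_\lambda v,x)w^{(\mu)}
\end{align*}
for $\lambda\in\lambda_0+\Lambda$, $\mu\in\mu_0+\Lambda$, $v\in V^1_\hk$, $w^{(\mu)}\in L_\hk(\mu,1)$. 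The scalar $\epsilon(\lambda,\mu)\omega(\mu-\mu_0,\lambda)e^{i\pi(\mu-\mu_0|\lambda)}$ has absolute value $1$, since $\epsilon$ and its commutator $\omega$ are $S^1$-valued and $(\mu-\mu_0|\lambda)\in\mathbb Z$ (as $\mu-\mu_0\in\Lambda$, $\lambda\in\Lo$). Hence for every mode $s$ and every $w^{(\mu)}\in L_\hk(\mu,1)$ one has $\lVert\mathfrak Y(c_\lambda v,s)w^{(\mu)}\rVert=\lVert\mathcal Y_\lambda(c_\lambda v,s)w^{(\mu)}\rVert$; since both $W_{[\mu_0]}$ and the target split orthogonally and compatibly into their sectors, $\mathfrak Y(c_\lambda v,x)$ satisfies exactly the same energy bounds as $\mathcal Y_\lambda(c_\lambda v,x)$. (The bounding constants in theorem \ref{lb37} can be taken independent of the sector $L_\hk(\mu,1)$, because the operator part $E^-(\lambda,x)E^+(\lambda,x)$ is sector-independent and the remaining factor $x^{\lambda(0)}$ only rescales the $s$-th mode by a power of $|s|$ absorbed into $(1+|s|)^t$.)

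Then the conclusions follow. By theorem \ref{lb37}(c) and the previous paragraph, $\mathfrak Y(c_{\lambda_0}\Omega,x)$ is energy-bounded for the nonzero homogeneous charge vector $c_{\lambda_0}\Omega\in W_{[\lambda_0]}$, so proposition \ref{lb17} gives that $\mathfrak Y$ is energy-bounded. For $\lambda\in\lambda_0+\Lambda$ with $(\lambda|\lambda)\leq 1$ (resp. $(\lambda|\lambda)\leq 2$), theorem \ref{lb37}(a) (resp. (b)) shows $\mathcal Y_\lambda(c_\lambda\Omega,x)$, hence $\mathfrak Y(c_\lambda\Omega,x)$, satisfies $0$-th (resp. $1$-st) order energy bounds. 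There is no genuine obstacle here — the real content sits in theorems \ref{lb40} and \ref{lb37} — and the only points requiring care are the energy-boundedness of the lattice modules $W_{[\kappa]}$ (needed so that proposition \ref{lb17} applies) and the unimodularity of the $\mu$-dependent prefactor from theorem \ref{lb40}, which guarantees it drops out of every norm estimate.
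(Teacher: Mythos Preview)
Your proof is correct and follows exactly the paper's approach: the paper's proof is the single sentence ``This theorem, together with theorem \ref{lb37}, immediately implies'' (referring to theorems \ref{lb40} and \ref{lb37}), and you have simply fleshed out the details behind that implication. Your care about the unimodularity of the prefactor and the sector-uniformity of the Heisenberg energy bounds is appropriate, though the paper leaves these implicit.
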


\begin{co}[cf.\cite{TL04}]\label{lb41}
Let $\gk$ be a complex unitary simple Lie algebra of type $A,D,$ or $E$. The inner product on $\gk$ as well as on its Cartan subalgebra $\hk$ is chosen such that the length of the roots of $\gk$ are $\sqrt 2$.  Let $\Lambda$ be the root lattice of $\gk$, and choose dominant integral weights $\lambda,\mu,\nu$  of $\gk$ admissible at level $1$. If $\nu-\lambda-\mu\in\Lambda$, then $\dim\mathcal V^1_\gk{\nu\choose\lambda~\mu}=1$, and any type $\nu\choose\lambda~\mu$ intertwining operator of $V^1_\gk$ is energy bounded. Moreover, let $d$ be the distance between $0$ and $\lambda+\Lambda$. Choose any $\mathcal Y\in\mathcal V^1_\gk{\nu\choose\lambda~\mu},u^{(\lambda)}\in L_\gk(\lambda)$. If $d^2\leq 1$ then $\mathcal Y(u^{(\lambda)},x)$ satisfies $0$-th order energy bounds. If $d^2\leq 2$ then $\mathcal Y(u^{(\lambda)},x)$ satisfies $1$-th order energy bounds. 

If, however, $\nu-\lambda-\mu\notin\Lambda$, then $\dim\mathcal V^1_\gk{\nu\choose\lambda~\mu}=0$.
\end{co}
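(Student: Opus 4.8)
The plan is to deduce this entirely from the Frenkel--Kac construction together with the lattice-VOA results established just above. First I would invoke Frenkel--Kac \cite{FK80} (see also \cite{Kac98} section 5.6): since $\gk$ is simply laced and the inner product is normalized so that roots have squared length $2$, the root lattice $\Lambda$ is a non-degenerate even lattice in $\ih$, and there is an isomorphism of unitary VOAs $V^1_\gk\simeq V_\Lambda$ matching the vacuum, conformal, and PCT structures. Under this isomorphism the dual lattice $\Lo$ is the weight lattice, the dominant integral weights of $\gk$ admissible at level $1$ correspond bijectively to the cosets $\Lo/\Lambda$ via $\lambda\mapsto[\lambda]$, and the irreducible module $L_\gk(\lambda,1)$ is carried to $W_{[\lambda]}=\bigoplus_{\alpha\in\Lambda}L_\hk(\lambda+\alpha,1)$. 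Since $[\nu]-[\lambda]-[\mu]$ is trivial in $\Lo/\Lambda$ exactly when $\nu-\lambda-\mu\in\Lambda$, Theorem \ref{lb40} immediately gives $\dim\mathcal V^1_\gk{\nu\choose\lambda~\mu}=N^{[\nu]}_{[\lambda][\mu]}$, which is $1$ when $\nu-\lambda-\mu\in\Lambda$ (with an explicit non-zero intertwining operator) and $0$ otherwise. Moreover, the charge space of any $\mathcal Y\in\mathcal V^1_\gk{\nu\choose\lambda~\mu}$ becomes $W_{[\lambda]}$ under Frenkel--Kac, so the theorem on intertwining operators of $V_\Lambda$ stated just before this corollary shows directly that every such $\mathcal Y$ is energy-bounded.

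For the refined statements involving $d$, I would identify the lowest energy $\gk$-module $L_\gk(\lambda)$ sitting inside $L_\gk(\lambda,1)\simeq W_{[\lambda]}$. Each Fock space $L_\hk(\mu,1)$ has one-dimensional lowest weight space $\mathbb C v_\mu=\mathbb C\, c_\mu\Omega$ of conformal weight $(\mu|\mu)/2$, so the lowest conformal weight occurring in $W_{[\lambda]}$ is $d^2/2$, where $d$ is the distance from $0$ to the coset $\lambda+\Lambda$, and the lowest energy subspace of $W_{[\lambda]}$ is $\bigoplus_{\mu\in\lambda+\Lambda,\ (\mu|\mu)=d^2}\mathbb C v_\mu$ (consistency of the conformal vectors under Frenkel--Kac forces $\Delta_\lambda=d^2/2$, so this is indeed the lowest energy $\gk$-module). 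Hence $L_\gk(\lambda)$ is spanned by the vectors $c_\mu\Omega$ with $(\mu|\mu)=d^2$. If $d^2\le 1$ (resp. $d^2\le 2$), the preceding lattice-VOA theorem shows each $\mathcal Y(c_\mu\Omega,x)$ satisfies $0$-th order (resp. $1$-st order) energy bounds; since energy bounds of a fixed order are preserved under linear combinations in the charge variable, and the Frenkel--Kac isomorphism intertwines the $L_0$-gradings, it follows that $\mathcal Y(u^{(\lambda)},x)$ satisfies $0$-th order (resp. $1$-st order) energy bounds for every $u^{(\lambda)}\in L_\gk(\lambda)$. In the cases where only energy-boundedness (without a sharp order) is needed, one may instead simply quote Proposition \ref{lb17}, using that all $L_\gk(\cdot,1)$ are energy-bounded.

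The main obstacle is not any single analytic estimate --- all the hard analysis is already packaged in the lattice-VOA theorem and ultimately in Theorem \ref{lb37} --- but rather the careful bookkeeping of the Frenkel--Kac dictionary: matching the normalized inner product, verifying that the conformal vectors (and hence the gradings and the equality $\Delta_\lambda=d^2/2$) correspond, and checking that the bijection between level-$1$ admissible weights and $\Lo/\Lambda$ is compatible with both the module correspondence and the fusion-rule computation of Theorem \ref{lb40}. Once this dictionary is laid out, the corollary follows by direct translation, as indicated in \cite{TL04}.
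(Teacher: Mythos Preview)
Your proposal is correct and follows exactly the route the paper intends: invoke the Frenkel--Kac identification $V^1_\gk\simeq V_\Lambda$ for simply laced $\gk$, match level-$1$ admissible weights with cosets $\Lo/\Lambda$ and irreducible modules with the $W_{[\lambda]}$, and then read off both the fusion rules from Theorem~\ref{lb40} and the energy bounds from the preceding lattice-VOA theorem, using that $L_\gk(\lambda)$ sits as the span of the minimal-norm highest-weight vectors $c_\mu\Omega$. The paper states this as a corollary without proof (deferring to \cite{TL04}), and your handling of the one nontrivial bookkeeping point---that the conformal vectors, hence the $L_0$-gradings, agree under Frenkel--Kac, so that $\Delta_\lambda=d^2/2$ and the lowest-energy subspace is identified correctly---is exactly what is needed.
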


\end{appendices}

\end{document}